\newcommand{\reals}{\mathbb{R}}
\newcommand{\naturals}{\mathbb{N}}
\renewcommand{\P}{\mathcal{P}}
\newcommand{\hyperbolic}{\mathbb{H}}
\newcommand{\PSL}{{\rm PSL}_2(\mathbb{C})}
\newcommand{\SL}{{\rm SL}_2(\mathbb{C})}
\newcommand{\ie}{i.e.\ }
\newcommand{\len}{\ell}
\newcommand{\Cos}{\mathrm{Cos}}
\renewcommand{\v}{\mathbf{e}}
\newcommand{\0}{\mathbf 0}
\newcommand{\e}{\hat{\mathbf{e}}}
\newcommand{\dth}{d_{\mathrm{Th}}}
\newcommand{\dwp}{d_{\mathrm{WP}}}
\newcommand{\Log}{\mathrm{Log}}
\newcommand{\T}{\mathcal{T}}
\newcommand{\ml}{\mathcal{ML}}
\newcommand{\pml}{\mathcal{PML}}
\newcommand{\tec}{Teichm\"uller }
\renewcommand{\wp}{Weil--Petersson }
\newcommand{\R}{\mathbb R}
\newcommand{\trace}{{\rm tr}}
\newtheorem{theorem}{Theorem}[section]
\newtheorem{proposition}[theorem]{Proposition}
\newtheorem{lemma}[theorem]{Lemma}
\newtheorem{corollary}[theorem]{Corollary}
\theoremstyle{definition}
\newtheorem{definition}[theorem]{Definition}
\newtheorem{remark}[theorem]{Remark}
\newtheorem{conjecture}[theorem]{Conjecture}
\newtheorem{question}[theorem]{Question}
\title[The earthquake metric on Teichm\"{u}ller space]
{The earthquake metric on Teichm\"{u}ller space}
\author[Y. Huang, K. Ohshika, H. Pan and A. Papadopoulos]{Yi Huang, Ken'ichi Ohshika, Huiping Pan, and Athanase Papadopoulos}
\address{Yi Huang, Yau Mathematical Sciences Center, Tsinghua University, Haidian District, Beijing, China.}
\email{yihuangmath@tsinghua.edu.cn}
\address{Ken'ichi Ohshika, 
Department of Mathematics,
Gakushuin University,
Mejiro, Toshima-ku, Tokyo, Japan.}
\email{ohshika@math.gakushuin.ac.jp}
\address{Huiping Pan,
School of mathematics, South China University of Technology, 510641, Guangzhou, China}
\email{panhp@scut.edu.cn} 
\address{Athanase Papadopoulos, Institut de Recherche Mathématique Avancée (Université de Strasbourg et CNRS),
7 rue René Descartes,
67084 Strasbourg Cedex France.}
\email {papadop@math.unistra.fr}
\date{\today}
\begin{document}
\sloppy
\maketitle

\begin{abstract}
This is the first paper to systematically study the earthquake metric, an asymmetric Finsler metric on Teichm\"uller space introduced by Thurston. We provide proofs for several assertions of Thurston and establish new properties of this metric, among which are incompleteness,  asymptotic distance to the boundary and comparisons with the Thurston metric and the Weil--Petersson metric.  In doing so, we propose a  novel asymmetric generalisation of the notion of completion for symmetric metrics, which we call the FD-completion, and prove that for the earthquake metric the FD-completion and various symmetrised metric completions coincide with the Weil--Petersson completion. We also answer a question of Thurston by giving an interpretation of this metric arising from a global minimisation problem, namely, the earthquake magnitude minimisation problem. At several points of this paper, we formulate a certain number of open problems which will show that the earthquake metric constitutes a promising subject.

\noindent  \emph{Keywords.---} hyperbolic surfaces, Teichm\"uller theory, earthquakes, earthquake metric, Finsler metrics, Thurston metric, Weil--Petersson metric, moduli space, augmented Teichm\"uller space, Deligne-Mumford compactification.

\noindent    \emph{AMS classification.---}  30F60, 32G15, 30F10, 52A21.
\end{abstract}

\tableofcontents

\section{Introduction}

Fenchel--Nielsen twists are deformations of hyperbolic metrics on a surface $S$ obtained by cutting the surface along simple closed geodesics and reglueing with displacement. They constitute special examples of a general family of hyperbolic surface deformations which cut, twist and reglue with respect to \emph{measured geodesic laminations}. Thurston \cite{ThE} first defined these generalised metric deformations, and dubbed them \emph{earthquakes}. 

The well-known \emph{earthquake theorem} (\cite[Theorem~2]{Ker} and \cite[III.1.3.1.]{ThE}) is Thurston's first important result on earthquakes. It asserts that  for any ordered pair of distinct hyperbolic metrics on $S$ there is a unique (left) earthquake\footnote{We say ``earthquake" without qualifying adjectives to mean the left earthquake.} deforming from the initial metric to the terminal metric. This result was famously employed in Kerckhoff's solution \cite{Ker} to the Nielsen realisation problem: Kerckhoff showed that any finite subgroup of the mapping class group of $S$ can be realised as an isometry (sub)group of some hyperbolic metric on $S$.

Kerckhoff's proof made use of \emph{convexity} (specifically, of geodesic lengths with respect to earthquakes) --- a favourite tool in the Thurston school. Another context in which convexity and earthquakes intermingle is described in \cite[Theorem~5.2]{ThM} (also see \cref{Th:Thurston-Ball} in the present paper), where Thurston showed that the derivatives of unit-speed earthquakes at each point in Teichm\"uller space $x\in\T(S)$ prescribes the boundary sphere for a convex body in $T_x\T(S)$ which contains the origin. Using this convex body, he defined a weak norm (\cref{defn:weaknorm}) to take value $1$ on this sphere, and dubbed it the \emph{earthquake norm}, which we denote by ${\|\cdot\|}_e$. The earthquake norm therefore defines a Finsler metric on Teichm\"uller space, which we call the \emph{earthquake metric} $d_e$ (\cref{defn:earthquakemetric}). 

The earthquake norm has an intriguing interpretation: Barbot and Fillastre spurred by a comment of Seppi, realised that the additive symmetrisation of the earthquake norm measures the volume of the convex core of certain associated quasi-Fuchsian co-Minkowski surfaces, see \cite[p. 650]{Barbot-Fillastre}. Yet the earthquake metric remains mysterious. Thurston muses: 

\begin{quote} \small Perhaps there is some global minimization problem which the earthquake norm measures, but I don't know [\ldots] any global interpretation for distances with respect to the earthquake norm.
\end{quote}

In response, we show in \cref{thm:magnitude} that the earthquake metric \emph{does} have a minimisation formulation in terms of earthquake \emph{magnitudes}  (\cref{defn:magnitude}). Our nomenclature retroactively reinterprets the following words by Thurston \cite[p.~22]{ThM}):
\begin{quote}\small
 The distance between two elements of Teichm\"{u}ller space in the earthquake norm is the infimum of the total magnitude of a sequence of earthquakes transforming one to the other.
\end{quote}

\noindent
In other words, the earthquake metric quantifies the following:

\begin{center}
\textit{How efficiently can we earthquake from one hyperbolic metric to another?}
\end{center}

We establish basic geometric properties of this metric, and discover connections with the Thurston (asymmetric) metric and the Weil--Petersson metric. We believe that these connections will give the earthquake metric a secure footing in Teichm\"uller and moduli space theory.

We mentioned the paper  \cite{Barbot-Fillastre} by Barbot and Fillastre in which the earthquake metric appears in the form of a norm. The two authors write, on p. 650 of their paper: 
\begin{quote}\small
[\ldots] Note that the earthquake norm also induces
an asymmetric distance on Teichmüller space, but, to the best of our knowledge,
nothing is known about this distance.
\end{quote}
The present paper is the first contribution in this direction.

Throughout this paper, $S=S_{g,n}$ denotes a Riemann surface of genus $g\geq0$, with $n\geq0$ cusps, and negative Euler characteristic. Let us state our results in some detail, anticipating some of the notation set out precisely in the next section.

\subsection{Earthquake metric}

Kerckhoff \cite[Proposition~2.6]{kerckhoff1985earthquakes} showed that every tangent vector $v\in T_x\T(S)$ at a point $x\in\T(S)$ can be uniquely expressed as an \emph{infinitesimal earthquake}, i.e. a tangent vector of the form
\[
\v_{\lambda}(x):=\left.\tfrac{\mathrm{d}}{\mathrm{d}t}\right|_{t=0} E_{t\lambda}(x)
\] 
of an earthquake path $E_{t\lambda}(x)$ shearing with respect to a measured lamination $\lambda\in\ml(S)$. Thurston \cite[Theorem~5.2]{ThM} further showed that the set
\begin{align*}
\left\{
v\in T_x\T(S)\;\mid\;
v=\v_{\lambda}(x)\text{ for }\ell_\lambda(x)=1
\right\}
\end{align*}
is the boundary of a convex ball in $T_x\T(S)$ containing the origin in its interior, and hence we can define a weak norm as follows.

\begin{definition}[earthquake norm]
\label{defn:earthquakenorm}
For every $x\in\T(S)$, the function
\begin{align*}
{\|\cdot\|}_e: T_x\T(S)\to\mathbb{R}_{\geq0},\quad
\|\v_{\lambda}(x)\|_e:=\len_x(\lambda)
\end{align*}
defines a weak norm (\cref{defn:weaknorm}) on $T_x\T(S)$  called the \emph{earthquake norm}. 
\end{definition}

\begin{definition}[earthquake metric]
\label{defn:earthquakemetric}
The earthquake norm induces a notion of length for $C^1$ paths in $\T(S)$. This determines a path metric  on $\T(S)$  (\cref{defn:finslermetric}) where the distance from $x$ to $y$ is defined as the infimum of the length, with respect to the earthquake norm, of piecewise $C^1$ paths from $x$ to $y$. We refer to this distance metric as the \emph{earthquake metric} $d_e$.
\end{definition}

\begin{remark}[left/right earthquake metric]
\label{rmk:rightearthquake}
Working with right earthquakes rather than left earthquakes, we get a distance function $d^\#_e$ instead of $d_e$. This is also a Finsler metric, satisfying $d^\#_e(x,y):=d_e(y,x)$ --- it is also called the reverse metric of $d_e$ (\cref{defn:conjmetric}). Its underlying norm is ${\|v\|}_e^\#={\|-v\|}_e$. We refer to $d_e$ and $d^\#_e$ respectively as the left and right earthquake metrics respectively when we need to distinguish between the two, and omit ``left'' when only dealing with $d_e$.
\end{remark}

The left and right earthquake metrics induce natural topologies on Teichm\"uller space. The following is an immediate corollary of \cref{prop:finslertopology}:

\begin{proposition}[usual topology]
\label{prop:topology:finsler}
The left and right earthquake metrics $d_e$ and $d_e^\#$ both induce the usual topology on Teichm\"uller space.
\end{proposition}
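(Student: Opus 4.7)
The plan is to verify the hypotheses of \cref{prop:finslertopology} for both the left and right earthquake norms; that proposition will then immediately yield the conclusion. Concretely, I need to check that the assignments $x\mapsto{\|\cdot\|}_e$ and $x\mapsto{\|\cdot\|}_e^\#$ define continuous families of weak norms on the tangent bundle $T\T(S)$ whose unit balls at each point are compact, convex, and contain the origin in their interiors.

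First I would establish the continuity of ${\|\cdot\|}_e$ as a function on the total tangent bundle. Kerckhoff's parametrisation identifies $T_x\T(S)$ with $\ml(S)$ via $\lambda\mapsto\v_{\lambda}(x)$, and this identification is jointly continuous in $(x,\lambda)$ because the earthquake flow depends continuously on the basepoint and the shearing lamination. Composing with the jointly continuous length pairing $(x,\lambda)\mapsto\len_x(\lambda)$ shows that ${\|\v_{\lambda}(x)\|}_e=\len_x(\lambda)$ varies continuously on $T\T(S)$. The compactness, convexity, and inclusion of the origin in the interior of each unit ball are precisely the content of Thurston's theorem recalled in \cref{Th:Thurston-Ball}. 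Hence \cref{prop:finslertopology} applies to $d_e$.

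For $d_e^\#$, I would observe that ${\|v\|}_e^\# = {\|-v\|}_e$, so each unit ball for the right norm is the image of the corresponding unit ball for the left norm under the smooth involution $v\mapsto -v$. This operation preserves continuity of the family of weak norms together with convexity, compactness, and the property of containing the origin in the interior, so \cref{prop:finslertopology} applies to $d_e^\#$ as well.

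The only point requiring care is the joint continuity in $(x,\lambda)$ of both the Kerckhoff parametrisation and the length pairing; both are known but worth flagging because continuity of the norm \emph{fibrewise} does not automatically imply continuity on the total space. Once those continuities are confirmed, the result reduces to a direct citation of the general Finsler topology result.
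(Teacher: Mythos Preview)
Your proposal is correct and takes essentially the same approach as the paper: the paper simply states that \cref{prop:topology:finsler} is an immediate corollary of \cref{prop:finslertopology}, having established elsewhere (\cref{s:enorm} for the weak norm property via \cref{Th:Thurston-Ball}, and Kerckhoff's analyticity results for continuity) that the earthquake norm is a Finsler metric in the sense of \cref{defn:finslermetric}. One small point worth sharpening: to show that $(x,v)\mapsto\|v\|_e$ is continuous on $T\T(S)$ you need the \emph{inverse} of the Kerckhoff parametrisation $(x,\lambda)\mapsto(x,\v_\lambda(x))$ to be jointly continuous, not just the forward map; this follows by invariance of domain since both $\T(S)\times\ml(S)$ and $T\T(S)$ are topological manifolds of the same dimension, but your phrasing slightly obscures which direction you are invoking.
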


\subsection{Magnitude}
We  propose a natural energy function for quantifying \emph{efficient} earthquaking, and arrive at an alternative formulation of $d_e$.

\begin{definition}[magnitude]
\label{defn:magnitude}  Consider a piecewise earthquake path consisting of earthquake segments joining $x_1,\ldots,x_{m+1}$ such that $x_{i+1}=E_{\lambda_i}(x_i)$, for $i=1,\ldots,m$. We refer to the quantity ${\sum}_{i=1}^m \ell_{\lambda_i}(x_i)$ as the \emph{magnitude} of the given piecewise earthquake path.
\end{definition}

\begin{remark}
The magnitude of an earthquake segment is independent of the choice of homothetic representative. In particular, the magnitude is both 
\begin{itemize}
\item
the time taken for traversing an earthquake segment when the measured lamination $\lambda$ is normalised to have hyperbolic length $1$, and
\item
the hyperbolic length of the measured lamination $\lambda$ needed to traverse the given earthquake segment in precisely time $1$.
\end{itemize}
\end{remark}


\begin{theorem}[magnitude minimisation]
\label{thm:magnitude}
For arbitrary $x,y\in\T(S)$, the earthquake distance $d_e(x,y)$ is equal to the infimum of the magnitudes of all piecewise earthquake paths from $x$ to $y$.
\end{theorem}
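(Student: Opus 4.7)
The claim reduces to two inequalities. For $d_e(x,y) \leq \inf \text{magnitude}$, it suffices to verify that each earthquake segment has Finsler length equal to its magnitude, so that piecewise earthquake paths realize their magnitudes as Finsler lengths of piecewise $C^1$ paths, over which $d_e$ is already an infimum. For the reverse inequality, the plan is to approximate an arbitrary piecewise $C^1$ path $\gamma:[0,1]\to\T(S)$ from $x$ to $y$ by a piecewise earthquake path whose magnitude converges to the Finsler length of $\gamma$.

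\textbf{Magnitude equals Finsler length on earthquake segments.} Parametrize the earthquake segment as $t \mapsto E_{t\lambda}(x)$ for $t \in [0,1]$. By the semigroup identity $E_{t\lambda}(x) = E_{(t-s)\lambda}(E_{s\lambda}(x))$, the velocity at time $s$ equals the infinitesimal earthquake $\v_\lambda(E_{s\lambda}(x))$, whose earthquake norm is $\ell_{E_{s\lambda}(x)}(\lambda)$. The earthquake flow along $\lambda$ preserves $\ell_\lambda$ — a well-known consequence of Wolpert's derivative formula, or equivalently of the fact that earthquake along $\lambda$ fixes the measured lamination $\lambda$ setwise and acts isometrically along its leaves — so this norm equals $\ell_x(\lambda)$ for all $s$. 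The Finsler length of the segment is therefore $\int_0^1 \ell_x(\lambda)\,ds = \ell_x(\lambda)$, matching the magnitude. Summing over segments of any piecewise earthquake path from $x$ to $y$ yields $d_e(x,y) \leq \inf \text{magnitude}$.

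\textbf{Approximation and the main obstacle.} Given a piecewise $C^1$ path $\gamma$ from $x$ to $y$ with Finsler length $L = \int_0^1 \|\gamma'(t)\|_e\,dt$, fix a partition $0 = t_0 < \ldots < t_N = 1$ of mesh $\delta$. The earthquake theorem furnishes unique $\lambda_i \in \ml(S)$ with $E_{\lambda_i}(\gamma(t_i)) = \gamma(t_{i+1})$; concatenating produces a piecewise earthquake path from $x$ to $y$ of magnitude $M_\delta := \sum_i \ell_{\gamma(t_i)}(\lambda_i)$, and it suffices to show $M_\delta \to L$ as $\delta \to 0$. Writing $\gamma'(t_i) = \v_{\mu_i}(\gamma(t_i))$ via Kerckhoff's parametrization of tangent vectors by infinitesimal earthquakes, a formal Taylor expansion of the earthquake map at $\lambda = 0$ suggests $\lambda_i/(t_{i+1}-t_i) \to \mu_i$ in $\ml(S)$, and then continuity of the length pairing gives $\ell_{\gamma(t_i)}(\lambda_i) = (t_{i+1}-t_i)\|\gamma'(t_i)\|_e + o(t_{i+1}-t_i)$, presenting $M_\delta$ as a Riemann sum for $L$.

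\textbf{The hard part.} The delicate point is to make this Taylor estimate \emph{uniform} in $i$ along the compact image $\gamma([0,1])$, so that the error terms aggregate to $o(1)$ rather than accumulating: concretely, one needs a uniform local inverse to the earthquake map $(\lambda,x) \mapsto (x, E_\lambda(x))$ in a neighbourhood of $\{0\}\times \gamma([0,1])$. Although $\ml(S)$ carries only a piecewise-linear structure, this uniform estimate should follow by combining the continuity in $x$ of Kerckhoff's homeomorphism $\ml(S)\to T_x\T(S)$, the joint continuity and $\mathbb{R}_{\geq0}$-homogeneity of the length pairing $\T(S)\times\ml(S)\to \R_{\geq0}$, and compactness of $\gamma([0,1])$; this is where we expect the technical weight of the proof to lie.
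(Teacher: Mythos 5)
Your first half (each earthquake segment is a $C^1$ path whose Finsler length equals its magnitude, because $\ell_\lambda$ is constant along $t\mapsto E_{t\lambda}(x)$) is correct, and your overall architecture for the other direction — approximate a $C^1$ path by a piecewise earthquake path and run a Riemann-sum argument — is the same as the paper's. But your decomposition differs: you connect consecutive sample points \emph{exactly} by the unique earthquakes furnished by the earthquake theorem and then try to identify $\lambda_i/(t_{i+1}-t_i)$ with $\mu_i$ to first order, whereas the paper shoots the infinitesimally matched earthquake $E_{(s_{i+1}-s_i)\mu_i}(p(s_i))$ from each sample point and then closes the resulting ``sawtooth'' with a small correcting earthquake whose magnitude is bounded linearly by the displacement in trace coordinates (\cref{lem:lineapprox2,lem:shortapprox}).

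The genuine gap is exactly at the step you flag as ``the hard part,'' and the route you propose to fill it does not suffice. You need a \emph{uniform first-order} statement: $\ell_{\gamma(t_i)}(\lambda_i)=(t_{i+1}-t_i)\,\|\gamma'(t_i)\|_e+o(t_{i+1}-t_i)$ with the error uniform over the partition, equivalently a uniform quantitative local inverse for $\lambda\mapsto E_\lambda(x)$ at $\lambda=0$ along the compact image of $\gamma$. Continuity of Kerckhoff's homeomorphism $\v_x:\ml(S)\to T_x\T(S)$ (\cref{thm:earthquakehomeo}), joint continuity and homogeneity of the length pairing, and compactness give no modulus of differentiability: they do not control the second-order deviation of $t\mapsto E_{t\lambda}(x)$ from its linearization uniformly in $([\lambda],x)$, nor do they even guarantee that $\ell_{\gamma(t_i)}(\lambda_i)=O(t_{i+1}-t_i)$ — without a quantitative bound on the inverse earthquake map, the interpolating laminations could a priori contribute errors that accumulate rather than aggregate to $o(1)$. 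This is precisely why the paper develops the complex-analytic regularity of Section 4: holomorphicity of quakebends (\cref{thm:quakebend}) plus Cauchy-integral estimates give a Taylor expansion in trace coordinates whose coefficients are continuous in $([\lambda],x)$ and whose quadratic remainder is bounded uniformly on compacta (\cref{lem:taylorseries,lem:keylemma}), and a uniform positive lower bound on the linear coefficient along the path then yields the Lipschitz control $\ell_\mu(\tilde q)\leq M_\Gamma\|\pi(s)-q\|$ on the inverse earthquake (\cref{lem:shortapprox}). Some input of this strength (or an appeal to these results of the paper) is indispensable; as written, your proposal asserts the key uniform estimate rather than proving it, and the soft ingredients you list cannot deliver it.
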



\subsection{Duality to the Thurston metric}
In \cite[Section~16.3.4.5]{Barbot-Fillastre}, Barbot and Fillastre explain a duality, first hinted at by Thurston on the last lines of page~20 and on page~21 of \cite{ThM}:

\begin{theorem}[infinitesimal duality]
\label{duality}
At each point $x \in \T(S)$, there is a linear isometry between the normed spaces $(T_x \T(S), {\|\cdot \|}_e)$ and ${(T^*_x \T(S), {\|\cdot \|}^*_{\mathrm{Th}})}$. The isometry is induced by the Weil--Petersson symplectic duality.
\end{theorem}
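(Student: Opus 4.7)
The plan is to take $\Phi: T_x\T(S) \to T_x^*\T(S)$ given by $v \mapsto \omega_{\mathrm{WP}}(v, \cdot)$ as the candidate linear isometry. The non-degeneracy of the Weil--Petersson symplectic form makes $\Phi$ a linear isomorphism, and the workhorse of the argument is Wolpert's infinitesimal duality formula
\begin{equation*}
d\ell_\lambda(w) = \omega_{\mathrm{WP}}(w, \v_\lambda(x)) \quad \text{for all } w \in T_x\T(S) \text{ and } \lambda \in \ml(S),
\end{equation*}
which translates to the identity $\Phi(\v_\lambda(x)) = d\ell_\lambda$ up to a global sign convention that distinguishes left from right earthquake duality.

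To upgrade $\Phi$ to an isometry, the cleanest route is via polar duality of the underlying unit balls. Let $B_e \subset T_x\T(S)$ denote the closed unit ball of $\|\cdot\|_e$ and $B_{\mathrm{Th}} \subset T_x\T(S)$ that of $\|\cdot\|_{\mathrm{Th}}$. By Kerckhoff's parametrisation of $T_x\T(S)$ by infinitesimal earthquakes together with \cref{defn:earthquakenorm}, one has $B_e = \{\v_\lambda(x) : \ell_x(\lambda) \leq 1\}$. Invoking Wolpert's formula, a vector $w$ lies in the polar dual $B_e^\circ$ of $B_e$ (with respect to the pairing $\omega_{\mathrm{WP}}$) if and only if $d\ell_\lambda(w) \leq 1$ whenever $\ell_x(\lambda) \leq 1$, and by homogeneity this is equivalent to $\|w\|_{\mathrm{Th}} \leq 1$. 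Hence $B_e^\circ = B_{\mathrm{Th}}$.

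Since $B_e$ is a convex body containing the origin in its interior (\cref{Th:Thurston-Ball}), the bipolar theorem gives $B_e^{\circ\circ} = B_e$, so $B_{\mathrm{Th}}^\circ = B_e$ as well. The standard correspondence between Minkowski functionals and support functions of polar dual bodies then yields
\begin{equation*}
\|v\|_e = \sup_{w \in B_{\mathrm{Th}}} \omega_{\mathrm{WP}}(v, w) = \sup_{\|w\|_{\mathrm{Th}} \leq 1} \Phi(v)(w) = \|\Phi(v)\|_{\mathrm{Th}}^*,
\end{equation*}
confirming that $\Phi$ is a linear isometry.

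The main obstacle is the bookkeeping required to handle the asymmetric nature of both norms. The Thurston norm is an asymmetric norm, so $\|\cdot\|_{\mathrm{Th}}^*$ must be interpreted as the one-sided dual norm $\sup\{\Phi(v)(w) : \|w\|_{\mathrm{Th}} \leq 1\}$; likewise the earthquake norm distinguishes left from right earthquakes (cf.\ \cref{rmk:rightearthquake}). Ensuring that Wolpert's formula is used with the correct sign convention, so that $\Phi$ pairs left infinitesimal earthquakes with the dual of the Thurston norm (and not its reverse), is the delicate step. The required asymmetric versions of the Minkowski/support function machinery and bipolar theorem are available, but must be invoked with care, using that $B_e$ is closed, bounded, and has the origin in its interior.
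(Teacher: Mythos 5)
Your argument is correct in substance but takes a genuinely different route from the paper's. Both proofs run on the same engine, namely Wolpert's symplectic duality between twist vectors and length differentials: the paper packages this as \cref{lem:WP}, showing $\Phi_{\mathrm{WP}}(\e_x(\lambda))=(d\log\ell_\lambda)_x$, and then finishes in two lines, since the linear isomorphism carries the earthquake unit sphere $\e_x(\pml(S))$ onto Thurston's conorm indicatrix $\iota_x(\pml(S))$ (\cref{embeds into cotangent}), and positive homogeneity of both weak norms does the rest. You instead pass to the tangent-space Thurston ball and run the polar/bipolar and support-function machinery. This works, but it consumes two inputs the paper's argument deliberately avoids: (i) to apply the bipolar theorem to $B_e$ you need $B_e$ to be a closed convex body with the origin in its interior, which is precisely \cref{Th:Thurston-Ball}; you may of course cite Thurston's theorem independently, but note that the paper refrains from using it here exactly so that \cref{duality} can then be used to \emph{prove} that convexity, a purpose your argument cannot serve --- and once you know $\Phi(\v_\lambda(x))$ is proportional to $(d\log\ell_\lambda)_x$, the sphere-to-sphere argument makes the bipolar detour unnecessary; (ii) you silently identify the conorm $\|\cdot\|^*_{\mathrm{Th}}$ of the statement (defined by the indicatrix $\iota_x(\pml(S))$) with the one-sided dual of $\|\cdot\|_{\mathrm{Th}}$, which is a further bipolar step on the cotangent side, legitimate via \cref{embeds into cotangent} but worth stating. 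Two smaller points need fixing rather than flagging: Wolpert's formula is a statement about Fenchel--Nielsen twists along simple closed curves, and its validity for all $\lambda\in\ml(S)$ requires the density/continuity argument via Kerckhoff's cosine formula that constitutes the proof of \cref{lem:WP}; and the sign is not mere bookkeeping --- with your conventions, $\Phi(v)=\omega_{\mathrm{WP}}(v,\cdot)$ combined with $d\ell_\lambda(w)=\omega_{\mathrm{WP}}(w,\v_\lambda(x))$ gives $\Phi(\v_\lambda(x))=-d\ell_\lambda$, and because the norms involved are asymmetric this would yield an isometry onto the reverse conorm; the remedy is simply to take $\Phi(v)=\omega_{\mathrm{WP}}(\cdot,v)$, as the paper does.
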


\begin{remark}
\cref{duality} entwines the earthquake, the Thurston, and the Weil--Petersson geometries of Teichm\"uller space. In light of upcoming results such as connecting the earthquake and Weil--Petersson metrics, it is tempting to wonder if this helps to explain the similarities between the two metrics.
\end{remark}

For expositional clarity and completeness, we give a proof of \cref{duality} via \cref{lem:WP}. Using \cref{duality}, we obtain the rigidity theorem.

\begin{theorem}[mapping class group rigidity]
\label{thm:mcg rigid}
The isometry group of $(\T(S), d_e)$ is the extended mapping class group, with the usual action. This statement holds except when $S=S_{0,4},S_{0,5},S_{1,1}$ or $S_{2,0}$, where the isometry group is the extended mapping class group modulo hyperelliptic involution.
\end{theorem}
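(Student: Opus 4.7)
The plan is to reduce this to the mapping class group rigidity of the Thurston asymmetric metric, using \cref{duality} as a bridge. First, given an isometry $f \in \Isom(\T(S), d_e)$, I would show $f$ admits a well-defined derivative $df_x : T_x\T(S) \to T_{f(x)}\T(S)$ at each point $x$, and that this derivative is a linear isometry of earthquake norms. For Finsler metrics --- even asymmetric ones --- this is a Myers--Steenrod / Mazur--Ulam style fact, which in the asymmetric case one obtains from the path-length definition of $d_e$ together with the observation that an isometry must send unit-speed geodesic germs to unit-speed geodesic germs.

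Second, \cref{duality} identifies $(T_x\T(S), \|\cdot\|_e)$ with $(T_x^*\T(S), \|\cdot\|_{\mathrm{Th}}^*)$ via the Weil--Petersson pairing. Consequently, the linear isometry group of $(T_x\T(S), \|\cdot\|_e)$ coincides with that of $(T_x^*\T(S), \|\cdot\|_{\mathrm{Th}}^*)$ and hence, by passing to adjoints and using finite-dimensionality, with that of $(T_x\T(S), \|\cdot\|_{\mathrm{Th}})$. A Royden-type theorem for the Thurston norm --- implicit in Walsh's proof of the Thurston-metric rigidity --- says that this last group is, modulo the hyperelliptic involution in the four exceptional cases, the stabiliser of $x$ in the extended mapping class group. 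In particular, at each $x$ there is a mapping class element $\phi_x$ realising $df_x$. Because $\mathrm{Mod}(S)$ acts properly discontinuously on $\T(S)$, the assignment $x \mapsto \phi_x$ is locally constant, hence constant on the connected space $\T(S)$, so $f$ is realised by a single element of the extended mapping class group.

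The main obstacle will be the infinitesimal Thurston-norm rigidity invoked above. One must analyse the extremal structure of the Thurston unit ball at each $x$ --- or, dually via \cref{duality}, of the earthquake unit ball --- and prove that its linear symmetries all come from mapping classes fixing $x$. The four exceptional surfaces $S_{0,4}, S_{0,5}, S_{1,1}, S_{2,0}$ are precisely those on which the hyperelliptic involution acts trivially on $\T(S)$ and therefore cannot be detected by any metric invariant, forcing the $\mathbb{Z}/2$ quotient in the conclusion. A secondary technical point is the automatic regularity of isometries in the asymmetric Finsler setting, which, although expected, needs justification more careful than the symmetric Mazur--Ulam argument.
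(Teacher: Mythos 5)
Your overall architecture --- upgrade the isometry to a norm-preserving differentiable map, apply infinitesimal rigidity pointwise, then glue via proper discontinuity of the $\mathrm{Mod}(S)$-action --- is the same as the paper's, and your explanation of the exceptional surfaces matches. However, two of your steps have genuine gaps. The first is the regularity step: you propose to obtain $df_x$ by arguing that an isometry sends unit-speed geodesic germs to unit-speed geodesic germs. For the earthquake metric this has no footing, since it is not known that geodesics exist between nearby points, let alone that they are unique or regular enough to yield a derivative (the paper records geodesic existence as an open question). The paper instead proves that the earthquake norm is locally Lipschitz (\cref{thm:lipregular}) --- a nontrivial result resting on the regularity analysis of earthquakes via holomorphicity --- and then invokes the low-regularity Finsler Myers--Steenrod theorems of Matveev--Troyanov \cite{MT2017}, whose Binet--Legendre construction bypasses geodesics entirely. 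Without an input of this kind, your first step does not go through.

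The second and more serious gap is that the infinitesimal rigidity you invoke is the wrong statement. You quote a Royden-type theorem identifying the \emph{self}-isometry group of $(T_x\T(S),\|\cdot\|_{\mathrm{Th}})$ with the stabiliser of $x$ in the extended mapping class group, and conclude that ``at each $x$ there is a mapping class $\phi_x$ realising $df_x$''. But $df_x$ is a linear isometry from $T_x\T(S)$ to $T_{f(x)}\T(S)$, two a priori \emph{different} tangent spaces; a statement about the isometry group at a single point neither explains why such an isometry can exist nor shows that $f(x)$ lies in the $\mathrm{Mod}(S)$-orbit of $x$. What is required is the two-point version --- any linear isometry $(T_x\T(S),\|\cdot\|_e)\to(T_y\T(S),\|\cdot\|_e)$ is induced by a mapping class carrying $x$ to $y$ --- which is exactly \cref{geometrically rigid}, proved in the paper via \cref{duality} together with the infinitesimal rigidity of the Thurston norm between distinct points due to \cite{Pan} and \cite{HOP} (this is not extractable from Walsh's horofunction-boundary proof of global Thurston-metric rigidity). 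With only the one-point statement, your $\phi_x$ does not relate $x$ to $f(x)$, so the local-constancy and gluing argument has nothing to glue; repairing this amounts to reproving the paper's infinitesimal rigidity theorem in its two-point form.
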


In fact, we obtain the above rigidity result from an infinitesimal version of this theorem (see \cref{geometrically rigid}), which parallels Royden's infinitesimal rigidity theorem for the Teichm\"uller metric \cite{royden}.

\begin{remark}
Lipschitz constants for stretch maps underpinning the Thurston metric and the magnitudes for earthquakes are quantities associated with different deformation phenomena:
\begin{itemize}
\item
the former measures tensile stress accumulated during elastic deformations of hyperbolic surfaces,
\item
whereas the latter measures accumulated shear stress which result in brittle deformation of hyperbolic surfaces.
\end{itemize}
It is tempting, although heavily speculative, to wonder if the Finsler structure of the Thurston and earthquake metrics are consequences of some general physical principle regarding energy minimisation, and further, whether the infinitesimal duality has an energy-based interpretation. 
\end{remark}

\subsection{Earthquake paths and earthquake metric geodesics} \label{s:horo}
Mirzakhani \cite{Mirzakhani2008} and Calderon--Farre \cite{calderonfarre} showed that the earthquake flow is a natural conceptual hyperbolic analogue of the Teichm\"uller horocycle flow. This narrative roughly accords with Thurston's observation in \cite[p.~22]{ThM} that earthquakes are \emph{not} geodesics for this metric. In particular, he asserts that on the Teichm\"uller space of $1$-cusped tori,
\begin{center}
\textit{Earthquakes along simple curves are approximately horocycles}.
\end{center}
He remarked that the inefficiency of horocycles in navigating Teichm\"uller space suggests the following statement, of which we provide a proof, for general hyperbolic surfaces:

\begin{theorem}\label{thm:longquake:nongeo} Sufficiently long left earthquake paths in $\T(S)$ cannot be geodesics with respect to the left earthquake metric.
\end{theorem}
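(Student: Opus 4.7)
The plan is to invoke the magnitude-minimisation theorem (\cref{thm:magnitude}) and compare the earthquake-norm arclength of the path $\{E_{t\lambda}(x)\}_{t\in[0,T]}$ with the infimum of magnitudes of piecewise earthquake paths joining its endpoints. Since a measured lamination has no self-intersections, the generalised Wolpert cosine formula yields $\tfrac{d}{dt}\ell_{E_{t\lambda}(x)}(\lambda)=0$; hence $\ell_\lambda$ is preserved along its own earthquake flow and the arclength equals exactly $T\ell_x(\lambda)$. The task reduces to exhibiting, for $T$ large, a piecewise earthquake path from $x$ to $E_{T\lambda}(x)$ of magnitude strictly less than this.

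When $\lambda=a\gamma$ is a weighted simple closed curve, we exploit the Dehn-twist structure: since $\ell_\gamma$ is constant along its own earthquake flow, $E_{T\lambda}(x)=\tau_\gamma^{k}(x)$ whenever $Ta=k\ell_x(\gamma)$ with $k\in\mathbb{Z}_{>0}$. This permits a \emph{pinch-twist-unpinch} shortcut exploiting the isometric action of the mapping class group on $(\T(S),d_e)$. Given $\epsilon>0$, fix any $x_\epsilon\in\T(S)$ with $\ell_{x_\epsilon}(\gamma)=\epsilon$, and set $C_\epsilon:=d_e(x,x_\epsilon)+d_e(x_\epsilon,x)<\infty$. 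Concatenating a piecewise earthquake path $x\to x_\epsilon$, the single earthquake segment $x_\epsilon\to E_{k\epsilon\gamma}(x_\epsilon)=\tau_\gamma^k(x_\epsilon)$ of magnitude $k\epsilon\cdot\ell_{x_\epsilon}(\gamma)=k\epsilon^2$, and the $\tau_\gamma^k$-image of any return path $x_\epsilon\to x$, we obtain $d_e(x,\tau_\gamma^{k}(x))\leq C_\epsilon+k\epsilon^2$. Letting first $T\to\infty$ and then $\epsilon\to 0$ gives $d_e(x,E_{T\lambda}(x))=o(T)$, so the linear arclength strictly exceeds the distance for $T$ sufficiently large.

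The same pinch-twist-unpinch template handles any weighted multicurve $\lambda=\sum a_i\gamma_i$ with pairwise disjoint $\gamma_i$, by pinching all components simultaneously. The main obstacle is extending the argument to a general, possibly irrational, measured lamination, where the Dehn-twist periodicity underlying the shortcut is unavailable. I envision two viable routes: either approximate $\lambda$ by weighted multicurves $\mu_n\to\lambda$ in $\ml(S)$ and transfer the sublinear bound via continuity of the earthquake flow combined with uniform estimates on the approximating magnitudes; or invoke Mirzakhani's recurrence for the earthquake flow on moduli space to produce mapping classes of sublinear earthquake translation length which keep $E_{T\lambda}(x)$ within bounded $d_e$-distance of the mapping class group orbit of $x$. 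Either route demands substantial quantitative control, and is where the bulk of the technical work lies.
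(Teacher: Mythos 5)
Your weighted simple closed curve (and multicurve) argument is fine, and it is essentially the paper's own treatment of that case: the pinch--twist--unpinch shortcut is the same use of mapping class group invariance as the paper's estimate $d_e(x,x_m)\leq d_e(x,y)+m\ell_\gamma(y)^2+d_e(y,x)$ with $\ell_\gamma(y)$ small. The computation of the arclength as $T\ell_x(\lambda)$ (via invariance of $\ell_\lambda$ under its own flow) is also correct, and invoking \cref{thm:magnitude} is harmless though not needed, since the magnitude of any explicit piecewise earthquake path already bounds $d_e$ from above by definition of the path metric.

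The genuine gap is the general (possibly irrational) lamination case, which you leave as two speculative routes. Neither is routine. The approximation route founders on uniformity: if $\mu_n\to\lambda$ in $\ml(S)$, the constants $C_{\epsilon}$ in your shortcut depend on how far you must travel to pinch all components of $\mu_n$, and these blow up as the multicurves become complicated; moreover you have no control of $d_e\bigl(E_{T\mu_n}(x),E_{T\lambda}(x)\bigr)$ uniformly in $T$ (continuity of the flow is only locally uniform in $t$), so the sublinear bound does not transfer. The Mirzakhani-recurrence route would likewise need quantitative input you do not supply. The paper sidesteps all of this with a short direct argument that works for every $\lambda$ at once: Kerckhoff's cosine formula (\cref{Kerckhoff cosine}) gives $\ell_\alpha(E_{t\lambda}(x))\leq\ell_\alpha(x)+t\,i(\alpha,\lambda)$ for all $\alpha$, hence by Thurston's ratio formula (\cref{eq:thurston:ratio}) one has $\dth(x,E_{t\lambda}(x))\leq\log\bigl(1+t\,c(x,\lambda)\bigr)$, and then the norm comparison $d_e\leq C_2\,\dth$ from \cref{cor:e<wp} shows that $d_e(x,E_{t\lambda}(x))$ grows at most logarithmically in $t$, while the earthquake arclength grows linearly. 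That comparison inequality is the missing idea in your proposal; without it (or some substitute giving a uniform sublinear upper bound on $d_e(x,E_{t\lambda}(x))$ for arbitrary $\lambda$), your argument only proves the theorem for weighted multicurves.
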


We presently have very little understanding regarding geodesics of the earthquake metric, and much is open for discovery. Here is a sampling of some of the questions we wish to be able to answer:

\begin{question}
Is it possible to give an explicit (i.e. in coordinates) description of even a single geodesic segment for the earthquake metric?
\end{question}

\begin{question}
Can short earthquake segments ever be geodesic?
\end{question}

\begin{question}
Is the reverse-time parameterisation of some geodesic ever also a geodesic for the earthquake metric?
\end{question}

\begin{question}\label{item}
Is the earthquake metric geodesically convex, i.e. can two arbitrary points in Teichm\"uller space  be joined by a geodesic?
\end{question}

\subsection{Norm and metric comparisons}


To help familiarising and contextualising the earthquake metric, we first study comparisons among the following Finsler norms on Teichm\"uller space: 
\begin{itemize}
\item
the earthquake norm ${\|\cdot\|}_e$, 
\item
the Weil--Petersson norm ${\|\cdot\|}_{\mathrm{WP}}$, 
\item
the Thurston norm ${\|\cdot\|}_{\mathrm{Th}}$, and 
\item
the Teichm\"uller norm ${\|\cdot\|}_{\mathrm T}$.  
\end{itemize}

Before stating  the theorem, let us introduce a notation. 
For any $x\in\T(S)$,  let $\ell_{\mathrm{sys}}(x)$ be the \emph{systolic length} of the hyperbolic surface $x$, that is, the length of a shortest  closed geodesic on $x$. 
With this notation in preparation, our result is:
\begin{theorem}
\label{thm:e:wp:th:m} 
There are positive constants $C_0,C_1,C_2$, depending only on the topology of $S$, such that for any $x\in\T(S)$ and any $v\in \mathrm{T}_x\T(S)$, 
\begin{align*}
C_0 \ell_{\mathrm{sys}}(x){\Log}(\tfrac{1}{\ell_{\mathrm{sys}}(x)})\|v\|_{\mathrm{Th}}
\leq 	
\|v\|_e 
\leq 
C_1\|v\|_{\mathrm{WP}} 
\leq  
C_2 \|v\|_{\mathrm{Th}} ,
\end{align*}
where $\Log (t):=\max\{1,\log t\}$.
\end{theorem}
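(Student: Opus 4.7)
My plan is to split the chain of inequalities into two essentially independent pieces: the middle and rightmost inequalities, which are linked by a duality argument using \cref{duality}, and the leftmost inequality, which I would prove directly via Kerckhoff's cosine derivative formula combined with a quantitative intersection-number estimate.

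\smallskip

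\noindent\textit{Middle and right inequalities via duality.} By \cref{duality}, the Weil--Petersson symplectic form $\omega_{\mathrm{WP}}$ induces a linear isometry $\phi:(T_x\T(S),\|\cdot\|_e)\to (T_x^*\T(S),\|\cdot\|^*_{\mathrm{Th}})$. Because WP is K\"ahler, the very same map $\phi$ is simultaneously an isometry from $(T_x\T(S),\|\cdot\|_{\mathrm{WP}})$ onto $(T_x^*\T(S),\|\cdot\|^*_{\mathrm{WP}})$. It follows that the comparison $\|v\|_e\leq C_1\|v\|_{\mathrm{WP}}$ on $T_x\T(S)$ is equivalent, after transferring via $\phi$ and then dualising the resulting cotangent comparison $\|\alpha\|^*_{\mathrm{Th}}\leq C_1\|\alpha\|^*_{\mathrm{WP}}$, to the tangent comparison $\|v\|_{\mathrm{WP}}\leq C_1\|v\|_{\mathrm{Th}}$. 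Thus the middle and right inequalities are equivalent, and it suffices to prove the right one (with a possibly rescaled constant). For this I would use Wolpert's formulae relating the WP K\"ahler structure, the infinitesimal earthquake vector fields $\v_\mu$, and length derivatives, together with the identification $\|d\ell_\mu\|^*_{\mathrm{Th}}=\ell_\mu$; the global comparison then reduces to a uniform WP lower bound on $\|d\ell_\mu\|_{\mathrm{WP}}^*=\|\mathrm{grad}_{\mathrm{WP}}\ell_\mu\|_{\mathrm{WP}}$ in terms of $\ell_\mu$.

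\smallskip

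\noindent\textit{Left inequality.} Using Kerckhoff's infinitesimal parameterisation, write $v=\v_\lambda$ so that $\|v\|_e=\ell_\lambda(x)$. Kerckhoff's cosine derivative formula yields $|d\ell_\mu(\v_\lambda)|\leq i(\lambda,\mu)$ for every $\mu\in\ml(S)$, whence
\[
\|v\|_{\mathrm{Th}}=\sup_{\mu\in\ml(S)}\frac{d\ell_\mu(v)}{\ell_\mu(x)}\;\leq\;\sup_{\mu\in\ml(S)}\frac{i(\lambda,\mu)}{\ell_\mu(x)}.
\]
The inequality then reduces to the universal intersection bound
\[
i(\lambda,\mu)\;\leq\;\frac{C\,\ell_\lambda(x)\,\ell_\mu(x)}{\ell_{\mathrm{sys}}(x)\,\Log\bigl(1/\ell_{\mathrm{sys}}(x)\bigr)},
\]
which I would prove via the thick--thin decomposition of $x$. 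In the $\varepsilon$-thick part, injectivity-radius bounds control intersection counts by a constant multiple of $\ell_\lambda\ell_\mu$. In the thin part around a short geodesic of length $\ell$, the collar lemma provides a collar of width $\asymp\log(1/\ell)$; each crossing of $\lambda$ by $\mu$ within such a collar consumes at least this width of $\mu$'s length, producing the systole factor $\ell_{\mathrm{sys}}\Log(1/\ell_{\mathrm{sys}})$ in the denominator when summed over the shortest collar.

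\smallskip

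\noindent\textit{Main obstacle.} I expect the hardest step to be obtaining the intersection inequality in precisely the stated form, with $\ell_{\mathrm{sys}}(x)\Log(1/\ell_{\mathrm{sys}}(x))$ appearing uniformly and only topology-dependent constants elsewhere. Quantifying contributions from the different regions of the Margulis decomposition, and extending estimates from simple closed curves to arbitrary measured laminations with the correct logarithmic correction, is a delicate quantitative exercise that is likely to be the technical core of the proof.
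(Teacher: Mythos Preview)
Your proposal is correct and follows essentially the same route as the paper. For the left inequality the paper does exactly what you describe: Kerckhoff's cosine formula gives $\|\v_\lambda\|_{\mathrm{Th}}\le\sup_\mu i(\lambda,\mu)/\ell_\mu(x)$, and the intersection bound $i(\lambda,\mu)\lesssim \ell_\lambda\ell_\mu/(\ell_{\mathrm{sys}}\Log(1/\ell_{\mathrm{sys}}))$ is supplied by Torkaman's asymptotic (or, as you suggest and the paper notes, by a direct thick--thin argument). For the middle and right inequalities, the paper first proves $\|v\|_e\le C\|v\|_{\mathrm{WP}}$ directly from the Bonsante--Seppi--Tamburelli gradient lower bound $\|\nabla\ell_\mu\|_{\mathrm{WP}}\gtrsim \ell_\mu$ together with $\|\v_\mu\|_{\mathrm{WP}}=\tfrac12\|\nabla\ell_\mu\|_{\mathrm{WP}}$, and then deduces $\|v\|_{\mathrm{WP}}\le C\|v\|_{\mathrm{Th}}$ from it via an explicit computation using the cosine formula and the symplectic pairing; this computation is precisely your abstract duality observation written out by hand. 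Your packaging of the middle$\Leftrightarrow$right equivalence through the simultaneous $\phi$-isometry for $(\|\cdot\|_e,\|\cdot\|^*_{\mathrm{Th}})$ and $(\|\cdot\|_{\mathrm{WP}},\|\cdot\|^*_{\mathrm{WP}})$ is a cleaner way to say the same thing.
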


There is one more metric we shall consider on Teichm\"uller space, namely, the Teichm\"uller metric; we denote it by $d_T$, and we denote the norm of a vector $v$ for this metric by $\|v\|_T$.
With this notation, combining \cref{thm:e:wp:th:m} with Wolpert's \cite[Lemma 3.1]{Wolpert1979} and an estimate due to Burns--Masur--Wilkinson \cite[Lemma 5.4]{BMW2012},  we can form the following \emph{cycle} of comparisons:
 
\begin{corollary}\label{thm:various}
	There are   positive constants $C_0,C_1,C_2,C_3$ depending only on the topology of $S$, such that for any $x\in\T(S)$ and any $v\in T_x\T(S)$, 
\begin{align*}
C_0 \ell_{\mathrm{sys}}(x)\Log\frac{1}{\ell_{\mathrm{sys}}(x)}\|v\|_{\mathrm{Th}}\leq 	\|v\|_e{\leq} C_1\|v\|_{\mathrm{WP}}{\leq}  C_2 \|v\|_{\mathrm{Th}}{\leq}2C_2\|v\|_T{\leq}\frac{C_3\|v\|_{\mathrm{WP}}}{\ell_{\mathrm{sys}}(x)} . 
\end{align*}
\end{corollary}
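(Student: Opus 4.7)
The plan is to concatenate the three-term chain from \cref{thm:e:wp:th:m} with two additional comparison estimates drawn from the literature, and then consolidate the constants.

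The first three inequalities
\[
C_0 \ell_{\mathrm{sys}}(x)\Log\tfrac{1}{\ell_{\mathrm{sys}}(x)}\|v\|_{\mathrm{Th}} \leq \|v\|_e \leq C_1\|v\|_{\mathrm{WP}} \leq C_2\|v\|_{\mathrm{Th}}
\]
are simply \cref{thm:e:wp:th:m} restated verbatim, so nothing further is required there. The fourth inequality $\|v\|_{\mathrm{Th}} \leq 2\|v\|_T$ comes from Wolpert's Lemma~3.1 of \cite{Wolpert1979}: translated into infinitesimal form, that lemma bounds $|d\log\ell_\alpha(v)|$ by a multiple of the Teichm\"uller norm $\|v\|_T$ for every simple closed geodesic $\alpha$. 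Because the Thurston (infinitesimal) norm can be computed as a supremum of logarithmic derivatives of hyperbolic lengths along measured laminations, and because simple closed geodesics are dense in $\ml(S)$ up to scaling, taking the supremum yields $\|v\|_{\mathrm{Th}} \leq 2\|v\|_T$. The fifth inequality $\|v\|_T \leq C_3'\|v\|_{\mathrm{WP}}/\ell_{\mathrm{sys}}(x)$ is then Lemma~5.4 of Burns--Masur--Wilkinson \cite{BMW2012}, which provides precisely this local comparison of the Teichm\"uller and Weil--Petersson norms in terms of the systole.

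Chaining the five inequalities and renaming $2C_2 \cdot C_3'$ as $C_3$ (and possibly enlarging $C_1, C_2$ and shrinking $C_0$ so that the chain closes up) gives the claimed cycle. The main "obstacle" is not mathematical but bookkeeping: one must check that the normalisation conventions for $\|\cdot\|_{\mathrm{Th}}$, $\|\cdot\|_T$, and $\|\cdot\|_{\mathrm{WP}}$ used in \cite{Wolpert1979} and \cite{BMW2012} match those fixed in the present paper, and track the factor of $2$ carefully (it is the source of the explicit $2C_2$ appearing in the statement). Since this is purely a matter of reconciling conventions between citations, the argument beyond that is essentially a one-line concatenation.
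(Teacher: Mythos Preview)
Your proposal is correct and matches the paper's own approach exactly: the paper does not give a standalone proof of this corollary but simply states (immediately before the statement) that it follows by combining \cref{thm:e:wp:th:m} with Wolpert's \cite[Lemma~3.1]{Wolpert1979} and Burns--Masur--Wilkinson's \cite[Lemma~5.4]{BMW2012}. Your unpacking of how each cited lemma supplies the respective inequality, and your remark about tracking the factor of $2$, are accurate elaborations of this one-line derivation.
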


\cref{thm:various} induces the following metric comparisons.

\begin{corollary}\label{cor:e<wp}
There are constants $C_1,C_2$ depending only on the topology of $S$ such that for any two points $x,y$ in $\T(S)$, we have
\begin{align*}
d_e(x,y)
\leq C_1 d_{\mathrm{WP}}(x,y)
\leq C_2 d_{\mathrm{Th}}(x,y)\leq 2C_2 d_T(x,y).
\end{align*}
\end{corollary}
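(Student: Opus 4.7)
The plan is to derive these three distance inequalities directly from the pointwise norm comparisons in \cref{thm:various} via the standard principle that, for any Finsler metric, pointwise norm bounds integrate along paths to yield bounds on infimal path-length distances.

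First, I fix $x,y\in\T(S)$ and let $\gamma:[0,1]\to\T(S)$ be an arbitrary piecewise $C^1$ path from $x$ to $y$. By \cref{defn:earthquakemetric} and the analogous path-metric definitions of $d_{\mathrm{WP}}$, $d_{\mathrm{Th}}$ and $d_T$, each distance is the infimum of $\int_0^1 \|\gamma'(t)\|_\bullet\,dt$ over all such paths. Integrating the pointwise bound $\|v\|_e\leq C_1\|v\|_{\mathrm{WP}}$ from \cref{thm:various} against $v=\gamma'(t)$ yields
\[
d_e(x,y)\;\leq\;\int_0^1\|\gamma'(t)\|_e\,dt\;\leq\;C_1\int_0^1\|\gamma'(t)\|_{\mathrm{WP}}\,dt.
\]
Taking the infimum over all such $\gamma$ on the right-hand side gives $d_e(x,y)\leq C_1 d_{\mathrm{WP}}(x,y)$.

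Next, I apply the identical integration scheme to the remaining pointwise inequalities from \cref{thm:various}. Using $\|v\|_{\mathrm{WP}}\leq (C_2/C_1)\|v\|_{\mathrm{Th}}$ gives $d_{\mathrm{WP}}(x,y)\leq (C_2/C_1)\,d_{\mathrm{Th}}(x,y)$; using $\|v\|_{\mathrm{Th}}\leq 2\|v\|_T$ gives $d_{\mathrm{Th}}(x,y)\leq 2 d_T(x,y)$. Chaining these inequalities and relabelling the constants recovers the statement of the corollary.

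The only subtle point worth flagging is asymmetry: both $d_e$ and $d_{\mathrm{Th}}$ are asymmetric, so one must check that the norm comparisons apply in the correct direction, namely to the tangent vectors $\gamma'(t)$ traversed while moving from $x$ to $y$. However, \cref{thm:various} bounds these norms for \emph{every} tangent vector $v\in T_x\T(S)$, with no restriction on orientation, so the inequalities pass cleanly through the length integrals regardless of the direction of traversal. Consequently the proof presents no real obstacle; it is essentially an exercise in Finsler bookkeeping, and indeed the argument functions as a one-line ``immediate consequence'' of the preceding corollary.
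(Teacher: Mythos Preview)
Your proof is correct and matches the paper's treatment: the paper states this corollary as an immediate consequence of \cref{thm:various} without giving an explicit argument, and the standard integrate-along-paths Finsler bookkeeping you supply is exactly the intended justification. Your remark on asymmetry is also apt and correctly handled.
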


\subsection{Metric incompleteness and completion}

Since the Weil--Petersson metric is incomplete, one intuitively expects the uniform domination of $d_e$ by $d_{\mathrm{WP}}$, given in \cref{cor:e<wp}, to be an obstruction to any natural notion of completeness for the earthquake metric --- this includes a notion called forward \emph{FD-completeness}, defined for asymmetric metrics, which we introduce in \cref{sec:fdcompletion}:

\begin{proposition}
\label{thm:incomplete}
The earthquake metric is not forward FD-complete.
\end{proposition}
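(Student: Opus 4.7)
The plan is to deduce the failure of forward FD-completeness from the classical incompleteness of the Weil--Petersson metric, exploiting the uniform domination
\[
d_e(x,y)\leq C_1\, d_{\mathrm{WP}}(x,y)
\]
furnished by \cref{cor:e<wp}. The strategy has three steps, and almost all the work has already been front-loaded into \cref{thm:e:wp:th:m}.

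First, I would invoke the well-known fact (due to Wolpert, Chu, Masur) that $(\T(S),d_{\mathrm{WP}})$ is incomplete via a pinching degeneration: one fixes a simple closed curve $\alpha$ on $S$ and exhibits a $d_{\mathrm{WP}}$-rectifiable path $\gamma:[0,1)\to\T(S)$ along which $\ell_\alpha(\gamma(t))\to 0$ as $t\to1$. Choosing any sequence $t_n\nearrow 1$, the points $x_n:=\gamma(t_n)$ form a $d_{\mathrm{WP}}$-Cauchy sequence which possesses no accumulation point in $\T(S)$ for the usual topology, because Mumford compactness is violated along the sequence by $\ell_\alpha(x_n)\to 0$.

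Second, the comparison in \cref{cor:e<wp} immediately gives, for all $m\leq n$,
\[
d_e(x_m,x_n)\leq C_1\, d_{\mathrm{WP}}(x_m,x_n)\longrightarrow 0,
\]
so $\{x_n\}$ is forward Cauchy in $d_e$. I would then argue that it admits no forward limit in $(\T(S),d_e)$: by \cref{prop:topology:finsler}, the left earthquake metric induces the usual topology on $\T(S)$, so any prospective $d_e$-limit would also be a limit in the usual topology, which cannot exist. This is precisely the obstruction required to contradict forward FD-completeness, whose defining requirement is that every forward Cauchy sequence admit a forward limit point in $\T(S)$.

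Because the chain of inequalities does all the heavy work, there is no essential obstacle. The only subtlety is the asymmetry: I must feed the forward direction of $d_e$ into the comparison. This is automatic, since $d_{\mathrm{WP}}$ is symmetric and $d_e(x_m,x_n)\leq C_1 d_{\mathrm{WP}}(x_m,x_n)$ as stated. If the precise definition of forward FD-completeness in \cref{thm:incomplete} turns out to demand a slightly stronger witness (for instance, a finite-length forward-Cauchy curve rather than a sequence), the same path $\gamma$ itself, having finite earthquake length bounded by $C_1$ times its Weil--Petersson length, serves this purpose verbatim.
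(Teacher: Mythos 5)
Your argument is correct and follows essentially the route the paper itself takes: the domination $d_e\leq C_1 d_{\mathrm{WP}}$ of \cref{cor:e<wp} transfers the classical incompleteness of the Weil--Petersson metric (a finite-length pinching path) to the earthquake metric, and \cref{prop:topology:finsler} excludes a forward limit in $\T(S)$, since any such limit would be a limit in the usual topology, where the continuity and positivity of $\ell_\alpha$ (rather than Mumford compactness, which concerns moduli space) forbids it. The one repair needed is that forward FD-completeness (\cref{defn:FDcomplete}) asks for a forward FD-sequence, i.e.\ $\sum_n d_e(x_n,x_{n+1})<\infty$, not merely a forward-Cauchy sequence; your construction already supplies this, because the points $x_n=\gamma(t_n)$ are taken in order along a path of finite Weil--Petersson length, so $\sum_n d_e(x_n,x_{n+1})\leq C_1\sum_n d_{\mathrm{WP}}(x_n,x_{n+1})\leq C_1 L_{\mathrm{WP}}(\gamma)<\infty$. (The paper also provides an intrinsic alternative in \cref{sec:asyp:dist}: an explicit piecewise earthquake path of finite $d_e$-length along which a simple closed curve is pinched, which yields the same FD-sequence without quoting Weil--Petersson incompleteness.)
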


In \cref{sec:fdcompletion} and \cref{appendix:FD} we also define the forward \emph{finite distance-series completion (FD-completion)} --- a novel metric completion for asymmetric metric spaces. 
The FD-completion generalises the Cauchy completion for symmetric metric spaces (\cref{thm:Fd=cauchy}), and naturally contains the original metric space as a forward-dense subset (\cref{prop:natisoinclude}). Lipschitz maps in the original space can be extended to the FD-completion in a functorial way (\cref{prop:FDmorphisms}).

\begin{theorem}[completion]\label{thm:completion}
The forward FD-completions of $\T(S)$ with respect to both the left and right earthquake metrics coincide with the Weil--Petersson metric completion, i.e.\ the augmented Teichm\"uller space, obtained by adding boundary strata corresponding to Teichm\"{u}ller spaces of nodal Riemann surfaces.
\end{theorem}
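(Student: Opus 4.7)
The plan is to use the comparison inequality $d_e \leq C_1 \dwp$ from \cref{cor:e<wp} to build a natural map between completions, and then verify this map is a bijection. The Lipschitz identity $(\T(S), \dwp) \to (\T(S), d_e)$, by functoriality of the FD-completion (\cref{prop:FDmorphisms}), extends to a map $\Phi$ from the Weil--Petersson completion of $\T(S)$ --- the augmented Teichm\"uller space --- into the forward FD-completion of $(\T(S), d_e)$. Since both of these completions contain $\T(S)$ as a forward-dense subset with $\Phi$ extending the identity, the theorem reduces to showing that $\Phi$ is a bijection.

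For surjectivity, the goal is to show that every forward FD-Cauchy sequence $(x_n)$ in $d_e$ is FD-equivalent to some WP-Cauchy sequence. I would split into cases according to whether $(x_n)$ eventually remains in a thick part of $\T(S)$ or the systole $\ell_{\mathrm{sys}}(x_n)$ collapses along a subsequence. In the thick case, the lower bound $C_0\ell_{\mathrm{sys}}(x)\Log(1/\ell_{\mathrm{sys}}(x))\|v\|_{\mathrm{Th}} \leq \|v\|_e$ of \cref{thm:e:wp:th:m}, combined with the standard norm comparisons listed in \cref{thm:various}, yields bi-Lipschitz equivalence between $d_e$ and $\dwp$ on thick parts, forcing $(x_n)$ to be WP-Cauchy. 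In the thin case, I would exploit the magnitude interpretation (\cref{thm:magnitude}): the finiteness of the accumulated magnitude along any connecting piecewise earthquake path, together with Wolpert-type twist asymptotics near the boundary strata, should identify the set of pinching curves and produce WP-convergence into the associated stratum of augmented Teichm\"uller space.

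For injectivity, I would show that two WP-Cauchy sequences $(x_n)$, $(y_n)$ limiting to distinct points of augmented Teichm\"uller space cannot become $d_e$-FD-equivalent. Using \cref{thm:magnitude}, any piecewise earthquake path from $x_n$ to $y_n$ must shear sufficiently to alter the limiting pinching data, or the limiting hyperbolic structures on the complements of pinching curves; quantifying such a change in terms of hyperbolic lengths of the shearing laminations should yield a uniform positive lower bound on the total magnitude, and hence on $d_e(x_n, y_n)$. I expect the argument to be most transparent when localised near each boundary stratum using Fenchel--Nielsen-type normal-form coordinates.

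The right earthquake case follows from the left via the reversal identity $d_e^\#(x,y) = d_e(y,x)$ of \cref{rmk:rightearthquake}, after checking that the analogous thick/thin decomposition and magnitude lower bounds are insensitive to orientation of earthquaking. I anticipate the main obstacle to be the injectivity step: unlike the upper domination $d_e \leq C_1 \dwp$, there is no universal lower bound for $d_e$ in terms of $\dwp$, so the requisite asymmetric local lower bounds near boundary strata must be derived through a fine asymptotic analysis of earthquake magnitudes along collapsing curves, rather than being inherited directly from existing comparisons.
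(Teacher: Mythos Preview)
Your overall architecture---extend the identity along the Lipschitz comparison $d_e\le C_1\dwp$ and prove the resulting map is a bijection---matches the paper (the paper's map $\Pi$ goes in the opposite direction, but this is cosmetic). What is missing is the one concrete lower bound that drives both bijectivity steps, and your proposed substitutes do not close the gap.

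The paper's engine is \cref{lem:lowerbound:emetric}: for any simple closed curve $\alpha$,
\[
d_e(x,y)\;\ge\;2\int_{\min\{\ell_\alpha(x),\ell_\alpha(y)\}}^{\max\{\ell_\alpha(x),\ell_\alpha(y)\}} w(\ell)\,\mathrm d\ell,
\]
where $w$ is the collar width. This is an immediate consequence of Kerckhoff's cosine formula (the rate of change of $\ell_\alpha$ under an earthquake along $\beta$ is at most $i(\alpha,\beta)$) together with the collar lemma ($\ell_\beta\ge 2\,i(\alpha,\beta)\,w(\ell_\alpha)$). From this single inequality one reads off that along any $d_e$-FD-sequence, \emph{every} length function $\ell_\alpha(x_n)$ converges in $[0,\infty)$; a short Bers--pants--decomposition argument then forces convergence in the augmented Teichm\"uller space (\cref{lem:fdiswp}). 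Injectivity is the same inequality again: distinct Weil--Petersson limits differ on some $\ell_\gamma$, so the integral gives $\liminf d_e(x_n,y_n)>0$ (\cref{lem:fd-equi:wp-equi}).

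Your thick/thin split does not replace this. In the thick branch, norm comparability on the thick part does not by itself bound $d_e$ below by $\dwp$, because near-optimal $d_e$-paths between thick points may dip into the thin part; you would still need something like the collar estimate to trap them. In the thin branch, the dichotomy is not a priori exhaustive in a useful form: an FD-sequence could have systole oscillating and visit thin regions for different curves, and nothing in your plan rules this out or identifies a stable pinching set. Invoking \cref{thm:magnitude} and ``Wolpert-type twist asymptotics'' is not the right lever---the magnitude interpretation gives no lower bound on $d_e$, and twist control only becomes available \emph{after} you know the length functions converge. The collar-width integral is exactly the missing ``asymmetric local lower bound'' you anticipate in your final paragraph; once you have it, the proof is short and the thick/thin case split becomes unnecessary.
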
 

Combining \cref{thm:completion}, \cite{harvey1974chabauty}, and \cite{masur1976extension}, we further obtain:

\begin{corollary}[Deligne--Mumford compactification]
\label{thm:dmcompact}
The forward FD-completion of the moduli space $\mathcal{M}(S)$ with respect to the earthquake metric coincides with the Deligne--Mumford compactification.
\end{corollary}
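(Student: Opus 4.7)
The strategy combines \cref{thm:completion} with \cref{thm:mcg rigid}. Since the mapping class group $\mathrm{MCG}(S)$ acts by isometries on $(\T(S), d_e)$, the earthquake metric descends to a well-defined asymmetric metric on the moduli space $\mathcal{M}(S) = \T(S)/\mathrm{MCG}(S)$. By functoriality of the FD-completion under Lipschitz maps (\cref{prop:FDmorphisms}), the $\mathrm{MCG}(S)$-action extends by isometries to the forward FD-completion of $(\T(S), d_e)$, which \cref{thm:completion} identifies with the augmented Teichm\"uller space $\overline{\T}(S)$. The classical results of Harvey \cite{harvey1974chabauty} and Masur \cite{masur1976extension} then identify $\overline{\T}(S)/\mathrm{MCG}(S)$ with the Deligne--Mumford compactification $\overline{\mathcal{M}}(S)$. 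Thus it will suffice to prove that the FD-completion construction commutes with this proper, isometric MCG-quotient.

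To establish this commutation, I would study the quotient projection $\pi: \T(S) \to \mathcal{M}(S)$, which is $1$-Lipschitz by construction of the quotient metric. \cref{prop:FDmorphisms} then yields a $1$-Lipschitz extension $\overline{\pi}: \overline{\T}(S) \to \mathcal{M}(S)^{+}$, where $\mathcal{M}(S)^{+}$ denotes the forward FD-completion of $(\mathcal{M}(S), d_e)$. This extension is invariant under the extended $\mathrm{MCG}(S)$-action and so induces a continuous surjection $\overline{\T}(S)/\mathrm{MCG}(S) \to \mathcal{M}(S)^{+}$. Verifying that this induced map is a homeomorphism reduces to checking bijectivity: injectivity says that forward finite-distance series in $\T(S)$ with $\pi$-equivalent FD-limits in $\mathcal{M}(S)$ have $\mathrm{MCG}(S)$-equivalent FD-limits in $\overline{\T}(S)$, while surjectivity amounts to lifting an arbitrary forward finite-distance series from $\mathcal{M}(S)$ to one in $\T(S)$.

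The principal technical obstacle is the lifting step in the surjectivity argument. Given a forward finite-distance series $(x_i)$ in $\mathcal{M}(S)$, one must select representatives $\tilde x_i \in \pi^{-1}(x_i) \subset \T(S)$ such that the lifted consecutive distances $d_e(\tilde x_i, \tilde x_{i+1})$ approximate the downstairs distances well enough for the lifted sequence to remain a forward finite-distance series, and so that its FD-limit projects to the FD-limit of $(x_i)$. Such representatives can be selected iteratively thanks to the proper discontinuity of the $\mathrm{MCG}(S)$-action on $\T(S)$ and the fact that $d_e$ induces the usual topology (\cref{prop:topology:finsler}); concretely, each subsequent $\tilde x_{i+1}$ is chosen within the $\mathrm{MCG}(S)$-orbit of $x_{i+1}$ so as to approach, up to a summable error, the infimal $d_e$-distance from the previously fixed lift. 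Once the lifting is in place, chaining the resulting identifications yields
\[
\mathcal{M}(S)^{+} \;\cong\; \overline{\T}(S)/\mathrm{MCG}(S) \;\cong\; \overline{\mathcal{M}}(S),
\]
establishing the claimed coincidence with the Deligne--Mumford compactification.
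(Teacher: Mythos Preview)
Your approach is essentially the paper's own: the paper simply states that the result follows by combining \cref{thm:completion} with \cite{harvey1974chabauty} and \cite{masur1976extension}, and you have supplied the missing verification that FD-completion commutes with the $\mathrm{MCG}(S)$-quotient. Your lifting argument for surjectivity is the right idea and works as sketched.

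One correction: the invocation of \cref{thm:mcg rigid} is misplaced. That theorem asserts that \emph{every} isometry of $(\T(S),d_e)$ comes from the extended mapping class group, which is not what you need here. What you actually use is the much more elementary fact that $\mathrm{MCG}(S)$ acts by isometries, and this is immediate from the naturality of the earthquake norm (earthquakes, hyperbolic lengths, and hence $\|\cdot\|_e$ are all $\mathrm{MCG}(S)$-equivariant by construction). You should drop the reference to rigidity and simply cite the equivariance directly.
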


\begin{remark}
The fact that the earthquake metric, which is defined purely in terms of hyperbolic geometry, is able to access the Deligne--Mumford compactification with its complex analytic/algebraic geometric roots, may seem unexpected. This is reminiscent of a result of Wolpert in \cite{wolpert1986thurston}, saying that the Weil--Petersson metric, which has complex analytical origins, is equal to a certain Riemannian metric first constructed by Thurston, defined in terms of the behaviour of the hyperbolic lengths of closed geodesics.
\end{remark}

\begin{remark}
We shall show in \cref{thm:universalcompletion} that analogous results concerning augmented Teichm\"uller space  also hold for symmetrisation (\cref{defn:metricsymm} and \cref{defn:finslersymm}) of the earthquake metric. Among these, we shall highlight the Finsler symmetrisations $D^{(1)}$ and $D^{(\infty)}$ (see \cref{defn:finslersymm}) as potential objects for closer study. The former has a geometric interpretation as its Finsler norm in terms of co-Minkowskian surfaces \cite[p. 650]{Barbot-Fillastre}, and the latter infimises magnitude without favouring left or right earthquakes.
\end{remark}

\begin{theorem}[metric extension]\label{thm:!extension}
The left earthquake metric $d_e$ has a unique continuous extension $\bar{d}_e$ to an asymmetric metric on the forward FD-completion $\overline{\T(S)}$. Likewise, the right earthquake metric $d_e^\#$ uniquely extends to an asymmetric metric $\bar{d}_e^\#$ on the forward FD-completion $\overline{\T(S)^\#}$ of $(\T(S), d_e^\#)$. 
Both $(\overline{\T(S)},\bar{d}_e)$ and $(\overline{\T(S)}^\#,\bar{d}_e^\#)$ are Busemannian metric spaces.
\end{theorem}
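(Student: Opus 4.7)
The plan is to exploit \cref{thm:completion}, which identifies the forward FD-completion $\overline{\T(S)}$ with the Weil--Petersson completion of $\T(S)$, together with the comparison $d_e \leq C_1 d_{\mathrm{WP}}$ from \cref{cor:e<wp}. This comparison expresses that the identity map $(\T(S), d_{\mathrm{WP}}) \to (\T(S), d_e)$ is $C_1$-Lipschitz; since $d_{\mathrm{WP}}$ is symmetric and $d_e^\#(x,y) = d_e(y,x)$, the same bound holds for $d_e^\#$. The underlying principle is that a Lipschitz map into an asymmetric metric space extends uniquely to the Cauchy completion of its domain, and here that domain completion has been identified with the target's FD-completion.

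For existence, given $p, q \in \overline{\T(S)}$ I would pick sequences $(x_n),(y_n) \subset \T(S)$ that are Cauchy in $d_{\mathrm{WP}}$ and converge to $p, q$; by the identification this is possible. Two applications of the asymmetric triangle inequality combined with \cref{cor:e<wp} give
\[
|d_e(x_n, y_n) - d_e(x_m, y_m)| \leq C_1 \bigl( d_{\mathrm{WP}}(x_n, x_m) + d_{\mathrm{WP}}(y_n, y_m) \bigr),
\]
so $(d_e(x_n, y_n))$ is Cauchy in $\mathbb{R}$; I define $\bar{d}_e(p, q)$ to be the limit. Independence from the choice of approximating sequences follows by interleaving and reapplying the same estimate. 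The inequality moreover shows that $\bar{d}_e$ is Lipschitz (in particular continuous) in the WP product topology on $\overline{\T(S)}\times\overline{\T(S)}$, which coincides with the FD-completion topology by \cref{thm:completion}. Uniqueness is then automatic: any two continuous extensions of $d_e$ must coincide on the dense product $\T(S)\times\T(S)$, hence everywhere. The construction for $\bar{d}_e^\#$ is verbatim the same.

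Finally, to show $(\overline{\T(S)}, \bar{d}_e)$ is Busemannian, I would verify the asymmetric metric axioms set out earlier in the paper. Non-negativity, $\bar{d}_e(p,p) = 0$, and the triangle inequality descend from $\T(S)$ by continuity. The main obstacle is separation: the comparison $d_e \leq C_1 d_{\mathrm{WP}}$ is an upper bound only, and the reverse bound from \cref{thm:e:wp:th:m} degenerates at boundary strata where the systole vanishes, so we cannot bound $\bar{d}_e$ below by $d_{\mathrm{WP}}$ directly. Instead I would argue from the construction of the FD-completion itself: distinct equivalence classes of forward-Cauchy sequences are by design separated by a non-vanishing symmetrised earthquake distance, giving $\bar{d}_e(p,q) + \bar{d}_e(q,p) > 0$ whenever $p \neq q$. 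Combined with the continuity established above, this verifies the Busemannian axioms, and an identical argument handles $(\overline{\T(S)^\#}, \bar{d}_e^\#)$.
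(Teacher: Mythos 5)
Your construction of $\bar d_e$ itself (representing points by approximating sequences, the estimate $|d_e(x_n,y_n)-d_e(x_m,y_m)|\leq C_1(d_{\mathrm{WP}}(x_n,x_m)+d_{\mathrm{WP}}(y_n,y_m))$, and independence by interleaving) is sound and close in spirit to the paper's \cref{thm:metric:extension}. But there are two genuine gaps. First, you never verify the third, defining axiom of a Busemannian metric space (\cref{defn:busemannian}): for sequences $(\xi_k)$ in $\overline{\T(S)}$, $\bar d_e(\xi_k,\xi)\to 0$ if and only if $\bar d_e(\xi,\xi_k)\to 0$. This is the bulk of the paper's proof (\cref{thm:busemannianext}, part (2)), and it cannot follow from the one-sided bound $d_e\leq C_1 d_{\mathrm{WP}}$ alone: one needs that $d_e$-smallness forces Weil--Petersson closeness, which the paper extracts from the collar lower bound \cref{lem:lowerbound:emetric} through \cref{lem:fd-equi:wp-equi}. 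Your separation step suffers from the same omission in disguise: FD-equivalence is defined via interlacings with finite distance-series, not via vanishing of the symmetrised distance, so the assertion that distinct classes are ``by design'' separated by a positive symmetrised earthquake distance is precisely what has to be proved (the paper does it via \cref{lem:fd-equi:wp-equi} together with the symmetry of $d_{\mathrm{WP}}$, or via the general \cref{thm:asymmetricmetric}); as stated it is an unproved claim, and since your $\bar d_e$ is defined along arbitrary WP-Cauchy representatives rather than FD-representatives, even invoking the appendix result would first require identifying your extension with the standard one.

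Second, the reliance on \cref{thm:completion} is circular within the paper's architecture. The proof of \cref{thm:completion} goes through \cref{thm:lr:wp:completion}, whose Step~4 explicitly uses the Busemannian property of $\bar d_e$ (i.e.\ the theorem you are proving) both to fix the standard topology on $\overline{\T(S)}$ and to argue continuity of $\Pi$ and its inverse. So you may not use the topological identification of $\overline{\T(S)}$ with $\overline{\T(S)}^{\mathrm{WP}}$, nor the statement that every point of $\overline{\T(S)}$ is represented by a WP-Cauchy sequence, as black boxes; the non-circular substitutes are \cref{lem:fdiswp} (every FD-sequence for $d_e$ is WP-Cauchy) and \cref{lem:fd-equi:wp-equi}, which is exactly how the paper proceeds. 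Once you replace the appeal to \cref{thm:completion} by these lemmas, the hard direction of the topological comparison --- that $\bar d_e$-convergence implies WP-convergence --- reappears as an unavoidable step, and it is precisely the diagonal argument of \cref{thm:busemannianext} that your proposal is missing. The uniqueness claim is fine once continuity and density are in place, but it too presupposes that the topology on $\overline{\T(S)}$ has been pinned down, which again rests on the Busemannian property.
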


We introduce the term ``Busemannian'' in \cref{defn:busemannian}.

\begin{question}
By  \cref{thm:dmcompact}, the completion locus of the earthquake metric consists of strata of lower-dimensional Teichm\"uller spaces (of possibly disconnected surfaces). Does the extended metric on each stratum coincide with the intrinsic earthquake metric for that Teichm\"uller space?
\end{question}

\subsection{Distance to the boundary}

The first inequality in \cref{cor:e<wp} says a great deal more than just the incompleteness of the earthquake metric. It asserts that any point in $\T(S)$ is a bounded distance away from the completion locus. 

\begin{corollary}[Bounded diameter]\label{cor:ms:diameter}
The moduli space $\mathcal{M}(S)$ has bounded diameter with respect to the earthquake metric.
\end{corollary}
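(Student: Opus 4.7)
The strategy is to combine the first inequality from \cref{cor:e<wp}, namely $d_e(x,y)\leq C_1\,d_{\mathrm{WP}}(x,y)$, with the classical fact that the moduli space $\mathcal{M}(S)$ has finite Weil--Petersson diameter. The argument is essentially a direct concatenation of results already assembled in the paper together with a standard fact about Weil--Petersson geometry.

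First, I would invoke Masur's extension theorem \cite{masur1976extension}, which shows that the Weil--Petersson metric extends to a complete metric on the augmented Teichm\"uller space, whose quotient by the mapping class group is the Deligne--Mumford compactification $\overline{\mathcal{M}}(S)$. Since $\overline{\mathcal{M}}(S)$ is compact, it has finite Weil--Petersson diameter $D_{\mathrm{WP}}<\infty$, and therefore the open stratum $\mathcal{M}(S)\subset\overline{\mathcal{M}}(S)$ has Weil--Petersson diameter at most $D_{\mathrm{WP}}$ as well.

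Next, because both the earthquake and the Weil--Petersson metrics on $\T(S)$ are invariant under the mapping class group action, the pointwise norm inequality from \cref{thm:e:wp:th:m} and the consequent metric inequality from \cref{cor:e<wp} descend to a corresponding inequality on $\mathcal{M}(S)$: for any $[x],[y]\in\mathcal{M}(S)$,
\[
d_e([x],[y])\;\leq\;C_1\,d_{\mathrm{WP}}([x],[y])\;\leq\;C_1 D_{\mathrm{WP}}.
\]
This bounds the forward diameter of $\mathcal{M}(S)$ in $d_e$.

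Since $d_e$ is asymmetric, one might be concerned with the reverse direction. However, by \cref{rmk:rightearthquake} the right earthquake metric satisfies $d_e^\#(x,y)=d_e(y,x)$, and the proof of \cref{thm:e:wp:th:m} applies equally to the right earthquake norm (with the same constants, upon replacing $v$ by $-v$); using the symmetry $d_{\mathrm{WP}}(y,x)=d_{\mathrm{WP}}(x,y)$, we obtain the analogous bound $d_e(y,x)\leq C_1 D_{\mathrm{WP}}$ for every pair. The only mild obstacle is verifying that the constants in \cref{cor:e<wp} do not degenerate under the right earthquake norm, which follows from the convexity of the unit ball in \cref{defn:earthquakenorm} together with its containment of the origin in its interior.
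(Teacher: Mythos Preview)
Your proof is correct and matches the paper's intended argument: the corollary is stated without explicit proof, but the preceding sentence makes clear it follows from $d_e\leq C_1\,d_{\mathrm{WP}}$ (\cref{cor:e<wp}) together with the finite Weil--Petersson diameter of $\mathcal{M}(S)$. Your last paragraph is unnecessary, however: the inequality $d_e(x,y)\leq C_1\,d_{\mathrm{WP}}(x,y)$ already holds for \emph{every} ordered pair, so $\sup_{x,y}d_e(x,y)\leq C_1 D_{\mathrm{WP}}$ directly, with no separate treatment of $d_e(y,x)$ or the right earthquake metric required.
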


One line of inquiry is to ponder:

\begin{question}
What is the diameter of $\mathcal{M}(S)$ with respect to the earthquake metric?
\end{question}

Another is to study the behaviour of the earthquake metric as one approaches the completion locus:

\begin{theorem}\label{thm:dist:boundary:new}
Let $\overline{\mathcal{M}(S)}$ denote the completion of the moduli space with respect to the earthquake metric. There exists  a constant $C_S>1$, depending only on the topology of $S$, such that for any $x\in\mathcal{M}(S)$,

\begin{align}
2\ell_{\mathrm{sys}}(x)
\mathrm{Log}
(\tfrac{1}{\ell_{\mathrm{sys}}(x)})
\leq 
d_e(x,\partial \overline{\mathcal{M}(S)})
\leq 
2C_S\cdot\ell_{\mathrm{sys}}(x)
\mathrm{Log}(\tfrac{1}{\ell_{\mathrm{sys}}(x)}),
\end{align}
where, as before, $\mathrm{Log}(t):=\max\{1,\log(t)\}$. Moreover, we have the following asymptotic behaviour as $\ell_{\mathrm{sys}}(x)\to 0$
\begin{align}
\frac
{d_e(x,\partial \overline{\mathcal{M}(S)})}
{2\ell_{\mathrm{sys}}(x)
\mathrm{Log}(\tfrac{1}{\ell_{\mathrm{sys}}(x)})}
\to 1.
\end{align}
Both inequalities hold with $d_e(x,\partial \overline{\mathcal{M}(S)})$ replaced by $d_e(\partial \overline{\mathcal{M}(S)},x)$.
\end{theorem}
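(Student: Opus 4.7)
The plan is to prove the lower and upper bounds via Wolpert's derivative formula for earthquakes together with the collar lemma, and to construct efficient piecewise earthquake paths inside the collar of the systole for the matching upper bound.

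For the lower bound, let $\gamma:[0,1]\to\T(S)$ be any piecewise $C^1$ path from $x$ whose endpoint converges, in the metric completion (\cref{thm:completion}), to a boundary stratum where some simple closed curve $\beta$ is pinched. Writing $\gamma'(t) = \v_{\lambda(t)}$ via Kerckhoff's identification so that $\|\gamma'(t)\|_e = \ell_{\gamma(t)}(\lambda(t))$, Wolpert's formula yields $\bigl|\tfrac{d}{dt}\ell_\beta(\gamma(t))\bigr| \leq i(\beta,\lambda(t))$, while the collar lemma yields
\[
\ell_{\gamma(t)}(\lambda(t)) \geq 2\, i(\beta,\lambda(t))\, w\bigl(\ell_\beta(\gamma(t))\bigr), \qquad w(\ell) := \operatorname{arcsinh}\bigl(1/\sinh(\ell/2)\bigr).
\]
Setting $F(\ell) := \int_0^\ell 2 w(s)\, ds$, the chain rule gives $\bigl|(F\circ\ell_\beta\circ\gamma)'(t)\bigr| \leq \|\gamma'(t)\|_e$, so integration and taking $\gamma(1)$ to the boundary (where $\ell_\beta\to 0$, hence $F\to 0$) yields $\operatorname{length}_e(\gamma) \geq F(\ell_\beta(x)) \geq F(\ell_{\mathrm{sys}}(x))$, using $\ell_\beta(x)\geq\ell_{\mathrm{sys}}(x)$ and monotonicity of $F$. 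Infimising over $\gamma$ gives $d_e(x,\partial\overline{\mathcal{M}(S)}) \geq F(\ell_{\mathrm{sys}}(x))$. The explicit asymptotic $F(\ell) = 2\ell \log(1/\ell) + O(\ell)$ as $\ell\to 0$ then delivers both the pointwise lower bound $2\ell_{\mathrm{sys}}\Log(1/\ell_{\mathrm{sys}})$ (in the appropriate range of $\ell_{\mathrm{sys}}$) and the leading coefficient $2$ needed for the asymptotic ratio.

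For the upper bound, fix a systolic geodesic $\alpha$ at $x$ and construct a piecewise earthquake path from $x$ to the boundary stratum pinching $\alpha$. Discretise $[0,\ell_{\mathrm{sys}}(x)]$ by a fine partition $0=s_0<s_1<\cdots<s_N=\ell_{\mathrm{sys}}(x)$; at each intermediate hyperbolic surface with $\ell_\alpha = s_k$, select a simple closed curve $\beta_k$ whose geodesic crosses the collar of $\alpha$ at an angle $\theta_k$ with $\cos\theta_k$ bounded away from $0$ and $\ell_{\beta_k}$ comparable to $2 w(s_k)$, chosen so that the efficiency ratio $|\cos\theta_k|/\ell_{\beta_k}$ approaches $1/(2w(s_k))$ as $s_k\to 0$. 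Such $\beta_k$ are produced by explicit hyperbolic trigonometry in the collar model $\{|r|\leq w(s_k)\}$ with metric $dr^2 + \cosh^2(r)\, d\theta^2$, realised as Dehn-twist iterates of a bounded-length dual curve. A short earthquake along $\beta_k$ decreases $\ell_\alpha$ from $s_k$ to $s_{k-1}$ at a cost in magnitude of approximately $2 w(s_k)(s_k-s_{k-1})$; summing and passing to the Riemann limit yields total magnitude at most $(1+o(1))\int_0^{\ell_{\mathrm{sys}}(x)} 2w(s)\, ds \leq 2 C_S\, \ell_{\mathrm{sys}}(x)\Log(1/\ell_{\mathrm{sys}}(x))$ for a topological constant $C_S$, matching the lower bound asymptotically. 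The reverse direction $d_e(\partial,x)$ follows by a symmetric argument: Wolpert's formula is sign-insensitive so the same collar estimate applies, and the upper-bound construction can be run with right earthquakes (equivalently, via $d_e^\#$, \cref{rmk:rightearthquake}).

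The principal obstacle is the upper-bound construction of efficient curves $\beta_k$: the naive choice of the shortest curve dual to $\alpha$ in the collar crosses $\alpha$ nearly orthogonally ($\cos\theta\approx 0$) and hence barely changes $\ell_\alpha$ to first order. Producing curves with both $\ell_\beta$ comparable to $2w(\ell_\alpha)$ \emph{and} $\cos\theta$ bounded away from $0$, and balancing the Dehn-twist power against length so as to match the sharp constant $2$ in the limit $\ell_\alpha\to 0$, requires fine control of the geodesic equation in the collar; an auxiliary subtlety is controlling the error introduced by finite-step discretisation so that no magnitude is lost in passing to the Riemann limit.
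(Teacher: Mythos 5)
Your proposal follows essentially the same route as the paper: the lower bound is exactly \cref{lem:lowerbound:emetric} (Kerckhoff's cosine formula combined with the collar width $w$, integrated to $\int_0^{\ell_{\mathrm{sys}}}2w(\ell)\,\mathrm{d}\ell$), and the upper bound is the construction of \cref{prop:asymp:dist:upper}, where one earthquakes along Dehn-twist iterates of a curve dual to the systole kept at intersection angle close to $\pi$, with \cref{lem:angles} and \cref{lem:length:angle} supplying precisely the ``efficient $\beta_k$'' (length comparable to the collar width, $|\cos\theta|$ close to $1$) whose construction you identify as the principal obstacle, and with Kerckhoff's angle-monotonicity and Wolpert's cosine formula controlling the rate of decrease of $\ell_\alpha$. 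The only organisational difference — switching curves after a fixed decrement of $\ell_\alpha$ rather than when the angle degrades past a fixed threshold $\vartheta$ — is a variant the authors themselves point out in the remark following that proof.
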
 

\begin{remark}
Although \cref{thm:dist:boundary:new} is phrased in terms of the moduli space, the result is still true if we replace $\mathcal{M}(S)$ by $\T(S)$.
\end{remark}

\subsection{Further comparisons with the Weil--Petersson metric}

Yet another line of inquiry suggested by \cref{cor:e<wp} is whether $d_e$ and $d_{\mathrm{WP}}$ are bi-Lipschitz equivalent. We show in \cref{prop:notbilipschitz} that this is false, essentially because $d_e$ and $\dwp$ behave very differently in the thin part of Teichm\"uller space. Such fine structure is ignored by coarse geometry, and we have the following  positive result:

\begin{theorem}[Earthquake vs. Weil--Petersson]
\label{thm:coarse:geometry}
The earthquake metric is quasi-isometric to the Weil--Petersson metric. More precisely, there are constants $D_1,D_2>0$ which depend only on the topology of $S$, such that
\begin{align*}
\forall x,y\in\T(S),\quad 
&D_1 \dwp (x,y)-D_2\leq d_e(x,y)\leq C \dwp (x,y)
\end{align*}
where $C$ is the constant from \cref{cor:e<wp}. \end{theorem}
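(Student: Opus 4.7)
The upper bound $d_e(x,y) \leq C\, d_{\mathrm{WP}}(x,y)$ is inherited directly from \cref{cor:e<wp}, so the substantive content lies in establishing the coarse reverse inequality $D_1 d_{\mathrm{WP}}(x,y) - D_2 \leq d_e(x,y)$. The additive slack $D_2$ is necessary and cannot be removed: \cref{prop:notbilipschitz} already rules out a bi-Lipschitz equivalence, so no pointwise infinitesimal comparison of the form $\|v\|_{\mathrm{WP}} \leq c\|v\|_e$ can hold. The argument must therefore be genuinely coarse rather than infinitesimal.

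My plan is to leverage Brock's theorem identifying $(\T(S), d_{\mathrm{WP}})$ quasi-isometrically with the pants graph $\mathcal{P}(S)$ via any short (Bers) marking assignment $\sigma : \T(S) \to \mathcal{P}(S)$. Given Brock's quasi-isometry, it suffices to prove that $\sigma$ is also coarsely Lipschitz with respect to $d_e$, i.e.\
\[
d_{\mathcal{P}(S)}(\sigma(x), \sigma(y)) \leq A\,d_e(x,y) + B.
\]
Composing with Brock's quasi-isometry in the reverse direction then yields the required $d_{\mathrm{WP}}(x,y) \leq A' d_e(x,y) + B'$. Using \cref{thm:magnitude}, the coarsely Lipschitz property of $\sigma$ reduces to a local version: it suffices to produce constants $K, N > 0$ such that $d_e(x,y) \leq K$ forces $d_{\mathcal{P}(S)}(\sigma(x), \sigma(y)) \leq N$. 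Subdividing a near-optimal piecewise earthquake path from $x$ to $y$ into roughly $d_e(x,y)/K$ consecutive segments of magnitude $\leq K$ and iterating this local estimate then completes the coarsely Lipschitz bound.

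The main obstacle is the proof of the local statement: along a single earthquake of magnitude $\leq K$, \emph{a priori} the short marking could change by arbitrarily many elementary moves, since Kerckhoff's cosine formula bounds $|d\ell_\gamma/dt|$ along the $\lambda$-earthquake by the geometric intersection number $i(\gamma, \lambda)$, which is unbounded across $\gamma$. To handle this I expect to combine: the invariance of $\ell_\lambda$ along its own earthquake flow; Kerckhoff's convexity of length functions along earthquakes; the collar lemma to prevent very short curves from exiting the Margulis region during a bounded-magnitude move; the bi-Lipschitz equivalence of $d_e$ and $d_{\mathrm{WP}}$ on fixed thick parts furnished by \cref{thm:e:wp:th:m}; and the asymmetric boundary distance estimate of \cref{thm:dist:boundary:new} in the thin regions. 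The most delicate case is when both $x$ and $y$ lie deep in the thin part, where only finitely many curves of each short marking can change, but uniformity in the bound $N$ requires a careful tracking of how the earthquake lamination interacts with the Bers set across a magnitude-$K$ segment.
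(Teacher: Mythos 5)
Your global scaffolding is exactly the paper's: the upper bound is \cref{cor:e<wp}, and the lower bound is obtained by composing Brock's quasi-isometry $\pi:(\T(S),\dwp)\to(\mathcal{P}(S),d_{\mathcal{P}})$ (\cref{thm:Brock}) with a coarse Lipschitz bound $d_{\mathcal P}(\pi(x),\pi(y))\leq A\,d_e(x,y)+B$, proved by cutting a near-optimal path into pieces of $d_e$-length at most a fixed constant and iterating a local estimate (this is \cref{lem:e:pants:lower} in the paper, where the pieces have unit length). However, you do not prove the local estimate; you only list tools you ``expect to combine,'' and you correctly identify that this is where the whole difficulty sits. That is a genuine gap: none of the ingredients you name (the cosine formula, convexity of lengths along earthquakes, the collar lemma, the thick-part norm comparison from \cref{thm:e:wp:th:m}, or the boundary-distance asymptotics of \cref{thm:dist:boundary:new}) visibly assembles into a uniform bound $N$, and your own worry about the intersection pattern of the earthquake lamination with the Bers curves is precisely the bookkeeping that has no obvious uniform control, especially in the thin part where a magnitude-$K$ earthquake along a curve of length $\epsilon$ can effect on the order of $K/\epsilon^2$ Dehn twists.

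The missing idea, which is how the paper closes the local step and which entirely sidesteps tracking the lamination, is a \emph{uniform chart-exit cost}: for fixed $L'>L>$ Bers constant, any path that starts in $V_P(L)=\{\ell_\alpha<L\ \forall\alpha\in P\}$ and leaves $V_P(L')$ has $d_e$-length at least some $c>0$ independent of $P$ and of the basepoint. The paper derives this (\cref{lem:cover:J}) from the compactness of $\overline{V_P(L')}/\mathbf{DT}(P)$ — quotienting by the twist group along $P$ is exactly what neutralises the unbounded twisting you were concerned about, since twisting along $P$ does not change the short pants decomposition — together with mapping class group invariance and the continuous extension of $d_e$ to the Weil--Petersson completion (\cref{thm:metric:extension}); alternatively, one can get it directly from \cref{lem:lowerbound:emetric}, since leaving $V_P(L')$ forces some $\alpha\in P$ to increase in length from below $L$ to above $L'$, at cost at least $2\int_L^{L'}w(\ell)\,\mathrm{d}\ell$. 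Given this, a segment of $d_e$-length $1$ meets at most $J\approx 1/c$ of the charts $V_{P_i}(L')$, and Brock's overlap lemma (\cref{lem:Brock}: intersecting charts have pants decompositions at bounded $d_{\mathcal P}$-distance) converts the chart transitions into at most $(KJ+1)D_2$ elementary moves, which is your local statement with an explicit $N$. Without this (or an equivalent) uniform lower bound your proof does not go through; with it, your iteration argument is fine, and incidentally \cref{thm:magnitude} is not needed — arbitrary piecewise $C^1$ paths suffice, as in the paper.
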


The proof of \cref{thm:coarse:geometry} also shows that $(\T(S),d_e)$ is quasi-isometric to the pants graph $\mathcal{P}(S)$ --- the infinite graph whose vertices are pants decompositions of $S$, with an edge between any two pants decompositions if and only if the decompositions are related by an elementary move. 
Indeed, Brock's result \cite[Theorem~1.1]{Brock2003} saying that $(\T(S),d_{\mathrm{WP}})$ is quasi-isometric to $\mathcal{P}(S)$ implies the claim. 

\begin{remark}[navigating Teichm\"uller space]
\cref{thm:coarse:geometry} suggests that (at least) for ``long'' paths in Teichm\"uller space endowed with the earthquake metric, a potential strategy for efficiently traversing this metric space is to use the pants complex as a guide, running between regions of Teichm\"uller space with very short pants decompositions.
\end{remark}

\subsection{Asymmetry of the earthquake metric}

It is known that the Thurston norm is  asymmetric at some points in $\T(S)$ (see \cite[Figure 1]{ThM} and \cite[Theorem 5.3]{Theret}). This confers a similar norm-wise asymmetry to the earthquake norm via \cref{duality}. 
We shall prove a stronger asymmetry using the infinitesimal rigidity (\cref{geometrically rigid}), that is, we shall prove that the earthquake norm, hence the Thurston norm also, is \emph{nowhere} symmetric.

\begin{theorem}[norm asymmetry]
\label{prop:asymmetric}
The earthquake norm is asymmetric at every point of $\T(S)$. That is:  for any $x\in\T(S)$, there exists a tangent vector $v\in T_x\T(S)$ such that $\|v\|_e\neq\|-v\|_e$. 
\end{theorem}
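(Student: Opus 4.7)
The plan is to argue by contradiction using the infinitesimal rigidity theorem \cref{geometrically rigid}. Assume for contradiction that at some $x\in\T(S)$ the earthquake norm satisfies $\|v\|_e=\|-v\|_e$ for every $v\in T_x\T(S)$. Then $-\mathrm{Id}$ is a linear isometry of $(T_x\T(S),\|\cdot\|_e)$, and by infinitesimal rigidity it equals $d\phi_x$ for some element $\phi$ of the effective (extended) mapping class group fixing $x$. I would first rule out the possibility that $\phi$ is orientation-reversing: such a $\phi$ acts anti-holomorphically near $x$, so $d\phi_x$ anti-commutes with the complex structure $J$ on $T_x\T(S)$; its $(-1)$-eigenspace has real dimension $\dim_{\mathbb{C}}\T(S)$, which is strictly less than $\dim_{\mathbb{R}}T_x\T(S)$, and therefore cannot be the entire tangent space. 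Hence $\phi$ must be orientation-preserving and is then realised as a conformal involution of the Riemann surface $x$.

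Next, I would use the classical identification of the fixed locus of $\phi$ in $\T(S)$ with the Teichm\"uller space $\T(x/\phi)$ of the quotient orbifold, whose tangent space at $x$ is the $(+1)$-eigenspace of $d\phi_x=-\mathrm{Id}$, and hence trivial. This forces $x/\phi$ to be a rigid hyperbolic orbifold --- a $2$-sphere with three orbifold points of orders $n_i$ (with $n_i=\infty$ encoding a cusp) satisfying $\sum 1/n_i<1$. Because $x\to x/\phi$ is a degree $2$ orbifold cover from a surface, any finite cone order must divide $2$ and hence equal $2$, restricting $x/\phi$ to either $(0;\infty,\infty,\infty)$ or $(0;2,\infty,\infty)$.

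The concluding step is a short Riemann--Hurwitz computation $\chi(S)=2\chi_{\mathrm{top}}(x/\phi)-b$ (with $b$ the number of branch points in $x/\phi$), combined with the constraint that each cusp of $x/\phi$ has $1$ or $2$ preimage cusps in $S$ summing to the cusp count of $S$. I expect this to eliminate every candidate except $S=S_{0,4}$ with quotient $(0;\infty,\infty,\infty)$, where $\phi$ fixes two punctures and swaps the remaining two; this $\phi$ is a hyperelliptic involution of $S_{0,4}$, which by \cref{thm:mcg rigid} lies in the kernel of the mapping class group action on $\T(S_{0,4})$ and therefore satisfies $d\phi_x=\mathrm{Id}\neq -\mathrm{Id}$, the desired contradiction. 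The main obstacle is executing the Riemann--Hurwitz and cusp-counting case analysis cleanly across the small list of topological types with $2g+n\in\{3,4\}$, though each sub-case reduces to a brief arithmetic check.
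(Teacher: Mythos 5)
Your opening move is the same as the paper's: assume $\|v\|_e=\|-v\|_e$ for all $v\in T_x\T(S)$, note that $-\mathrm{Id}$ is then a linear isometry of the earthquake norm, and invoke \cref{geometrically rigid} to produce a mapping class $\phi$ fixing $x$ with $d\phi_x=-\mathrm{Id}$. From that point the paper argues by deforming the quotient orbifold of $(S,x)$ by $\langle\phi\rangle$ to get a $\phi$-invariant path whose tangent vector would have to be fixed by $d\phi_x$, whereas you instead try to classify the possible quotient orbifolds. Two steps of your classification break down.

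First, ``realised as a conformal involution'' is not justified. From $d\phi_x=-\mathrm{Id}$ you only get that $\phi^2$ fixes $x$ with derivative $\mathrm{Id}$, hence (by finite order and linearisation) that $\phi^2$ acts \emph{trivially on $\T(S)$}. On the surfaces where the mapping class group does not act faithfully on Teichm\"uller space (e.g.\ $S_{1,1}$, $S_{0,4}$, $S_{1,2}$, $S_{2,0}$) this only says that $\phi^2$ is trivial or a hyperelliptic involution, so the conformal realisation of $\phi$ on $x$ may have order $4$; the quotient is then not a degree-two cover and can carry cone points of order $4$, so your list of candidate rigid quotients (cone orders dividing $2$) is incomplete. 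This is not hypothetical: the square once-punctured torus admits an order-$4$ conformal automorphism whose square is the hyperelliptic involution; the corresponding mapping class acts on $\T(S_{1,1})\cong\mathbb{H}$ as $z\mapsto -1/z$, fixing that point with derivative exactly $-\mathrm{Id}$, and the quotient is the rigid orbifold with cone points of orders $2$ and $4$ and one cusp, which your case analysis never sees.

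Second, the terminal contradiction in your $S_{0,4}$ case is incorrect. The involution you exhibit (fixing two punctures, exchanging the other two, acting freely on the surface) is \emph{not} one of the hyperelliptic involutions of $S_{0,4}$: the involutions lying in the kernel of the action on $\T(S_{0,4})$ exchange the four punctures in two pairs and fix none. Your involution acts nontrivially on $\T(S_{0,4})\cong\mathbb{H}$, as an elliptic involution with an isolated fixed point at which its derivative is again $-\mathrm{Id}$, so it does not contradict $d\phi_x=-\mathrm{Id}$ and the argument does not close. Indeed, at these special points of $\T(S_{1,1})$ and $\T(S_{0,4})$ there genuinely is an orientation-preserving mapping class fixing the point and acting by $-\mathrm{Id}$ on the tangent space, and naturality of the earthquake norm then gives $\|v\|_e=\|-v\|_e$ there; so these configurations cannot be eliminated by any version of your classification, and a complete proof of \cref{prop:asymmetric} must confront them directly. (Note that they are also precisely the configurations in which the quotient orbifold is rigid, i.e.\ where the deformation step in the paper's own proof requires extra care.)
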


As a direct consequence of \cref{prop:asymmetric}, we obtain:

\begin{corollary}\label{cor:metricasym}
The earthquake metric is not symmetric, that is, there exist $x$ and $y$ in $\T(S)$ such that $d_e(x,y)\neq d_e(y,x)$.  Equivalently, the left earthquake metric and the right earthquake metric are not equal.
\end{corollary}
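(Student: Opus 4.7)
The plan is to promote the pointwise norm asymmetry from \cref{prop:asymmetric} into a genuine metric asymmetry by comparing the earthquake distance along a small smooth arc in the two directions, and then reading off the norms as the first-order terms.

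By \cref{prop:asymmetric}, I fix a point $x\in\T(S)$ and a tangent vector $v\in T_x\T(S)$ with $\|v\|_e\neq\|-v\|_e$; relabeling if necessary I may assume $\|v\|_e<\|-v\|_e$. I then pick any $C^1$ curve $\gamma:(-\epsilon,\epsilon)\to\T(S)$ with $\gamma(0)=x$ and $\gamma'(0)=v$ (for instance an earthquake segment if $v$ is realised as an infinitesimal earthquake, but really any smooth curve will do). The goal is to show that for all sufficiently small $t>0$, one has $d_e(x,\gamma(t))<d_e(\gamma(t),x)$.

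For the upper bound on $d_e(x,\gamma(t))$, I use $\gamma|_{[0,t]}$ itself as a candidate path, so that
\[
d_e(x,\gamma(t))\ \leq\ \int_0^t \|\gamma'(s)\|_e\, ds\ =\ t\|v\|_e + o(t),
\]
where the $o(t)$ estimate follows from the continuity of ${\|\cdot\|}_e$ on the tangent bundle (\emph{i.e.}, the fact that $\|\gamma'(s)\|_e\to\|v\|_e$ as $s\to 0$). For the lower bound on $d_e(\gamma(t),x)$, I invoke the standard first-order recovery of a continuous Finsler norm from its induced path metric: for any $C^1$ curve $\gamma$ with $\gamma'(0)=v$,
\[
\lim_{t\to 0^+}\frac{d_e(\gamma(t),\gamma(0))}{t}\ =\ \|-v\|_e.
\]
The upper bound in this limit is again immediate from the definition of path metric applied to $\gamma$ in reverse; the matching lower bound uses continuity of ${\|\cdot\|}_e$ together with the fact that any competitor path between two nearby points must stay in a small neighbourhood of $x$ where all norms are uniformly close to $\|{\cdot}\|_e$ at $x$, so its Finsler length is bounded below by $(1-\eta)t\|-v\|_e$ for any $\eta>0$ once $t$ is small enough. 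Combining both estimates,
\[
d_e(x,\gamma(t))\ \leq\ t\|v\|_e+o(t)\ <\ t\|-v\|_e+o(t)\ \leq\ d_e(\gamma(t),x)
\]
for all sufficiently small $t>0$, giving explicit points witnessing the asymmetry.

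The main obstacle I anticipate is giving a clean justification of the lower-bound half of the first-order recovery statement. This is classical for smooth Finsler structures but in our setting the earthquake norm is only a continuous weak norm (not smooth, not even symmetric), so one must argue by hand using uniform two-sided estimates for $\|\cdot\|_e$ on a small coordinate neighbourhood of $x$ rather than quoting standard Finsler geometry wholesale. Finally, the equivalence with the left/right statement is immediate from \cref{rmk:rightearthquake}: the identity $d_e^\#(x,y)=d_e(y,x)$ means that asymmetry of $d_e$ is literally the same assertion as $d_e\neq d_e^\#$.
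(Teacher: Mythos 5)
Your argument is correct, but it takes a different route from the paper's. The paper proves \cref{cor:metricasym} by contradiction: if $d_e$ were symmetric, the identity would be an isometry between $(\T(S),d_e)$ and $(\T(S),d_e^\#)$, and then the Binet--Legendre machinery of Matveev--Troyanov \cite[Theorem~B]{MT2017}, together with the local Lipschitz regularity of the norm (\cref{thm:lipregular}), forces ${\|\cdot\|}_e={\|\cdot\|}_e^\#$ on all of $T\T(S)$, contradicting \cref{prop:asymmetric}. You instead use the first-order recovery of a continuous Finsler norm from its induced path metric along a short $C^1$ arc, traversed in both directions, which produces explicit nearby witnesses $x,\gamma(t)$ of the asymmetry; this needs only continuity of ${\|\cdot\|}_e$ on $T\T(S)$ (not \cref{thm:lipregular}) and avoids quoting the Binet--Legendre theorem. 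This is essentially the elementary alternative the paper itself alludes to in the remark after its proof (``one needs only apply the arguments outlined on the first page of \cite[\S3]{MT2017}''), so your approach is legitimate and in some ways lighter. Two details in your lower bound deserve to be written out: (i) once competitor paths are confined to a chart neighbourhood where $(1-\eta)\|w\|_{e,x}\leq\|w\|_{e,y}$, the passage from ``pointwise norm comparison'' to ``length $\geq(1-\eta)\,t\|-v\|_e+o(t)$'' uses the integral triangle inequality (Jensen for the convex weak norm ${\|\cdot\|}_{e,x}$), which bounds the constant-norm length of any path below by the norm of its displacement vector; and (ii) a competitor path need not stay in the small neighbourhood, but any path exiting it has length bounded below by a positive constant independent of $t$ (this is exactly \cref{lem:comparison} and \cref{lem:local:comparison} in Appendix~A), so such paths are irrelevant once $t$ is small. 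With those two points made explicit, your proof is complete; the final reduction to the left/right formulation via \cref{rmk:rightearthquake} is fine as stated.
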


\begin{remark}
Curiously, we have not been able to find a direct argument which proves \cref{cor:metricasym}.
\end{remark}
 The following ``equivalences'' we obtain (often with the Weil--Petersson metric as a bridge) between the left and right earthquake metrics compensate for the asymmetry of the (left) earthquake metric:
\begin{itemize}
\item
\cref{prop:topology:finsler} says that they are homeomorphic;

\item
\cref{thm:coarse:geometry} shows that they are quasi-isometric;

\item
\cref{thm:completion} shows that they have the same forward FD-completion;

\item
and \cref{thm:dist:boundary:new} shows that they have similar behaviour in terms of distances from the completion locus.

\end{itemize}

And one is tempted to pose the following question.

\begin{question}
Is the left earthquake metric bi-Lipschitz equivalent to the right earthquake metric?
\end{question}

Actually, a positive answer to the above question would suffice to show that the earthquake metric is geodesic. However, there is every likelihood that the two metrics are \emph{not} bi-Lipschitz equivalent.

\subsection*{Organisation of the paper}

Our paper is structured as follows.
\begin{itemize}
\item
In \cref{s:background}, we review some of the necessary background Teichm\"uller theory and a small amount of asymmetric metric theory. 

\item
We investigate the properties of the earthquake norm in \cref{s:enorm}, showing that it defines a Finsler metric.

\item
In \cref{s:regularity}, we strengthen the known regularity of $E_{t\lambda}(x)$ with respect to both time $t$ and space $x$ derivatives. We use this to show that the earthquake metric is (locally) Lipschitz continuous.

\item
We then utilise the established regularity to show, in \cref{sec:magnitude}, an energy-based formulation of the earthquake metric.

\item
In \cref{sec:infinitesimal}, we show that the the isometry group of the earthquake metric is equal to the extended mapping class group. The infinitesimal version of this result can be used to show that the earthquake metric is asymmetric.
 
 \item In \cref{sec:comparison},  we compare the earthquake metric with the Thurston metric, the Weil--Petersson metric, and the Teichm\"uller metric on $\T(S)$, both infinitesimally and globally.
\item
In \cref{s:geodesy}, we investigate the geodesic properties of the earthquake metric, showing a negative result, namely, that (long) earthquakes are not geodesics of this metric.

\item
We then go on to study the behaviour of the earthquake metric in the thin part of Teichm\"uller space, determining the magnitude of earthquakes needed to deform a marked surface with short systole to a nodal Riemann surface.

\item
In \cref{s:completion}, we define FD-completions of asymmetric metric spaces, and show that the FD-completion of Teichm\"uller space equipped with the earthquake metric is equal to the Weil--Petersson completion. 

\item
Finally, in \cref{s:vsWP}, we show that the earthquake metric is coarsely equivalent to the Weil--Petersson metric. 

\end{itemize}
At the end of this paper, we have included two appendices, the first outlining general facts regarding Finsler metrics of low regularity, and the second establishing various foundational properties of the FD-completion of an asymmetric metric space.

\subsection*{Acknowledgements} We thank Harry Baik, Jialong Deng, Xiaolong Han, Hideki Miyachi, Nicolai Reshetikhin, Andrea Seppi, Weixu Su, Ivan Telpukhovskiy, Marc Troyanov, Yunhui Wu and Wenyuan Yang for useful and/or encouraging conversations regarding this work. The first listed author (YH) wishes to acknowledge support by Beijing Natural Science Foundation Grant no. 04150100122. The second listed author (KO) is supported by the JSPS Fund for the Promotion of Joint International Research 18KK0071. The third listed author (HP) is supported by the National Natural Science Foundation of China NSFC 12371073 and Guangzhou Basic and Applied Basic Research Foundation 2024A04J3636.  The authors acknowledge the support of the Erwin Schr\"odinger international Institute for Mathematics and Physics.

\newpage

\section{Background}\label{s:background}

In this section, in an effort to make the paper self-contained, we have collected a certain number of fundamental ideas, definitions and properties which explain the results of this paper and set them in the appropriate context.

\subsection{Teichm\"uller and moduli space}

Given an oriented connected surface of genus $g\geq 0$ with $n\geq 0$ punctures, $S=S_{g,n}$, we let 
\begin{itemize}
\item
$\T(S)$ denote the \emph{Teichm\"uller space} of $S$ --- the space of complete hyperbolic metrics on $S$ up to isometries isotopic to the identity map on $S$;
\item
$\mathcal{M}(S)$ denote the \emph{moduli space} of $S$ --- the space of isometry classes of complete hyperbolic metrics on $S$.
\end{itemize}

For expositional simplicity, we may refer to $x\in\T(S)$ simply as a marked (hyperbolic) surface, as opposed to an equivalence class of hyperbolic metrics. Likewise, we refer to a point in $\mathcal{M}(S)$ as a (hyperbolic) surface.

The topology of $\T(S)$ is well understood: the space is analytically equivalent to $\mathbb{R}^{6g-6+2n}$. The moduli space $\mathcal{M}(S)$ has greater topological complexity, and one approach to it is to understand it as a quotient of $\T(S)$ by the \emph{mapping class group}
\[
\mathrm{Mod}(S):=
\frac{\{\text{ orientation-preserving diffeomorphisms of }S\ \}}
{\{\text{ diffeomorphisms of }S\text{ isotopic to the identity}\ \}}.
\]
It is well known that
\begin{itemize}
\item 
the mapping class group $\mathrm{Mod}(S)$ acts (non-freely) properly discontinuously on $\T(S)$,
\item
the moduli space $\mathcal{M}(S)$, which is the quotient space, is an orbifold, 
\item
and since the Teichm\"uller space $\T(S)$ is simply connected, it is the orbifold universal cover of $\mathcal{M}(S)$.
\end{itemize}

There are several modern introductions to these topics, see e.g.  \cite{Abikoff, FLP, FLPtranslation, IT}.
The reader interested in the history of moduli and Teichm\"uller spaces may refer to \cite{AJP-History, JP-Historical}.

\subsection{Lengths of geodesics and measured laminations}

The (hyperbolic) length of a non-contractible closed curve $\gamma$ on $S$, with respect to a hyperbolic metric $x\in\T(S)$, is defined as the length of the unique shortest closed curve (specifically, a geodesic) in the free homotopy class of $\gamma$.
From now on, by a  closed curve, we always mean a non-contractible one.
 Linearity gives us a natural extension of the notion of length to weighted curves $c\gamma$ and sums of weighted curves $\sum c_i\gamma_i$. The measured lamination space $\mathcal{ML}(S)$ is, in some precise sense, a completion of the space of weighted  \emph{simple} (i.e.\ non-self-intersecting) closed curves. The elements of $\mathcal{ML}(S)$ are geodesic laminations equipped with transverse measures, see \cite[\S 1]{Thurston-BAMS}.

The hyperbolic length function for weighted simple closed curves extends continuously to a positively homogeneous function on $\ml(S)$. For a general measured geodesic lamination $\lambda$, its length $\len_x(\lambda)$ 
is the total mass of the measure on the surface defined as the product of the Lebesgue measure along the leaves of $\lambda$ and the transverse measure of $\lambda$.

\begin{remark}[length notation]\label{rmk:lengthnotation}
In our most general context, a length function is a map
\[
\ell:\ml(S)\times\T(S)\to\mathbb{R}_{\geq0},
\]
assigning to any pair $(x,\lambda)\in\T(S)\times\ml(S)$ the length of the unique geodesic representative of $\lambda$ with respect to the metric $x$. In almost all contexts, however, we fix either the lamination $\lambda$ or the metric $x$, and we respectively denote these two functions by:
\begin{itemize}
\item
$\len_x:\ml(S)\to\mathbb{R}_{\geq0},\quad \len_x(\lambda):=\ell(\lambda,x)$;

\item
$\ell_\lambda:\T(S)\to\mathbb{R}_{\geq0},\quad \ell_\lambda(x):=\ell(\lambda,x)$.
\end{itemize}
\end{remark}

The hyperbolic length function is continuous, and is positively homogeneous in its first parameter, and the pairing $\ell$ between Teichm\"uller space and the measured lamination space induces an embedding of $\T(S)$ into the space $\mathbb{R}_{\geq0}^{\mathcal{ML}(S)}$ of real functions on $\mathcal{ML}(S)$ by ``duality'' in the following sense: for each $\lambda\in\mathcal{ML}(S)$, the $\lambda$-coordinate for $x\in\T(S)$ is given by $\ell_\lambda(x)$. Thurston showed that the $\mathbb{R}_{>0}$-projectivisation of this embedding is still an embedding, and that the boundary of Teichm\"uller space may be interpreted, using this embedding, as the space of projective measured laminations $\mathcal{PML}(S):=(\mathcal{ML}(S)-\{0\})/_{\R_{>0}}$.

\subsection{Fenchel--Nielsen twists and earthquakes}
The left Fenchel--Nielsen twist with respect to a weighted simple closed curve $c\gamma$ defines a flow on Teichm\"uller space, whereby the time $\ell_\gamma$ map of the flow corresponds to the action of the right Dehn twist along $\gamma$ (since the action of a mapping class on the Teichm\"uller  space is  precomposition by the inverse). Thurston showed that Fenchel--Nielsen twist flows, which shear along simple closed geodesics, generalise to flows with respect to measured laminations in the following way (see \cite{Ker}):
\begin{itemize}
\item
extend Fenchel--Nielsen twist flows to  flows for (positively) weighted simple closed curves $c\gamma$ via $(t,x)\mapsto E_{tc\gamma}(x)$, for $x\in \mathcal{T}(S)$ and $t\in\mathbb{R}_{>0}$.

\item
 for any sequence $(c_i\gamma_i)$ of weighted simple closed curves converging to a measured lamination $\lambda$, show that their corresponding twist flows converge, and that this limit is independent of the choice of the initial sequence $(c_i\gamma_i)$ of weighted simple closed curves. 
\end{itemize}
We refer to any such limiting flow as the \emph{(left) earthquake flow with respect to $\lambda$}:
\[
(t,x)\mapsto \lim_{i\to\infty}E_{tc_i\gamma_i}(x)=:E_{t\lambda}(x).
\]
Kerckhoff \cite{kerckhoff1985earthquakes} showed that for any $\lambda\in\ml(S)$, the earthquake flow $t\mapsto E_{t\lambda}(x)$ is analytic with respect to $t$. The infinitesimal earthquake
\[
\v(\lambda,x)
:=
\left.
\tfrac{d}{dt}
\right|_{t=0}E_{t\lambda}(x)
\]
is the basis for defining the earthquake norm (\cref{defn:earthquakenorm}), and subsequently, the earthquake metric (\cref{defn:earthquakemetric}).

\begin{remark}[earthquake vector notation]\label{rmk:earthquake-normalized}
We adopt notation similar to length function notation (see \cref{rmk:lengthnotation}) to distinguish situations when we wish to regard earthquake vectors as functions of $\ml(S)$ versus $\T(S)$:
\begin{itemize}
\item
$\v_x:\ml(S)\to T_x\T(S),\quad \v_x(\lambda):=\v(\lambda,x)$;

\item
$\v_\lambda:\T(S)\to T\T(S),\quad \v_\lambda(x):=\v(\lambda,x)$
\end{itemize}
\end{remark}

\begin{remark}
Thurston provides in \cite{ThE} a different construction/proof for earthquake flows in greater generality. In particular, this construction  works for the universal Teichm\"uller space.
\end{remark}




\subsection{Asymmetric and Busemannian metric spaces}

Consider the following quote from Gromov in the introduction of \cite{gromov1999metric}:
\begin{quote}\small
 [\ldots] Besides, one insists that the distance function be symmetric, that is, $d(x,y)=d(y,x)$. (This unpleasantly limits many applications [\ldots])
\end{quote}
Many metrics are naturally asymmetric due to global minimisation problems innately bearing such asymmetry. For example, the (minimal) amount of energy that one expends ascending a staircase is (generally) different from the amount needed when descending. Nevertheless, such quantities satisfy the triangle inequality, and thus benefit from the wealth of metric space theory which mathematics has produced. There are various versions of asymmetric metric space theory. For our purposes, we adopt the following:

\begin{definition}[{asymmetric metric space, see, e.g.,\cite{mennucci2013asymmetric}}]
\label{defn:asymmetric}
A set $X$ endowed with a function $d:X\times X\to [0,\infty]$ is called an \emph{asymmetric metric space} if for all $x,y,z\in X$
\begin{enumerate}
\item $d(x,x)=0$;
\item $d(x,y)=d(y,x)=0\Rightarrow x=y$;
\item $d(x,z)\leq d(x,y)+d(y,z)$.
\end{enumerate}
\end{definition}

The adjective ``asymmetric" is used by Thurston for a metric which does not satisfy the symmetry axiom in his paper \cite{ThM}, which is where he introduced the Thurston (asymmetric) metric. Before Thurston, asymmetric metrics were used and studied in various contexts.  Thurston described his metric as ``Finsler asymmetric". As a matter of terminology, Finsler metrics do not necessarily assume the symmetry axiom. A Finsler metric which is symmetric is usually referred to as a ``reversible Finsler metric".

\begin{remark}
Although semantically confusing, an asymmetric metric is allowed to be symmetric. In any case, we show that the earthquake metric, like the Thurston metric, is non-symmetric. 
\end{remark}

In this paper, we shall make use of the following notions of symmetrisation of an asymmetric metric:

\begin{definition}[Symmetrisations]
Given an asymmetric metric metric $d$, its 
\emph{sum-symmetrisation} $d^{\mathrm{sum}}$ is defined as
\[d^{\mathrm{sum}} (x,y):=\tfrac{1}{2}(d(x,y)+d(y,x))
\]
and its 
\emph{max-symmetrisation} $d^{\mathrm{max}}$ is defined as
\[d^{\mathrm{max}} (x,y):=\max \{d(x,y),d(y,x)\}
.\]

\end{definition}

\begin{definition}[reverse metric]
\label{defn:conjmetric}
Given an asymmetric metric space $(X,d)$, the function $d^\#: X\times X\to[0,\infty]$ defined by $d^\#(x,y):=d(y,x)$ is (also) an asymmetric metric on $X$. We refer to $d^\#$ as the \emph{reverse metric} of $d$, and call $(X,d^\#)$ the \emph{reverse metric space} of $(X,d)$.
\end{definition}

\begin{definition}[topologies of asymmetric metric spaces]
Asymmetric metrics $d$ induce three natural choices of topologies on $X$:
\begin{itemize}
\item
the \emph{forward topology} is generated by \emph{forward open balls}, i.e.\  sets of the form $B^+(x,\epsilon):=\{y\in X\mid d(x,y)<\epsilon\}$, for arbitrary $\epsilon>0$;

\item
the \emph{backward topology} is generated by \emph{backward open balls}, i.e.\  sets of the form $B^-(x,\epsilon):=\{y\in X\mid d(y,x)<\epsilon\}$, for arbitrary $\epsilon>0$;

\item
the \emph{symmetric topology} is generated by the forward and backward open balls.
\end{itemize}
\end{definition}

\begin{remark}
It is fairly straightforward to see that the symmetric topology of an asymmetric metric $d$ agrees with the topology induced by the max-symmetrisation of $d$.
\end{remark}

\begin{definition}[limits]
Given an asymmetric metric space $(X,d)$, we say that $x\in X$ is a \emph{forward limit} of a sequence $(x_n)$ if $d(x_n,x)\to0$. Backward limits are analogously defined.
\end{definition}

\begin{remark}
There is an alternative notion of a forward limit stated as requiring that   $d(x_n,y)\to d(x,y)$ for all $y \in X$. This is a stricter condition than what we want.
\end{remark}

Asymmetric metrics, including Finsler, were studied extensively by Herbert Busemann, in several papers and books. Busemann is one of the most important promoters of metric geometry. We shall see later that the metric spaces we consider all fall within a strict subclass of asymmetric metric spaces which appear in his work. Busemann refers to metrics which we call asymmetric as ``not necessarily symmetric'' in \cite{Busemann-synthetic} --- this is semantically clearer, but more cumbersome to say. Nonetheless, it is helpful to have a name for this more general framework. We consider that the following name is deserved:

\begin{definition}[Busemannian metric space]
\label{defn:busemannian}
A set $X$ endowed with a function $d:X\times X\to [0,\infty)$ is called an \emph{Busemannian metric space}
 if
\begin{itemize}
\item for all $x,y\in X$, $d(x,y)=0\Leftrightarrow x=y$;
\item for all $x,y,z\in X$, $d(x,z)\leq d(x,y)+d(y,z)$.
\item for all $x\in X$ and for all sequences $(x_n)$ in $X$, we have $d(x_n,x)\to 0$ if and only if $d(x,x_n)\to 0$.
\end{itemize}
\end{definition}

Spaces satisfying the three properties of \cref{defn:busemannian} are highlighted and studied in \cite{busemann1944} (of course, without the name).

\begin{remark}
The concept of Busemannian metric spaces should not be confused with objects such as Busemann (convex) spaces and Busemann G-spaces, which are geodesic metric spaces satisfying additional conditions, and are tied in with different developments in metric geometry.
\end{remark}

\begin{remark}
\label{rmk:continuityissues}
For Busemannian metric spaces, the forward, backward and symmetric topologies all agree. This follows from the fact that these topologies are first countable. We set this topology as the \emph{standard topology} on $X$. Since $d^{\max}$ is symmetric, continuity of functions defined on Busemannian metric spaces may be verified as usual in the language of limits and/or convergent sequences.
\end{remark}

\subsection{Finsler geometry of low regularity}

Thurston's paper \cite{ThM} on the Thurston metric, where he defines the earthquake metric, deals with asymmetric Finsler metric structures on Teichm\"uller space with low regularity (i.e.\  $C^0$) on the metric. To begin with, we clarify the manner in which asymmetry manifests itself at the level of the Finsler norm.

\begin{definition}[weak norm]
\label{defn:weaknorm}
We define  a \emph{weak norm} ${\|\cdot\|}$ on a vector space $V$  as a function ${\|\cdot\|}:V\to[0,\infty)$ satisfying the following properties:
\begin{enumerate}
\item $\| v\|=0$ if and only if $v=0$;
\item $\| v_1+v_2\|\leq \| v_1\| + \| v_2\|$ for all $v_1, v_2$ in $V$;
\item $\| \lambda v\|= \lambda \| v\|$ for all $v$  in $V$ and for all $\lambda >0$.
\end{enumerate}
\end{definition}

\begin{remark}As in the case of a symmetric norm, a weak norm $\| \cdot\| $ on $V$ is completely determined by its unit sphere (for Finsler norms, this is sometimes called the \emph{indicatrix}), that is, the set of vectors $v\in V$ satisfying $\| v\| =1$. 
This may be an arbitrary hypersurface bounding a compact convex body in $V$ containing the origin in its interior. 
\end{remark}

\begin{definition}[Finsler metric]
\label{defn:finslermetric}
A \emph{Finsler metric} or a \emph{Finsler norm} on a path-connected $C^1$-manifold is a continuous function ${\|\cdot\|}_X: TX\to\mathbb{R}_{\geq0}$ where the restriction of $F$ to each tangent space is a weak norm. For any $C^1$ path $p:I\rightarrow X$ we may compute its length $\mathrm{L}(p)$ with respect to ${\|\cdot\|}_X$ via $\mathrm{L}(p):=\int_I \|p'(s)\|_X\ \mathrm{d}s$. Given any $x,y\in X$, let $\mathbf{P}(x,y)$ be the set of piecewise $C^1$ paths in $X$ starting at $x$ and ending at $y$. We define the metric $d_X$ induced by the Finsler metric ${\|\cdot\|}_X$ as
\begin{align*}
d_X(x,y):=
\inf_{p\in\mathbf{P}(x,y)}
\mathrm{L}(p)
=
\inf_{p\in\mathbf{P}(x,y)}
\int_I \|p'(s)\|_X\ \mathrm{d}s.
\end{align*}
\end{definition}

\begin{remark}
General principles (see  \cref{prop:finslerbusemann} of the first appendix) tell us that Finsler metrics are always Busemannian, and the respective topologies induced by these two types of metric structures are equivalent.
\end{remark}

\subsection{The Thurston metric}
\label{s:thurstonmetric}

The \emph{Thurston (asymmetric) metric} measures the logarithmic optimal Lipchitz constant of maps between hyperbolic metrics. Specifically, given any homeomorphism $\varphi:(S,x)\to(S,y)$, define the \emph{Lipschitz constant of $\varphi$} as the quantity
\[
\mathrm{Lip}(\varphi)
:=
\sup_{\text{distinct }p,q\in S}
\frac{d_y(\varphi(p),\varphi(q))}{d_x(p,q)}\in\mathbb{R}\cup\{\infty\},
\]
where $d_x$ and $d_y$ respectively denote distance metrics on $S$ induced by (some fixed choice of isotopy class representative for) the hyperbolic metrics $x$ and $y$. The Thurston metric is defined by
\[
d_{\mathrm{Th}}(x,y)
:=
\inf_{\varphi\sim\mathrm{Id}_S}
\log\mathrm{Lip}(\varphi),
\]
where the infimum is taken over all homeomorphisms $\varphi$ isotopic to the identity map $\mathrm{Id}_S$ on $S$. Remarkably, Thurston \cite[Theorem 8.5]{ThM} demonstrated an alternative interpretation for $\dth(x,y)$ as the supremal ratio of lengths of simple closed curves with respect to $x$ and $y$. 
Let $\mathcal S$ denote the set of essential simple closed curves on $S$.
Then
\begin{align}\label{eq:thurston:ratio}
\dth(x,y)
=
\log\sup_{\gamma\in\mathcal S}
\frac{\ell_\gamma(y)}{\ell_\gamma(x)}
=
\log\max_{\gamma\in\ml( S) \setminus \{0\}}
\frac{\ell_\lambda(y)}{\ell_\lambda(x)}.
\end{align}

Thurston further showed that $d_{\mathrm{Th}}$ is  a Finsler metric on $\T(S)$. First note that the derivative of $\log \ell_\lambda:\T(S)\to\mathbb{R}$ depends only on the projective class $[\lambda]\in\pml(S)$. Thus, for each $x \in \T(S)$, we may define a map $\iota_x:\pml(S)\to T_x^*\T(S)$
\[
[\lambda]\mapsto
\iota_x([\lambda]):=(\mathrm{d}\log \ell_\lambda)_x \in T^*_x \T(S).
\]
\begin{theorem}[\cite{ThM}, Theorem 5.1]
\label{embeds into cotangent}
For every $x \in \T(S)$, the map $\iota_x$ is an embedding of the projective lamination space $\pml(S)$ into the cotangent space $T_x^* \T(S)$. The image set $\iota_x(\pml(S))$ is the boundary of a convex open ball containing the origin in $T_x^*\T(S)$.
\end{theorem}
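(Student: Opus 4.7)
The plan is to derive this cotangent-space statement from its tangent-space counterpart via the Weil-Petersson symplectic duality. The tangent-space version is Thurston's theorem that the infinitesimal-earthquake hypersurface $\{\v_\lambda(x) \in T_x\T(S) : \ell_\lambda(x)=1\}$ is the boundary of a convex body $B_x\subset T_x\T(S)$ containing the origin in its interior (this is \cref{Th:Thurston-Ball}, already invoked in \cref{defn:earthquakenorm} to define the earthquake norm). The strategy is to exhibit a linear isomorphism $T_x\T(S)\to T_x^*\T(S)$ that intertwines the parametrisation $[\lambda]\mapsto\v_\lambda(x)/\ell_\lambda(x)$ of $\partial B_x$ by $\pml(S)$ with the map $\iota_x$, thereby transporting the tangent-space picture to the cotangent space.

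First I would recall Kerckhoff's infinitesimal earthquake theorem, which gives a homeomorphism $\ml(S)\to T_x\T(S)$, $\lambda\mapsto\v_\lambda(x)$; combined with Thurston's convexity theorem this realises $\pml(S)$ as $\partial B_x$ via $[\lambda]\mapsto\v_\lambda(x)/\ell_\lambda(x)$. Next, I would invoke Wolpert's duality formula (cf. \cref{lem:WP}), which asserts that $\v_\lambda(x)$ is the Hamiltonian vector field of $\ell_\lambda$ with respect to the Weil-Petersson symplectic form $\omega_{\mathrm{WP}}$; equivalently, the linear isomorphism $\phi_x: T_x\T(S)\to T_x^*\T(S)$ defined by $v\mapsto\omega_{\mathrm{WP}}(v,\cdot)$ sends $\v_\lambda(x)$ to $-d\ell_\lambda|_x$, and therefore sends $\v_\lambda(x)/\ell_\lambda(x)$ to $-\iota_x([\lambda])$. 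Composing, $\iota_x$ factors as
\[
\pml(S) \;\xrightarrow{\;[\lambda]\,\mapsto\,\v_\lambda(x)/\ell_\lambda(x)\;}\; \partial B_x \;\xrightarrow{\;-\phi_x\;}\; \partial(-\phi_x(B_x)),
\]
namely as a homeomorphism onto the boundary of a convex body, followed by a linear isomorphism of vector spaces which carries convex bodies to convex bodies and preserves the property of containing the origin in the interior. Consequently $\iota_x$ is an embedding and $\iota_x(\pml(S))=\partial(-\phi_x(B_x))$ is the boundary of a convex body containing the origin in its interior, as desired.

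The main obstacle is verifying that the sign conventions and normalisations in Wolpert's formula are correctly calibrated, so that $\phi_x$ genuinely intertwines the two parametrisations. This is classical and is reassembled in the paper's own \cref{lem:WP}, which establishes the requisite Weil-Petersson symplectic duality between infinitesimal earthquakes and length differentials (also used in the proof of \cref{duality}). A secondary point, needed for the continuity of the composition, is the continuity of $\lambda\mapsto d\ell_\lambda|_x$ on $\ml(S)$; this follows from joint continuity of $\ell(\cdot,\cdot)$ on $\ml(S)\times\T(S)$ together with the real-analytic dependence of $\ell_\lambda$ on $x$ for fixed $\lambda$. With these in hand, the remainder of the argument is a purely linear-algebraic translation.
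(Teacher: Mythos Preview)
The paper does not supply its own proof of this statement; it is cited as Thurston's Theorem~5.1 from \cite{ThM} and treated as background in \S\ref{s:background}. Your argument is nonetheless correct as a standalone derivation: \cref{lem:WP} shows that the Weil--Petersson symplectic isomorphism carries $\e_x(\pml(S))$ bijectively onto $\iota_x(\pml(S))$, so the convexity and origin-in-interior conclusions of \cref{Th:Thurston-Ball} transfer linearly to the cotangent side. This is precisely the mirror image of the paper's \emph{first proof} of \cref{Th:Thurston-Ball}, which runs the same linear isomorphism in the opposite direction (cotangent to tangent).

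The only caveat is circularity within the paper's internal logic. Both proofs the paper supplies for \cref{Th:Thurston-Ball} invoke \cref{embeds into cotangent}: the first proof uses it via \cref{duality}, and the second proof explicitly appeals to the convexity of $\mathrm{d}\log\ell(\pml(S))\subset T_x^*\T(S)$ to obtain supporting linear functionals, which are then transported back to the tangent space via \cref{reciprocal}. So if your argument were spliced into the paper as the justification for \cref{embeds into cotangent}, it would close a loop. Your derivation is valid provided \cref{Th:Thurston-Ball} is taken as an independent input from Thurston's original paper (as its citation in \cref{defn:earthquakenorm} suggests), rather than from the paper's own derived proofs.
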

By taking $\iota_x(\pml(S))$ to be the indicatrix, this defines a (co-)norm on the cotangent space $T_x^*\T(S)$. The \emph{Thurston norm} on the tangent space $T_x\T(S)$ is defined by duality:
 \begin{align}\label{eq:thurston:norm}
\|v\|_{\mathrm{Th}}
:=
\sup _{\lambda \in \mathcal{PML}(S)}
(\mathrm{d} \log\ell_\lambda)_x (v)
=
\sup _{\lambda \in \mathcal{PML}(S)}
\frac{v(\ell_\lambda)}{\ell_\lambda}.
\end{align}




\subsection{The Weil--Petersson metric}
\label{sec:wp}
Teichm\"uller space is naturally a complex analytic manifold (see, e.g., \cite[chapter~7]{IT}):
\begin{itemize}
\item  
identify the tangent space $T_x\T(S)$ with the space $\mathit{HB}(x)$ of \emph{harmonic Beltrami differentials}  on $x$, and
\item
set the complex structure $\sqrt{-1}:T_x\T(S)\to T_x\T(S)$ on $T_x\T(S)$ to be multiplication by $\sqrt{-1}$ on $\mathit{HB}(x)$.

\end{itemize}

\begin{definition}[Weil--Petersson K\"ahler metric]
Let $\rho(z)|dz|^2=x\in\T(S)$ denote a hyperbolic metric on $S$, and consider the following Hermitian inner product on $\mathit{HB}(x)$: for any $f_1,f_2\in \mathit{HB}(x)$,
\begin{align}
\label{eq:petersson}
h_{\mathrm{WP}}(f_1,f_2)
:=\int_S 
\rho(z)^2
f_1(z)\overline{f_2(z)}
\ dA,
\quad
\end{align}
where $dA$ is the area form for the metric $\rho(z)|dz|^2$ corresponding to $x$. It turns out that the collection of such Hermitian inner products varying over the tangent bundle of $\T(S)$ defines a K\"ahler metric $h_{\mathrm{WP}}$ on $\T(S)$; this is the \emph{Weil--Petersson K\"ahler metric} on $\T(S)$.
\end{definition}

\begin{itemize}
\item
The real component of $h_{\mathrm{WP}}$ defines a Riemannian metric $\langle\cdot,\cdot\rangle_{\mathrm{WP}}$, and we refer to it as the \emph{Weil--Petersson metric}. 

\item
We write ${\|\cdot\|}_{\mathrm{WP}}$ for the Weil--Petersson norm on tangent spaces, and denote the induced distance by $d_{\mathrm{WP}}$.

\item
The (constant multiple of) the imaginary component, $-2 \mathrm{Im}(h_{\mathrm{WP}})$ defines a symplectic form $\omega_{\mathrm{WP}}$, which is called the Weil-Petersson symplectic form.
\end{itemize}

\begin{remark}
We note that \cref{eq:petersson} is unconventional in the sense that the Petersson inner product is usually phrased in terms of holomorphic quadratic differentials. 
\end{remark}

The following lemma is an essential ingredient for the comparison of various norms on the Teichm\"uller space $\T(S)$.  

\begin{lemma}\label{lem:wp:nabla}
For any simple closed curve $\alpha$ on $S$, let $\nabla \ell_\alpha$ denote the Weil--Petersson gradient field for $\ell_\alpha:\T(S)\to \R$, then
\begin{align*}
\|\v_\alpha(x)\|_{\mathrm{WP}}=\tfrac{1}{2}\|(\nabla \ell_\alpha)_x\|_{\mathrm{WP}}.
\end{align*}
\end{lemma}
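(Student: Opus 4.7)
The plan is to obtain the identity as a direct consequence of Wolpert's duality formula relating the Weil--Petersson symplectic form to Fenchel--Nielsen twist vector fields, combined with the K\"ahler nature of the Weil--Petersson metric.

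First, I would unpack the left-hand side. Since $\alpha$ is a simple closed curve, the earthquake flow $E_{t\alpha}$ agrees, with its natural arclength parameterisation, with the Fenchel--Nielsen twist flow along $\alpha$. Hence the infinitesimal earthquake $\v_\alpha(x)$ coincides with the Wolpert twist vector field $t_\alpha$ at $x$.

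Next, I would invoke Wolpert's magic formula, which in its differential-form guise reads
\begin{align*}
\omega_{\mathrm{WP}}(\v_\alpha,\cdot)=\tfrac{1}{2}\,d\ell_\alpha(\cdot)
\end{align*}
as an identity of $1$-forms on $\T(S)$. Using the K\"ahler identity $\omega_{\mathrm{WP}}(X,Y)=\langle JX,Y\rangle_{\mathrm{WP}}$, this rearranges to
\begin{align*}
\langle J\v_\alpha,\cdot\rangle_{\mathrm{WP}}
=\tfrac{1}{2}\,d\ell_\alpha(\cdot)
=\tfrac{1}{2}\,\langle (\nabla\ell_\alpha)_x,\cdot\rangle_{\mathrm{WP}},
\end{align*}
where the last equality is the definition of the Weil--Petersson gradient. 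Non-degeneracy of the pairing forces the identification $J\v_\alpha(x)=\tfrac{1}{2}(\nabla\ell_\alpha)_x$ inside $T_x\T(S)$.

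Finally, since the complex structure $J$ is an isometry of the Weil--Petersson Riemannian metric (a consequence of the K\"ahler condition), I would take Weil--Petersson norms on both sides of the last displayed equation to conclude
\begin{align*}
\|\v_\alpha(x)\|_{\mathrm{WP}}
=\|J\v_\alpha(x)\|_{\mathrm{WP}}
=\tfrac{1}{2}\|(\nabla\ell_\alpha)_x\|_{\mathrm{WP}}.
\end{align*}
The only subtle step is verifying the normalisation: one must check that with the convention adopted here (where $E_{t\alpha}$ shears by $t$ units of hyperbolic length), the vector $\v_\alpha$ is precisely the Wolpert twist $t_\alpha$ appearing in his magic formula, rather than a rescaled version. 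This is the main point at which a sign or factor could slip through, so I would take care to align the paper's earthquake normalisation with the one used in Wolpert's original references.
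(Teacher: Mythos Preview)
Your approach is essentially the same as the paper's: invoke Wolpert's duality between $d\ell_\alpha$ and the twist vector via $\omega_{\mathrm{WP}}$, convert to the Riemannian pairing using the complex structure, deduce a pointwise vector identity from nondegeneracy, and then use that $J$ (written $\sqrt{-1}$ in the paper) is a WP-isometry. The only discrepancy is where the factor $\tfrac{1}{2}$ lives: in the paper's conventions Wolpert's formula reads $d\ell_\alpha=\omega_{\mathrm{WP}}(\cdot,\v_\alpha)$ with \emph{no} $\tfrac{1}{2}$, and the factor instead enters through the normalisation $\omega_{\mathrm{WP}}=-2\,\mathrm{Im}(h_{\mathrm{WP}})$, giving $-\omega_{\mathrm{WP}}(\v_\alpha,v)=2\langle \sqrt{-1}\,\v_\alpha,v\rangle_{\mathrm{WP}}$ and hence $\v_\alpha=\tfrac{\sqrt{-1}}{2}\nabla\ell_\alpha$. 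You correctly flagged this as the delicate point; just be aware that your displayed ``magic formula'' $\omega_{\mathrm{WP}}(\v_\alpha,\cdot)=\tfrac{1}{2}d\ell_\alpha$ does not match the paper's normalisation, though the end result is the same.
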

\begin{proof}
By \cite[Theorem 2.10]{Wolpert1982}, we see that\footnote{In this formula, the sign conventions differ from those of Wolpert's article \cite{Wolpert1982} because we use left earthquakes while Wolpert uses right earthquakes.}
\begin{align*}
d\ell_\alpha
= \omega_{\mathrm{WP}}(\cdot, \v_\alpha(x))
=-\omega_{\mathrm{WP}}(\v_\alpha(x),\cdot),
\end{align*}
where $\omega_{\mathrm{WP}}$ denotes the Weil--Petersson symplectic form. Then, for any tangent vector $v\in T_x\T(S)$, 
\begin{align*}
\langle\nabla \ell_\alpha, v\rangle_{\mathrm{WP}}
=
d\ell_\alpha(v)
=-\omega_{\mathrm{WP}}(\v_\alpha(x), v)
=2\langle\sqrt{-1}\v_\alpha(x),v\rangle_{\mathrm{WP}}.
\end{align*} 
Since the pairing is nonsingular, we see that $\v_\alpha(x)=\frac{\sqrt{-1}}{2}(\nabla \ell_\alpha)_x$. To conclude, we write
\begin{align*}
\|\v_\alpha(x)\|^2
_{\mathrm{WP}}
&=
\mathrm{Re}(h_{\mathrm{WP}}(\v_\alpha(x),\v_\alpha(x)))\\
&=
\mathrm{Re}(h_{\mathrm{WP}}
(\tfrac{\sqrt{-1}}{2}(\nabla \ell_\alpha)_x,
\tfrac{\sqrt{-1}}{2}(\nabla \ell_\alpha)_x))\\
&=
\mathrm{Re}(h_{\mathrm{WP}}
(\tfrac{1}{2}(\nabla \ell_\alpha)_x,
\tfrac{1}{2}(\nabla \ell_\alpha)_x))
=
\|\tfrac{1}{2}(\nabla \ell_\alpha)_x\|^2
_{\mathrm{WP}}.
\end{align*}
\end{proof}

\subsection{Complexified character varieties}

Let $\mathcal{X}^{\mathrm{irred}}_{\mathrm{para}}(\pi_1(S),\PSL)$ denote the \emph{character variety} consisting of conjugacy classes of type-preserving (i.e.\  all punctures, if any, have parabolic holonomy), irreducible representations of $\pi_1(S)$ into $\PSL$. Teichm\"uller space is identified with the subset of $\mathcal{X}^{\mathrm{irred}}_{\mathrm{para}}(\pi_1(S),\PSL)$ consisting of discrete faithful representations into $\mathrm{PSL}_2(\mathbb{R})\subset\PSL$. We fix, once and for all, a spin structure on $S$. This tells us how to lift a representation from $\PSL$ to $\SL$, and so we have an embedding 
\[
\T(S)
\hookrightarrow
V(S):=
\mathcal{X}^{\mathrm{irred}}_{\mathrm{para}}(\pi_1(S),\SL),
\]
where the codomain is the character variety of type-preserving, irreducible representations of $\pi_1(S)$ into $\SL$. Faltings \cite[Theorem~3]{faltings1983real} shows that $V(S)$ is a complex manifold\footnote{Gunning shows this for the $\mathrm{GL}_2(\mathbb{C})$ character variety in earlier work \cite[\S9]{gunning1967lectures}, and it seems possible to derive the $\SL$ case using the holomorphicity and rank~1-ness of determinant maps on surface group generators.}. 

\subsection{Quasi-Fuchsian space}
\label{sec:qf}
We make mention of a particular subset $\mathcal{QF}(S)$ of $V(S)$ consisting of all characters of \emph{quasi-Fuchsian representations}. A representation $\rho:\pi_1(S)\to\mathrm{PSL}_2(\mathbb{C})$ induces a group 
\[
\rho(\pi_1(S))\lneq\mathrm{PSL}_2(\mathbb{C})=\mathrm{Isom}^+(\mathbb{H}^3),
\]
which acts on $\mathbb{H}^3$. A quasi-Fuchsian representation is defined as one whose induced group $\rho(\pi_1(S))\lneq\mathrm{PSL}_2(\mathbb{C})$ has limit set in $\partial^\infty\mathbb{H}^3=\mathbb{C}P^1$ which is a quasicircle. Such a quasicircle separates $\mathbb{C}P^1$ into two discs, both admitting proper discontinuous actions of $\pi_1(S)$ via $\rho(\pi_1(S))$. This means that a quasi-Fuchsian representation induces two (marked) Riemann surface structures on $S$ with opposite orientations. Bers' simultaneous uniformisation theorem \cite{bers_simultaneous} tells us that the space of such pairs of marked Riemann surfaces precisely parametrises the quasi-Fuchsian space, and
\[
\mathcal{QF}(S)=\T(S\cup\bar{S})=\T(S)\times\T(\bar{S})=\T(S)\times\overline{\T(S)}.
\]
In particular, simultaneous uniformisation tells us that this agreement is a biholomorphism, where the former space is endowed with the  complex structure coming from $V(S)$ and the latter is given the usual complex structure on Teichm\"uller space.

Quasifuchsian representations generalise Fuchsian ones where the limit set circle $\mathbb{R}\cup\{\infty\}=\mathbb{R}P^2\subset\mathbb{C}^1=\mathbb{C}\cup\{\infty\}$ has complement naturally regarded as the upper and lower half-planes in $\mathbb{C}$. The actions of a Fuchsian representation on these two half-planes are related by reflection symmetry, hence the two marked Riemann surfaces parametrising such a Fuchsian representation are antiholomorphic. This gives a diagonal embedding of $\T(S)$ in $\mathcal{QF}(S)$ as a maximal-dimensional totally real submanifold.  In particular, this means that the usual complex structure of $\T(S)$, as referenced in the beginning of \cref{sec:wp}, is not complex analytically compatible with the natural complex structure on $\mathcal{QF}(S)\subset V(S)$. However, this diagonal embedding of $\T(S)$ into $\mathcal{QF}(S)$ \emph{is} real analytic.


\subsection{Quakebends}
The character variety $V(S):=\mathcal{X}^{\mathrm{irred}}_{\mathrm{para}}(\pi_1(S),\SL)$ is a natural parameter space for deformations of Fuchsian representations under \emph{quakebends} --- a holonomy-based complexification of earthquakes introduced by Epstein--Marden \cite{epstein1987convex} via the formalism of \emph{quakebend cocycles}. This is shearing with respect to the following ``tensor product'' of $\mathbb{C}$ and measured lamination space over $\mathbb{R}_{>0}$:
\[
\ml_{\mathbb{C}}(S)
:=
\{
(z,\lambda)\in\mathbb{C}\times\ml(S)
\}
/
(\forall t\in\mathbb{R}_{>0},\ (tz,\lambda)\sim(z,t\lambda)).
\]
Conceptually speaking, to quakebend a marked hyperbolic surface $x\in\T(S)$ with respect to 
\begin{itemize}
\item
a real weighted measured lamination $(t,\lambda)$ for $t\in\mathbb{R}$, is to perform a left earthquake with respect to $t\lambda$ if $t>0$ and to perform a right earthquake if $t<0$;

\item
an imaginary weighted measured lamination $(t\sqrt{-1},\lambda)$ for $t\in\mathbb{R}$, is to bend the universal cover of $x$, regarded as a totally geodesic plane in $\mathbb{H}^3$, to obtain a $\pi_1(S)$-equivariant pleated plane;

\item
a measured lamination with general complex weight, is a combination of earthquaking with respect to the real part of $(z,\lambda)$, followed by bending with respect to the imaginary part.
\end{itemize}
McMullen \cite{mcmullen1998complex} gave an alternative approach for describing quakebends using intermediary deformations called \emph{complex earthquakes} where one (real) earthquakes and grafts flat cylinders \cite{goldman1987projective, hejhal1975monodromy, maskit1969class}, instead of bending by some angle, to produce projective surface structures, and one then goes from a projective structure to a $\PSL$ holonomy representation via \cite{hejhal1975monodromy}.

Complexifying earthquakes in the form of quakebends affords us simultaneous access to the complex-analytic, as well as to the algebraic structure of the representation varieties. McMullen's work \cite[Theorem~2.5 and Proposition~2.6]{mcmullen1998complex} clarifies the structure which we need.

\begin{theorem}
\label{thm:quakebend}
Define the \emph{bending holonomy map}
\[
\eta:
\ml_{\mathbb{C}}(S)\times\T(S)
\to
\mathcal{X}^{\mathrm{irred}}_{\mathrm{para}}(\pi_1(S),\SL)
\]
as the map which takes a complex measured lamination $(z,\lambda)\in\ml_{\mathbb{C}}(S)$ paired with (the character for) a Fuchsian representation $x\in\T(S)$ to (the character for) the induced representation obtained from quakebending $x$ with respect to $(z,\lambda)$. The map $\eta$ is continuous, and it is complex analytic with respect to the $z$ parameter in the complex measured lamination $(z,\lambda)$.
\end{theorem}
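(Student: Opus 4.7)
The plan is to obtain the theorem as a consequence of Theorem~2.5 and Proposition~2.6 of \cite{mcmullen1998complex}. I would approach it in three stages: an explicit matrix description of quakebends along weighted simple closed curves, joint regularity in that rational case, and extension to all of $\ml_{\mathbb{C}}(S)$ by a density argument.

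For a weighted simple closed curve $(z,c\gamma)\in\ml_{\mathbb{C}}(S)$ and a Fuchsian representation $x\in\T(S)$, one can write $\eta(z,c\gamma,x)$ down explicitly. Develop $x$ into $\htwo\subset\hthree$; at each lift of $\gamma$, insert the Möbius transformation $M_{\tilde\gamma}(zc):=\exp\bigl(\tfrac{zc}{2}\,T_{\tilde\gamma}\bigr)$, where $T_{\tilde\gamma}\in\mathfrak{sl}_2(\mathbb{C})$ generates the one-parameter subgroup fixing the lift $\tilde\gamma$. The new holonomy of any $\alpha\in\pi_1(S)$ is then a finite alternating product of factors $\rho_x(\alpha_i)$ and $M_{\tilde\gamma_j}(zc)$, and consequently is entire in $z$ and jointly real-analytic in $(c,x)$.

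For general $(z,\lambda)$, the plan is to approximate $(z,\lambda)$ by rational complex laminations $(z,c_n\gamma_n)\to(z,\lambda)$, using density of weighted simple closed curves in $\ml(S)$. The key ingredient is the Epstein--Marden convergence theorem for quakebend cocycles \cite{epstein1987convex}: it ensures that the holonomies $\eta(z,c_n\gamma_n,x)$ converge in $\mathcal{X}^{\mathrm{irred}}_{\mathrm{para}}(\pi_1(S),\SL)$ to a limit that is independent of the chosen approximating sequence, and that this convergence is locally uniform in $z$ on compact subsets of $\mathbb{C}$. Joint continuity of $\eta$ follows at once, and complex analyticity in $z$ then falls out of Weierstrass's theorem, since each approximant $z\mapsto\eta(z,c_n\gamma_n,x)$ is entire.

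The principal obstacle is the third stage. The cocycle of a general quakebend is not literally a limit of rational cocycles in any naïve sense; defining it and verifying its joint regularity requires the Möbius-cocycle formalism of \cite{epstein1987convex}. One must also check that the limit representation genuinely lies in the irreducible, type-preserving character variety $V(S)$, which is ultimately where the hypothesis that $x$ is Fuchsian plays a role. All of this delicate work is carried out in \cite{epstein1987convex, mcmullen1998complex}, so in practice the proof consists of citing and assembling the relevant statements from those references.
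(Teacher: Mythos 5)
Your proposal is essentially the paper's approach: the paper offers no independent proof of this statement, but states it as a direct consequence of McMullen \cite[Theorem~2.5 and Proposition~2.6]{mcmullen1998complex} (with the quakebend-cocycle formalism of \cite{epstein1987convex} underlying it), which is exactly the assembly of references you describe. Your outline of how those cited results fit together (explicit holonomy for weighted simple closed curves, Epstein--Marden convergence for general laminations, analyticity in $z$ via locally uniform limits of entire functions) is a reasonable account of what the citations provide.
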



\subsection{Trace coordinates }

\begin{definition}[trace functions]
\label{defn:trace}
For any closed curve $\alpha$ on $S$, let $\mathbf{a}\in\pi_1(S)$ denote its homotopy class which is determined up to conjugacy. Then the \emph{trace function} 
\[
\trace_\alpha:V(S)
\to
\mathbb{C},
\quad
\rho\mapsto\trace(\rho(\mathbf{a}))
\]
is well defined (i.e.\  independent of the choice of $\mathbf{a}$) and holomorphic.
\end{definition}

\begin{theorem}[trace embedding]
\label{thm:embedding}
The Teichm\"uller space $\T(S)$ embeds in $\mathbb{R}^\Gamma$, for some finite collection of simple closed curves $\Gamma=\gamma_1,\ldots,\gamma_D$ on $S$, via the map
\begin{align*}
\trace^{\Gamma}:\T(S)\to\mathbb{R}^\Gamma,\quad
x\mapsto (\trace_{\gamma_1}(x),\ldots,\trace_{\gamma_D}(x)). 
\end{align*}
For closed surfaces, Schmutz \cite{schmutz1993parametrisierung} and Okumura \cite{okumura1996global} independently show that $D:=\dim_{\mathbb{R}}\T(S)+1$ suffices, and this $D$ is known to be optimal. For punctured surfaces, Hamenst\"adt \cite{hamenstadt2003length} shows that $D=\dim_{\mathbb{R}}\T(S)+1$ still suffices.
\end{theorem}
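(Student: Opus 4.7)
The plan is to first reduce the trace embedding problem to a length separation problem. For a simple closed curve $\gamma$ on $S$ and $x \in \T(S) \subset V(S)$, the corresponding element $\rho(\mathbf{a}) \in \mathrm{SL}_2(\reals)$ is hyperbolic, so $\trace_\gamma(x) = \epsilon \cdot 2\cosh(\ell_\gamma(x)/2)$, where $\epsilon = \pm 1$ is globally determined by the fixed spin structure (and by the connectedness of $\T(S)$). Hence separating points of $\T(S)$ via trace functions is equivalent to separating them via length functions of the same curves, and it suffices to produce a collection $\Gamma = \{\gamma_1, \ldots, \gamma_D\}$ of simple closed curves whose lengths determine $x$.

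Next I would establish a weak version of the result by a classical Fricke-type argument: the $\mathrm{SL}_2$-character of any representation of the finitely-generated group $\pi_1(S)$ is determined by its trace values on finitely many elements, via iterated use of the identity $\trace(AB) + \trace(AB^{-1}) = \trace(A)\trace(B)$, and one verifies that these elements can be taken to be simple closed curves on $S$. This produces an embedding into some $\reals^N$, but typically with $N \gg \dim_\reals \T(S) + 1$.

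The main work is to sharpen the count down to $D = \dim_\reals \T(S) + 1$. My approach is to start from a pants decomposition $P = \{\alpha_1, \ldots, \alpha_{3g-3+n}\}$, whose lengths give the ``scalar half'' of Fenchel--Nielsen coordinates. One then adjoins $3g-3+n$ auxiliary simple closed curves $\beta_i$, each crossing its partner $\alpha_i$ non-trivially, so that by Wolpert's twist derivative formula $\partial \ell_\beta / \partial \tau_\alpha = \sum_{p \in \alpha \cap \beta} \cos \theta_p$, the collection $(\ell_{\alpha_i}, \ell_{\beta_i})$ has non-degenerate Jacobian in Fenchel--Nielsen coordinates and hence defines a local diffeomorphism onto its image. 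Adding one further curve, bringing the total to $\dim_\reals\T(S) + 1$, is used to eliminate a residual two-fold ambiguity (e.g., exchanging twist parameters by their negatives, or a hyperelliptic-type involution) and hence to promote the local diffeomorphism to an embedding.

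The hard part is to upgrade from local injectivity to \emph{global} injectivity: a priori, two widely separated points of $\T(S)$ could still produce identical length vectors unless the curves are chosen very carefully. For closed surfaces, Schmutz \cite{schmutz1993parametrisierung} and Okumura \cite{okumura1996global} resolve this by explicit curve constructions combined with delicate hyperbolic trigonometry and a case analysis according to topological type; Hamenst\"adt \cite{hamenstadt2003length} extends the result to punctured surfaces, where new difficulties arise from the non-compactness of cusp regions and the need to control arcs near punctures. Optimality of $D = \dim_\reals \T(S) + 1$ for closed surfaces requires a separate argument: any embedding into lower-dimensional Euclidean space would violate the local topological structure of $\T(S)$ once the boundary behavior of length functions at $\partial \overline{\T(S)}$ is accounted for. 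For the present paper's purposes, one can simply appeal to the three cited references.
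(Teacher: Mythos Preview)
The paper does not prove this theorem at all: it is stated in the background section as a known result, with the proof entirely outsourced to the cited references (Schmutz, Okumura, Hamenst\"adt), together with a remark pointing to \cite[Proposition~6.17]{IT} for real-analyticity. Your final sentence --- ``For the present paper's purposes, one can simply appeal to the three cited references'' --- is therefore exactly what the paper does, and is the correct approach here.

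Your preceding sketch is a reasonable informal outline of the ideas in those references, but note that it is not needed for this paper and would not, as written, constitute a self-contained proof: the reduction from trace to length is fine, the Fricke-type argument for \emph{some} finite embedding is standard, but the step where you ``adjoin $3g-3+n$ auxiliary curves $\beta_i$'' and then ``add one further curve'' to kill a two-fold ambiguity is a heuristic rather than an argument --- the actual global injectivity arguments in the cited papers are considerably more delicate and do not follow this clean pattern. In short: your proposal is not wrong, but the paper's ``proof'' is simply a citation, and that is all that is called for.
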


\begin{remark}
The above embedding is real analytic. See, e.g., \cite[Proposition~6.17]{IT} for an argument for closed surfaces which also applies for punctured surfaces.

\end{remark}

\begin{remark}[trace vs. length]
\label{rmk:complexlengths}
Trace functions are more natural when working over $V(S):=\mathcal{X}^{\mathrm{irred}}_{\mathrm{para}}(\pi_1(S),\SL)$ because length functions generally take value in $\mathbb{C}/(2\pi\sqrt{-1}\mathbb{Z})$ rather than $\mathbb{C}$. However, if one is only working over $\mathcal{QF}(S)\subset V(S)$, the contractibility of $\mathcal{QF}(S)$ gives a canonical extension of $\ell_{\gamma}:\T(S)\to\mathbb{R}$ to  $\ell_{\gamma}:\mathcal{QF}(S)\to\mathbb{C}$ via analytic continuation.  This holomorphic function on $\mathcal{QF}(S)$ is called the \textit{complex length} of $\gamma$. 
\end{remark}

\subsection{Cosine formula}

In our study, we shall often need to know the behaviour of geodesic length functions (or trace functions) with respect to earthquakes. Wolpert's cosine formula \cite[Corollary 2.12]{Wolpert1982} describes what happens to geodesic length functions when one performs Fenchel--Nielsen twists, but we shall require Kerckhoff's generalisation. First, we define the total cosine function.

\begin{definition}[{total cosine, see \cite{kerckhoff1985earthquakes}}]
Given two measured geodesic laminations $\lambda$ and $\mu$ on a hyperbolic surface $(S, x)$, their \emph{total cosine}, denoted by $\Cos_x(\lambda, \mu)$, is the integral over the surface $(S,x)$
\[
\Cos_x(\lambda, \mu)
=
\int_{(S,x)}
\cos\theta\ \mathrm{d}\lambda\times \mathrm{d}\mu,
\]
of the cosine of the positive angle $\theta$ made at each intersection point of the two laminations $\lambda$ and $\mu$ when we look at $\mu$ from $\lambda$. The aforementioned integral is taken with respect to the product of the two transverse measures.
\end{definition}

\begin{lemma}[{Kerckhoff's cosine formula, \cite[Proposition 2.5]{kerckhoff1985earthquakes}}]
\label{Kerckhoff cosine}
 
For any measured laminations $\lambda$ and $\mu$ on $(S,x)$, we have
\[
\mathrm{d}\ell_\mu(\v_x(\lambda))
=
\v_x(\lambda)(\ell_{\mu})
=
\Cos_x(\lambda, \mu).
\]
\end{lemma}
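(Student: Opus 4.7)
My plan is to establish the cosine formula first for pairs of weighted simple closed geodesics, and then pass to general measured laminations by density and continuity arguments. The base case is essentially Wolpert's classical cosine formula: for simple closed geodesics $\alpha,\beta$ on $(S,x)$, the Fenchel--Nielsen twist deformation along $\alpha$ satisfies
\begin{equation*}
\mathrm{d}\ell_\beta(\v_x(\alpha)) = \sum_{p\in\alpha\cap\beta}\cos\theta_p,
\end{equation*}
where $\theta_p$ is the intersection angle at $p$ measured from $\alpha$ to $\beta$. When $\alpha$ and $\beta$ coincide, the standard computation using the collar lemma (or a direct geodesic trigonometry argument) gives $\mathrm{d}\ell_\alpha(\v_x(\alpha))=0$, which matches $\Cos_x(\alpha,\alpha)=0$ since parallel leaves intersect at angle $\pi/2$ or not at all. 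Extending by linearity in the weights, the formula holds whenever $\lambda=\sum c_i\alpha_i$ and $\mu=\sum d_j\beta_j$ are positive weighted multicurves, since both sides of the identity depend bilinearly on $(\lambda,\mu)$.

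The second step is to approximate arbitrary $\lambda,\mu\in\ml(S)$ by sequences $(\lambda_n),(\mu_n)$ of weighted simple multicurves converging in the weak-$*$ topology on $\ml(S)$; this is possible because weighted multicurves are dense in $\ml(S)$. I then need two continuity statements. The first is continuity of the left-hand side: the map $(\lambda,x)\mapsto\v_x(\lambda)$ is continuous (this is Thurston's/Kerckhoff's construction of the earthquake flow as the limit of Fenchel--Nielsen twists with respect to any approximating sequence of weighted simple closed curves), and the differential $\mathrm{d}\ell_\mu$ depends continuously on $\mu$ because $\ell_\mu$ depends smoothly on $x$ with continuous dependence on $\mu$. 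The second is continuity of the total cosine function $(\lambda,\mu)\mapsto\Cos_x(\lambda,\mu)$, which follows from the fact that the geodesic representatives of $\lambda_n,\mu_n$ converge as subsets of $(S,x)$ equipped with their transverse measures, and the cosine integrand is bounded and (generically) continuous across the intersection locus.

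The main obstacle I anticipate is the last continuity claim in the presence of common sublaminations. If $\lambda$ and $\mu$ share a leaf, or if a sequence of transverse intersections degenerates to a tangential configuration in the limit, one has to be careful that the product measure $\mathrm{d}\lambda\times\mathrm{d}\mu$ does not concentrate on the diagonal in a way that creates a jump. The standard fix is to notice that whenever two leaves are tangent they must coincide (leaves of measured laminations cannot cross themselves), so the angle at any shared point is $0$ or $\pi$, and the $\cos\theta$ integrand is locally integrable with respect to the transverse measures; combined with the fact that weak-$*$ convergence of measured laminations implies weak convergence of the associated transverse measures on compact transversals, one obtains $\Cos_x(\lambda_n,\mu_n)\to\Cos_x(\lambda,\mu)$. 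Passing to the limit in the formula $\mathrm{d}\ell_{\mu_n}(\v_x(\lambda_n))=\Cos_x(\lambda_n,\mu_n)$ then yields the claimed identity, and the middle equality $\mathrm{d}\ell_\mu(\v_x(\lambda))=\v_x(\lambda)(\ell_\mu)$ is just the definition of the differential applied to a tangent vector.
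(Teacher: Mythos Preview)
The paper does not actually prove this lemma; it is stated as a citation of Kerckhoff's result \cite[Proposition 2.5]{kerckhoff1985earthquakes}, and the paper only uses it as a black box (e.g.\ in \cref{lem:WP}, \cref{reciprocal}, and the proof of \cref{Th:Thurston-Ball}). So there is no ``paper's own proof'' to compare against.

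That said, your sketch is essentially the standard argument and is the route Kerckhoff himself takes: Wolpert's cosine formula for simple closed curves, then passage to general measured laminations by density and continuity. A few points deserve tightening. First, your remark that when $\alpha=\beta$ ``parallel leaves intersect at angle $\pi/2$'' is confused; the correct statement is simply that a lamination has no transverse self-intersection, so $\Cos_x(\alpha,\alpha)=0$ because the integration domain is empty, and $\mathrm{d}\ell_\alpha(\v_x(\alpha))=0$ because earthquaking along $\alpha$ preserves $\ell_\alpha$. Second, the bilinearity step only makes sense when the constituent curves of the multicurve $\lambda$ are pairwise disjoint, since $\v_x$ is not linear on $\ml(S)$ in general; fortunately this is exactly the case for weighted multicurves. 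Third, the continuity of $\Cos_x(\cdot,\cdot)$ is the genuine technical heart of the argument and is where Kerckhoff does real work; your discussion of degenerating angles and shared leaves gestures at the right issues but would need the careful treatment of transverse cocycles and the structure of geodesic laminations that Kerckhoff provides to be a complete proof.
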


\begin{corollary}
\label{reciprocal}
Let $\lambda$ and $\mu$ be two measured laminations.
Then we have 
\[
\mathrm{d}\ell_\mu(\v_x(\lambda))
=
-d\ell_\lambda(\v_x(\mu)).
\]
\end{corollary}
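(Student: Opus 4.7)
The plan is to apply Kerckhoff's cosine formula (\cref{Kerckhoff cosine}) to both sides of the claimed equality and then show that the total cosine function $\Cos_x(\cdot,\cdot)$ is antisymmetric in its two arguments. Concretely, by \cref{Kerckhoff cosine} we have
\[
\mathrm{d}\ell_\mu(\v_x(\lambda)) = \Cos_x(\lambda,\mu)
\quad\text{and}\quad
\mathrm{d}\ell_\lambda(\v_x(\mu)) = \Cos_x(\mu,\lambda),
\]
so the corollary reduces to the identity $\Cos_x(\lambda,\mu) = -\Cos_x(\mu,\lambda)$.

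To establish the antisymmetry, I would unpack the definition of the total cosine. The integrand is $\cos\theta$, where $\theta$ is the \emph{signed/oriented} positive angle made at each intersection point when looking at $\mu$ from $\lambda$. Swapping the roles of the two laminations replaces this angle by its supplementary partner $\pi-\theta$, because the four angles at an intersection pair up into two classes summing to $\pi$, and the convention ``looking at $\cdot$ from $\cdot$'' picks out opposite classes under this swap. Since the product measure $\mathrm{d}\lambda\times\mathrm{d}\mu$ on the intersection set is symmetric in $\lambda$ and $\mu$ (the intersection pairing $i(\lambda,\mu)$ is symmetric), we obtain
\[
\Cos_x(\mu,\lambda) = \int_{(S,x)} \cos(\pi-\theta)\,\mathrm{d}\mu\times\mathrm{d}\lambda = -\int_{(S,x)} \cos\theta\,\mathrm{d}\lambda\times\mathrm{d}\mu = -\Cos_x(\lambda,\mu),
\]
which is what we need.

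The only real subtlety, and hence the main point to verify carefully, is the angle convention and its behavior under the interchange of $\lambda$ and $\mu$; once this is pinned down, the rest is a formal computation. Note that for weighted simple closed curves one can verify the identity pointwise at transverse intersections and then extend to general measured laminations by continuity of $\Cos_x$ and density of weighted simple closed multicurves in $\ml(S)$.
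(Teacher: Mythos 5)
Your proposal is correct and is exactly the derivation the paper intends: the corollary is stated as an immediate consequence of Kerckhoff's cosine formula (\cref{Kerckhoff cosine}), with the sign coming from the antisymmetry $\Cos_x(\lambda,\mu)=-\Cos_x(\mu,\lambda)$, which holds because interchanging the laminations replaces the positive angle $\theta$ at each intersection by the supplementary angle $\pi-\theta$ while the product of transverse measures is symmetric. Your careful pinning down of the angle convention (and the remark on extending from weighted simple closed curves by continuity) is precisely the content the paper leaves implicit, so there is nothing to correct.
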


\newpage
\section{The earthquake norm}
\label{s:enorm}

In this section, we first prove that the earthquake metric is a well-defined Finsler metric. This requires the following ingredients:
\begin{itemize}
\item
the earthquake norm ${\|\cdot\|}_e$ on each tangent space $T_x\T(S)$ needs to be a weak norm,
\item
and these norms should vary (at least) continuously with respect to the base point $x\in\T(S)$.
\end{itemize}

The former may be obtained in several ways. We provide first a proof via an infinitesimal duality between the earthquake norm and the co-norm of the Thurston metric (\cref{duality}); we then give a more direct proof via Thurston's original strategy, which relies on the convexity of length functions. The latter of the above two requirements implicitly follows from Kerckhoff's proof of the analyticity of earthquakes \cite{kerckhoff1985earthquakes}. We show later in \cref{thm:lipregular} a stronger result, namely, that the earthquake norm is locally Lipschitz continuous.

\subsection{Infinitesimal duality to the Thurston metric}
\label{s:vsthurston}

Throughout this subsection, although we refer to ${\|\cdot\|}_e$ as the earthquake \emph{norm}, we only use its invariance with respect to multiplication by non-negative real numbers. This is an important point, as the main result of this subsection will be used to give a proof of the fact that ${\|\cdot\|}_e$ is a weak norm. The proof of the main result of this subsection depends on the following key lemma in which we use the earthquake vector notation $\v_x$ for the map defined in \cref{rmk:earthquake-normalized}
and where $\e_x:\mathcal{PML}(S)\to T_x\T(S)$ is the map defined on the quotient space by
\[
\e_x \colon \lambda\mapsto \v_x\left(\tfrac{\lambda}{\len_x(\lambda)}\right)=\tfrac{\v_x(\lambda)}{\len_x(\lambda)}.
\]

\begin{lemma}
\label{lem:WP}
Let $\omega_{\mathrm{WP}}$ be the Weil--Petersson symplectic form on the Teichm\"uller space $\mathcal{T}(S)$ and consider the linear isomorphism  $\Phi_{\mathrm{WP}}$ defined by
\begin{align}
 \Phi_{\mathrm{WP}} \colon T_x\T(S)\to T_x^*\T(S),\quad
v\mapsto \Phi_{\mathrm{WP}}(v):=\omega_{\mathrm{WP}}(\cdot,v).
\end{align}
Then, we have
\begin{align}
\Phi_{\mathrm{WP}}
\left(
\e_x(\lambda)\right)
=(d\log\ell_\lambda)_x.\label{eq:dual}
\end{align}
\end{lemma}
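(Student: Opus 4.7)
The identity reduces, essentially by direct computation, to Wolpert's symplectic interpretation of the differential of length functions (the same result already invoked in the proof of \cref{lem:wp:nabla}). My strategy is to first verify the formula for weighted simple closed curves $c\gamma$, where Wolpert's theorem applies directly, and then extend it to arbitrary measured laminations $\lambda\in\ml(S)$ by density and continuity. The projective setting is built in: both sides of \cref{eq:dual} descend to well-defined maps on $\pml(S)$, because $\v_x$ is positively homogeneous in its measured lamination argument (so $\e_x$ is projectively invariant) and $d\log \ell_\lambda$ depends only on the projective class $[\lambda]$.

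\textbf{Step 1: Wolpert's formula.} For a weighted simple closed curve $c\gamma$, Wolpert's theorem (with our left-earthquake sign convention, cf.\ the footnote in the proof of \cref{lem:wp:nabla}) gives
\begin{align*}
(d\ell_{c\gamma})_x = \omega_{\mathrm{WP}}(\,\cdot\,,\v_{c\gamma}(x))=\Phi_{\mathrm{WP}}(\v_{c\gamma}(x)).
\end{align*}
Dividing both sides by $\ell_{c\gamma}(x)>0$ and using linearity of $\Phi_{\mathrm{WP}}$ together with the identity $d\log \ell_{c\gamma}=d\ell_{c\gamma}/\ell_{c\gamma}$ yields
\begin{align*}
(d\log \ell_{c\gamma})_x
=\Phi_{\mathrm{WP}}\!\left(\tfrac{\v_{c\gamma}(x)}{\ell_{c\gamma}(x)}\right)
=\Phi_{\mathrm{WP}}(\e_x(c\gamma)).
\end{align*}

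\textbf{Step 2: extension by continuity.} Both sides of \cref{eq:dual} are continuous on $\pml(S)$ with values in the finite-dimensional vector space $T_x^*\T(S)$. For the left-hand side, $\lambda\mapsto \v_x(\lambda)$ is continuous on $\ml(S)$ (as follows from Kerckhoff's convergence theorem for earthquake flows) and $\ell_\lambda(x)$ is continuous and positive on $\ml(S)\setminus\{0\}$, so $\e_x$ descends to a continuous map on $\pml(S)$, and $\Phi_{\mathrm{WP}}$ is linear. For the right-hand side, continuity of $\lambda\mapsto (d\log\ell_\lambda)_x$ on $\pml(S)$ follows from the continuity of $d\ell_\lambda$ in $\lambda$ (e.g.\ via Kerckhoff's cosine formula \cref{Kerckhoff cosine}, which computes the pairing of $(d\ell_\lambda)_x$ against every infinitesimal earthquake) together with positivity of $\ell_\lambda(x)$. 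Since weighted simple closed curves are dense in $\ml(S)$, the equality established in Step~1 propagates to all of $\pml(S)$.

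\textbf{Anticipated difficulty.} There is no serious obstacle: this is essentially an unpacking of Wolpert's formula together with a normalisation. The only delicate point is ensuring the correct sign in Wolpert's identity for \emph{left} earthquakes (Wolpert's original convention uses right earthquakes), and confirming that the continuous extension from weighted simple closed curves to general measured laminations is compatible on both sides; both are handled by the cosine formula framework already recalled in \cref{Kerckhoff cosine}.
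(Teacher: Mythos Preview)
Your proof is correct and follows essentially the same route as the paper: first establish the identity for (weighted) simple closed curves via Wolpert's duality formula $(d\ell_\gamma)_x=\Phi_{\mathrm{WP}}(\v_x(\gamma))$ and normalise by $\ell_\gamma(x)$, then extend to arbitrary $\lambda\in\ml(S)$ by density, using continuity of $\v_x$, $\ell_x$, and the continuity of $(d\ell_\lambda)_x$ supplied by Kerckhoff's cosine formula. The only cosmetic difference is that the paper writes out the limiting argument explicitly rather than packaging it as ``both sides are continuous on $\pml(S)$''.
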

\begin{proof}
For any simple closed curve $\gamma$, Wolpert's duality formula for the Weil--Petersson form \cite[Theorem 2]{Wolpert1982} tells us that the Fenchel--Nielsen twist vector $\left.\tfrac{\partial}{\partial\tau_{\gamma}}\right|_x$ for $\gamma$ satisfies $\Phi_{\mathrm{WP}}(\left.\tfrac{\partial}{\partial\tau_{\gamma}}\right|_x)=(d\ell_\gamma)_x$. Since $\left.\tfrac{\partial}{\partial\tau_{\gamma}}\right|_x$ is precisely the earthquake vector field $\v_x(\gamma)$, normalising by the length of $\gamma$, we have:
\begin{align}
\Phi_{\mathrm{WP}}
\left(
\e_x(\gamma)\right)
=
\Phi_{\mathrm{WP}}
\left(
\tfrac
{\v_x(\gamma)}
{\len_x(\gamma)}
\right)
=
\frac{(\mathrm{d}\ell_\gamma)_x}
{\ell_\gamma(x)}
=
(d\log\ell_\gamma)_x.\label{eq:projembed}
\end{align}

In the general case where $\lambda$ is an arbitrary measured geodesic lamination,
the continuity of $\v_x$ and $\len_x$ ensures that for any sequence of weighted simple closed curves $c_i\gamma_i\to\lambda$, the sequence $\e_x(c_i\gamma_i)=\e_x(\gamma_i)$ tends to $\e_x(\lambda)$. Furthermore, Kerckhoff's generalised cosine formula for the derivative of general measured laminations with respect to general earthquakes (\cref{Kerckhoff cosine}) asserts that for an arbitrary tangent vector $v=\v_x(\mu)\in T_x\T(S)$,
\[
\mathrm{d}\ell_{c_i\gamma_i}(v)\
=
\v_x(\mu)(\ell_{c_i\gamma_i})
=\Cos_x(\mu,c_i\gamma_i).
\]
The continuity of $\Cos_x$ ensures that this converges to $\Cos_x(\mu,\lambda)=\mathrm{d}\ell_{\lambda}(v)$, and thus, $(d\ell_\lambda)_x$ varies continuously with respect to $\lambda$,  hence
\[
\Phi_{\mathrm{WP}}
\left(
\e_x(\lambda)
\right)
=
\lim_{i\to\infty}
\Phi_{\mathrm{WP}}
\left(
\tfrac
{\v_x(\gamma_i)}
{\len_x(\gamma_i)}
\right)
=
\lim_{i\to\infty}
(d\log\ell_{\gamma_i})_x
=
(d\log\ell_\lambda)_x,
\] 
which proves the lemma.
\end{proof}

Recall that the Thurston metric is Finsler. We denote the Thurston conorm on the cotangent space of $\T(S)$ by $\|\cdot \|^*_{\mathrm{Th}}$.\medskip

\noindent\textbf{\cref{duality}} \textrm{(infinitesimal duality)}\textbf{.} \textit{At each point $x \in \T(S)$, there is a linear isometry between the normed spaces $(T_x \T(S), \|\cdot \|_e)$ and ${(T^*_x \T(S), \|\cdot \|^*_{\mathrm{Th}})}$ induced by Weil--Petersson symplectic duality.} 

\begin{proof}
The right-hand side in \cref{eq:dual} is precisely Thurston's expression \cite[Theorem~5.1]{ThM} for the embedding of $\pml(S)$ in $T_x^*\T(S)$ as the Thurston co-norm unit sphere. Thus, $\Phi_{\mathrm WP}$ is a linear isomorphism taking the unit sphere in $T_x \T(S)$ with respect to $\| \cdot \|_e$ to that in $T_x^*\T(S)$ with respect to $\| \cdot \|_{\mathrm{Th}}^*$. This induces the required isometry.
\end{proof}



%

%
%

\subsection{Convexity of $\e_x(\pml(S))$}

Kerckhoff uses his work on lines of minima 
\cite[Theorem 2.2]{kerckhoff1992lines} to show, during the proof of \cite[Proposition~2.6]{kerckhoff1985earthquakes}, the following.

\begin{proposition}
\label{thm:earthquakehomeo}
For every $x\in\T(S)$, the map  
\[
\v_x:\ml(S)\to T_x\T(S),\quad\lambda\mapsto\v_x(\lambda)
\] 
is a homeomorphism.
\end{proposition}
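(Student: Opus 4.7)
First I note that $\v_x$ is continuous (by the construction of earthquake flows as limits of Fenchel--Nielsen twist flows depending continuously on the underlying weighted measured lamination) and positively homogeneous: $\v_x(c\lambda) = c \v_x(\lambda)$ for all $c \geq 0$. In particular, $\v_x$ sends $0$ to $0$ and preserves the cone-over-$\pml(S)$ structure of $\ml(S)$.

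My plan is to transport the problem to the cotangent space via the Weil--Petersson symplectic duality $\Phi_{\mathrm{WP}} \colon T_x\T(S) \to T_x^*\T(S)$, which is a linear isomorphism. Extending \cref{lem:WP} from its normalised statement to arbitrary $\lambda \in \ml(S)$ by positive homogeneity yields $\Phi_{\mathrm{WP}}(\v_x(\lambda)) = (d\ell_\lambda)_x$. The claim thus reduces to showing that the map $\Psi \colon \ml(S) \to T_x^*\T(S)$, $\lambda \mapsto (d\ell_\lambda)_x$, is a homeomorphism.

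Writing $(d\ell_\lambda)_x = \ell_\lambda(x) \cdot (d\log\ell_\lambda)_x$, I would then invoke Thurston's embedding theorem (\cref{embeds into cotangent}): the map $\iota_x \colon [\lambda] \mapsto (d\log\ell_\lambda)_x$ embeds $\pml(S)$ as the boundary of a convex body $B \subset T_x^*\T(S)$ containing $0$ in its interior. Since $\ell_\bullet(x)$ is a continuous positive function on $\ml(S) \setminus \{0\}$ which scales linearly under the $\R_{>0}$-action on $\ml(S)$, the assignment $\lambda \mapsto ([\lambda], \ell_\lambda(x))$ is a homeomorphism $\ml(S) \setminus \{0\} \cong \pml(S) \times \R_{>0}$. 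Composing this with the cone-over-$\iota_x$ map $(v,t) \mapsto t \cdot \iota_x(v)$, and sending $0 \mapsto 0$, recovers $\Psi$. Since the radial rays from $0$ through $\partial B$ bijectively and continuously cover $T_x^*\T(S)$ (as $0$ lies in the interior of $B$ and $\partial B$ is a topological sphere), this cone map is a homeomorphism, concluding the proof.

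The main technical subtlety lies less in the concluding cone-homeomorphism argument (which is standard once the pieces are in place) than in the careful extension of \cref{lem:WP} to an identity on all of $\ml(S)$ and in verifying that $\ell_\bullet(x)$ gives a bona fide radial coordinate on the topological cone $\ml(S)$. Both are routine given the continuity of length functions, which effectively shifts the substantive content of the proposition onto Thurston's embedding theorem.
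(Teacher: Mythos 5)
Your argument is correct, but it takes a genuinely different route from the paper: the paper does not prove \cref{thm:earthquakehomeo} at all, it quotes it as Kerckhoff's result, extracted from his proof of \cite[Proposition~2.6]{kerckhoff1985earthquakes} and resting on his lines-of-minima work. You instead deduce it from Thurston's embedding theorem for the Thurston co-norm (\cref{embeds into cotangent}), transported to the tangent space by Weil--Petersson symplectic duality (\cref{lem:WP} rescaled to $\Phi_{\mathrm{WP}}(\v_x(\lambda))=(d\ell_\lambda)_x$), plus a cone/Minkowski-gauge argument: since $\iota_x(\pml(S))$ is a compact convex hypersurface bounding a body $B$ with $0$ in its interior, every ray from the origin meets it exactly once and the gauge of $B$ is continuous, so $([\lambda],t)\mapsto t\,\iota_x([\lambda])$ is a homeomorphism onto $T_x^*\T(S)\setminus\{0\}$, with the radial coordinate $\ell_\lambda(x)$ and a properness check handling the point $0$ (you should state that last check, since a bijection that is a homeomorphism off a single point still needs continuity of both directions there; compactness of $\partial B$ away from $0$ gives it immediately). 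This is the same duality mechanism the paper uses in its \emph{first} proof of \cref{Th:Thurston-Ball}, and your argument in effect proves that theorem in full, homeomorphism and convexity at once, whereas the paper invokes duality only for the convexity and leaves the homeomorphism to Kerckhoff. What Kerckhoff's route buys is independence from Thurston's Theorem~5.1; what yours buys is a short deduction internal to the Thurston-norm/Weil--Petersson picture.

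One point you must make explicit to rule out circularity. As written, the paper's proof of \cref{lem:WP} establishes continuity of $\lambda\mapsto(d\ell_\lambda)_x$ by evaluating on ``an arbitrary tangent vector $v=\v_x(\mu)$'', which silently uses the surjectivity you are trying to prove. The fix is one sentence: it suffices to test convergence of $(d\ell_{\lambda_n})_x$ against Fenchel--Nielsen twist vectors $\v_x(\gamma)$ of simple closed curves, which span $T_x\T(S)$ (by Wolpert duality, their images $(d\ell_\gamma)_x$ span $T_x^*\T(S)$ since finitely many length functions give local coordinates), and on these the cosine formula gives $d\ell_{\lambda_n}(\v_x(\gamma))=\Cos_x(\gamma,\lambda_n)\to\Cos_x(\gamma,\lambda)$. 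Likewise you should note that Thurston's proof of \cref{embeds into cotangent} proceeds via stretch maps and ratios of lengths, not via infinitesimal earthquakes, so no hidden dependence on \cref{thm:earthquakehomeo} enters from that side either. With those remarks added, your proof stands as a legitimate alternative to the citation.
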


Thurston strengthens the above result as follows.

\begin{theorem}[{\cite[Theorem 5.2]{ThM}}]
\label{Th:Thurston-Ball}
For every $x\in\T(S)$, the map from $\ml(S)$ to $T_x \T(S)$ defined by $\v_x \colon\ml(S)\to T_x\T(S),\ \lambda \mapsto \v_x(\lambda)$ is a homeomorphism onto $T_x\T(S)$. The quotient map $\e_x:\mathcal{PML}(S)\to T_x\T(S)$ defined by
\[
\e_x \colon \lambda\mapsto \v_x\left(\frac{\lambda}{\len_x(\lambda)}\right)=\frac{\v_x(\lambda)}{\len_x(\lambda)}
\]
embeds $\mathcal{PML}(S)$ as a hypersurface which is the boundary of a  convex subset of $T_x\T(S)$ containing the origin at its interior.
\end{theorem}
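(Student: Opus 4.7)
The plan is to split the theorem into its two components, treating the homeomorphism claim separately from the convex embedding claim. The first claim, that $\v_x \colon \ml(S) \to T_x\T(S)$ is a homeomorphism onto the full tangent space, is exactly \cref{thm:earthquakehomeo} (Kerckhoff), which I would invoke directly.

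For the convex embedding claim, I would leverage the infinitesimal duality of \cref{duality}. By \cref{lem:WP}, we have $\Phi_{\mathrm{WP}} \circ \e_x = \iota_x$ as maps from $\pml(S)$ to $T_x^*\T(S)$, where $\iota_x([\lambda]) = (d\log \ell_\lambda)_x$. By \cref{embeds into cotangent}, the image $\iota_x(\pml(S))$ is the boundary of a convex open body $K^{*} \subset T_x^*\T(S)$ containing the origin. Since $\Phi_{\mathrm{WP}} \colon T_x\T(S) \to T_x^*\T(S)$ is a linear isomorphism, the preimage $K := \Phi_{\mathrm{WP}}^{-1}(K^{*})$ is a convex open body in $T_x\T(S)$ containing the origin, and its boundary is precisely $\e_x(\pml(S))$. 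Since $\pml(S)$ is compact and $\e_x = \Phi_{\mathrm{WP}}^{-1} \circ \iota_x$ is a continuous injection into a Hausdorff space, $\e_x$ is automatically a topological embedding.

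An alternative and more direct route, following Thurston's original strategy, would bypass \cref{embeds into cotangent} and establish the convexity of the earthquake unit sphere from scratch. Given tangent vectors $\v_x(\lambda_1), \v_x(\lambda_2)$ normalised so that $\len_x(\lambda_i) = 1$, one would aim to show that any convex combination is of the form $\v_x(\mu)$ with $\len_x(\mu) \leq 1$. Kerckhoff's cosine formula (\cref{Kerckhoff cosine}) implies the linearity
\[
d\ell_\nu\bigl(t\,\v_x(\lambda_1) + (1-t)\,\v_x(\lambda_2)\bigr) = t\,\Cos_x(\lambda_1,\nu) + (1-t)\,\Cos_x(\lambda_2,\nu)
\]
for each test lamination $\nu \in \ml(S)$. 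Combined with Kerckhoff's convexity of hyperbolic length functions along earthquake paths, this would yield the required length bound on $\mu$.

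The main obstacle in the first approach is \cref{embeds into cotangent} itself, whose proof ultimately rests on the convexity of geodesic length functions along earthquakes; in the second approach, the same convexity is the crux, simply repackaged more directly. In either formulation, the essential non-trivial input is Kerckhoff's convexity theorem, and once it is accepted, the duality supplies a clean and short derivation of the convex hypersurface structure on $\e_x(\pml(S))$.
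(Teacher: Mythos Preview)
Your first approach is correct and is essentially the paper's first proof verbatim: use \cref{lem:WP} to identify $\e_x$ with $\iota_x$ via the linear isomorphism $\Phi_{\mathrm{WP}}$, then invoke \cref{embeds into cotangent} to transport convexity from the Thurston co-norm unit sphere.

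Your second (``Thurston-style'') route, however, diverges from what the paper actually does, and as written it has a gap. You propose to show directly that if $\len_x(\lambda_i)=1$ then $t\,\v_x(\lambda_1)+(1-t)\,\v_x(\lambda_2)=\v_x(\mu)$ with $\len_x(\mu)\le 1$, appealing to Kerckhoff's convexity of length functions along earthquake paths. But that convexity statement (that $t\mapsto \ell_\nu(E_{t\lambda}(x))$ is convex) does not obviously bound $\len_x(\mu)$; you have not indicated which length function to apply or how the cosine-formula linearity feeds into the inequality. The paper's second proof takes a different tack: for each $\lambda$ it produces a \emph{supporting hyperplane} at $\e_x(\lambda)$ by using the reciprocity $d\ell_\lambda(\v_x(\mu))=-d\ell_\mu(\v_x(\lambda))$ from \cref{reciprocal} to convert a linear functional maximised on the (already known to be convex) Thurston co-norm sphere into one maximised on $\e_x(\pml(S))$; the existence of a supporting hyperplane at every boundary point then gives convexity, and a separate short argument shows the origin lies in the interior. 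So the key mechanism is reciprocity plus support, not the length-convexity along earthquake paths that you cite.
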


The additional conclusions needed to promote \cref{thm:earthquakehomeo} to this theorem are equivalent to asserting that the earthquake norm on the vector space $T_x \T(S)$ is a weak norm (see \cref{defn:weaknorm}).
It will turn out that the image of the embedding of $\mathcal{PML}(S)$ into $T_x\T(S)$ given by \cref{Th:Thurston-Ball} is the unit sphere for the earthquake norm.

We give two proofs for \cref{Th:Thurston-Ball}.


\noindent\textbf{First proof:} The linear isometry $\Phi_{\mathrm WP}$ in 
\cref{lem:WP,duality} isometrically identifies $\e_x(\pml(S))$ with the unit co-vectors in $T^*_x\T(S)$ with respect to the Thurston norm. We see that the unit ball with respect to the Thurston norm in  ${\|\cdot\|}^*_{\mathrm{Th}}$ is the boundary of an open convex ball in $T^*_x\T(S)$ since the Thurston conorm is a weak norm.
Therefore $\e_x(\pml(S))$ has the same property.\medskip

\noindent\textbf{Second proof:} We flesh out Thurston's sketch of the proof of \cref{Th:Thurston-Ball}. This proof does not depend on the isometry described in \cref{duality}, but does use Kerckhoff's generalisation of Wolpert's cosine formula (\cref{Kerckhoff cosine}) in the guise of the following proposition.

\begin{proposition}
\label{Kerckhoff}
Given an arbitrary $\lambda\in\ml(S)$, let $\mu$ be the unique measured geodesic lamination satisfying $\v_x(\mu)=-\v_x(\lambda)$. Then, 
\[
(\mathrm{d}\ell_\lambda+\mathrm{d}\ell_\mu)_x=0
\quad\text{and }\quad
\Cos_x(\lambda, \mu)=0.
\]
\end{proposition}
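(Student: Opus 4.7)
The plan is to derive both equalities as direct consequences of Kerckhoff's cosine formula (\cref{Kerckhoff cosine}) and its skew-symmetric form (\cref{reciprocal}), together with the surjectivity of $\v_x:\ml(S)\to T_x\T(S)$ furnished by \cref{thm:earthquakehomeo}. The existence and uniqueness of $\mu$ with $\v_x(\mu)=-\v_x(\lambda)$ is immediate from the fact that $\v_x$ is a homeomorphism, so the real content lies in translating the vanishing $\v_x(\lambda)+\v_x(\mu)=0$ into statements about $\mathrm{d}\ell$-covectors and cosine integrals.

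For the first assertion, I would test the covector $(\mathrm{d}\ell_\lambda+\mathrm{d}\ell_\mu)_x$ against an arbitrary tangent vector. Since $\v_x$ is surjective, every $v\in T_x\T(S)$ has the form $v=\v_x(\nu)$ for some $\nu\in\ml(S)$. Applying \cref{reciprocal} to each summand gives
\begin{align*}
(\mathrm{d}\ell_\lambda+\mathrm{d}\ell_\mu)_x(\v_x(\nu))
&=\mathrm{d}\ell_\lambda(\v_x(\nu))+\mathrm{d}\ell_\mu(\v_x(\nu))\\
&=-\mathrm{d}\ell_\nu(\v_x(\lambda))-\mathrm{d}\ell_\nu(\v_x(\mu))\\
&=-\mathrm{d}\ell_\nu\bigl(\v_x(\lambda)+\v_x(\mu)\bigr)=0,
\end{align*}
which shows that the covector vanishes identically.

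For the second assertion, I would apply \cref{Kerckhoff cosine} directly: $\Cos_x(\lambda,\mu)=\mathrm{d}\ell_\mu(\v_x(\lambda))$. Using the first assertion to replace $\mathrm{d}\ell_\mu$ at $x$ by $-\mathrm{d}\ell_\lambda$, this equals $-\mathrm{d}\ell_\lambda(\v_x(\lambda))$. Finally, \cref{reciprocal} applied with both laminations equal to $\lambda$ gives $\mathrm{d}\ell_\lambda(\v_x(\lambda))=-\mathrm{d}\ell_\lambda(\v_x(\lambda))$, hence $\mathrm{d}\ell_\lambda(\v_x(\lambda))=0$, and therefore $\Cos_x(\lambda,\mu)=0$.

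There is no substantial obstacle here; the proposition is essentially a book-keeping exercise once one accepts the cosine formula and its antisymmetric reformulation. The only mildly delicate point is invoking surjectivity of $\v_x$ at the end of the first step to pass from a pointwise vanishing on the image of $\v_x$ to the vanishing of the covector itself, but this is covered by \cref{thm:earthquakehomeo}. A stylistic choice to resolve is whether to prove the two claims in the order stated (deriving the cosine identity from the differential identity, as above) or to establish $\Cos_x(\lambda,\mu)=0$ first by a symmetry argument and feed it back; I would prefer the order above because it uses each cited lemma exactly once.
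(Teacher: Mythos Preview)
Your proof is correct and follows a slightly different route from the paper's. The paper obtains $(\mathrm{d}\ell_\lambda+\mathrm{d}\ell_\mu)_x=0$ in one line by invoking \cref{lem:WP} (Weil--Petersson symplectic duality), which directly identifies $\v_x(\lambda)+\v_x(\mu)=0$ with the vanishing of the sum of differentials; it then uses the geometric fact that earthquakes preserve the length of the shearing lamination to get $(\mathrm{d}\ell_\lambda)(\v_x(\lambda))=0$, and deduces $\Cos_x(\lambda,\mu)=0$ from the cosine formula. You instead work entirely within the cosine formula and its skew-symmetric corollary (\cref{reciprocal}), testing the covector against $\v_x(\nu)$ and using surjectivity of $\v_x$; you recover $(\mathrm{d}\ell_\lambda)(\v_x(\lambda))=0$ formally from antisymmetry rather than citing the geometric invariance. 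Your approach is arguably more in keeping with the paper's declared intent for the ``second proof'' of \cref{Th:Thurston-Ball}, which is advertised as independent of the Weil--Petersson duality machinery; the paper's own proof of this proposition still leans on \cref{lem:WP}.
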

\begin{proof}
By \cref{lem:WP}, the condition $\v_x(\lambda)+\v_x(\mu)=0$ is equivalent to 
\[
(\mathrm{d}\ell_\lambda+\mathrm{d}\ell_\mu)_x=0.
\]
Since earthquakes do not change the length of the sheared lamination, we have $(\mathrm{d} \ell_\lambda)(\v_x(\lambda))=0$, and hence $(\mathrm{d}\ell_\mu)_x(\v_x(\lambda))=0$.
By \cref{Kerckhoff cosine}, we have
\[
0
=
(\mathrm{d}\ell_\mu)_x(\v_x(\lambda))
 =
\v_x(\lambda)(\ell_{\mu})
=
\Cos_x(\lambda,\mu),
\]
 hence $\Cos_x(\lambda, \mu)=0$. 
\end{proof}


\begin{proof}[Second proof of \cref{Th:Thurston-Ball}]
Since $\e_x:\pml(S)\to T_x\T(S)$ is a homeomorphism (\cref{thm:earthquakehomeo}), its image is a hypersurface in $T_x\T(S)$. For an arbitrary measured lamination $\lambda$ on $S$, the form $(\mathrm{d}\log \ell_\lambda)_x$ is an element of the cotangent space $T^*_x\T(S)$ (see \cref{embeds into cotangent}).
Since $\mathrm{d}\log\ell(\pml(S))$ is the unit sphere with respect to the conorm ${\|\cdot\|}^*_{\mathrm{Th}}$, it is a convex hypersurface in $T^*_x\T(S)$. Convexity is equivalent to the fact for each $\lambda$ in $\mathcal{ML}(S)$,  there exists a linear function on $T^*_x\T(S)$, \ie an element $v$ of $T_x\T(S)$, which, when restricted to $\mathrm{d}\log \ell(\pml(S))$, attains its maximum at $(\mathrm{d}\log \ell_\lambda)_x$. By \cref{thm:earthquakehomeo}, there is a unique measured lamination $\mu$ such that $v=\v_x(\mu)$. Then by \cref{reciprocal}, we have
\begin{align}
\label{maximum}
\begin{split}
&(\mathrm{d}\log\ell_\lambda)(\v_x(\mu))\\
&=\frac{1}{\len_x(\lambda)}\mathrm{d}\ell_\lambda(\v_x(\mu))
=\frac{-1}{\len_x(\lambda)}\mathrm{d}\ell_\mu(\v_x(\lambda))
=-(\mathrm{d}\ell_\mu)\left(\e_x(\lambda)\right).
\end{split}
\end{align}
Therefore $-(d\ell_\mu)_x$ attains its maximum on $\e_x(\pml(S))$ at $\e_x(\lambda)$. The existence of such a linear function implies the convexity of $\e(\pml(S))$ at $\e_x(\lambda)$. By varying $\lambda$ over all measured laminations, we see that the image $\e_x(\pml(S))$ is a convex hypersurface.

It remains to show that the image of the origin of the vector space $T_x\mathcal{T}(S)$ is contained in the open subset bounded by the image of $\mathcal{PML}(S)$ by the map $\e_x$.
For an arbitrary $\lambda\in\ml(S)\setminus\{0\}$, by taking $\mu$ not disjoint from $\lambda$, each of whose leaves is either disjoint from or nearly parallel to $\lambda$, we ensure that $\v_x(\mu)(\ell_\lambda)\neq0$. By possibly replacing $\mu$ with $\mu'$ such that $\v_x(\mu)=-\v_x(\mu')$, we obtain that $\v_x(\mu)(\ell_\lambda)>0$, hence the maximum value of $(d\ell_\mu)_x$ taken at $\e_x(\lambda)$ in \cref{maximum}, which we denote by $m_\lambda$, is always positive.
Therefore, the hyperplane defined by $(d\ell_\mu)_x=m_\lambda$ has $\e(\pml(S))$ and the origin $\0$ on the same side. Since this holds for all $\lambda\in\ml(S)$, the interior of the convex ball bounded by $\e(\pml(S))$ contains $\0$.
\end{proof}

\newpage
\section{Earthquake regularity via holomorphicity}
\label{s:regularity}

In \cite{kerckhoff1985earthquakes}, Kerckhoff shows that earthquakes $E_{t\lambda}(x)$ are real analytic with respect to both the time parameter $t$ and the spatial parameter $x\in\T(S)$. For our purposes, we require  continuity of its (higher) derivatives with respect to varying $\lambda\in\ml(S)$. We obtain the following results which will be used later in the paper.
\begin{itemize}
\item
\cref{lem:taylorseries}: We derive the Taylor series expansion for earthquakes $E_{t\lambda}(x)$ with respect to time $t$ with coefficients depending continuously on $\lambda\in\ml(S)$ and $x\in\T(S)$, and

\item 
\cref{prop:spatial}: we establish the continuity of the space derivatives, i.e.\  for $x$ varying in $\T(S)$,  of $\v_{\lambda}(x)$ with respect to $\lambda$.
\end{itemize}

We rely on complex analysis to show both results. We apply the Cauchy integral formula to control the derivatives of complex extensions of earthquakes (complexifying either $t$ or $x\in\T(S)$). In \cref{sec:lipregular}, as an application of this improved understanding of the regularity of earthquakes, we show that the earthquake norm is locally Lipschitz continuous.

\subsection{Regularity of time derivatives}\label{sec:proof:preliminaries}

We first deal with the time derivative in some detail to illustrate the argument at hand.

\begin{proposition}[Taylor series for earthquakes]
\label{lem:taylorseries}
For any simple closed curve $\gamma$ on $S$ and for any $x\in\T(S)$, $t\geq0$, and any $\lambda\in\ml(S)$, we have the following ``Taylor series expansion":
\[
\trace_\gamma\left(E_{\frac{t\lambda}{\ell_\lambda(x)}}(x)\right)
=
\trace_\gamma(x)
+
\sum_{k=1}^\infty
\varphi^\gamma_k([\lambda],x)
\cdot
t^k,
\]
where the functions $\varphi^\gamma_k:\pml(S)\times\T(S)\to\mathbb{R}$ are continuous.
\end{proposition}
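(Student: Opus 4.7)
The plan is to exploit the complex-analytic extension of earthquakes to quakebends (Theorem~\ref{thm:quakebend}) together with the holomorphicity of trace functions on the character variety (Definition~\ref{defn:trace}), and then to extract the Taylor coefficients via Cauchy's integral formula. First I would normalise: for $[\lambda]\in\pml(S)$ and $x\in\T(S)$, set $\hat\lambda_x:=\lambda/\ell_\lambda(x)$, the unique representative of $[\lambda]$ with unit $x$-length. Continuity of the length pairing makes the map $([\lambda],x)\mapsto\hat\lambda_x$ into $\ml(S)$ continuous. Next, invoking Theorem~\ref{thm:quakebend}, the complexified curve
\[
z\mapsto \eta(z\hat\lambda_x,x)\in V(S)
\]
is holomorphic in $z$ for each fixed $([\lambda],x)$ and jointly continuous in all variables. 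Composing with the holomorphic function $\trace_\gamma\colon V(S)\to\mathbb{C}$ yields
\[
F_\gamma(z,[\lambda],x):=\trace_\gamma\bigl(\eta(z\hat\lambda_x,x)\bigr),
\]
which is holomorphic in $z$, jointly continuous in $(z,[\lambda],x)$, and whose restriction to $z=t\geq 0$ real recovers $\trace_\gamma(E_{t\lambda/\ell_\lambda(x)}(x))$.

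Second, I would apply Cauchy's integral formula to produce the coefficients. For any $r>0$ contained in the disk of holomorphicity of $F_\gamma(\cdot,[\lambda],x)$, define
\[
\varphi_k^\gamma([\lambda],x):=\frac{1}{2\pi i}\oint_{|z|=r}\frac{F_\gamma(z,[\lambda],x)}{z^{k+1}}\,\mathrm{d}z,
\]
so that $F_\gamma(z,[\lambda],x)=\trace_\gamma(x)+\sum_{k\geq 1}\varphi_k^\gamma([\lambda],x)\,z^k$ on $\{|z|<r\}$. Joint continuity of the integrand in $(z,[\lambda],x)$ together with the compactness of the integration contour then gives continuity of $\varphi_k^\gamma$ in $([\lambda],x)$ by a standard continuity-under-the-integral argument. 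Real-valuedness of $\varphi_k^\gamma$ follows because $t\mapsto F_\gamma(t,[\lambda],x)$ is a real-valued real-analytic function of $t$, whose Taylor coefficients are automatically real.

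The main obstacle I anticipate is technical rather than conceptual, namely verifying that the disk of holomorphicity on which $F_\gamma(\cdot,[\lambda],x)$ is defined can be taken with radius bounded below \emph{locally uniformly} in $([\lambda],x)$, so that one can pick a common $r$ in a neighbourhood and validate the continuity-under-the-integral step. This is where the full force of the continuity of $\eta$ on $\ml_\mathbb{C}(S)\times\T(S)$ in Theorem~\ref{thm:quakebend} enters: a neighbourhood of $\{0\}\times\{\hat\lambda_{x_0}\}\times\{x_0\}$ in $\mathbb{C}\times\ml(S)\times\T(S)$ maps into the regular locus where $\eta$ is holomorphic in the complex parameter, and the compactness of $\pml(S)$ makes this local uniformity propagate. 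Once local uniformity is established, the continuity of $\varphi_k^\gamma$ on $\pml(S)\times\T(S)$ is immediate, completing the proof.
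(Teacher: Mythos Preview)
Your proposal is correct and follows essentially the same approach as the paper: complexify the earthquake to a quakebend via Theorem~\ref{thm:quakebend}, compose with the holomorphic trace function, and extract Taylor coefficients by the Cauchy integral formula, with continuity of the coefficients following from joint continuity of the integrand. The one unnecessary worry is your ``main obstacle'' about a locally uniform radius of holomorphicity: since $\eta$ is defined on all of $\ml_{\mathbb{C}}(S)\times\T(S)$ and is complex analytic in $z$ for every $z\in\mathbb{C}$, the function $F_\gamma(\cdot,[\lambda],x)$ is entire, so any fixed radius (the paper uses $|w|=1$) works globally and no uniformity argument is needed.
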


\begin{remark}
This proposition ensures that small earthquake segments are uniformly close to ``straight line segments'' in local trace coordinates. We later use this to $C^1$-approximate paths on $\T(S)$ using earthquake segments. 
\end{remark}

\begin{proof}
We know from \cref{thm:quakebend} that the function $F_{\lambda,x}(z):=\eta((z,\lambda),x)$, where $\eta:\ml_{\mathbb{C}}(S)\times\T(S)\to\mathcal{X}^{\mathrm{irred}}_{\mathrm{para}}(\pi_1(S),\SL)$ denotes the bending holonomy map, is holomorphic. Composed with the trace function $\trace_\gamma$ (\cref{defn:trace}), we have an entire function $\mathrm{Tr}_{\lambda,x}:=
\trace_\gamma\circ F_{\lambda,x}:\mathbb{C}\to\mathbb{C}$. Thus, $\mathrm{Tr}_{\lambda,x}$ is equal to its Taylor series (at $z=0$):
\begin{align*}
\mathrm{Tr}_{\lambda,x}(z)
=
\trace_{\gamma}(x)
+\sum_{k=1}^\infty\phi_{\lambda,x,k}\cdot{z^k},
\end{align*}
where the Cauchy integral formula tells us that
\begin{align}
\phi_{\lambda,x,k}
&=
\int_{|w|=1}
\frac{\mathrm{Tr}_{\lambda,x}(w)}{{2\pi\sqrt{-1}}w^{k+1}}\ \mathrm{d}w
=
\int_{|w|=1}
\frac{\trace_\gamma\circ\eta((w,\lambda),x)}{{2\pi\sqrt{-1}}w^{k+1}}\ \mathrm{d}w.
\label{eq:meroint}
\end{align}
\cref{eq:meroint} tells us that, for all $k=1,2,\ldots$, the coefficient $\phi_{\lambda,x,k}$ varies continuously with respect to $\lambda$ and $x$.

Going forward, we restrict the $z$ parameter in $\mathrm{Tr}_{\lambda,x}(z)$ to be a positive real parameter $t\geq0$. In this case, the bending holonomy map $\eta((t,\tfrac{\lambda}{\ell_\lambda(x)}),x)$ is the left earthquake map $E_{\frac{t\lambda}{\ell_\lambda(x)}}(x)$, and we have:
\[
\trace_\gamma\left(E_{\frac{t\lambda}{\ell_\lambda(x)}}(x)\right)
=
\trace_\gamma(x)
+
\sum_{k=1}^\infty
\phi_{\frac{\lambda}{\ell_\lambda(x)},x,k}\cdot {t^k}.
\]
Setting $\varphi_k^\gamma([\lambda],x):=\phi_{\frac{\lambda}{\ell_\lambda(x)},x,k}$ defines a continuous function on $\pml(S)\times\T(S)$, therefore giving us the desired result.
\end{proof}

\begin{lemma}
\label{lem:keylemma}
For any 
compact $K\subset\T(S)$, there exists $C_{\gamma,K}$ such that for all $ t\in[0,1]$,
\[
\left|
\trace_\gamma\left(E_{\frac{t\lambda}{\ell_\lambda(x)}}(x)\right)
-\trace_\gamma(x)
-\varphi^\gamma_1([\lambda],x)t
\right|
\leq
C_{\gamma,K}t^2.
\]
\end{lemma}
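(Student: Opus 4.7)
The plan is to exhibit the left-hand side in the lemma as the tail of the Taylor series from \cref{lem:taylorseries} and to control each coefficient uniformly via the Cauchy integral formula, then sum geometrically. Concretely, the quantity inside the absolute value equals $\sum_{k=2}^\infty \varphi_k^\gamma([\lambda],x)\,t^k$, so it suffices to produce a constant $M$ (depending on $\gamma$ and $K$) with $|\varphi_k^\gamma([\lambda],x)| \leq M/2^k$ uniformly in $[\lambda] \in \pml(S)$ and $x \in K$, from which the desired quadratic bound on $[0,1]$ follows by routine geometric summation.

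For the coefficient bounds I would re-use the Cauchy representation derived in the proof of \cref{lem:taylorseries}. Since the holomorphic extension $\mathrm{Tr}_{\lambda,x}(z) = \trace_\gamma \circ \eta\bigl((z, \lambda/\ell_\lambda(x)), x\bigr)$ is entire (by \cref{thm:quakebend}), the integral formula for $\varphi_k^\gamma([\lambda],x)$ may be evaluated on a circle of any radius. Enlarging the radius from $1$ to $R=2$ transforms the bound $|\varphi_k^\gamma| \leq \sup_{|w|=1}|\mathrm{Tr}_{\lambda,x}(w)|$ into $|\varphi_k^\gamma([\lambda],x)| \leq M/2^k$, where
\[
M := \sup_{|w|=2,\; x\in K,\; [\lambda]\in\pml(S)} \bigl|\trace_\gamma\circ \eta\bigl((w,\tfrac{\lambda}{\ell_\lambda(x)}), x\bigr)\bigr|.
\]

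The main obstacle, and the only nontrivial step, is verifying that $M<\infty$. This reduces to a compactness argument. Although $\ml(S)$ itself is not compact, the normalization map $\pml(S) \times K \to \ml(S)$, $([\lambda],x) \mapsto \lambda/\ell_\lambda(x)$, is continuous with compact image (being defined on a compact space), so crossing it with the circle $\{|w|=2\}$ produces a compact subset of $\ml_{\mathbb{C}}(S)\times \T(S)$. The continuous function $\trace_\gamma\circ \eta$ must therefore attain a finite maximum on this compact set, which furnishes $M$. Once $M<\infty$ is in hand, summing $\sum_{k\geq 2}(M/2^k)t^k$ for $t \in [0,1]$ yields the claim with, e.g., $C_{\gamma,K} := M/2$.
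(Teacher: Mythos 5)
Your proposal is correct and follows essentially the same route as the paper: both express the error as the Taylor tail from \cref{lem:taylorseries}, bound each coefficient by the Cauchy integral formula on the circle $|w|=2$, obtain a finite uniform constant via compactness of $\{|w|=2\}\times\pml(S)\times K$ together with continuity of $\trace_\gamma\circ\eta$, and conclude by geometric summation on $[0,1]$. Your slightly more explicit justification of compactness (via the continuous normalisation $([\lambda],x)\mapsto\lambda/\ell_\lambda(x)$) and the constant $C_{\gamma,K}=M/2$ are fine variants of the paper's argument.
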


\begin{proof}
This essentially follows from \cref{eq:meroint}, which tells us that
\begin{align*}
\left|
\varphi_k^\gamma([\lambda],x)
\right|
=
\left|
\phi_{\frac{\lambda}{\ell_\lambda(x)},x,k}
\right|
&= \left|
\int_{|w|=2}
\frac{|\trace_\gamma\circ\eta((w,\frac{\lambda}{\ell_\lambda(x)}),x)}{{2\pi\sqrt{-1}}w^{k+1}}\ \mathrm{d}w\right|\\
&\leq 
\int_{|w|=2}
\frac{|\trace_\gamma\circ\eta((w,\frac{\lambda}{\ell_\lambda(x)}),x)|}{{2\pi|\sqrt{-1}}w^{k+1}|}\ \mathrm{d}|w|\\
&\leq
\int_{|w|=2}
\;\;\;
\sup_{
\stackrel
{|w|=2, \lambda\in\ml(S)}
{x\in K}
}
\frac
{\left|\trace_\gamma\circ\eta((w,\frac{\lambda}{\ell_\lambda(x)}),x)\right|}
{2\pi \cdot 2^{k+1}}\ \mathrm{d}|w|\\
&=
\max_{
\stackrel
{|w|=2, [\lambda]\in\pml(S)}
{x\in K}
}
\frac{\left|
\trace_\gamma\circ\eta((w,\tfrac{\lambda}{\ell_\lambda(x)}),x)
\right|}{2^{k+1}}.
\end{align*}
We set  $$c_{\gamma,K}:=
\max_{
\stackrel
{|w|=2, [\lambda]\in\pml(S)}
{x\in K}
}
\left|
\trace_\gamma\circ\eta((w,\tfrac{\lambda}{\ell_\lambda(x)}),x)
\right|.$$ 
We note that the existence of $c_{\gamma,K}$ is  due to the compactness of the three spaces $\{|w|=2\}\subset\mathbb{C}$, $\pml(S)$ and $K\subset\T(S)$.  This implies \begin{equation*}
	\left|
\varphi_k^\gamma([\lambda],x)
\right|\leq \frac{c_{\gamma,K}}{2^{k+1}}.
\end{equation*}
\cref{lem:taylorseries} then tells us that, for $0 \leq t\leq 1$, 
\begin{align*}
\left|
\trace_\gamma\left(E_{\frac{t\lambda}{\ell_\lambda(x)}}(x)\right)
-\trace_\gamma(x)
-\varphi^\gamma_1([\lambda],x)t
\right|
&\leq
\sum_{k=2}^\infty
\frac{c_{\gamma,K}}{2^{k+1}} {t^k}
=\frac{c_{\gamma,K}}{2^3}t^2\sum_{k=0}\frac{t^k}{2^k}\\
&\leq \frac{c_{\gamma,K}}{2^3}\cdot t^2.
\end{align*}
Taking $C_{\gamma,K}:=\frac{c_{\gamma,K}}{2^3}$ suffices to give the desired result.
\end{proof}
\begin{remark}\label{rmk:right:earthquake:key}
We can extend the domain of $t\in[0,1]$ to $[-1,1]$, where $E_{\frac{t\lambda}{\ell_\lambda(x)}}(x)$ with $t\in[-1,0)$ represents the surface obtained from $x$ via  a \emph{right} earthquake wth respect to $\frac{|t|\lambda}{\ell_\lambda(x)}$.
\end{remark}

\subsection{Regularity of space derivatives}

Having proved the continuity of the time derivatives with respect to $\lambda\in\ml(S)$ in detail, we now appeal to the same arguments, but for the spatial derivatives:

\begin{proposition}\label{prop:spatial}
The earthquake vector field $\v_\lambda:\T(S)\to T\T(S)$ is real analytic, and its partial derivatives of all orders (including the $0$-th) are continuous with respect to $\lambda\in\ml(S)$.  
\end{proposition}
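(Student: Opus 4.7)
The plan is to combine Kerckhoff's real analyticity of earthquakes in the spatial variable with a multi-variable Cauchy integral argument, extending the strategy of \cref{lem:taylorseries,lem:keylemma} by also complexifying the spatial parameter $x$ inside the complex character variety $V(S)$, in which $\T(S)$ sits as a totally real submanifold.

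Real analyticity of $\v_\lambda$ in $x$ for each fixed $\lambda \in \ml(S)$ is immediate from Kerckhoff's \cite{kerckhoff1985earthquakes} joint real analyticity of $(t, x) \mapsto E_{t\lambda}(x)$, since differentiating at $t=0$ preserves real analyticity in $x$. The core task is therefore to show that all spatial derivatives of $\v_\lambda$ depend continuously on $\lambda$.

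First I would fix any $x_0 \in \T(S)$ and choose local holomorphic coordinates on $V(S)$ around $x_0$ in which $\T(S)$ appears as the real locus. For any simple closed curve $\gamma$, consider the function $F_\gamma(z, \lambda, x) := \trace_\gamma \circ \eta((z, \lambda), x)$, which is continuous in all three variables, holomorphic in $z$ by \cref{thm:quakebend}, and real analytic in $x \in \T(S)$ by Kerckhoff. Since $\T(S)$ is totally real in $V(S)$, the standard extension principle for real analytic functions along totally real submanifolds yields a unique holomorphic extension of $x \mapsto F_\gamma(z, \lambda, x)$ to a polydisk neighborhood $W$ of $x_0$ in $V(S)$. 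Combining separate holomorphicity in $z$ and in $\tilde x$ via Hartogs' theorem gives a jointly holomorphic function $\tilde F_\gamma(z, \lambda, \tilde x)$ on $\complexes \times W$. By uniqueness of holomorphic extensions together with the joint continuity of $\eta$ on $\T(S)$ and Montel's theorem applied to the locally uniformly bounded family of extensions, $\tilde F_\gamma$ depends continuously on $\lambda$, locally uniformly on $\complexes \times W$.

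Next I would apply Cauchy's integral formula in several complex variables: any mixed partial derivative $\partial_z \partial_{\tilde x}^\alpha \tilde F_\gamma|_{(0, x_0)}$ is expressible as a contour integral of $\tilde F_\gamma$ over a suitable torus, which is therefore continuous in $\lambda$. The first-order $z$-derivative of $\tilde F_\gamma$ at $z=0$ restricted to the real slice gives $x \mapsto \v_\lambda(x)(\trace_\gamma)$, and subsequent $\tilde x$-derivatives produce its spatial derivatives of arbitrary order. Since by \cref{thm:embedding} a sufficient collection of trace functions provides real analytic local coordinates on $\T(S)$ near $x_0$, the continuity in $\lambda$ of all spatial derivatives of each coordinate function $\v_\lambda(\cdot)(\trace_\gamma)$ translates into the desired continuity of all spatial partial derivatives of the earthquake vector field $\v_\lambda$.

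The main technical obstacle will be securing joint holomorphicity of the spatial extension $\tilde F_\gamma$ together with its continuous dependence on $\lambda$. I would handle this by first treating weighted simple closed curves, for which quakebends admit explicit formulas via multiplication of generators of representations by matrix exponentials --- manifestly holomorphic in both the complex shearing parameter $z$ and the entries of the representation viewed as holomorphic coordinates on $V(S)$. The general case then follows by density of weighted simple closed curves in $\ml(S)$ combined with the continuity of $\eta$ (\cref{thm:quakebend}) and Montel's theorem, since the holomorphic extensions form a locally uniformly bounded normal family.
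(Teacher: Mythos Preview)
Your overall strategy — complexify the spatial parameter and apply the Cauchy integral formula to transfer continuity in $\lambda$ from the function to its derivatives — is exactly the paper's. The difference is that the paper does not reconstruct the holomorphic extension from scratch: it cites Kerckhoff \cite[pp.~25--26]{kerckhoff1985earthquakes}, who already showed that $\mathrm{d}\ell_\alpha\circ\v_\lambda$ extends holomorphically to a neighbourhood $\mathcal{N}$ of the Fuchsian locus in $\mathcal{QF}(S)$, with the extension depending continuously on $\lambda$ (this comes from the explicit complexified cosine formula). Once that is in hand, a single application of the Cauchy integral formula in the complex spatial variable finishes the proof.

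Your attempt to rebuild this extension via totally real analytic continuation, Hartogs, and Montel has a genuine gap at the Montel step. To invoke Montel you need the family of holomorphic extensions $\tilde F_\gamma(\cdot,\lambda,\cdot)$ to be locally uniformly bounded on a \emph{fixed} complex polydisc as $\lambda$ varies. But \cref{thm:quakebend} only gives continuity of $\eta$ for $x\in\T(S)$; bounds on the real locus do not automatically propagate to bounds on the holomorphic extension, and the radius of the polydisc produced by the totally real extension principle depends a priori on $\lambda$. Your fallback to explicit quakebend formulas for weighted simple closed curves is correct as far as it goes, but as $c_i\gamma_i\to\lambda$ the curves $\gamma_i$ become arbitrarily complicated, and it is not obvious from the explicit matrix formulas alone that the extensions stay bounded on a fixed complex neighbourhood — establishing precisely this is the substance of Kerckhoff's argument. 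The simplest repair is to cite Kerckhoff for the extension (and its continuity in $\lambda$) directly, at which point your argument collapses to the paper's.
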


\begin{proof}
We use a part of Kerckhoff's proof of the analyticity of earthquakes \cite[Theorem 1]{kerckhoff1985earthquakes}. For any simple closed curve $\alpha$, Kerckhoff showed \cite[p. 25, 26]{kerckhoff1985earthquakes} that the derivative function $\mathrm{d}\ell_\alpha\circ\v_{\lambda}:\T(S)\to \R$ complex analytically extends to a neighbourhood $\mathcal{N}$ of the Fuchsian locus $\T(S)$ within the quasi-Fuchsian space $\mathcal{QF}(S)$ (see \cref{sec:qf}). The holomorphicity of $\v_\lambda:\mathcal{N}\to T\mathcal{N}\subset T\mathcal{QF}(S)$ then follows from the following two facts: 
\begin{itemize}
\item 
the complex length functions\footnote{\cref{rmk:complexlengths} explains why complex lengths are well defined over $\mathcal{QF}(S)$.} of simple closed curves are holomorphic functions on $\mathcal{QF}(S)$;
\item
$\mathcal{QF}(S)$ is parametrised by the complex length functions of finitely many simple closed curves \cite{kou_complexlength, tan1994}.\end{itemize}

Applying the Cauchy integral formula to the analytic extension ${\mathrm{d}\ell_\alpha\circ\v_{\lambda}:\mathcal{N}\to\mathbb C}$, we see that the partial derivatives (of all orders) of $\mathrm{d}\ell_\alpha\circ\v_{\lambda}$ are continuous with respect to $\lambda$  varying $\ml(S)$. In particular, the restriction of any such derivative to $\T(S)$ is also continuous with respect to $\lambda$ varying in $\ml(S)$. 
\end{proof}

It is perhaps cleaner to frame \cref{prop:spatial} via the following:

\begin{corollary}
The earthquake map $E_\lambda:\T(S)\to\T(S)$ is real analytic, and its partial derivatives of all orders are continuous with respect to $\lambda\in\ml(S)$.
\end{corollary}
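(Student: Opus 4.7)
The plan is to transfer the conclusion of \cref{prop:spatial} from the vector field $\v_\lambda$ to its flow map. The key observation is that differentiating the semigroup identity $E_{(t+s)\lambda}(x) = E_{s\lambda}(E_{t\lambda}(x))$ in $s$ at $s = 0$ yields the ODE
\[
\tfrac{d}{dt}E_{t\lambda}(x) = \v_\lambda(E_{t\lambda}(x)),
\]
so that $E_{t\lambda}$ is precisely the time-$t$ flow of the complete vector field $\v_\lambda$, and in particular $E_\lambda$ is its time-1 map. Since earthquake flows are globally defined on $\T(S)$, existence of the time-1 map is not an issue.

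From here, real analyticity of $E_\lambda$ on $\T(S)$ is a direct consequence of the standard theorem on real-analytic dependence of ODE solutions on initial data, applied to the real-analytic vector field $\v_\lambda$ supplied by \cref{prop:spatial}.

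For the continuity of the partial derivatives of $E_\lambda$ in $\lambda$, I would proceed by induction on the order of differentiation. The $k$-th $x$-derivative of the flow, viewed as a function along the orbit segment $\{E_{t\lambda}(x) : t\in[0,1]\}$, satisfies a linear inhomogeneous variational ODE whose coefficients and inhomogeneities are polynomial expressions in the partial derivatives $\partial_x^j\v_\lambda$ for $j \leq k$, evaluated along that same orbit. By \cref{prop:spatial}, each such ingredient depends continuously on $\lambda \in \ml(S)$; since the solution of a linear ODE depends continuously on its coefficients uniformly on the compact parameter interval $[0,1]$, we conclude that $\partial_x^k E_\lambda$ is also continuous in $\lambda$.

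The main step to check carefully is this inductive continuous dependence of the flow and its higher variations on $\lambda$. The only mild subtlety is that $\ml(S)$ has no a priori smooth structure, but this is irrelevant here: the argument uses only continuity in $\lambda$ of finitely many quantities at each order, and \cref{prop:spatial} already packages its conclusion in exactly this form, so the classical parameter-dependence theorems apply without modification.
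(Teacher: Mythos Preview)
Your argument is correct. The paper's own proof is the single line ``Combine \cref{prop:spatial} and \cite[Theorem~1]{kerckhoff1985earthquakes}'', so your write-up is really a detailed fleshing-out of what that line means. The one difference worth noting: for real analyticity of $E_\lambda$, the paper simply cites Kerckhoff's theorem directly, whereas you re-derive it from \cref{prop:spatial} via the standard analytic-flow theorem for real-analytic vector fields. Both are fine, and yours is arguably more self-contained. For the continuity of the $x$-derivatives in $\lambda$, the paper gives no hint of the mechanism; your variational-equation argument is a clean, elementary way to pass from the conclusion of \cref{prop:spatial} to the corresponding statement for the time-$1$ map, and is likely what the authors had in mind. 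An alternative, more in the spirit of \cref{s:regularity}'s title, would be to observe that Kerckhoff's proof actually produces a holomorphic extension of $E_\lambda$ to a neighbourhood of the Fuchsian locus in $\mathcal{QF}(S)$ and then apply the Cauchy integral formula directly to $E_\lambda$, exactly as was done for $\v_\lambda$ in the proof of \cref{prop:spatial}; but the paper does not spell this out either.
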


\begin{proof}
Combine \cref{prop:spatial} and \cite[Theorem~1]{kerckhoff1985earthquakes}.
\end{proof}

\subsection{Local Lipschitz regularity}
\label{sec:lipregular}

In \cref{sec:infinitesimal}, we make use of Matveev and Troyanov's work \cite{MT2017} on the Myers--Steenrod theorem for Finsler metrics of low regularity to apply norm-wise results (e.g.\  asymmetricity, rigidity) to obtain information about the earthquake distance metric. To do so, we show that the earthquake norm is locally Lipschitz continuous in the following sense:

\begin{definition}[Lipschitz continuity of Finsler norms {\cite[{\S2.1}]{MT2017}}]
A Finsler norm $F$ on a domain $U\subset\mathbb{R}^N$ is \emph{Lipschitz continuous} if the restriction of $F:TU=U\times\mathbb{R}^N\to\mathbb{R}_{\geq0}$ to an \emph{open neighbourhood} of
\[
T^1 U:=\{(x,v)\in U\times \mathbb{R}^N \colon F(x,v)=1\}
\]
is a Lipschitz continuous function with respect to the Euclidean metrics on $TU=U\times\mathbb{R}^N$ and $\mathbb{R}_{\geq0}$.
\end{definition}

\begin{remark}[local Lipschitz continuity]
Although the notion of Lipschitz regularity in our context is ill-defined as there is no canonical Euclidean parameterisation of Teichm\"uller space, \emph{local Lipschitz continuity} is well defined for real analytic, hence locally Lipschitz continuous, coordinate charts on $\T(S)$.
\end{remark}

\begin{theorem}[local Lipschitz continuity of ${\|\cdot\|}_e$]
\label{thm:lipregular}
The earthquake Finsler norm ${\|\cdot\|}_e$ is locally Lipschitz continuous.
\end{theorem}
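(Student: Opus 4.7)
The plan is to establish local Lipschitz continuity of $\|\cdot\|_e$ by obtaining uniform Lipschitz estimates separately in the base point $x$ and in the tangent vector $v$ on a compact subset of a coordinate chart, and then combining them. Fix a real analytic coordinate chart $U \subseteq \T(S)$ with a compact $K \subseteq U$, identifying tangent spaces over $U$ with $\mathbb{R}^n$.

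The Lipschitz bound in $v$ is routine: $\|v\|_e^x = \ell_x(\v_x^{-1}(v))$ is jointly continuous in $(x, v)$ by \cref{thm:earthquakehomeo} and \cref{prop:spatial}, so over the compact set $K$ the earthquake norm is uniformly equivalent to the Euclidean norm. In particular, there is a constant $C_2$ with $\|v\|_e^x \leq C_2 |v|$ for all $x \in K$; this makes each $\|\cdot\|_e^x$ uniformly $C_2$-Lipschitz in $v$ and guarantees that the unit sphere bundle $T^1 K$ sits inside an open Euclidean annulus.

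The Lipschitz bound in $x$ is where \cref{prop:spatial} becomes essential. The normalised earthquake vector $\e_x(\mu) = \v_x(\mu)/\ell_x(\mu)$ is real analytic in $x$ for fixed $\mu \in \pml(S)$, and its $x$-derivative is continuous in $\mu$ by \cref{prop:spatial}. Compactness of $\pml(S) \times K$ then yields a uniform estimate $|\e_x(\mu) - \e_{x'}(\mu)| \leq C_1 |x - x'|$. For $v = \e_x(\mu)$ (so $\|v\|_e^x = 1$) and $x'$ near $x$, the reverse triangle inequality gives
\[
\bigl|\,\|v\|_e^{x'} - 1\,\bigr| = \bigl|\,\|\e_x(\mu)\|_e^{x'} - \|\e_{x'}(\mu)\|_e^{x'}\,\bigr| \leq \|\e_x(\mu) - \e_{x'}(\mu)\|_e^{x'} \leq C_1 C_2 |x - x'|.
\]
Positive homogeneity extends this to vectors of uniformly bounded earthquake norm, and the triangle inequality combines this $x$-Lipschitz bound with the $v$-Lipschitz bound to produce joint Lipschitz continuity of $\|\cdot\|_e$ on the open Euclidean annulus containing $T^1 K$.

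The main obstacle is the uniform $x$-Lipschitz estimate: continuity of the indicatrix field alone does not suffice, and the enhanced regularity furnished by \cref{prop:spatial}—obtained via holomorphicity of quakebends through an application of the Cauchy integral formula uniform in $\lambda$—is exactly what upgrades separate continuity in $x$ to joint Lipschitz behaviour over all $\mu \in \pml(S)$.
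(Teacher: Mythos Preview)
Your proof is correct and follows essentially the same strategy as the paper, but with a cleaner decomposition. Both proofs split the Lipschitz estimate into a base-point part and a fibre part, and both rely on \cref{prop:spatial} together with compactness of $\pml(S)\times K$ to obtain the needed uniformity. The paper, however, organises the base-point step differently: it fixes the lamination $\lambda$ (not the vector $v$) and compares $\|\v_x(\lambda)\|_e=\ell_\lambda(x)$ with $\|\v_y(\lambda)\|_e=\ell_\lambda(y)$, invoking the local Lipschitzness of length functions via the Thurston metric; it then needs an extra step using \cref{prop:spatial} to relate $e(y,\lambda)$ back to $e(x,\lambda)$. Your route via the Lipschitz variation of the normalised vector $\e_x(\mu)$ packages the length-function estimate into the quotient $\v_x(\mu)/\ell_x(\mu)$ and avoids that detour.

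One small point to tighten: the reverse triangle inequality you write, $\bigl|\|a\|_e-\|b\|_e\bigr|\le\|a-b\|_e$, is only valid for symmetric norms. For the (genuinely asymmetric) earthquake norm you need
\[
\bigl|\,\|\e_x(\mu)\|_e^{x'} - \|\e_{x'}(\mu)\|_e^{x'}\,\bigr|\ \le\ \max\bigl\{\|\e_x(\mu)-\e_{x'}(\mu)\|_e^{x'},\ \|\e_{x'}(\mu)-\e_x(\mu)\|_e^{x'}\bigr\},
\]
exactly as the paper does in its \cref{eq:3.10.1}. This is harmless, since both terms are bounded by $C_2\,|\e_x(\mu)-\e_{x'}(\mu)|$ via the same norm comparison you already use, so the argument goes through unchanged.
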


\begin{proof}
 Teichm\"uller space can be covered by open balls $\{V\}$ where each $V$ is contained in a compact set $K\subset U$ where $(U,\trace^\Gamma)$ is a trace coordinate chart for the set of simple closed curves $\Gamma$ with $|\Gamma|=\dim \T(S)$. We regard $TU$ as $U\times\mathbb{R}^{6g-6+2n}$, and adopt the following notation:
\begin{itemize}
\item
let $d_1$ denote the Euclidean metric on $U$ (pulled back from ${\trace^\Gamma(U)}$);

\item
let ${\|\cdot\|}_2$ denote the Euclidean norm on the parallelised tangent space;


\item
we often write tangent vectors $\v_x(\lambda)\in T_x U$ in the form $(x,e(x,\lambda))\in U\times\mathbb{R}^{6g-6+2n}$.
\end{itemize}

Let $A$ be a bounded open set in $\mathbb{R}^{6g-6+2n}$ whose closure does not contain the origin such that $T^1 K\subset U\times A$. Our goal is to show that the earthquake norm is Lipschitz on $V\times A$, not with respect to the Euclidean metric, but with respect to a distance defined as:
\[
d_3((x,v),(y,w)):=d_1(x,y)+\|v-w\|_2. 
\]
To see that this is sufficient, we observe that $d_3$ is the Finsler metric for the norm on $U\times\mathbb{R}^{6g-6+2n}$ given by summing the $\ell_1$-norm on the base and the $\ell_2$-norm on the tangent fiber, hence $d_3$ is uniformly bi-Lipschitz to the Euclidean metric. Alternatively, we can choose $V$ to be a convex domain in $U$, and observe that $TV$ is convex for both $d_3$ and the Euclidean metric on $TV$, and invoke \cite[Lemma~2.1]{MT2017}.

Ultimately, we wish to show that there exists a constant $C>0$ such that
for any $\v_x(\lambda),\v_y(\mu)\in V\times A$,
\[\left|
\|\v_x(\lambda)\|_e-\|\v_y(\mu)\|_e
\right|
\leq
C
d_3(\v_x(\lambda),\v_y(\mu)).
\]
Since by the triangle inequality
\[
\left|
\|\v_x(\lambda)\|_e-\|\v_y(\mu)\|_e
\right|
\leq
\left|
\|\v_x(\lambda)\|_e-\|\v_y(\lambda)\|_e
\right|
+
\left|
\|\v_y(\lambda)\|_e-\|\v_y(\mu)\|_e
\right|
,\] 
it suffices to show that for any $\v_x(\lambda),\v_y(\mu)\in V\times A,$
\begin{gather}
\left|\|\v_x(\lambda)\|_e-\|\v_y(\lambda)\|_e\right|<C_1 d_1(x,y)\leq C_1 d_3(\v_x(\lambda),\v_y(\mu))\text{, and}\label{eq:case(1)}\\
\left|\|\v_y(\lambda)\|_e-\|\v_y(\mu)\|_e\right|
<C_2\|e(y,\lambda)-e(y,\mu)\|_2
<C_3 d_3(\v_x(\lambda),\v_y(\mu)).\label{eq:case(2)}
\end{gather}
 
 We first verify \cref{eq:case(1)}. Invoking the definition of the earthquake norm, we see that
\begin{align}
\frac{\left|
\|\v_x(\lambda)\|_e-\|\v_y(\lambda)\|_e
\right|}
{d_1(x,y)}
=
\left|
\frac{\ell_\lambda(x)-\ell_\lambda(y)}
{d_1(x,y)}
\right|.\label{eq:case1}
\end{align}
\cref{eq:case(1)} then follows from the fact that length functions are locally Lipschitz. Indeed, let $\dth(x,y)$ be the Thurston distance from $x$ to $y$. Without loss of generality, we may assume that $\ell_\lambda(x)\leq \ell_\lambda(y)$. Then
\begin{eqnarray*}
	|\ell_\lambda(x)-\ell_\lambda(y)|&=&\ell_\lambda(x)\left(\frac{\ell_\lambda(y)}{\ell_\lambda(x)}-1\right)\\
	&\leq& \ell_\lambda(x) (e^{d_{Th}(x,y)}-1)\leq C\cdot \ell_\lambda(x) d_{Th}(x,y)
\end{eqnarray*}
for some constant $C$ depending on $K$ and $A$.  Combined with the fact that $d_{Th}(x,y)$ and $d_1(x,y)$ are locally bi-Lipschitz to each other, this implies \cref{eq:case(1)}.

We next show \cref{eq:case(2)}. The triangle inequality and obtain that
\begin{align*}
\|\v_y(\lambda)\|_e
&\leq
\|\v_y(\mu)\|_e+\|\v_y(\lambda)-\v_y(\mu)\|_e
\ \text{and}\\
\|\v_y(\mu)\|_e
&\leq
\|\v_y(\lambda)\|_e+\|\v_y(\mu)-\v_y(\lambda)\|_e,
\end{align*}
and hence 
\begin{align}
\left|
\|\v_y(\lambda)\|_e
-\|\v_y(\mu)\|_e
\right|
\leq
\max\{
\|\v_y(\mu)-\v_y(\lambda)\|_e,
\|\v_y(\lambda)-\v_y(\mu)\|_e
\}.\label{eq:3.10.1}
\end{align}
By the compactness of $T^1K$, there is a constant $C_2$ such that
\begin{align}
\sup_{(y,u)\in TV}
\frac{\|(y,u)\|_e}{\|u\|_2}
\leq
\sup_{(y,u)\in TK}
\frac{\|(y,u)\|_e}{\|u\|_2}
=
\max_{(y,\hat{u})\in T^1K}
\|(y,\hat{u})\|_e
=C_2.\label{eq:3.10.2}
\end{align}
Combining \cref{eq:3.10.1} and \cref{eq:3.10.2} with $u=e(y, \lambda)-e(y,\mu)$ or $e(y,\mu)-e(y,\lambda)$, we get
\begin{align*}
\left|
\|\v_y(\lambda)\|_e
-
\|\v_y(\mu)\|_e
\right|
&\leq
\max\{\|(y, e(y, \lambda)-e(y,\mu))\|_e, \|(y, e(y,\mu)-e(y,\lambda))\|_e\}\\ &\leq
C_2\|e(y,\lambda)-e(y,\mu)\|_2,
\end{align*}
which gives the first inequality in \cref{eq:case(2)}. For the latter inequality, we use the triangle inequality and obtain that
\begin{align*}
\|e(y,\lambda)-e(y,\mu)\|_2
&\leq
\|e(x,\lambda)-e(y,\mu)\|_2
+
\|e(y,\lambda)-e(x,\lambda)\|_2\\
&\leq
d_3(\v_x(\lambda),\v_y(\mu))
+
\|e(y,\lambda)-e(x,\lambda)\|_2,
\end{align*}
where the second inequality follows from the definition of $d_3$.
\cref{prop:spatial} establishes
\begin{itemize}
\item
the differentiability of the partial derivatives of $\v_\lambda(x)$ with respect to $x\in V\subset K$, and
\item
the continuity of these derivatives with respect to $\lambda$.
\end{itemize}
Thus, these regularity properties also apply to $e(x,\lambda)$. From this we infer that there exists a constant $C_3'$ such that  for all $x,y\in V\subset K$ and for all $e(x, \lambda), e(y,\lambda) \in A$, 
\begin{align}
\|e(y,\lambda)-e(x,\lambda)\|_2
\leq 
C_3' d_1(x,y). 
\end{align}
Since $d_1(x,y)\leq d_3(\v_x(\lambda),\v_y(\mu))$ (which follows from the definition), we have \cref{eq:case(2)} with $C_3=1+C_3'$.  
\end{proof}

\newpage
\section{Magnitude minimisation}
\label{sec:magnitude}
The main goal of this section is to demonstrate the ``magnitude minimisation'' interpretation of the earthquake metric (see \cref{defn:earthquakemetric}) as follows.

\smallskip
\noindent\textbf{\cref{thm:magnitude}} (magnitude minimisation)\textbf{.} \textit{For any $x,y\in\T(S)$, the earthquake distance $d_e(x,y)$ is equal to the infimal magnitude over the collection of piecewise earthquake paths from $x$ to $y$.}


\subsection{Proof strategy for \cref{thm:magnitude}}
\label{sec:proof:magnitude}

Given two arbitrary points $x,y\in \mathcal{T}(S)$, the distance $d_e(x,y)$ is realised as the infimum of the earthquake metric lengths of a sequence $(p_j:I\to\T(S))_{j\in\mathbb{N}}$ of piecewise $C^1$-paths starting at $x$ and ending at $y$. We may assume that $p_j$ is $C^1$ by approximating each of them by a $C^1$-path.

To prove \cref{thm:magnitude}, it suffices to take an arbitrary $C^1$ path $p:[0,L]\to\T(S)$, and construct a sequence of piecewise earthquake paths which have the same endpoints as $p$, and whose magnitude tends to the earthquake metric length $\int_0^L\|p'(t)\|_e\mathrm{d}t$ of the path $p(t)$. We take the following steps:
\begin{enumerate}  
\item
In \cref{lem:pathcoords}, we show that we may reduce to dealing with a path $p$ with nowhere zero derivative over a local trace coordinate chart $(U,\trace^\Gamma)$. This is not essential, but simplifies notation and avoids some bi-Lipschitz comparison arguments.

\item
We break the path into \emph{very} short segments and approximate them (see \cref{fig:pg29}) first by Euclidean segments (\cref{lem:lineapprox1}) and then by short earthquake segments (\cref{lem:lineapprox2}) sharing the same starting point and initial velocity. We choose the segments to be short enough to ensure that the endpoints of the approximating earthquake segments remain close to $p([0,L])$.

\item
We show, using \cref{lem:taylorseries}, that the endpoints of the approximating earthquake segments in the previous step can be linked up with small earthquake segments to form a path 
of earthquake ``sawtooth waves'' (see \cref{fig:pg31}), so that the total contribution of the shorter ``vertical segments'' make arbitrarily small contributions to the total magnitude as the approximation becomes arbitrarily fine.

\end{enumerate}

\begin{remark}
\label{rmk:arclength}
Since magnitude and path lengths with respect to Finsler metrics are all independent of the parametrisation of a path, we assume henceforth that $p(t)$ is parametrised by earthquake metric arclength, i.e.\  $\|p'(t)\|_e=1$. This means that the $L$ in the domain of $p$ is necessarily the earthquake metric length of the path $p(t)$. 
\end{remark}

\subsection{Step~1: reduction to working on charts}

Let $\Gamma=\{\gamma_1,\ldots,\gamma_D\}$, where $D=\dim_{\mathbb{R}}\T(S)+1$ be a collection of curves whose holonomy traces specify an embedding of $\T(S)$ as in \cref{thm:embedding}. 

\begin{lemma}
\label{lem:pathcoords}
Any $C^1$-path $p$ in $\T(S)\hookrightarrow\mathbb{R}^{\Gamma}$ with nowhere zero derivative can be expressed as a concatenation of finitely many paths $p_1,\ldots,p_m$ such that there exists a sequence of $C^1$-local coordinates $\trace^{\Gamma_j}:V_j\to\mathbb{R}^{\Gamma_j}$, where $\Gamma_j\subset\Gamma$, such that $\trace^{\Gamma_j}\circ p_j$ defines a path with nowhere zero derivative.
\end{lemma}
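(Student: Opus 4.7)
The plan is to exploit the fact (\cref{thm:embedding}) that $\trace^\Gamma$ is a real-analytic immersion of $\T(S)$ into $\mathbb{R}^\Gamma$, together with compactness of the domain $[0,L]$ of $p$. A useful preliminary observation is that if $\trace^{\Gamma_j}\colon V_j\to\mathbb{R}^{\Gamma_j}$ is a $C^1$-local coordinate chart of $\T(S)$ at a point $p_j(t)\in V_j$, then its differential $(\mathrm{d}\trace^{\Gamma_j})_{p_j(t)}$ is a linear isomorphism onto its image, so it sends $p_j'(t)\neq 0$ to a nonzero vector. Hence the ``nowhere zero derivative'' property of $\trace^{\Gamma_j}\circ p_j$ is automatic from the ``local chart'' property, and the task reduces to covering $p([0,L])$ by finitely many subpaths, each lying in a chart defined by some sub-collection $\Gamma_j\subset\Gamma$ of size $D-1=\dim\T(S)$.

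To produce such a chart pointwise, at each $t\in[0,L]$ I would appeal to the immersion property: the $D$ covectors $\{(\mathrm{d}\trace_{\gamma_i})_{p(t)}\}_{i=1}^D$ span the $(D-1)$-dimensional cotangent space $T_{p(t)}^*\T(S)$. A standard linear-algebra fact (any $D$-tuple spanning a $(D-1)$-dimensional vector space contains a basis) then selects a sub-collection $\Gamma_{j(t)}\subset\Gamma$ of size $D-1$ whose corresponding covectors form a basis, and the inverse function theorem realises $\trace^{\Gamma_{j(t)}}$ as a $C^1$-diffeomorphism from some open neighborhood $V_{j(t)}\ni p(t)$ onto its image in $\mathbb{R}^{\Gamma_{j(t)}}$.

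Globalisation is then a compactness argument. The defining condition for $\Gamma_{j(t)}$ to give a local chart is the non-vanishing of a real-analytic Jacobian determinant, which is open on the base point, and combined with the continuity of $p$ produces an open interval $I_t\ni t$ with $p(I_t)\subset V_{j(t)}$. Extracting a finite subcover from $\{I_t\}_{t\in[0,L]}$ and subdividing $[0,L]$ accordingly yields the desired concatenation $p=p_1*\cdots*p_m$. I do not anticipate any substantive obstacle; the only delicate point worth flagging is the exchange argument guaranteeing the existence of $\Gamma_{j(t)}$ at each point, which is immediate from the immersion property.
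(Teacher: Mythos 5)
Your proposal is correct and takes essentially the same route as the paper: at each point one drops a single trace coordinate so that the remaining $D-1$ of them give a $C^1$-chart (you argue dually, via the covectors $(\mathrm{d}\trace_{\gamma_i})_{p(t)}$ spanning $T^*_{p(t)}\T(S)$ and an exchange of basis, while the paper argues that the normal vector to the image of $(\mathrm{d}\trace^{\Gamma})_x$ cannot lie in the kernel of every coordinate projection), and then compactness of $p([0,L])$ produces the finite subdivision into subpaths each contained in one chart. Your added remark that the chart differential automatically carries the nonzero velocity $p'(t)$ to a nonzero vector is implicit in the paper's proof as well.
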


\begin{proof}
Define $\Gamma_i:=\Gamma-\{\gamma_i\}$. For any point in $\T(S)$, the derivatives $(\mathrm{d}\trace^{\Gamma_i})_x$ cannot all be singular, or else the normal vector of $(\mathrm{d}\trace^{\Gamma})_x$ would lie in the kernel of every projection map $\mathbb{R}^{\Gamma}\to\mathbb{R}^{\Gamma_i}$,  and hence would be equal to $0$. Thus, we can cover $\T(S)$ by  $C^1$-coordinate charts which map to $\mathbb{R}^{\Gamma_i}$, $i=1,\ldots,D$. Now cover $p([0,L])$ by these charts. Thanks to the compactness of $p([0,L])$, there are finitely many charts $(V_j,\trace^{\Gamma_{i_j}})_{j=1, \dots , m}$ and a subdivision $[t_0=0,t_1], \dots , [t_{m-1}, t_m=L]$ of $[0,L]$ such that $p([t_{j-1}, t_j])$ is contained in $V_j$. The result follows from relabelling the $\Gamma_{i_j}$ as $\Gamma_j$.
\end{proof}

\begin{remark}
\cref{lem:pathcoords} allows us to work in local coordinates in $\mathbb{R}^{\dim_{\mathbb{R}}\T(S)}$ to approximate subsegments like $\trace^{\Gamma_j}\circ p_j$ in trace function-based local coordinates.  This  simplifies notation and allows  us to use  \cref{lem:taylorseries} to understand the behaviour of short earthquake segments.
\end{remark}

\subsection{Step~2: approximating via earthquake segments}

Since we may approximate each path $\trace^{\Gamma_j}\circ p_j$ individually, \cref{lem:pathcoords} reduces the problem to approximating a single path $p(t)$ over a single trace coordinate chart $(U,\trace^{\Gamma})$. For the remainder of the proof, let 
\[
\pi:[0,L]\to\mathbb{R}^{\Gamma},\quad\pi(t):=\trace^{\Gamma}\circ p(t)
\] 
denote the corresponding image of the path $p(t)$ in trace coordinates.

\begin{lemma}
\label{lem:lineapprox1}
For each $\epsilon>0$, we can subdivide the interval $[0,L]$ into sub-intervals of the form $[s_i,s_{i+1}]$, where
\[
0=s_0<s_1<s_2<\ldots<s_{m-1}<s_m=L,
\]
so that for $i=0,\ldots,m-1$, and for all $s\in[s_i,s_{i+1}]$, we have
\begin{align}
\|\pi(s)-\pi(s_i)-(s-s_i)\pi'(s_i)\|_{\mathbb{R}^{\Gamma}}
<
\tfrac{\epsilon}{2}(s-s_i).
\label{eq:lineapprox1}
\end{align}
\end{lemma}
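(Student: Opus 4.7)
My plan is to derive the estimate from uniform continuity of $\pi'$ on the compact interval $[0,L]$, combined with the fundamental theorem of calculus. This is essentially the standard $C^1$ Taylor-approximation argument; the slight care needed is only in upgrading the integrated bound to a strict inequality.

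First, since $p$ is $C^1$ on $[0,L]$ and $\trace^\Gamma$ is real analytic, the composition $\pi=\trace^\Gamma\circ p:[0,L]\to\mathbb{R}^{\Gamma}$ is $C^1$. The derivative $\pi'$ is therefore continuous on the compact interval $[0,L]$ and, by Heine--Cantor, uniformly continuous. Given $\epsilon>0$, I will choose $\delta>0$ such that $|u-t|<\delta$ implies $\|\pi'(u)-\pi'(t)\|_{\mathbb{R}^\Gamma}<\tfrac{\epsilon}{2}$, and then select any partition $0=s_0<s_1<\cdots<s_m=L$ with mesh less than $\delta$.

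For each $i$ and each $s\in(s_i,s_{i+1}]$, the fundamental theorem of calculus applied componentwise gives
\begin{equation*}
\pi(s)-\pi(s_i)-(s-s_i)\pi'(s_i)
=\int_{s_i}^{s}\bigl(\pi'(u)-\pi'(s_i)\bigr)\,\mathrm{d}u,
\end{equation*}
so that
\begin{equation*}
\|\pi(s)-\pi(s_i)-(s-s_i)\pi'(s_i)\|_{\mathbb{R}^{\Gamma}}
\leq\int_{s_i}^{s}\|\pi'(u)-\pi'(s_i)\|_{\mathbb{R}^{\Gamma}}\,\mathrm{d}u.
\end{equation*}
To upgrade this to the strict inequality in the statement, I will use that the continuous function $u\mapsto\|\pi'(u)-\pi'(s_i)\|_{\mathbb{R}^{\Gamma}}$ attains its maximum $M_i$ on the compact subinterval $[s_i,s_{i+1}]$, and that our choice of $\delta$ forces $M_i<\tfrac{\epsilon}{2}$; integrating then yields an upper bound of $M_i(s-s_i)<\tfrac{\epsilon}{2}(s-s_i)$.

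There is no genuine obstacle here; the only minor subtlety is the degenerate case $s=s_i$, where both sides of the desired inequality vanish and strict inequality as stated cannot hold. This is best handled by reading the lemma as a claim for $s\in(s_i,s_{i+1}]$ (which is all that is used in the sequel, when approximating $p$ by nontrivial earthquake segments), or by noting that this edge case plays no role in the subsequent constructions.
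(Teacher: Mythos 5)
Your proposal is correct and follows essentially the same route as the paper's proof: both exploit the continuity of $\pi'$ on the compact interval (the paper via a finite cover on which $\pi'$ is nearly constant, you via uniform continuity, which is the same content) and then apply the fundamental theorem of calculus and the integral triangle inequality to get the bound $\int_{s_i}^{s}\|\pi'(u)-\pi'(s_i)\|\,\mathrm{d}u<\tfrac{\epsilon}{2}(s-s_i)$. Your remark about the degenerate endpoint $s=s_i$ is a fair observation (the paper's strict inequality has the same harmless issue), and your resolution is the right reading.
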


\begin{center}
\begin{figure}
\includegraphics[scale=0.5]{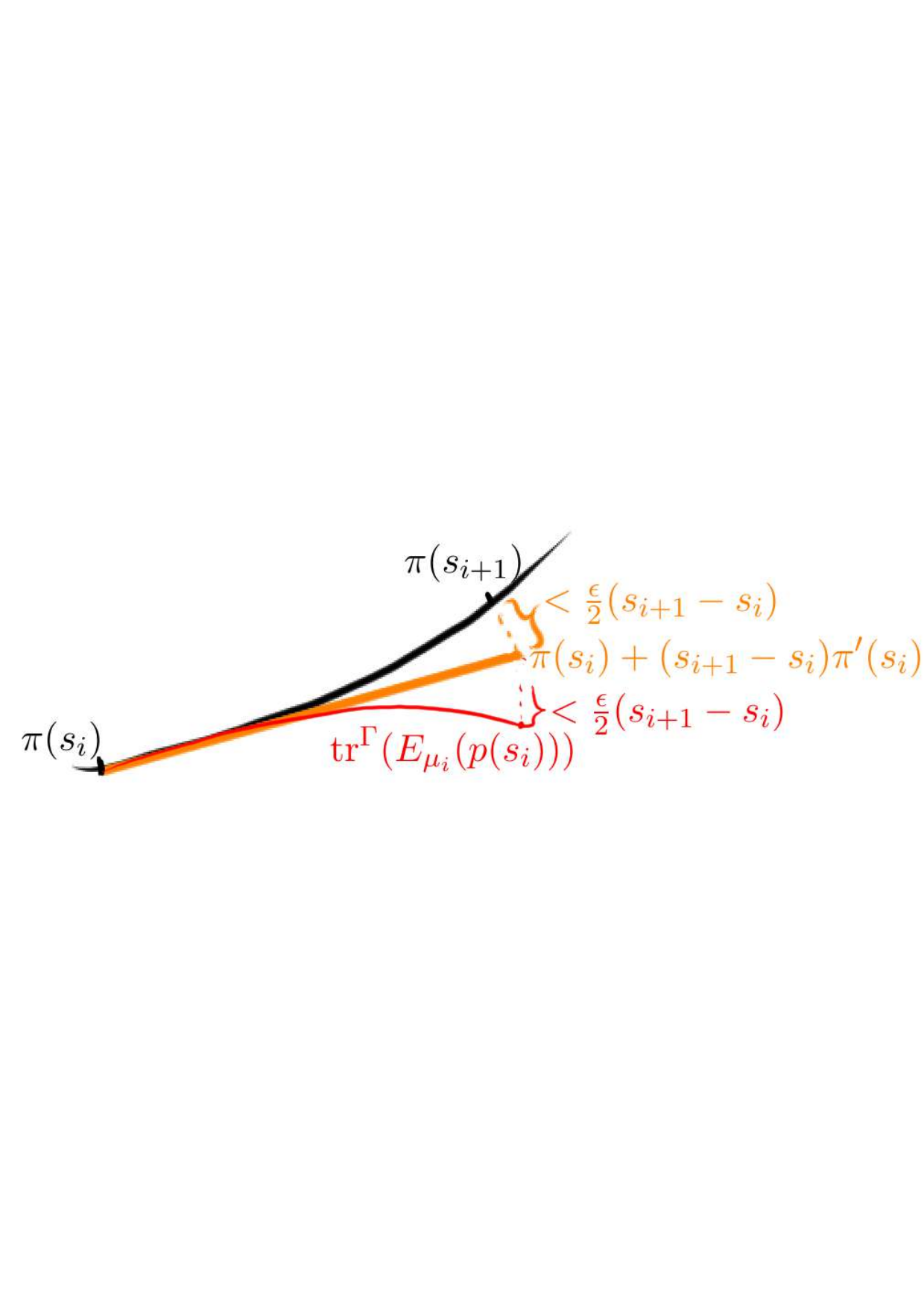}
\caption{A segment of $\pi(t)$ between $t=s_i$ and $t=s_{i+1}$, the {approximating linear segment in orange}, and the {approximating earthquake segment in red}.}
\label{fig:pg29}
\end{figure}
\end{center}

\begin{proof}
Since $\pi(t)$ is $C^1$, its derivative $\pi':[0,L]\to\mathbb{R}^{\Gamma}$ is continuous. Thus, we may cover $[0,L]$ with open intervals on which $\pi'(t)$ is nearly constant. The compactness of $[0,L]$ allows us to reduce to a (minimal) finite covering, and choosing $(s_i)_{i=0,\ldots,m}$ to lie on overlaps of successive intervals ensures that
\begin{align}
\text{for all } s\in[s_i,s_{i+1}],
\quad 
\|\pi'(s_i)-\pi'(s)\|_{\mathbb{R}^{\Gamma}}
<\tfrac{\epsilon}{2}.
\label{eq:line1condition}
\end{align}
Since $\pi(s)
=
\pi(s_i)
+
\int_{s_i}^{s}
\pi'(t)\; \mathrm{d}t$
and 
$(s-s_i)\pi'(s_i)=\int_{s_i}^{s}\pi'(s_i)\ \mathrm{d}t$, we have
\begin{align*}
\left\|
\pi(s)-\pi(s_i)-(s-s_i)\pi'(s_i)
\right\|_{\mathbb{R}^{\Gamma}}
&=
\left\|
\int_{s_i}^{s}
\pi'(t)
\; \mathrm{d}t
-
\int_{s_i}^{s}
\pi'(s_i)
\; \mathrm{d}t
\right\|_{\mathbb{R}^{\Gamma}}
\\
&\leq
\int_{s_i}^{s}
\|\pi'(t)-\pi'(s_i)\|_{\mathbb{R}^{\Gamma}}
\; \mathrm{d}t
<\tfrac{\epsilon}{2}(s-s_i).
\end{align*}
\end{proof}

\begin{lemma}
\label{lem:lineapprox2}
For each $\epsilon>0$, we can subdivide the interval $[0,L]$ into finer intervals of the form $[s_i,s_{i+1}]$, where
\[
0=s_0<s_1<s_2<\ldots<s_{m-1}<s_m=L,
\]
so that, for every $i=0,\ldots,m-1$, there is some $\mu_i\in\ml(S)$ such that
\begin{enumerate}[(i)]
\item
$\ell_{\mu_i}(p(s_i))=1$, and

\item
$\|\trace^{\Gamma}(E_{(s_{i+1}-s_i)\mu_i}(p(s_i)))
-\pi(s_{i+1})
\|_{\mathbb{R}^{\Gamma}}
<
\epsilon(s_{i+1}-s_i)$.

\end{enumerate}
\end{lemma}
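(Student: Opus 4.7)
The plan is to combine Lemma 5.2 with the Taylor series control from Lemma 4.2 (\cref{lem:keylemma}). Since $p$ is parametrised by earthquake arclength, at each point $s_i$ the tangent vector $p'(s_i) \in T_{p(s_i)}\T(S)$ satisfies $\|p'(s_i)\|_e = 1$, so by \cref{Th:Thurston-Ball} there is a unique $\mu_i \in \ml(S)$ with $\v_{p(s_i)}(\mu_i) = p'(s_i)$ and $\ell_{\mu_i}(p(s_i)) = 1$. This choice immediately gives condition~(i), and it makes the initial velocity of the earthquake path $t\mapsto E_{t\mu_i}(p(s_i))$ agree with $p'(s_i)$.

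Next, I would compare the trace coordinate of the endpoint of this earthquake segment to $\pi(s_{i+1})$ by inserting the linearisation $\pi(s_i) + (s_{i+1}-s_i)\pi'(s_i)$ as an intermediate. Since $p([0,L])$ is compact and lies in the chart $U$, we may enclose it in a compact set $K\subset U$. Applying \cref{lem:keylemma} to each $\gamma \in \Gamma$ (with the normalisation $\ell_{\mu_i}(p(s_i))=1$ so that $E_{t\mu_i}(p(s_i)) = E_{t\mu_i/\ell_{\mu_i}(p(s_i))}(p(s_i))$) and summing over the finite set $\Gamma$ yields a constant $C_K>0$ with
\begin{align*}
\bigl\|\trace^{\Gamma}(E_{(s_{i+1}-s_i)\mu_i}(p(s_i))) - \pi(s_i) - (s_{i+1}-s_i)\pi'(s_i)\bigr\|_{\R^{\Gamma}} \leq C_K (s_{i+1}-s_i)^2,
\end{align*}
provided $s_{i+1}-s_i \leq 1$, which we may assume. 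Applying \cref{lem:lineapprox1} with tolerance $\tfrac{\epsilon}{2}$ and the triangle inequality produces
\begin{align*}
\bigl\|\trace^{\Gamma}(E_{(s_{i+1}-s_i)\mu_i}(p(s_i))) - \pi(s_{i+1})\bigr\|_{\R^{\Gamma}} \leq \tfrac{\epsilon}{2}(s_{i+1}-s_i) + C_K(s_{i+1}-s_i)^2.
\end{align*}

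Finally, I would refine the subdivision from \cref{lem:lineapprox1} so that in addition to the bound of \cref{eq:lineapprox1}, every subinterval has length strictly less than $\tfrac{\epsilon}{2C_K}$. Then the quadratic remainder $C_K(s_{i+1}-s_i)^2$ is dominated by $\tfrac{\epsilon}{2}(s_{i+1}-s_i)$, and condition~(ii) follows.

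The only subtle point is ensuring that the constant $C_K$ is genuinely uniform, so that the refinement at the end can be carried out independently of the index $i$. This is handled by the compactness of $K\subset U$ and of $\pml(S)$, which together with the Cauchy integral bound in the proof of \cref{lem:keylemma} produce a single constant valid for all starting points $p(s_i)\in K$ and all projective classes $[\mu_i]\in\pml(S)$; this uniformity is precisely the content of \cref{lem:keylemma}, so no new analytic input is required.
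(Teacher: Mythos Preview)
Your proof is correct and follows essentially the same approach as the paper: choose $\mu_i$ via the earthquake homeomorphism so that $\v_{p(s_i)}(\mu_i)=p'(s_i)$, split the error via the linearisation $\pi(s_i)+(s_{i+1}-s_i)\pi'(s_i)$, control the earthquake-to-linear error by summing \cref{lem:keylemma} over $\gamma\in\Gamma$, and refine the subdivision so that each subinterval has length below $\min(1,\epsilon/(2C_K))$. The paper's proof is identical in structure and content, differing only in notation (it writes $C_{\Gamma,K}:=\sum_{\gamma\in\Gamma}C_{\gamma,K}$ for your $C_K$).
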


\begin{proof}
We choose the division points $(s_i)$ to satisfy the following constraints: for all $ i=0,\ldots, m$, 
\begin{enumerate}[(a)]
\item
$s_{i+1}-s_i<\min(\frac{\epsilon}{2C_{\Gamma,K}},1)$, where $K\subset\T(S)$ is a compact set containing the image of the path $\pi(t)$, and the constant $C_{\Gamma,K}$ is defined to be $\sum_{\gamma\in\Gamma} C_{\gamma,K}$ with $C_{\gamma,K}$ constructed as in \cref{lem:keylemma};

\item
for any $s\in[s_i,s_{i+1}]$,  we have $\|\pi'(s_i)-\pi'(s)\|_{\mathbb{R}^{\Gamma}}
<\tfrac{\epsilon}{2}$ --- this is just \cref{eq:line1condition}.

\end{enumerate}
We already argued in the proof of \cref{lem:lineapprox1} that condition $(b)$ is possible to enforce, and will yield \cref{eq:lineapprox1} as a consequence. The condition~{(a)} is clearly possible. 

We now explain the choice of $\mu_i$ for each interval $[s_i,s_{i+1}]$: the infinitesimal earthquake map $\v_{(\cdot)}(p(s_i)):\ml(S)\to T_{p(s_i)}\T(S)$ is a homeomorphism (see \cref{thm:earthquakehomeo}, extracted from the proof of \cite[Proposition~2.6]{kerckhoff1985earthquakes}), and we choose $\mu_i$ to be the unique measured lamination $\mu_i\in\ml(S)$ such that $\v_{\mu_i}(p(s_i))=p'(s_i)$. Immediate consequences of this choice of $\mu_i$ are the following.
\begin{itemize}
\item
Since $\|p'(s_i)\|_e=1$ (see \cref{rmk:arclength}),  we have
\[
\|\v_{\mu_i}(p(s_i))\|_e=\ell_{\mu_i}(p(s_i))=1,
\]
which already gives us conclusion~{(i)}.
\item
Moreover,
\[
\left.\tfrac{\mathrm{d}}{\mathrm{d}t}\right|_{t=0}
\trace^{\Gamma}(E_{t\mu_i}(p(s_i)))
=
\mathrm{d}\trace^\Gamma(\v_{\mu_i}(p(s_i)))=\mathrm{d}\trace^\Gamma(p'(s_i))=\pi'(s_i),
\]
and combining this with \cref{lem:taylorseries}, we see that
\begin{align*}
\pi'(s_i)
=
\left(
\varphi^{\gamma}_1([\mu_i],p(s_i))
\right)_{\gamma\in\Gamma}\in\mathbb{R}^{\Gamma}.
\end{align*}
\end{itemize}

We now prove conclusion~{(ii)} using conditions~{(a)} and {(b)}. By \cref{eq:lineapprox1} with $s=s_{i+1}$ and the triangle inequality, it suffices to show that
\begin{align*}
\|\trace^{\Gamma}(E_{\mu_i}(p(s_i)))-\pi(s_i)-(s_{i+1}-s_i)\pi'(s_i)\|_{\mathbb{R}^{\Gamma}}
<\tfrac{\epsilon}{2}(s_{i+1}-s_i).
\end{align*}

The condition $(a)$ means that we may invoke \cref{lem:keylemma} to assert that for all $\gamma \in \Gamma$,
$$\left|
\trace_\gamma\left(E_{t\mu_i}(p(s_i))\right)
-\trace_\gamma(p(s_i))
-\varphi^\gamma_1([\mu_i],p(s_i))\; t
\right|
<
C_{\gamma,K}t^2.
$$
We finish the proof by applying the triangle inequality (see \cref{fig:pg29}) to isolate each coordinate:
\begin{align*}
&\left\|
\trace^{\Gamma}
\left(
E_{(s_{i+1}-s_i)\mu_i}(p(s_i))
\right)-\pi(s_i)-\pi'(s_i)(s_{i+1}-s_i)
\right\|_{\mathbb{R}^{\Gamma}}\\
\leq&
\sum_{\gamma\in\Gamma}
\left|
\trace_\gamma\left(E_{(s_{i+1}-s_i)\mu_i}(p(s_i))\right)
-\trace_\gamma(p(s_i))
-\varphi^\gamma_1([\mu_i],p(s_i))(s_{i+1}-s_i)
\right|\\
<&
\sum_{\gamma\in\Gamma}
C_{\gamma,K}(s_{i+1}-s_i)^2
\leq
(\sum_{\gamma\in\Gamma}C_{\gamma,K})
\cdot\tfrac{\epsilon}{2C_{\Gamma,K}}
\cdot(s_{i+1}-s_i)
=\tfrac{\epsilon}{2}(s_{i+1}-s_i).
\end{align*}
\end{proof}

\subsection{Step~3: closing up the earthquake segments}

\begin{lemma}
\label{lem:shortapprox} 
There exist  constants $\delta>0$ and  $M_\Gamma$ such that for any $s\in[0,L]$ and any  $q\in \overline{B}_{\mathbb{R}^{\Gamma}}(\pi(s),\delta)$, there exists a unique $\mu\in\ml(S)$ such that for  $\tilde{q}:=(\trace^{\Gamma})^{-1}(q)$, 
we have  
\begin{enumerate}
	\item $\trace^{\Gamma}\left(E_{\mu}(\tilde{q})\right)=\pi(s)$, i.e.\  $ E_\mu(\tilde q)=p(s)$, and 
	\item $\ell_{\mu}(\tilde q)\leq M_\Gamma \|\pi(s)-q\|_{\mathbb{R}^{\Gamma}}$.
\end{enumerate}
\end{lemma}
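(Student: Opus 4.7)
My plan is to derive uniqueness of $\mu$ directly from the earthquake theorem and then establish the linear magnitude bound via a bootstrap from the second-order Taylor expansion of \cref{lem:keylemma}. The uniqueness of $\mu$ with $E_\mu(\tilde q) = p(s)$ is immediate from the earthquake theorem, so the essential content is the estimate $\ell_\mu(\tilde q) \leq M_\Gamma\|\pi(s) - q\|_{\mathbb{R}^\Gamma}$.

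Fix a compact set $K'$ with $p([0,L]) \subset \Int K' \subset K' \subset U$, and let $C_{\Gamma,K'} := \sum_{\gamma \in \Gamma} C_{\gamma,K'}$, where the $C_{\gamma,K'}$ are the constants from \cref{lem:keylemma}. Differentiating the Taylor expansion of \cref{lem:taylorseries} at $t = 0$ yields the identification $\varphi^\gamma_1([\mu],x)\,\ell_\mu(x) = d\trace_\gamma(\v_x(\mu))$, so summing the bound of \cref{lem:keylemma} over $\gamma \in \Gamma$ gives, for every $x \in K'$ and every $\mu \in \ml(S)$ with $\ell_\mu(x) \leq 1$,
\[
\|\trace^\Gamma(E_\mu(x)) - \trace^\Gamma(x) - d\trace^\Gamma_x(\v_x(\mu))\|_{\mathbb{R}^\Gamma} \leq C_{\Gamma,K'}\,\ell_\mu(x)^2.
\]
Specialising to $x = \tilde q$ and $E_\mu(\tilde q) = p(s)$ and applying the triangle inequality produces the key estimate
\[
\|d\trace^\Gamma_{\tilde q}(\v_{\tilde q}(\mu))\|_{\mathbb{R}^\Gamma} \leq \|\pi(s) - q\|_{\mathbb{R}^\Gamma} + C_{\Gamma,K'}\,\ell_\mu(\tilde q)^2.
\]

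To invert the linear part with uniform control, I would use that $\trace^\Gamma$ is a real-analytic embedding (so each $d\trace^\Gamma_x$ is injective) together with the local Lipschitz continuity of ${\|\cdot\|}_e$ from \cref{thm:lipregular}. Over the compact $K'$ these combine to yield a uniform upper bound $C_0$ on the operator norms of the left inverses $(d\trace^\Gamma_x)^{-1} \colon d\trace^\Gamma_x(T_x\T(S)) \to (T_x\T(S), {\|\cdot\|}_e)$. Substituting this back gives
\[
\ell_\mu(\tilde q) = \|\v_{\tilde q}(\mu)\|_e \leq C_0\|\pi(s) - q\|_{\mathbb{R}^\Gamma} + C_0 C_{\Gamma,K'}\,\ell_\mu(\tilde q)^2.
\]
Setting $M_\Gamma := 2C_0$, the quadratic term is absorbed into the left-hand side whenever $\ell_\mu(\tilde q) \leq (2C_0 C_{\Gamma,K'})^{-1}$, producing the desired linear bound.

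The main obstacle is then to select $\delta > 0$ small enough that this bootstrap hypothesis $\ell_\mu(\tilde q) \leq \min(1, (2C_0 C_{\Gamma,K'})^{-1})$ is automatically enforced whenever $\|q - \pi(s)\|_{\mathbb{R}^\Gamma} \leq \delta$. For this I would invoke the continuous dependence of the earthquake lamination $\mu(x,y)$ on its endpoints (with $\mu \to 0$ as $x \to y$, by the continuity of the inverse of the earthquake-theorem map at the diagonal) and then upgrade to uniform continuity over $s \in [0,L]$ via compactness of $p([0,L])$, so that a single $\delta$ works for all $s$ simultaneously. This continuous-dependence step is the only genuinely non-algebraic ingredient of the argument, and it is standard from Kerckhoff's treatment of the earthquake theorem.
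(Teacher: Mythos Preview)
Your argument is correct, but it follows a genuinely different route from the paper's.

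The paper reformulates the problem using \emph{right} earthquakes based at $p(s)$: instead of left-earthquaking from $\tilde q$ to $p(s)$, one right-earthquakes from $p(s)$ to $\tilde q$. With the base point fixed on the compact path image, the Taylor expansion of \cref{lem:keylemma} (extended to negative $t$ via \cref{rmk:right:earthquake:key}) yields a \emph{direct} linear lower bound $t \leq M_\Gamma \|\trace^\Gamma(E^\#_{t\lambda}(p(s))) - \pi(s)\|_{\mathbb{R}^\Gamma}$, where $M_\Gamma = 2/(a_{|\Gamma|} b_\Gamma)$ and $b_\Gamma := \min_{[\lambda],s} \sum_\gamma |\varphi_1^\gamma([\lambda],p(s))| > 0$ by compactness of $\pml(S)\times[0,L]$. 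The choice of $\delta$ is then explicit (any $\delta \leq \epsilon_0/M_\Gamma$ for a concrete $\epsilon_0$), and the covering argument uses only that $E^\#_{(\cdot)}(p(s))$ is a homeomorphism for each fixed $s$.

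Your approach instead bases the Taylor expansion at the moving point $\tilde q$, leading to the quadratic inequality and the bootstrap. This is conceptually natural, but the price is the final ``smallness'' step: you need $\ell_\mu(\tilde q)$ uniformly small once $\|q-\pi(s)\|$ is small, and for this you invoke \emph{joint} continuity of $(x,y)\mapsto \mu(x,y)$. That joint continuity does hold (the map $(x,\mu)\mapsto(x,E_\mu(x))$ is a continuous bijection between manifolds of equal dimension, hence a homeomorphism by invariance of domain), but it is a step beyond what Kerckhoff states for a single base point, so calling it ``standard from Kerckhoff's treatment'' slightly undersells what is needed. The paper's reformulation sidesteps this by keeping the base point on the fixed compact path, so that all smallness conditions are governed by explicit constants rather than an abstract continuity-plus-compactness argument.
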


\begin{proof}
We first make the observation that if we use $E^{\#}_{t\lambda}(\cdot)$ to denote right earthquakes with respect to $\lambda$, then whatever statement that we wish to understand regarding left earthquaking from points $\tilde{q}$ to $p(s)=E_{t\lambda}(\tilde{q})$ can be rephrased in terms of right earthquakes with respect to $\lambda$ deforming from $p(s)$ to $\tilde{q}=E^{\#}_{t\lambda}(p(s))$. Indeed, since the initial choice of using left earthquakes was arbitrary, we could have developed everything using right earthquakes from the very beginning of this work and obtained a version of the Taylor series \cref{lem:taylorseries} for right earthquakes. 

We clarify our choice of $\delta$ and $M_\Gamma$. For $\lambda\in\ml(S)$ normalised so that $\ell_\lambda(p(s))=1$.
Notice that $E^\#_{t\lambda}(p(s))=E_{-t\lambda}(p(s))$ for $t>0$. 
 Hence, \begin{align*}
&\left\|
\trace^{\Gamma}(E^{\#}_{t\lambda}(p(s)))
-
\pi(s)
\right\|_{\mathbb{R}^\Gamma}=\left\|
\trace^{\Gamma}(E^{\#}_{t\lambda}(p(s)))
-
\trace^\Gamma \gamma(p(s))
\right\|_{\mathbb{R}^\Gamma}\\
=& \left\|
\trace^{\Gamma}(E_{-t\lambda}(p(s)))
-
\trace^\Gamma \gamma(p(s))
\right\|_{\mathbb{R}^\Gamma}\\
\geq & a_{|\Gamma|}\sum_{\gamma\in\Gamma}\left\|
\trace^{\gamma}(E^{\#}_{t\lambda}(p(s)))
-
\trace^\gamma \gamma(p(s))
\right\|_{\mathbb{R}^\Gamma}
\\
\geq & a_{|\Gamma|}\sum_{\gamma\in\Gamma}\left(
 \left|\varphi^\gamma_1([\lambda],p(s)) t
\right|-\left|
\trace^{\gamma}(E^{\#}_{t\lambda}(p(s)))
-
\pi(s)-\varphi^\gamma_1([\lambda],p(s)) t
\right|\right)
\\
\geq & a_{|\Gamma|}\sum_{\gamma\in\Gamma}\left(  t |\varphi^\gamma_1([\lambda],p(s)) |- t^2
 C_{\gamma,K}\right),
\end{align*}
where $a_{|\Gamma|}>0$ is a uniform constant used to bound the Euclidean norm below by the $L^1$-norm in $\mathbb{R}^{\Gamma}$ and where the last inequality follows from Lemma \ref{lem:keylemma} and Remark \ref{rmk:right:earthquake:key}. 
Using compactness, we define another constant
\begin{align*}
b_\Gamma
:=
\min_{
\stackrel{[\lambda]\in\pml(S)}{s\in[0,L]}
}
\sum_{\gamma\in\Gamma}
|\varphi_1^\gamma([\lambda],p(s))|
>0,
\end{align*}
where the positivity follows from Lemma \ref{lem:pathcoords}.

We next choose $\epsilon_0>0$ subject to the following conditions:
\begin{itemize}
\item $\displaystyle\epsilon_0\leq\min(1,\frac{b_\Gamma}{2\sum_{\gamma\in\Gamma}c_{\gamma,K}})$

\item
$\epsilon_0$ is small enough so that the Euclidean $\epsilon_0$-neighbourhood $\overline{B}_{\mathbb{R}^{\Gamma}}(\pi([0,L]),\delta)$ of the image $\pi([0,L])$ of the path $\pi$ is contained within the image $\trace^{\Gamma}(U)\subset\mathbb{R}^{\Gamma}$.

\end{itemize}
The latter condition is a minor technicality needed for the use of auxiliary coordinate charts in our proof strategy. As a consequence, for every  $t\leq \epsilon_0$, 
\begin{align}
t\leq
M_\Gamma\left\|
\trace^{\Gamma}(E^{\#}_{\varepsilon\lambda}(p(s)))
-
\pi(s)
\right\|_{\mathbb{R}^\Gamma}
,\text{ where }M_\Gamma=\frac{2}{a_{|\Gamma|}b_\Gamma}.
\label{eq:upperbound}
\end{align}

We are now equipped for the proof of \cref{lem:shortapprox}. We know from \cite[Appendix Theorem~2]{Ker} that for every point $p(s)$ along the given path $p$, the right earthquake map
\[
E^{\#}_{(\cdot)}(p(s)):\ml(S)\to\T(S),\quad\lambda\mapsto E^{\#}_{\lambda}(p(s))
\]
is a homeomorphism. 
We specify that we choose $\delta>0$ to be any positive constant  less than or equal to $\epsilon_0/M_\Gamma$.
By setting $t=M_\Gamma \delta$ in \cref{eq:upperbound}, we see that the boundary sphere of the ball,
\[
\ml_{\leq \delta}(p(s))
:=
\left\{
\mu\in\ml(S)
\mid
\ell_\mu(p(s))
\leq M_{\Gamma}\delta
\right\},
\]
maps to a sphere lying outside the Euclidean open ball ${B}_{\mathbb{R}^{\Gamma}}(\pi(s),\delta)$, and hence the image of $\ml_{\leq \delta}(p(s))$ under $\trace^{\Gamma}\circ E^{\#}_{(\cdot)}(p(s))$ contains $\overline{B}_{\mathbb{R}^{\Gamma}}(\pi(s),\delta)$. This means that for any $s\in[0,L]$ and any $q\in \overline{B}_{\mathbb{R}^{\Gamma}}(\pi(s),\delta)$, there exists some $\mu\in\ml(S)$ such that 
$
\trace^{\Gamma}(E^{\#}_{\mu}(p(s)))=q.$
Moreover, by \eqref{eq:upperbound}, where $\lambda$ is assumed to be of unit length, we see that
$$0\leq\ell_\mu(p(s))\leq M_\Gamma\|\pi(s)-q\|_{\mathbb{R}^{\Gamma}}.$$
Equivalently, we have, for $\widetilde{q}=(\trace^{\Gamma})^{-1}(q)$,  
\[
\trace^{\Gamma}(E_{\mu}(\widetilde{q}))=\pi (s)\text{ and }\ell_\mu(\widetilde{q})\leq M_\Gamma\|\pi(s)-q\|_{\mathbb{R}^{\Gamma}}.
\] 
The uniqueness of $\mu$ is due to $E_{(\cdot)}(\widetilde{q})$ being a homeomorphism.
\end{proof}

\subsection{Proving the magnitude minimisation interpretation}

\begin{proof}[Proof of \cref{thm:magnitude}]
Given $\epsilon>0$, we approximate $\pi:[0,L]\to\mathbb{R}^{\Gamma}$ by forming a sawtooth-like piecewise earthquake path as follows:
\begin{itemize}
\item
break up the interval $[0,L]$ into intervals $[s_i,s_{i+1}]_{i=0,\ldots,m-1}$ subject to \cref{lem:lineapprox2} with the given $\epsilon$;

\item
replace each segment $\pi([s_i,s_{i+1}])$ by two earthquake segments --- the first segment is given by (see \cref{lem:lineapprox2})
\[
\trace^{\Gamma}(E_{t(s_{i+1}-s_i)\mu_i}(p(s_i))),\quad t\in[0,1],
\]
and the second segment joins the endpoint $\trace^{\Gamma}(E_{(s_{i+1}-s_i)\mu_i}(p(s_i)))$ to $\pi(s_{i+1})$ using \cref{lem:shortapprox}.
\end{itemize}

\begin{center}
\begin{figure}[ht]
\includegraphics[scale=0.4]{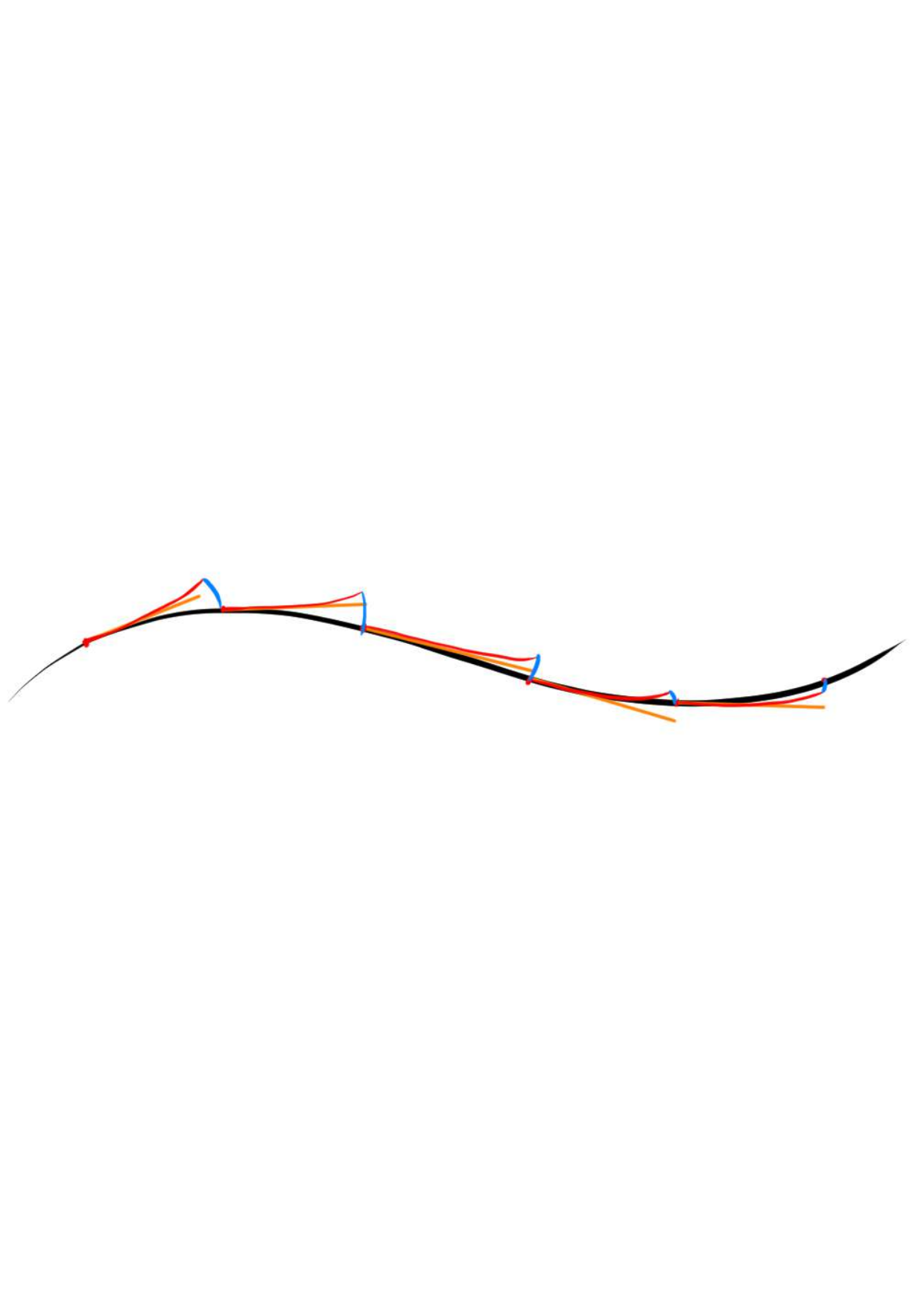}
\caption{Sawtooth waves (red and blue) which approximate the desired $C^1$-path $\pi(t)$.}
\label{fig:pg31}
\end{figure}
\end{center}

The earthquake metric length of the first segment is $s_{i+1}-s_i$, which is precisely the length of the segment $p([s_i,s_{i+1}])$. Thanks to \cref{lem:lineapprox2} (ii) and \cref{lem:shortapprox} (ii), the length of the second segment is less than ${\epsilon M_\Gamma(s_{i+1}-s_i)}$. Therefore, the earthquake metric length of the approximating piecewise earthquake path is 
$\displaystyle
\sum_{i=1}^{m-1}(s_{i+1}-s_i)=L$
 up to an error smaller than 
$\displaystyle\sum_{i=1}^{m-1}\epsilon M_\Gamma(s_{i+1}-s_i)=\epsilon M_\Gamma L$.
As $\epsilon\to0$, the length of the approximation tends to $L$, as desired.
\end{proof}

\newpage

\section{Mapping class group rigidity and earthquake metric asymmetry}
\label{sec:infinitesimal}

We establish the infinitesimal rigidity of the earthquake metric in two different ways. The first proof relies on \cref{duality} and the infinitesimal rigidity of Thurston's asymmetric metric established in \cite{HOP,Pan}. The second proof makes use of a rigidity result obtained in \cite{OP} concerning homeomorphisms of measured laminations space which preserve zero intersection number, which establishes ``topological rigidity'' (\cref{topological rigidity}). The remainder of the proof follows the third author's argument in \cite[Theorem 3.1]{Pan}. We then employ infinitesimal rigidity of the earthquake metric to show that $d_e$ is rigid with respect to the mapping class group and that it is asymmetric as a metric.

\subsection{Flats in the unit sphere of the earthquake norm}

We give a characterisation of pairs of measured laminations with zero intersection number, and show that this also characterises pairs of measured laminations in the unit norm ball of the earthquake Finsler metric which lie on the same flat.

\begin{theorem}[flatness characterisation]
\label{segment}
Let $\lambda$ and $\mu$ be two measured  laminations on $S$.
Then  the following three conditions are equivalent.
\begin{enumerate}[(1)]
\item
The equality $\| t \v_x(\lambda)+(1-t) \v_x(\mu)\|_e=t\| \v_x(\lambda)\|_e+(1-t)\| \v_x(\mu)\|_e$ holds for every $t \in (0,1)$.
\item
The same equality holds for some $t \in (0,1)$.
\item
 $i(\lambda, \mu)=0$.
 \end{enumerate}
 \end{theorem}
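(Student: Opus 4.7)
The plan is to close the circular chain $(3) \Rightarrow (1) \Rightarrow (2) \Rightarrow (1) \Rightarrow (3)$, noting that $(1) \Rightarrow (2)$ is tautological. For $(3) \Rightarrow (1)$, when $i(\lambda, \mu) = 0$ each combination $s\lambda + t\mu$ with $s, t \geq 0$ is a well-defined measured geodesic lamination, earthquakes along zero-intersection laminations commute (extending continuously from commutativity of Fenchel--Nielsen twists along disjoint simple closed curves), and both $\v_x$ and $\len_x$ are linear on this positive cone. Substituting $\v_x(t\lambda + (1-t)\mu) = t\v_x(\lambda) + (1-t)\v_x(\mu)$ and $\len_x(t\lambda + (1-t)\mu) = t\len_x(\lambda) + (1-t)\len_x(\mu)$ into the definition of $\|\cdot\|_e$ gives $(1)$ immediately.

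The implication $(2) \Rightarrow (1)$ is purely convex-geometric. Normalize $\hat w_i := \v_x(\lambda_i)/\|\v_x(\lambda_i)\|_e$ to unit vectors on the earthquake-norm sphere; the equality in $(2)$ places some interior point of the segment $[\hat w_1, \hat w_2]$ on the unit sphere. Since the earthquake unit ball is convex by \cref{Th:Thurston-Ball}, any supporting hyperplane at this interior point must contain both endpoints $\hat w_1, \hat w_2$, so the whole segment lies in the unit sphere; positive homogeneity of $\|\cdot\|_e$ then upgrades this to equality for every $t \in (0, 1)$.

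The crucial direction is $(1) \Rightarrow (3)$. Write $\hat\lambda := \lambda/\len_x(\lambda)$ and $\hat\mu := \mu/\len_x(\mu)$; by $(1)$ the segment from $\e_x(\lambda) = \v_x(\hat\lambda)$ to $\e_x(\mu) = \v_x(\hat\mu)$ lies in the earthquake unit sphere. By the infinitesimal duality (\cref{duality}), the Weil--Petersson symplectic isomorphism $\Phi_{\mathrm{WP}}$ isometrically transports this flat face to a flat face of the Thurston conorm unit ball, joining $(d\log\ell_{\hat\lambda})_x$ and $(d\log\ell_{\hat\mu})_x$. A supporting hyperplane to this flat face corresponds to some tangent vector $v \in T_x\T(S) \setminus \{0\}$ satisfying
\begin{align*}
(d\log\ell_{\hat\lambda})_x(v) = (d\log\ell_{\hat\mu})_x(v) = \|v\|_{\mathrm{Th}},
\end{align*}
so both $[\hat\lambda]$ and $[\hat\mu]$ lie in the maximal-stretch set $K(v) := \{[\eta] \in \pml(S) : (d\log\ell_\eta)_x(v) = \|v\|_{\mathrm{Th}}\}$. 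By Thurston's stretch theory \cite{ThM}, the elements of $K(v)$ are precisely the projective measured laminations supported on a single geodesic lamination on $S$; any two such have zero geometric intersection number. Hence $i(\lambda, \mu) = 0$.

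The main obstacle is this final invocation of Thurston's structural description of the faces of the conorm unit sphere (equivalently, of the maximal-stretch set $K(v)$). Kerckhoff's cosine formula applied along the parametrization $\eta_s$ of the flat face produces linear identities such as $\Cos_x(\hat\lambda, \eta_s) = (1-s)\Cos_x(\hat\lambda, \hat\mu)$, but these identities, or variants obtained by pairing (★) with other test laminations, do not by themselves force $i(\hat\lambda, \hat\mu) = 0$; genuine input from Thurston's characterization of the conorm sphere's faces appears necessary to complete the argument.
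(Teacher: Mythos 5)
Your reduction of (2) to (1) via convexity of the earthquake unit ball (\cref{Th:Thurston-Ball}) is sound, and your (3)~$\Rightarrow$~(1) is essentially the paper's argument, except that the additivity $\v_x(t\lambda+(1-t)\mu)=t\v_x(\lambda)+(1-t)\v_x(\mu)$ is justified in the paper by testing both sides against the differentials $\mathrm{d}\ell_\gamma$ via Kerckhoff's cosine formula (\cref{Kerckhoff cosine}) rather than by an appeal to commuting flows; that is a minor point. The problem lies in your (1)~$\Rightarrow$~(3), which is exactly where you yourself flag ``the main obstacle'': the argument rests on the assertion that for an \emph{arbitrary} tangent vector $v$ the maximal-stretch set $K(v)=\{[\eta]:(\mathrm{d}\log\ell_\eta)_x(v)=\|v\|_{\mathrm{Th}}\}$ consists of projective laminations supported in a single geodesic lamination. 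That is a statement about the structure of every exposed face of the Thurston conorm sphere; it is not proved in your sketch, it is not a direct quotation of anything in \cite{ThM}, and you concede you cannot derive it from the cosine-formula identities. As written, the crucial implication is therefore not established, so the proposal has a genuine gap.

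The paper avoids needing any such global face-structure theorem by choosing the test covector itself. Arguing the contrapositive (2)~$\Rightarrow$~(3): assume $i(\lambda,\mu)>0$, normalise $\|\v_x(\lambda)\|_e=\|\v_x(\mu)\|_e=1$, and let $\nu$ be the unique lamination with $\v_x(\nu)=t\v_x(\lambda)+(1-t)\v_x(\mu)$ (again \cref{Th:Thurston-Ball}). If $\|\v_x(\nu)\|_e=1$, Wolpert duality (\cref{lem:WP}) gives $t(\mathrm{d}\ell_\lambda)_x+(1-t)(\mathrm{d}\ell_\mu)_x=(\mathrm{d}\ell_\nu)_x$. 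One then extends the support of $\nu$ to a maximal geodesic lamination $\tilde\nu$ and evaluates this identity on the unit stretch vector $w_{\tilde\nu}$: by \cite[Lemma 4.2]{HOP}, $(\mathrm{d}\ell_\nu)_x(w_{\tilde\nu})=1$, while $(\mathrm{d}\ell_\lambda)_x(w_{\tilde\nu})\leq 1$ and $(\mathrm{d}\ell_\mu)_x(w_{\tilde\nu})<1$, the strict inequality because $i(\lambda,\mu)>0$ forces one of $\lambda,\mu$ to cross $\tilde\nu$ transversely. This contradiction uses only the maximal-stretch property of stretch vectors along maximal laminations, a much weaker (and available) input than your claimed description of $K(v)$ for general $v$. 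To repair your write-up you should either substitute this stretch-vector argument or supply a proof (or precise reference) for the face-structure claim; without one of these, the equivalence is not proved.
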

\begin{proof}
The implication from \textit{(1)} to \textit{(2)} is logically trivial.

We show that \textit{(3)} implies \textit{(1)}.
Suppose that $i(\lambda, \mu)=0$. For every $t\in (0,1)$,  the two measured laminations $t\lambda$ and $(1-t)\mu$ do not intersect transversely, hence $t\lambda+(1-t)\mu$ is a well-defined measured lamination. Indeed, $t\v_x(\lambda)+(1-t)\v_x(\mu)=\v_x(t\lambda+(1-t)\mu)$ because \cref{Kerckhoff cosine} tells us that the two sides of this identity evaluated on $\ell_\gamma$ agree for every simple closed curve $\gamma$.
It follows that 
\begin{align}
\| \v_x(t\lambda+(1-t)\mu) \|_e&=\len_x(t\lambda+(1-t)\mu)\notag\\
&=t\len_x(\lambda)+(1-t)\len_x(\mu)=t\| \v_x(\lambda)\|_e +(1-t)\| \v_x(\mu)\|_e,
\end{align}
and we are done.

We next show that \textit{(2)} implies \textit{(3)} by proving its contraposition. Suppose that $i(\lambda, \mu)>0$. Let  $t$ be any number in $(0,1)$. We prove that $\| t \v_x(\lambda)+(1-t) \v_x(\mu)\|_e<t\| \v_x(\lambda)\|_e+(1-t)\| \v_x(\mu)\|_e$ under the assumption that $\| \v_x(\lambda) \|_e=\| \v_x(\mu)\|_e=1$. By \cref{Th:Thurston-Ball}, there is a measured lamination $\nu$ such that $\v_x(\nu)=t\v_x(\lambda)+(1-t)\v_x(\mu)$. It remains to show that $\| \v_x(\nu) \|_e <1$.

Suppose, seeking a contradiction, that $\| \v_x(\nu) \|_e =1$.
By Wolpert's duality, we have
\begin{equation}
\label{differential}
t (d \ell_\lambda)_x+(1-t) (d\ell_\mu)_x=(d\ell_\nu)_x.
\end{equation}
We extend the support $|\nu|$ of $\nu$ to a maximal geodesic lamination $\tilde \nu$.
Since $i(\lambda, \mu)>0$, either $\lambda$ or $\mu$ intersects $\tilde \nu$ transversely.
Without loss of  generality, we can assume that $\mu$ does.
Let $w_{\tilde \nu}$ be the unit stretch vector along $\tilde \nu$.
Then, by \cite[Lemma 4.2]{HOP}, we have $(d\ell_\nu)_x(w_{\tilde \nu})=1$, whereas $(d\ell_\lambda)_x(w_{\tilde \nu})\leq 1$ and $(d\ell_\mu)_x(w_{\tilde \nu})<1$.
This shows that the right hand side of \cref{differential} applied to $w_{\tilde \nu}$
is $1$ whereas the left hand side is less than $1$.
This is a contradiction, and we have completed the proof.
\end{proof}

\begin{corollary}
\label{cor:strictconvexity}
The unit norm spheres for the earthquake Finsler norm on the tangent spaces of the Teichm\"uller space $\T(S)$ are strictly convex if and only if $S=S_{0,4}\text{ or }S_{1,1}$. 
\end{corollary}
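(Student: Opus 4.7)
The plan is to translate strict convexity of the unit sphere into an intersection-theoretic condition on $\ml(S)$ via \cref{segment}, and then check topologically which surfaces $S$ satisfy that condition. Since $\v_x \colon \ml(S) \to T_x\T(S)$ is a homeomorphism by \cref{Th:Thurston-Ball} and $\|\v_x(\lambda)\|_e = \ell_x(\lambda)$, the unit sphere in $(T_x\T(S),\|\cdot\|_e)$ is exactly the image under $\v_x$ of $\{\lambda \in \ml(S) : \ell_x(\lambda) = 1\}$. By \cref{segment}, the open segment between two such points $\v_x(\lambda)$ and $\v_x(\mu)$ lies in this sphere if and only if $i(\lambda, \mu) = 0$. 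Hence the unit sphere is strictly convex if and only if any two non-proportional nonzero measured laminations on $S$ have positive intersection number.

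For the "only if" direction, I would show that whenever $3g - 3 + n \geq 2$ (i.e. $S \notin \{S_{0,4}, S_{1,1}\}$), one can exhibit two distinct disjoint essential simple closed curves $\alpha, \beta$, e.g.\ by taking two components of a pants decomposition. These define non-proportional laminations with $i(\alpha,\beta) = 0$, so after rescaling to unit $x$-length, \cref{segment} produces a nontrivial segment in the unit sphere, certifying the failure of strict convexity at $x$ (and hence at every point of $\T(S)$).

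For the "if" direction, with $S \in \{S_{0,4}, S_{1,1}\}$, I would appeal to the well-known structure of $\ml(S)$ in these two exceptional cases. Since $3g-3+n=1$, every pants decomposition consists of a single curve, and a routine Euler-characteristic argument shows that every geodesic lamination on $S$ is either a single simple closed curve or a minimal filling lamination. Filling laminations intersect every nonzero lamination by definition, so it remains to check that distinct simple closed curves on $S_{0,4}$ or $S_{1,1}$ always intersect; this is classical via the Farey parametrisation of such curves, where distinct Farey vertices correspond to curves that meet essentially. Combining these observations, any two non-proportional elements of $\ml(S)$ satisfy $i(\lambda, \mu) > 0$, and strict convexity follows from the equivalence established above.

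The main obstacle is the structural input for $\ml(S_{0,4})$ and $\ml(S_{1,1})$ in the "if" direction: it is standard but needs to be invoked precisely. A compact way to handle this is to recall the identification $\ml(S) \cong \mathbb{R}^2$ with simple closed curves corresponding to rational rays and filling laminations to irrational ones, together with the fact that the intersection pairing $i \colon \mathbb{R}^2 \times \mathbb{R}^2 \to \mathbb{R}$ in this model is (up to a positive scalar) $|xy' - x'y|$ on rational vectors and strictly positive elsewhere on non-proportional pairs. Once this classical input is in place, the rest of the argument is immediate from \cref{segment}.
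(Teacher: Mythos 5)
Your proposal is correct and follows essentially the same route as the paper: both reduce strict convexity to the flatness characterisation of \cref{segment} and then use the fact that $S_{0,4}$ and $S_{1,1}$ are precisely the surfaces admitting no pair of non-proportional measured laminations with zero intersection number. You simply spell out both directions (disjoint pants curves when $3g-3+n\geq 2$, and the $\ml\cong\mathbb{R}^2$/Farey description in the exceptional cases) that the paper's two-line proof leaves implicit.
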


\begin{proof}
If $S=\ S_{0,4}$ or $S_{1,1}$,  then $S$ cannot support any pair of distinct measured laminations with null intersection. The result then follows from \cref{segment}.
\end{proof}

 \subsection{Rigidity of the earthquake metric}
\cref{segment} implies the following  topological infinitesimal rigidity property of the earthquake norm.  For any $x\in\T(S)$, let $\e_x \colon \pml(S) \to T^1_x \T(S)$ be, as before, the homeomorphism from $\pml(S)$ to the unit tangent sphere $T^1_x\T(S)$ which takes $[\lambda] \in \pml(S)$ to the unit tangent vector $\frac{1}{\len_x(\lambda)}\v_x(\lambda)$ with respect to the earthquake norm. 

\begin{corollary}[topological infinitesimal rigidity]
\label{topological rigidity}
Let $x\in\T(S)$ and $y\in\T(S')$. For any linear isometry $h \colon (T_x \T(S),{\|\cdot\|}_e) \to (T_y\T(S'),{\|\cdot\|}_e)$  between the tangent spaces with respect to the earthquake norm,    there is a homeomorphism $g \colon S \to S'$ such that $h(v)=\e_y\circ g\circ (\e_x)^{-1}(v)$ for any $v\in T_x\T(S)$.
\end{corollary}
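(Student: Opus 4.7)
The plan is to reduce the claim to a topological rigidity statement for $\pml$. Since $h\colon (T_x\T(S),{\|\cdot\|}_e) \to (T_y\T(S'),{\|\cdot\|}_e)$ is a linear isometry, it restricts to a homeomorphism between the unit tangent spheres $T^1_x\T(S)$ and $T^1_y\T(S')$. Conjugating by the homeomorphisms $\e_x$ and $\e_y$ (which, by \cref{Th:Thurston-Ball}, parametrise these unit spheres by $\pml(S)$ and $\pml(S')$ respectively) produces a homeomorphism
\[
\bar h := \e_y^{-1} \circ h \circ \e_x \colon \pml(S) \to \pml(S').
\]
The conclusion $h = \e_y \circ g \circ \e_x^{-1}$ will follow immediately once I can show that $\bar h$ is induced by some homeomorphism $g\colon S \to S'$ of the underlying surfaces.

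The central step is to verify that $\bar h$ preserves zero intersection number: $i(\lambda,\mu)=0$ if and only if $i(\bar h[\lambda],\bar h[\mu])=0$. I would prove this by characterising zero intersection in purely metric-geometric terms on the unit sphere. For unit vectors $u_1, u_2 \in T^1_x\T(S)$, the convexity of the norm gives
\[
\|tu_1 + (1-t)u_2\|_e \leq t\|u_1\|_e + (1-t)\|u_2\|_e = 1,
\]
with equality for all $t\in(0,1)$ precisely when the segment joining $u_1$ and $u_2$ lies on the unit sphere. Taking $u_1 = \e_x([\lambda])$ and $u_2 = \e_x([\mu])$, the flatness characterisation (\cref{segment}), applied to the normalised representatives $\lambda/\len_x(\lambda)$ and $\mu/\len_x(\mu)$ and using homogeneity of $i$, translates this equality condition into $i(\lambda,\mu)=0$. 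Since $h$ is linear it sends segments to segments, and since it is an isometry it sends the unit sphere to the unit sphere; therefore $\bar h$ preserves this flatness relation on the unit sphere, and hence preserves zero intersection.

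The proof concludes by invoking the rigidity theorem of Ohshika and the fourth author \cite{OP}, which states that any homeomorphism $\pml(S)\to\pml(S')$ preserving zero intersection number is induced by a homeomorphism $g \colon S \to S'$ of the underlying surfaces. Applying this to $\bar h$ produces the required $g$, and unravelling the definitions gives $h(v) = \e_y \circ g \circ \e_x^{-1}(v)$ for every $v \in T_x\T(S)$.

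The main obstacle is really the zero-intersection preservation step; once it is in hand, \cite{OP} does all of the rigidity heavy lifting. This step is nontrivial only insofar as one must carefully track the homothetic rescaling between the earthquake vector $\v_x(\lambda)$ (of norm $\len_x(\lambda)$) and its projectivised version $\e_x([\lambda])$ (of unit norm), but since both flatness on the unit sphere and zero intersection are invariant under scaling the underlying laminations, the argument goes through cleanly.
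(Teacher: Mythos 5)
Your argument is, in outline, the same as the alternative proof given in the paper: use \cref{Th:Thurston-Ball} to view the unit sphere as $\e_x(\pml(S))$, use the flatness characterisation \cref{segment} to see that the conjugated map $\e_y^{-1}\circ h\circ\e_x$ is a homeomorphism of $\pml(S)$ onto $\pml(S')$ preserving zero intersection number, and then invoke the rigidity theorem of \cite{OP}. That part of your reasoning is sound, and the scaling bookkeeping you worry about at the end is indeed harmless.

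The gap is that the rigidity input you lean on does not cover all surfaces, whereas the statement you are proving does. The theorem in \cite{OP} (that a disjointness-preserving homeomorphism of $\pml$ is induced by a surface homeomorphism) has exceptional low-complexity cases, and the paper accordingly offers this route only for $S\neq S_{0,4},S_{0,5},S_{0,6},S_{1,1},S_{1,2},S_{2,0}$. The failure is not a technicality: for $S=S_{1,1}$ or $S_{0,4}$ there are \emph{no} pairs of distinct measured laminations with zero intersection (this is exactly why the unit sphere is strictly convex in \cref{cor:strictconvexity}), so the relation your map $\bar h$ is shown to preserve is empty, every homeomorphism of $\pml(S)\cong S^1$ preserves it vacuously, and most such homeomorphisms are certainly not induced by homeomorphisms of the surface. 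So for these surfaces your argument yields no constraint at all, and similar issues (e.g.\ the hyperelliptic phenomena behind $S_{2,0}$, $S_{0,6}$, $S_{1,2}$, and the coincidence of $S_{1,2}$ with $S_{0,5}$) obstruct the remaining excluded cases. To prove the corollary in the stated generality you need a different argument for the exceptional surfaces; the paper's primary proof does this uniformly by passing through the Weil--Petersson duality \cref{duality} and quoting the topological rigidity of the Thurston (co)norm from \cite[Theorem 1.6]{Pan} or \cite[Theorem 1.8]{HOP}, which is established for all surfaces. Either restrict your proof and patch the exceptional cases that way, or adopt the duality route from the start.
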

\begin{proof} 
This follows from \cref{duality} and the topological rigidity of the Thurston norm (\cite[Theorem 1.6]{Pan} or \cite[Theorem 1.8]{HOP}).

If $S\neq S_{0,4},S_{0,5},S_{0,6},S_{1,1},S_{1,2},S_{2,0}$, we provide an alternative proof.
Since $h$ is an isometry, it sends flats in the unit sphere of $T_x\T(S)$ to flats in the unit sphere of $T_y\T(S')$.  Hence, by  \cref{segment}, we see that the composition map $\e_y^{-1} \circ h \circ \e_x:\pml(S)\to\pml(S')$ is a homeomorphism which preserves disjointness.  Combined with \cite[Theorem 2]{OP}, this implies that $\e_y^{-1} \circ h \circ \e_x$ is induced by some homeomorphism $g:S\to S'$, that is, $\e_y^{-1} \circ h \circ \e_x([\lambda])=g([\lambda])$ for any $[\lambda]\in\pml(S)$.   Hence, $h(v)=\e_y\circ g\circ (\e_x)^{-1}(v)$ for any $v\in T_x\T(S)$.

\end{proof}

We now give the statement for the infinitesimal rigidity for the earthquake norm. Let $\v_x:\ml(S)\to T_x\T(S)$ be the homeomorphism which sends  $\lambda$ to the earthquake vector $\v_x(\lambda)$.

\begin{theorem}[infinitesimal rigidity]
\label{geometrically rigid}
Any linear isometry  $h \colon (T_x \T(S),{\|\cdot\|}_e) \to (T_y\T(S),{\|\cdot\|}_e)$ between the tangent spaces at $x$ and $y$ in $\T(S)$  is induced by an extended mapping class, that is, there is a homeomorphism  $g:S\to S$ with $g^*(y)=x$ such that 
$h(v)=\v_y\circ g \circ(\v_x)^{-1}(v)$ for any $v\in \T(S)$.\end{theorem}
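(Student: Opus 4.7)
The plan is to upgrade the topological rigidity of \cref{topological rigidity} to the isometric rigidity claimed here by identifying the homeomorphism $g$ as an extended mapping class satisfying $g^\ast y=x$. Applying \cref{topological rigidity} produces a homeomorphism $g:S\to S$ describing the action of $h$ on the unit earthquake sphere. Combining this with the linearity of $h$ and the $\R_{>0}$-homogeneity of $\v_x$, we obtain
\[
h(\v_x(\lambda))=c(\lambda)\,\v_y(g\lambda),\qquad c(\lambda):=\frac{\ell_\lambda(x)}{\ell_\lambda(g^\ast y)},
\]
where $c$ depends only on the projective class $[\lambda]\in\pml(S)$ (because length is homogeneous of degree one). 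The remaining content of the theorem is exactly the assertion $c\equiv 1$, which is equivalent to $g^\ast y=x$.

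The first step is to show that $c$ is constant on $\pml(S)$. For disjoint measured laminations $\lambda,\mu$ with $[\lambda]\neq[\mu]$, the additivity $\v_x(\lambda+\mu)=\v_x(\lambda)+\v_x(\mu)$ noted in the proof of \cref{segment} (and the analogous identity at $y$ for $g\lambda,g\mu$, since $g$ preserves disjointness) combines with the linearity of $h$ to give
\[
c(\lambda+\mu)\bigl(\v_y(g\lambda)+\v_y(g\mu)\bigr)=c(\lambda)\v_y(g\lambda)+c(\mu)\v_y(g\mu).
\]
Linear independence of $\v_y(g\lambda)$ and $\v_y(g\mu)$ (which itself follows from the homogeneity and disjoint-support additivity of $\v_y$ together with the injectivity of $\v_y$, since any non-trivial linear dependence would produce a non-zero element of $\ml(S)$ mapping to $0\in T_y\T(S)$) then forces $c(\lambda)=c(\mu)=c(\lambda+\mu)$. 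Connectedness of the disjointness graph on $\pml(S)$ propagates this to a global constant $c\equiv c_0$. The small surfaces $S_{0,4}$ and $S_{1,1}$, where no two distinct projective laminations are disjoint, require separate treatment: in these cases $\dim_\R\T(S)=2$ and the earthquake unit ball is strictly convex by \cref{cor:strictconvexity}, so constancy of $c$ follows from continuity on $\pml(S)$ together with a direct two-dimensional linear-algebraic argument.

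The main obstacle is then to rule out $c_0\neq 1$. Given $\ell_\lambda(x)=c_0\,\ell_\lambda(g^\ast y)$ for every $\lambda\in\ml(S)$, this reduces to length-spectrum proportionality rigidity. My plan is to resolve it via the Fricke $\mathrm{SL}_2(\R)$ trace identity
\[
\trace_\alpha^{\,2}+\trace_\beta^{\,2}+\trace_{\alpha\beta}^{\,2}-\trace_\alpha\trace_\beta\trace_{\alpha\beta}=\trace_{[\alpha,\beta]}+2,
\]
applied to two simple closed curves $\alpha,\beta\subset S$ meeting transversely once and bounding a once-punctured torus (or a comparable subsurface for closed or sphere-with-punctures cases), so that $[\alpha,\beta]$ is a simple closed curve on $S$. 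Writing $\trace_\gamma=\pm 2\cosh(\ell_\gamma/2)$, the identity must hold at both $x$ and $g^\ast y$ with lengths scaled by $c_0$; the resulting $\cosh$-based functional equation is not preserved by any non-trivial scaling, forcing $c_0=1$. Once $c_0=1$, we have $\ell_\lambda(x)=\ell_\lambda(g^\ast y)$ for all $\lambda$, so the length-spectrum embedding of \cref{thm:embedding} gives $g^\ast y=x$, and substituting back yields $h(\v_x(\lambda))=\v_y(g\lambda)=(\v_y\circ g\circ\v_x^{-1})(\v_x(\lambda))$, completing the proof.
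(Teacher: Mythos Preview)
Your argument closely parallels the paper's second proof: obtain $g$ from \cref{topological rigidity}, use additivity of earthquake vectors on disjoint laminations together with linearity of $h$ and linear independence to see the scaling ratio is locally constant, then propagate via connectedness. The paper also gives a first proof, which you do not attempt, by transporting the problem through the Weil--Petersson duality of \cref{duality} to the Thurston co-norm and invoking the known infinitesimal rigidity there.

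The substantive divergence, and the weak point, is your handling of the step $c_0=1$. The paper dispatches this in a single sentence by citing Thurston's result that the length map embeds $\T(S)$ into the \emph{projectivised} space $P\mathbb{R}_+^{\ml(S)}$: once $\ell_\lambda(x)=c_0\,\ell_\lambda(g^\ast y)$ for all $\lambda$, projective injectivity already gives $x=g^\ast y$ (and hence $c_0=1$) with no further work. Your trace-identity route is more elementary in spirit but has a genuine gap: the Fricke identity you invoke needs two simple closed curves meeting transversely in a single point, and no such pair exists on genus-$0$ surfaces (intersection numbers of simple closed curves on spheres are always even). The parenthetical ``comparable subsurface for \ldots\ sphere-with-punctures cases'' does not supply an actual identity, so as written the argument does not cover $S_{0,n}$.

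Your treatment of the exceptional surfaces $S_{0,4}$ and $S_{1,1}$ is also only a sketch; note that the paper's second proof explicitly excludes these and relies on the duality argument for them.
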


We provide two different proofs of \cref{geometrically rigid}, one proof which works for an arbitrary surface, and the second one which works for $S\neq S_{1,1},S_{0,4}$.

\begin{proof}[First proof of \cref{geometrically rigid}.]
By \cref{duality}, $h$ induces a linear isometry $h^* \colon T_y^* \to T_x^*$ with respect to $\|\cdot \|_{\mathrm{Th}}$. The results of
\cite[Theorem 1.5]{Pan} or \cite[Theorem 1.1 and Theorem 1.15]{HOP} show that $h^*$ is induced by an extended mapping class. Taking the dual again, we obtain our theorem.
\end{proof}
\begin{proof}[Second proof for $S\neq S_{1,1},S_{0,4}$.]
Let $g:S\to S$ be the homeomorphism obtained from \cref{topological rigidity}. Then for any $\lambda\in\ml(S)$, we have
\begin{align*}
	h\left(\frac{\v_x(\lambda)}{\ell_x(\lambda)}\right)=\frac{\v_y(g(\lambda))}{\ell_y(g(\lambda))}.
\end{align*} 
We show that there exists a constant $K$ such that for any measured lamination $\lambda$, we have $\len_x(\lambda)=K\len_y(g(\lambda))$.
This implies the conclusion since the length function embeds $\T(S)$ into $P\reals_+^{\ml(S)}$ as was shown by Thurston, see \cite{FLPtranslation,FLP}.

We first note that for any measured lamination $\lambda$ on $S$, we have
\begin{align*}
h(\v_x(\lambda))
=\len_x(\lambda) h\left(\v_x\left(\tfrac{\lambda}{\len_x(\lambda)}\right)\right)
=\len_x(\lambda)\v_y\left(\tfrac{g(\lambda)}{\len_y(g(\lambda))}\right)
=\tfrac{\len_x(\lambda)}{\len_y(g(\lambda))}\v_y(g(\lambda)).
\end{align*}
Suppose that $\lambda$ and $\mu$ are two disjoint measured laminations. Then we have three equalities:
\begin{align*}
h(\v_x(\lambda))&=\tfrac{\len_x(\lambda)}{\len_y(g(\lambda))}\v_y(g(\lambda)),\\
h(\v_x(\mu))&=\tfrac{\len_x(\mu)}{\len_y(g(\mu))}\v_y(g(\mu)),   \text{ and}\\
h(\v_x(\lambda \sqcup \mu))
&=\tfrac{\len_x(\lambda)+\len_x(\mu)}{\len_y(g(\lambda))+\len_y(g(\mu))}\v_y(g(\lambda \sqcup \mu)),
\end{align*}
where by $\lambda \sqcup \mu$ we mean the measured lamination obtained from summing two measured laminations $\lambda$ and $\mu$ with disjoint support.

We note that $\v_x(\lambda \sqcup \mu)=\v_x(\lambda)+\v_x(\mu),$   and $\v_y(g(\lambda\sqcup \mu))=\v_y(g(\lambda))+\v_y(g(\mu))$ .
By \cref{Th:Thurston-Ball}, $\v_y(g(\lambda))$ and $\v_y(g(\mu))$ are linearly independent.
Therefore, we have
\[
\frac{\len_x(\lambda)}{\len_y(g(\lambda))}
=\frac{\len_x(\lambda)+\len_x(\mu)}{\len_y(g(\lambda)+\len_y(g(\mu))}
=\frac{\len_x(\mu)}{\len_y(g(\mu))}.
\]

This equality holds in particular when $\lambda$ and $\mu$ are disjoint weighted simple closed curves.
Recall that two disjoint simple closed curves correspond to the two vertices of an edge in the curve complex.
Since the curve complex is connected (for $S\neq S_{1,1},S_{0,4}$), every pair of simple closed curves can be jointed by a sequence of simple closed curves in which each pair of adjacent simple closed curves are disjoint.
Therefore, the above equality holds for every pair of weighted simple closed curves even when they are not disjoint.
By the density of weighted simple closed curves in $\ml(S)$, the above equality holds for every pair of measured laminations $\lambda$ and $\mu$, without the assumption that they are disjoint.
Thus we have shown that there is a constant $K$ such that $\len_x(g(\lambda))=K\len_y(\lambda)$ for any $\lambda \in \ml(S)$, and we are done.
\end{proof}

As a corollary, we have the following:\medskip

\noindent\textbf{\cref{thm:mcg rigid}} (mapping class group rigidity)\textbf{.} 
The isometry group of $(\T(S), d_e)$ is the extended mapping class group, with the usual action. This statement holds except when $S=S_{0,4},S_{0,5},S_{1,1}$ or $S_{2,0}$, for which the isometry group is the extended mapping class group modulo the hyperelliptic involution.


\begin{proof}
With \cref{thm:lipregular} at hand, the proof is essentially the same as that of \cite[Theorem 1.5]{DLRT} (see also \cite[Theorem 1.7 and Theorem 1.8]{Pan} or  \cite[Corollary 1.17 and Corollary 1.18]{HOP}), and here is a sketch: let $f:\T(S)\to\T(S)$ be an isometry with respect to the earthquake metric. By \cref{thm:lipregular}, the earthquake norm is locally Lipschitz. Combined with \cite[Theorem A and Theorem B]{MT2017}, this implies that the isometry $f$ is of class $C^1$ and norm-preserving.  Combining with  \cref{geometrically rigid}, we see that at every point $x\in\T(S)$, there exists  an extended mapping class $g_x$ of $S$ such that
$f(x)=g_x(x)$. The discreteness of the action of the extended mapping class group on $\T(S)$ then enables us to choose a common extended mapping class $g$ such that $f(x)=g(x)$ for all  $x\in\T(S)$, and this tells us that every isometry can be realised by the action of some extended mapping class. To finish off the proof, we note that extended mapping classes acts isometrically on $(\T(S),d_e)$, and only hyperelliptic involutions for $S=S_{1,1},S_{2,0},S_{0,4},S_{0,5}$ act trivially. Hence, the isometry group of $(\T(S),d_e)$ is the extended mapping class group, modulo the hyperelliptic involutions for the listed exceptional surfaces.
\end{proof}

\subsection{Asymmetry of the earthquake metric}
\label{subsec:asymmetry}
In this subsection, we use the infinitesimal  rigidity  to prove the asymmetry of the earthquake norm.

\smallskip

\noindent\textbf{\cref{prop:asymmetric}}
The earthquake norm is asymmetric at every point of $\T(S)$. That is,  for any $x\in\T(S)$, there exists a tangent vector $v\in T_x\T(S)$ such that $\|v\|_e\neq\|-v\|_e$.

\begin{proof}[Proof of \cref{prop:asymmetric}]
Suppose on the contrary that the earthquake norm \emph{is} symmetric at some point $x\in\T(S)$, which is to say: $\|v\|_e=\|-v\|_e$ for every ${v\in T_x\T(S)}$. By \cref{geometrically rigid}, there is a mapping class $g$ which fixes $x$ such that 
\begin{align}\label{eq:tangent:x}
\v_x\circ g\circ (\v_x)^{-1} (v)=-v, \text{ for all } v\in T_x\T(S).
\end{align}
Consider the map $g_*:\T(S)\to\T(S)$  induced by $g$, and its induced map on the tangent space, $dg_*:T_x\T(S)\to T_{g_*(x)}\T(S)=T_x\T(S)$.  For any measured lamination $\lambda$, the map $g$ sends the earthquake path $E_{t\lambda}(x)$ to the earthquake path $E_{tg(\lambda)}(g(x))$. Therefore, 
\[
dg_*(\v_x(\lambda))=\v_{g_*(x)}(g(\lambda))=\v_x(g(\lambda)),
\] 
where the second equality follows from the fact that $g$ fixes $x$. Hence, for any tangent vector $v\in T_x\T(S)$, we have 
	\begin{equation*}
		dg_* (v)= \v_x\circ g\circ (\v_x)^{-1}(v).
	\end{equation*}
	Combined with \cref{eq:tangent:x}, this implies that $dg_*(v)=-v$ for every $v\in T_x\T(S)$.
	
On the other hand, deforming the quotient of $(S,x)$ under  the cyclic group generated by $g$ yields a smooth path in $\T(S)$ which passes through $x$ and which is pointwise fixed by $g$. Hence, the tangent vector $v\neq0$ to this path at $x$ is fixed by $dg_*$, contradicting the identity $dg_*(v)=-v$ for all $v\in T_x\T(S)$. This completes the proof.
\end{proof}

\begin{corollary}
The Thurston norm is asymmetric at every point of $\T(S)$.
\end{corollary}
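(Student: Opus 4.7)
The plan is to derive this corollary directly from two ingredients already in place: the asymmetry of the earthquake norm (\cref{prop:asymmetric}) and the infinitesimal duality (\cref{duality}) between the earthquake norm and the Thurston co-norm, together with an elementary polar duality for weak norms. Since both steps are essentially routine transfer arguments, the statement should follow in a few lines.

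First, I would fix an arbitrary $x \in \T(S)$ and appeal to \cref{prop:asymmetric} to obtain a tangent vector $v \in T_x \T(S)$ with $\|v\|_e \neq \|-v\|_e$. The infinitesimal duality (\cref{duality}) gives a linear isometry $\Phi_{\mathrm{WP}} : (T_x\T(S), \|\cdot\|_e) \to (T^*_x\T(S), \|\cdot\|^*_{\mathrm{Th}})$. Linearity of $\Phi_{\mathrm{WP}}$ ensures $\Phi_{\mathrm{WP}}(-v) = -\Phi_{\mathrm{WP}}(v)$, hence
\[
\|\Phi_{\mathrm{WP}}(v)\|^*_{\mathrm{Th}} = \|v\|_e \neq \|-v\|_e = \|-\Phi_{\mathrm{WP}}(v)\|^*_{\mathrm{Th}}.
\]
Thus the Thurston co-norm is asymmetric at $x$.

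Second, I would transfer asymmetry from the co-norm on $T^*_x\T(S)$ to the Thurston norm on $T_x\T(S)$. By \cref{embeds into cotangent} and formula \eqref{eq:thurston:norm}, the unit indicatrix of $\|\cdot\|^*_{\mathrm{Th}}$ is $\iota_x(\pml(S))$, and the Thurston norm is its support function, i.e.\ the polar (dual) weak norm. For weak norms on a finite-dimensional vector space, central symmetry of the unit ball is preserved under polar duality, and biduality holds; consequently, a weak norm is symmetric if and only if its dual is symmetric. Applying the contrapositive to the Thurston co-norm yields the asymmetry of the Thurston norm at $x$, and since $x \in \T(S)$ was arbitrary this completes the argument.

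The only step requiring any care is the polar-duality statement for weak (asymmetric) norms; this is standard, but since the paper has only formally invoked such dualities implicitly (see e.g.\ the discussion around \cref{embeds into cotangent}), I would include a brief one-line justification: if $N$ is a weak norm with unit ball $B$, then $N$ is symmetric iff $B = -B$, iff the polar $B^\circ = -B^\circ$, iff the dual weak norm $N^*$ is symmetric. No analytic difficulty arises and the argument carries no hidden obstacle.
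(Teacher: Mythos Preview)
Your proposal is correct and follows essentially the same approach as the paper: combine \cref{prop:asymmetric} with the infinitesimal duality (\cref{duality}), with the extra care of passing from the co-norm to the norm via polar duality, a step the paper leaves implicit. The paper also notes an alternative route---repeating the proof strategy of \cref{prop:asymmetric} directly for the Thurston norm---but your argument matches the first option exactly.
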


\begin{proof}
Combine \cref{prop:asymmetric} with the infinitesimal duality between the earthquake and Thurston metrics (\cref{duality}). Alternatively, apply the same proof strategy as for \cref{prop:asymmetric}. 
\end{proof}

\noindent\textbf{\cref{cor:metricasym}.}
\textit{The earthquake metric is not symmetric, that is, there exist $x$ and $y$ in $\T(S)$ such that $d_e(x,y)\neq d_e(y,x)$.}

\begin{proof}
We assume, for contradiction, that $d_e$ is symmetric: for all $ x,y\in\T(S)$,
\[
d_e(x,y)=d_e(y,x)=d_e^\#(x,y).
\]
This means that the identity map $\mathrm{Id}:\T(S)\to\T(S)$ on Teichm\"uller space induces an isometry between $(\T(S),d_e)$ and $(\T(S),d_e^\#)$. By \cite[Theorem~B]{MT2017} and \cref{thm:lipregular}, the respective Binet--Legendre metrics for the left and right earthquake metrics must be locally Lipschitz continuous, hence \cite[Theorem~B]{MT2017} ensures that $\|(x,v)\|_e=\mathrm{Id}^*\|(x,v)\|_e^\#=\|(x,v)\|_e^\#$ for all $(x,v)\in T\T(S)$. This contradicts \cref{prop:asymmetric}, hence there must be points $x,y\in\T(S)$ such that $d_e(x,y)\neq d_e(y,x)$.
\end{proof}

\begin{remark}
The proof of \cref{cor:metricasym} does not actually need \cite[Theorem~B]{MT2017} and \cref{thm:lipregular}, as those results are used to ensure the regularity of isometries of Finsler metrics with sufficient regularity. Indeed, one needs only apply the arguments outlined on the first page of \cite[\S3]{MT2017}. We present the above proof for simpler referencing.

\end{remark}

\newpage
\section{Comparisons with other norms}
\label{sec:comparison}

The goal of this section is to compare the following Finsler norms on Teichm\"uller space: 
\begin{itemize}
\item
the earthquake norm ${\|\cdot\|}_e$, 
\item
the Weil--Petersson norm ${\|\cdot\|}_{\mathrm{WP}}$, 
\item
the Thurston norm ${\|\cdot\|}_{\mathrm{Th}}$, and 
\item
 the Teichm\"uller norm ${\|\cdot\|}_T$.
\end{itemize}
We prove following  \cref{thm:e:wp:th:m} mentioned in the introduction.

\smallskip

\noindent{\bf \cref{thm:e:wp:th:m}.}
{\em	There are   positive constants $C_0,C_1,C_2$, depending only on the topology of $S$, such that 
	for any $x\in\T(S)$ and any $v\in T_x\T(S)$, 
\begin{equation*}
C_0 \ell_{\mathrm{sys}}(x)\Log\frac{1}{\ell_{\mathrm{sys}}(x)}\|v\|_{\mathrm{Th}}\leq 	\|v\|_e{\leq} C_1\|v\|_{\mathrm{WP}}{\leq}  C_2 \|v\|_{\mathrm{Th}},
\end{equation*}
where $\Log(t)=\max\{1,\log t\}$.}

We establish \cref{thm:e:wp:th:m} by proving each inequality through a series of pairwise norm comparisons: \cref{lem:e:th:1}, \cref{thm:e:wp}, and \cref{thm:wp:th}. We strengthen \cref{thm:e:wp:th:m} with Wolpert \cite[Lemma 3.1]{Wolpert1979} and Burns--Masur--Wilkinson \cite[Lemma 5.4]{BMW2012} to get \cref{thm:various}. This allows us to pairwise compare each of the four norms under consideration, and we state these pairwise comparisons as a corollary for convenient referencing.

\begin{corollary}\label{cor:comparison:pairwise}
There are positive constants $C$ and $C'$, depending only on the topology of $S$, such that for any $x\in\T(S)$ and any $v\in T_x\T(S)$, 
\begin{align}
C \ell_{\mathrm{sys}}(x)\Log\frac{1}{\ell_{\mathrm{sys}}(x)}\|v\|_{\mathrm{Th}}
\leq &\|v\|_e
\leq C' \|v\|_{\mathrm{Th}}\\
C \ell_{\mathrm{sys}}(x)\Log\frac{1}{\ell_{\mathrm{sys}}(x)}\|v\|_{\mathrm{WP}}
\leq &\|v\|_e
\leq C'\|v\|_{\mathrm{WP}}\\
C \ell^2_{\mathrm{sys}}(x)\Log\frac{1}{\ell_{\mathrm{sys}}(x)}\|v\|_{T}
\leq &\|v\|_e
\leq C' \|v\|_{T}\\
C \ell_{\mathrm{sys}}(x)\Log\frac{1}{\ell_{\mathrm{sys}}(x)}\|v\|_{\mathrm{Th}}
\leq &\|v\|_{\mathrm{WP}}
\leq C' \|v\|_{\mathrm{Th}}\\
C \ell^2_{\mathrm{sys}}(x)\Log\frac{1}{\ell_{\mathrm{sys}}(x)}\|v\|_{T}
\leq &\|v\|_{\mathrm{WP}}
\leq C' \|v\|_{T}\\
C\ell_{\mathrm{sys}}(x)\|v\|_{T}
\leq &\|v\|_{\mathrm{Th}}
\leq C' \|v\|_{T}.\label{eg:teichvsth}
\end{align}
Moreover, we also see that
\begin{align}
C  \ell_{\mathrm{sys}}(x) {\|v\|_{\mathrm{Th}}}\leq &{\|-v\|_{\mathrm{Th}}}\leq \frac{C'}{ \ell_{\mathrm{sys}}(x)}\|v\|_{\mathrm{Th}}
\\
C  \ell^2_{\mathrm{sys}}(x)\Log\frac{1}{\ell_{\mathrm{sys}}(x)} {\|v\|_{e}}\leq &{\|-v\|_{e}}\leq \frac{C'}{ \ell_{\mathrm{sys}}(x)\Log\frac{1}{\ell_{\mathrm{sys}}(x)}}\|v\|_{e}.
\end{align}
\end{corollary}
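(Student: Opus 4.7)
The plan is to derive all eight inequalities as essentially formal consequences of the cascade of norm comparisons already established in \cref{thm:various}, together with the fact that the Teichm\"uller norm is a genuine (symmetric) norm, so that $\|-v\|_T = \|v\|_T$. None of the steps involve genuinely new analysis; the whole argument is bookkeeping among the six comparisons
\begin{align*}
(\mathrm{a})\ & \|v\|_e \leq C_1\|v\|_{\mathrm{WP}},\qquad
(\mathrm{b})\ \|v\|_{\mathrm{WP}} \leq \tfrac{C_2}{C_1}\|v\|_{\mathrm{Th}},\qquad
(\mathrm{c})\ \|v\|_{\mathrm{Th}} \leq 2\|v\|_T,\\
(\mathrm{d})\ & \|v\|_T \leq \tfrac{C_3}{2C_2\,\ell_{\mathrm{sys}}(x)}\|v\|_{\mathrm{WP}},\qquad
(\mathrm{e})\ C_0 \ell_{\mathrm{sys}}(x)\Log\tfrac{1}{\ell_{\mathrm{sys}}(x)}\|v\|_{\mathrm{Th}} \leq \|v\|_e
\end{align*}
extracted from \cref{thm:various}.

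The six forward inequalities are obtained by pairwise composition of (a)--(e). For instance, the upper bound in the first inequality is (a) followed by (b); the lower bound is just (e). The bound $\|v\|_{\mathrm{WP}} \leq C'\|v\|_T$ in the fifth inequality is (b)$\circ$(c); the matching lower bound $\ell^2_{\mathrm{sys}}(x)\Log\tfrac{1}{\ell_{\mathrm{sys}}(x)}\|v\|_T \leq c\|v\|_{\mathrm{WP}}$ is obtained by combining (d) with the sixth inequality $C\ell_{\mathrm{sys}}(x)\|v\|_T \leq \|v\|_{\mathrm{Th}}$ (itself extracted by running $(\mathrm{d})\to(\mathrm{b})$ backwards: rearranging (d) gives $\|v\|_{\mathrm{WP}} \geq \tfrac{2C_2\,\ell_{\mathrm{sys}}(x)}{C_3}\|v\|_T$, then applying (b) backwards gives $\|v\|_{\mathrm{Th}} \geq \tfrac{2C_1\,\ell_{\mathrm{sys}}(x)}{C_3}\|v\|_T$). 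The third inequality similarly follows by chaining (a), (b), (c) for the upper direction and (e) with the Thurston vs.\ Teichm\"uller comparison for the lower direction. I would present a brief table showing which chain of (a)--(e) produces each of the six inequalities.

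For the two asymmetry statements, I would exploit the symmetry $\|-v\|_T = \|v\|_T$. Applying the Thurston vs.\ Teichm\"uller comparison to both $v$ and $-v$ yields
\[
\|-v\|_{\mathrm{Th}} \leq 2\|-v\|_T = 2\|v\|_T \leq \tfrac{2}{C\,\ell_{\mathrm{sys}}(x)}\|v\|_{\mathrm{Th}},
\]
which gives the upper bound on $\|-v\|_{\mathrm{Th}}$; swapping the roles of $v$ and $-v$ produces the matching lower bound. For the earthquake asymmetry I would chain this Thurston asymmetry with the two-sided earthquake--Thurston comparison already obtained, namely
\[
\|-v\|_e \leq C'\|-v\|_{\mathrm{Th}} \leq \tfrac{C''}{\ell_{\mathrm{sys}}(x)}\|v\|_{\mathrm{Th}} \leq \tfrac{C'''}{\ell_{\mathrm{sys}}(x)\cdot\ell_{\mathrm{sys}}(x)\Log\tfrac{1}{\ell_{\mathrm{sys}}(x)}}\|v\|_e,
\]
and symmetrically for the lower bound.

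There is no real obstacle in this argument; the entire proof is a careful unpacking of \cref{thm:various} and an application of the symmetry of $\|\cdot\|_T$. The one subtlety worth flagging is that powers of $\ell_{\mathrm{sys}}(x)$ and of $\Log\tfrac{1}{\ell_{\mathrm{sys}}(x)}$ accumulate multiplicatively at each composition, so care is needed to track the correct exponent in each of the six forward inequalities and the two asymmetry bounds --- this is the only place where a mistake is likely, and I would verify each exponent by writing out the composition explicitly.
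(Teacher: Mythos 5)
Your overall strategy --- reading the corollary off as pure bookkeeping from \cref{thm:various}, plus the symmetry of an auxiliary norm for the two asymmetry bounds --- is exactly how the paper treats it (no separate proof is given there), and your chains for the six forward comparisons and for the Thurston asymmetry are correct. However, your route for the \emph{upper} bound in the last inequality does not prove the statement as written. Your displayed chain
\[
\|{-v}\|_e \leq C'\|{-v}\|_{\mathrm{Th}} \leq \tfrac{C''}{\ell_{\mathrm{sys}}(x)}\|v\|_{\mathrm{Th}} \leq \tfrac{C'''}{\ell^2_{\mathrm{sys}}(x)\Log\tfrac{1}{\ell_{\mathrm{sys}}(x)}}\|v\|_e
\]
loses a factor $\ell_{\mathrm{sys}}(x)^{-2}$, whereas the corollary asserts $\|{-v}\|_e \leq \tfrac{C'}{\ell_{\mathrm{sys}}(x)\Log\tfrac{1}{\ell_{\mathrm{sys}}(x)}}\|v\|_e$, with only one power of $\ell_{\mathrm{sys}}(x)$. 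In the thin part your bound is strictly weaker and no adjustment of constants repairs it: the Thurston norm's own asymmetry already costs $1/\ell_{\mathrm{sys}}$ on top of the $\ell_{\mathrm{sys}}\Log$ loss between $\|\cdot\|_e$ and $\|\cdot\|_{\mathrm{Th}}$, and routing through $\|\cdot\|_T$ instead fares no better, since the $e$-versus-$T$ comparison loses $\ell^2_{\mathrm{sys}}\Log$.

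The missing idea is to pass through a \emph{symmetric} norm whose two-sided comparison with $\|\cdot\|_e$ only costs a single factor $\ell_{\mathrm{sys}}\Log$, namely the Weil--Petersson norm (it is Riemannian, so $\|{-v}\|_{\mathrm{WP}}=\|v\|_{\mathrm{WP}}$). Then the second pairwise comparison gives
$\|{-v}\|_e \leq C_1\|{-v}\|_{\mathrm{WP}} = C_1\|v\|_{\mathrm{WP}} \leq \tfrac{C_1}{C\,\ell_{\mathrm{sys}}(x)\Log\tfrac{1}{\ell_{\mathrm{sys}}(x)}}\|v\|_e$, which is exactly the stated upper bound, and dually $\|{-v}\|_e \geq C\,\ell_{\mathrm{sys}}(x)\Log\tfrac{1}{\ell_{\mathrm{sys}}(x)}\|{-v}\|_{\mathrm{WP}} = C\,\ell_{\mathrm{sys}}(x)\Log\tfrac{1}{\ell_{\mathrm{sys}}(x)}\|v\|_{\mathrm{WP}} \geq \tfrac{C}{C'}\ell_{\mathrm{sys}}(x)\Log\tfrac{1}{\ell_{\mathrm{sys}}(x)}\|v\|_e$, which implies (indeed improves on) the stated $\ell^2_{\mathrm{sys}}\Log$ lower bound; your Teichm\"uller-symmetry chain also delivers that lower bound, so only the upper bound needs the Weil--Petersson detour. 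A minor further remark: deducing the lower bound of the fifth line from (d) alone tacitly uses that $\ell_{\mathrm{sys}}(x)\Log\tfrac{1}{\ell_{\mathrm{sys}}(x)}$ is bounded (true, as the systole is bounded above by a topological constant); chaining the sixth line with the fourth avoids even that.
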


\begin{remark}
\cref{eg:teichvsth} tells us that the ratio of the Thurston norm over the Teichm\"uller norm is bounded below by a constant times the (length of the) systole. This resolves \cite[Problem 5.3]{PapadopoulosSu2015}. 
\end{remark}

\subsection{Earthquake vs. Thurston}
We start with the comparison between the earthquake norm and the Thurston norm.

\begin{lemma}\label{lem:e:th:1}
	There exists a positive constant $C$ depending only on the topology of $S$ such that for any hyperbolic surface $x\in\T(S)$ and any tangent vector $v\in T_x\T(S)$, we have
\begin{align}\label{eq:e:th:1}
C{\ell_{\mathrm{sys}}(x)\Log(\tfrac{1}{\ell_{\mathrm{sys}}(x)})}\|v\|_{\mathrm{Th}}\leq\|v\|_{e},
\end{align}
	where, as before, $\Log(t)=\max\{1,\log t\}$.
\end{lemma}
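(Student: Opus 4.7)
The plan is to convert this comparison into a classical intersection-number vs.\ lengths inequality via Kerckhoff's cosine formula, then establish the latter through a collar-lemma analysis. By \cref{Th:Thurston-Ball}, any $v\in T_x\T(S)$ can be written as $v=\v_x(\lambda)$ for a unique $\lambda\in\ml(S)$, so $\|v\|_e=\ell_x(\lambda)$. Combining the dual description \cref{eq:thurston:norm} of the Thurston norm with \cref{Kerckhoff cosine}, we obtain
\[
\|v\|_{\mathrm{Th}}
=
\sup_{\mu\in\ml(S)\setminus\{0\}}
\frac{d\ell_\mu(v)}{\ell_x(\mu)}
=
\sup_{\mu\in\ml(S)\setminus\{0\}}
\frac{\Cos_x(\lambda,\mu)}{\ell_x(\mu)}.
\]
Since the integrand of $\Cos_x$ is at most $1$ in absolute value, $\Cos_x(\lambda,\mu)\leq i(\lambda,\mu)$, so \cref{eq:e:th:1} will follow once we exhibit a constant $C'>0$ depending only on the topology of $S$ such that
\begin{equation}\label{eq:plan-intlen}
i(\lambda,\mu)\,\ell_{\mathrm{sys}}(x)\,\Log(\tfrac{1}{\ell_{\mathrm{sys}}(x)})
\;\leq\;
C'\,\ell_x(\lambda)\,\ell_x(\mu)
\qquad\text{for all }\lambda,\mu\in\ml(S).
\end{equation}

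To establish \cref{eq:plan-intlen}, I observe that both sides are jointly continuous in $(\lambda,\mu)\in\ml(S)\times\ml(S)$ and positively homogeneous in each factor, so the density of weighted simple multicurves in $\ml(S)$ reduces the task to pairs of simple closed geodesics $\alpha,\beta$ on $(S,x)$. I would then split into cases according to the Margulis thick/thin decomposition of $(S,x)$. When $\ell_{\mathrm{sys}}(x)\geq\epsilon_0$ for a Margulis constant $\epsilon_0$, the factor $\ell_{\mathrm{sys}}(x)\Log(1/\ell_{\mathrm{sys}}(x))$ is uniformly bounded above (since the area of $(S,x)$ is topologically determined by Gauss--Bonnet), and a standard injectivity-radius packing argument yields $i(\alpha,\beta)\lesssim\ell_x(\alpha)\ell_x(\beta)$. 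When $\ell_{\mathrm{sys}}(x)<\epsilon_0$, the collar lemma equips each sufficiently short closed geodesic $\gamma_i$ with an embedded annular collar of width $w_i\asymp\log(1/\ell_x(\gamma_i))$, inside which any transverse arc contributes at least $2w_i$ to its length per crossing. I would then partition the intersections of $\alpha\cap\beta$ into those in the thick part of $(S,x)$ and those in each short-curve collar, bound the former via the thick-part estimate and the latter via the collar-width estimate, and observe that the shortest of the $\gamma_i$ produces the dominant $\Log(1/\ell_{\mathrm{sys}}(x))$ factor on the left-hand side of \cref{eq:plan-intlen}.

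Combining these ingredients gives
\[
\|v\|_{\mathrm{Th}}
\;\leq\;
\sup_{\mu\in\ml(S)\setminus\{0\}}\frac{i(\lambda,\mu)}{\ell_x(\mu)}
\;\leq\;
\frac{C'\,\ell_x(\lambda)}{\ell_{\mathrm{sys}}(x)\Log(1/\ell_{\mathrm{sys}}(x))}
\;=\;
\frac{C'\,\|v\|_e}{\ell_{\mathrm{sys}}(x)\Log(1/\ell_{\mathrm{sys}}(x))},
\]
which rearranges to \cref{eq:e:th:1} with $C=1/C'$. I expect the main obstacle to be the thin-case verification of \cref{eq:plan-intlen}: the intersection contributions inside the collars of multiple short geodesics must be bookkept carefully so that the single factor $\ell_{\mathrm{sys}}(x)\Log(1/\ell_{\mathrm{sys}}(x))$, rather than a product over all short curves, suffices to absorb the total count.
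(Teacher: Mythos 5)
Your reduction is exactly the paper's: write $v=\v_x(\lambda)$ via \cref{Th:Thurston-Ball}, use \cref{Kerckhoff cosine} together with the dual description of the Thurston norm to get $\|v\|_{\mathrm{Th}}\le\sup_\mu i(\lambda,\mu)/\ell_x(\mu)$, and then invoke the bound $\sup_{\alpha,\beta} i(\alpha,\beta)/(\ell_\alpha(x)\ell_\beta(x))\le C/\bigl(\ell_{\mathrm{sys}}(x)\Log(1/\ell_{\mathrm{sys}}(x))\bigr)$. The only divergence is in how that last estimate is obtained: the paper simply cites Torkaman's asymptotic \cref{eq:intersection:length:asym} (remarking that the weaker inequality \cref{eq:inters} also follows from the thick--thin estimate of Lenzhen--Rafi--Tao), whereas you propose to prove it from scratch by a Margulis thick--thin and collar-lemma analysis. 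That route is legitimate and is essentially the content of the cited results, so your version is more self-contained at the cost of carrying out the collar bookkeeping. The worry you flag at the end is not a real obstruction: each intersection point lies either in the thick part or in the collar of exactly one short geodesic $\gamma_i$, the collar of $\gamma_i$ contributes at most a constant times $\ell_x(\alpha)\ell_x(\beta)/\bigl(\ell_{\gamma_i}(x)\Log(1/\ell_{\gamma_i}(x))\bigr)$ (crossings cost width $\asymp\Log(1/\ell_{\gamma_i})$ in length, winding costs $\asymp\ell_{\gamma_i}$ per extra intersection), the function $t\mapsto t\Log(1/t)$ is monotone near $0$ so each such term is dominated by the systole term, and the number of collars is bounded by $3g-3+n$; hence one sums a topologically bounded number of contributions rather than forming a product, and the single factor $\ell_{\mathrm{sys}}(x)\Log(1/\ell_{\mathrm{sys}}(x))$ suffices. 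With that point settled, your argument is correct and yields the lemma.
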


\begin{proof}
Let $\alpha$ be a simple closed curve on $S$. For $x\in\mathcal{T}(S)$, we have $||\v_x(\alpha)||_e=\ell_\alpha(x)$, and
\begin{align*}
||\v_x(\alpha)||_{\mathrm{Th}}
=
\sup_\beta \frac{\v_x(\alpha)(\ell_\beta)}{\ell_\beta(x)}
=
\sup_\beta \frac{\Cos_x(\alpha,\beta)}{\ell_\beta(x)}
\leq \sup_\beta \frac{i(\alpha,\beta)}{\ell_\beta(x)},
\end{align*}
where $\beta$ ranges over all simple closed curves on $x$. Therefore,
\begin{align*}
\frac{||\v_x(\alpha)||_{\mathrm{Th}}}{||\v_x(\alpha)||_e}
=
\frac{||\v_x(\alpha)||_{\mathrm{Th}}}{\ell_\alpha(x)}
\leq \sup_\beta \frac{i(\alpha,\beta)}{\ell_\alpha(x)\ell_\beta(x)}.
\end{align*}
Torkaman \cite{Torkaman2023} discovered that as one approaches the thin part of Teichm\"uller space, \ie  as  $\ell_{\mathrm{sys}}(x)\to 0$, we have
\begin{align}
\label{eq:intersection:length:asym}
\sup_{\alpha,\beta} \frac{i(\alpha,\beta)}{\ell_\alpha(x)\ell_\beta(X)}
\sim
\frac{1}{2\ell_{\mathrm{sys}}(x)\log\frac{1}{\ell_{\mathrm{sys}}(x)}}.
\end{align}
In particular, there exists a constant $C$ depending only on the topology of $S$ such that 
\begin{align} \label{eq:inters}
\sup_{\alpha,\beta}\frac{i(\alpha,\beta)}{\ell_\alpha(x)\ell_\beta(x)}
\leq\frac{C}{\ell_{\mathrm{sys}}(x)\Log\frac{1}{\ell_{\mathrm{sys}}(x)}}.
\end{align}
 Therefore,
     	\begin{align*}
		\frac{||\v_x(\alpha)||_{\mathrm{Th}}}{||\v_x(\alpha)||_e}\leq \sup_\beta \frac{i(\alpha,\beta)}{\ell_\alpha(x)\ell_\beta(x)}
		 \leq \frac{C}{\ell_{\mathrm{sys}}(x)\Log \frac{1}{\ell_{\mathrm{sys}}(x)}}.
		\end{align*}
\end{proof}

Note that the estimate in \cref{eq:inters} is weaker than the one in \cref{eq:intersection:length:asym}, and that the former estimate can also be proved using the  thick-thin decomposition estimate in \cite[Proposition 3.1]{LRT2012}.

\subsection{Earthquake vs. Weil--Petersson}

We next compare the earthquake norm and the Weil--Petersson norm. We make use of the following estimate for the Weil--Petersson gradient of length functions by Bonsante--Seppi--Tamburelli:

\begin{theorem}[{\cite[Theorem H]{BST2017}}]\label{lem:wp:lbd}
There exists a universal positive constant $K$ such that for any simple closed curve $\alpha$ and any hyperbolic surface $x\in\T(S)$, we have
\begin{align}\label{eq:wp:lbd}
\|\nabla \ell_\alpha\|_{\mathrm{WP}}\geq \frac{K}{|\chi(S)|} \ell_\alpha,
\end{align}
where $\nabla\ell_\alpha$ is the gradient of the length function $\ell_\alpha:\T(S)\to\R$.
\end{theorem}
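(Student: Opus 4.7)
The statement is cited from Bonsante--Seppi--Tamburelli [BST2017], so the plan going forward is simply to apply it in combination with \cref{lem:wp:nabla} to derive the Weil--Petersson vs.\ earthquake norm comparison $\|v\|_e \leq C_1 \|v\|_{\mathrm{WP}}$. Nevertheless, I outline a self-contained strategy below that uses only tools already in play in this paper, together with the classical Riera/Wolpert length-pairing formula.

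By \cref{lem:wp:nabla}, the inequality is equivalent to showing
\begin{align*}
\|\v_\alpha(x)\|_{\mathrm{WP}} \;\geq\; \frac{K}{2|\chi(S)|}\,\ell_\alpha(x).
\end{align*}
The natural first step is to invoke Riera's identity, which expresses $\|\nabla \ell_\alpha\|^2_{\mathrm{WP}}$ as a non-negative Poincar\'e-type series indexed by double cosets of the cyclic subgroup $\langle \alpha\rangle$ in $\pi_1(S)$, with the identity double coset contributing a positive universal multiple of $\ell_\alpha(x)$ and all other terms non-negative. Discarding the off-diagonal terms already yields a sub-linear bound of the form $\|\nabla \ell_\alpha\|_{\mathrm{WP}} \geq C \sqrt{\ell_\alpha(x)}$. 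In the regime $\ell_\alpha(x) \leq C'/|\chi(S)|^2$, this sub-linear bound is strictly stronger than the required linear one, and the theorem holds immediately in that range.

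The genuinely interesting case is that of large $\ell_\alpha(x)$. Here the plan is to exploit the area constraint $\mathrm{Area}(x) = 2\pi|\chi(S)|$ and the collar lemma: a long geodesic must ``fold back'' near itself in the universal cover, which forces many positive contributions into the Riera series. A quantitative count of these returns, weighted by their hyperbolic distance contribution, would show that the number of such terms within a fixed distance window grows linearly in $\ell_\alpha(x)/|\chi(S)|$, and summing these with the identity contribution produces the linear lower bound claimed. The hard part will be this quantitative count, since the off-diagonal summands decay exponentially in the distance between lifts, so one must balance the exponentially growing number of lifts against their exponentially decaying contributions. This is where the Bonsante--Seppi--Tamburelli approach diverges from the Poincar\'e-series route entirely: they work in the dual convex co-Minkowski model of $\T(S)$, where $\|\nabla \ell_\alpha\|_{\mathrm{WP}}$ admits a direct interpretation as (a derivative of) the volume of an associated convex core, and the bound then follows from an isoperimetric-type inequality on convex sets in Minkowski space, bypassing the Poincar\'e series altogether.
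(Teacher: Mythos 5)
Your primary plan --- treating this as an external input from Bonsante--Seppi--Tamburelli and simply citing \cite[Theorem H]{BST2017} --- is exactly what the paper does: the statement is not reproved there, the authors only add the remark that the argument of \cite{BST2017}, stated for closed surfaces, goes through for punctured surfaces. So on that level your proposal and the paper coincide.

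The supplementary ``self-contained'' sketch, however, is not yet a proof, and as stated it contains a quantitative flaw in the only regime where work is needed. The small-length case is fine: Riera's formula gives $\|\nabla\ell_\alpha\|_{\mathrm{WP}}^2\geq \tfrac{2}{\pi}\ell_\alpha$ after discarding the non-negative off-diagonal terms, and $\sqrt{\ell_\alpha}$ dominates $\ell_\alpha/|\chi(S)|$ when $\ell_\alpha$ is at most a constant times $|\chi(S)|^2$. But for large $\ell_\alpha$ you need $\|\nabla\ell_\alpha\|_{\mathrm{WP}}^2\gtrsim \ell_\alpha^2/|\chi(S)|^2$, i.e.\ the off-diagonal part of the Riera series must contribute on the order of $\ell_\alpha^2/|\chi(S)|^2$. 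Your sketch asserts that the number of bounded-distance return terms ``grows linearly in $\ell_\alpha/|\chi(S)|$''; with each such term bounded below by a universal constant this only yields $\|\nabla\ell_\alpha\|_{\mathrm{WP}}^2\gtrsim \ell_\alpha(1+1/|\chi(S)|)$, which is still the square-root bound and does not close the gap. What is actually needed (and plausibly true, e.g.\ by cutting $\alpha$ into unit subarcs and pigeonholing them into a surface of area $2\pi|\chi(S)|$) is a \emph{quadratic} count, roughly $\ell_\alpha^2/|\chi(S)|$ pairs of subarcs at bounded mutual distance, together with a verification that distinct close pairs give distinct double cosets with uniformly positive contributions. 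None of this is carried out, and you acknowledge it is the hard step; so if you intend the sketch to replace the citation, the large-length count must be stated correctly (quadratic, not linear) and proved. Otherwise, simply keep the citation, which is all the paper itself relies on.
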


\begin{remark}
Even though the surface $S$ is closed in \cite[Theorem H]{BST2017}, the same argument holds for punctured surfaces.
\end{remark}

\begin{proposition}\label{thm:e:wp} There exists a positive constant $C$ depending only on the topology of $S$ such that for any hyperbolic surface $x\in\T(S)$ and any tangent vector $v\in T_x\T(S)$, we have
\begin{align}\label{eq:e:wp}
\|v\|_{e}\leq C \|v\|_{\mathrm{WP}},
\end{align}
where $C=\frac{2|\chi(S)|}{K}$ depends only on the topological type of $S$, and $K>0$ is a universal constant (independent of $S$) given by \cref{lem:wp:lbd}.
\end{proposition}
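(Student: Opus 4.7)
The plan is to leverage the two lemmas cited just before the statement (Wolpert-type duality in \cref{lem:wp:nabla} and the Bonsante--Seppi--Tamburelli gradient estimate in \cref{lem:wp:lbd}) to establish the inequality first for earthquake vectors along simple closed curves, and then to pass to general tangent vectors by density and continuity. The constant $C = 2|\chi(S)|/K$ is exactly what one expects from combining these two lemmas, so no additional estimates should be needed.

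First I would handle the case of a simple closed curve $\alpha$. Every tangent vector of the form $\v_\alpha(x)$ satisfies $\|\v_\alpha(x)\|_e = \ell_\alpha(x)$ by the very definition of the earthquake norm. On the other hand, \cref{lem:wp:nabla} gives $\|\v_\alpha(x)\|_{\mathrm{WP}} = \tfrac{1}{2}\|(\nabla \ell_\alpha)_x\|_{\mathrm{WP}}$, while \cref{lem:wp:lbd} asserts $\|(\nabla \ell_\alpha)_x\|_{\mathrm{WP}} \geq \tfrac{K}{|\chi(S)|}\ell_\alpha(x)$. Chaining these two gives the inequality $\|\v_\alpha(x)\|_e \leq \tfrac{2|\chi(S)|}{K}\|\v_\alpha(x)\|_{\mathrm{WP}}$, which is precisely the desired inequality for $v = \v_\alpha(x)$ with $C = 2|\chi(S)|/K$.

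Next I would extend from simple closed curves to arbitrary measured laminations. Since $\v_{c\alpha}(x) = c \v_\alpha(x)$ and $\ell_{c\alpha}(x) = c\ell_\alpha(x)$ for $c > 0$, the inequality transfers unchanged to all positively weighted simple closed curves. For a general $\lambda \in \ml(S)$, choose a sequence of weighted simple closed curves $c_i\gamma_i \to \lambda$ in $\ml(S)$. By the construction of the earthquake flow recalled in the Background section, $\v_{c_i\gamma_i}(x) \to \v_\lambda(x)$ in $T_x\T(S)$, and by continuity of the length pairing $\ell(\cdot,x)$ and the Weil--Petersson norm, we have both $\ell_{c_i\gamma_i}(x) \to \ell_\lambda(x)$ and $\|\v_{c_i\gamma_i}(x)\|_{\mathrm{WP}} \to \|\v_\lambda(x)\|_{\mathrm{WP}}$. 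Passing the inequality $\ell_{c_i\gamma_i}(x) \leq \tfrac{2|\chi(S)|}{K}\|\v_{c_i\gamma_i}(x)\|_{\mathrm{WP}}$ to the limit yields
\[
\|\v_\lambda(x)\|_e = \ell_\lambda(x) \leq \tfrac{2|\chi(S)|}{K}\|\v_\lambda(x)\|_{\mathrm{WP}}.
\]

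Finally, by \cref{Th:Thurston-Ball}, every tangent vector $v \in T_x\T(S)$ can be written as $v = \v_\lambda(x)$ for a unique $\lambda \in \ml(S)$, so the above estimate covers all tangent vectors and completes the proof with $C = 2|\chi(S)|/K$. There is no real obstacle here: the inequality for simple closed curves is immediate from the two cited lemmas, and the continuity argument is standard, so the only subtle point is verifying that convergence $c_i\gamma_i \to \lambda$ in $\ml(S)$ entails convergence of both sides of the inequality under the respective norms, which is already ensured by results recalled in \cref{s:background}.
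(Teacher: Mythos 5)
Your proposal is correct and follows essentially the same route as the paper: reduce to earthquake vectors via \cref{Th:Thurston-Ball}, prove the estimate for simple closed curves by combining \cref{lem:wp:nabla} with \cref{lem:wp:lbd}, and pass to general measured laminations by density of weighted simple closed curves. The only difference is that you spell out the limiting/continuity argument that the paper leaves implicit, which is fine and changes nothing of substance.
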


\begin{proof}[Proof of \cref{thm:e:wp}]
By \cref{Th:Thurston-Ball}, the tangent vector $v$ can be represented as the earthquake vector $\v_x(\alpha)$ for some measured lamination $\alpha$. Since the set of weighted simple closed curves is dense in the space of measured laminations, we have only to consider the case when $\alpha$ is a simple closed curve to get \cref{eq:wp:lbd}.
In this case, by \cref{lem:wp:nabla}, we have
\begin{align*}
\|\v_x(\alpha)\|_{\mathrm{WP}}=\tfrac{1}{2}\|\nabla\ell_\alpha\|_{\mathrm{WP}}.
\end{align*}	
Combined with \cref{lem:wp:lbd}, this completes the proof.
\end{proof}

Given \cref{thm:e:wp}, one naturally wonders whether the earthquake norm and the \wp norm might be bi-Lipschitz equivalent. We show that this is not the case.

\begin{proposition}[not bi-Lipschitz]
\label{prop:notbilipschitz}
For any simple closed curve $\alpha$, the earthquake norm and the \wp norm for the vectors $\v_x(\alpha)\ (x \in \T(S))$ are not bi-Lipschitz equivalent. Specifically, there is some uniform constant $c$ such that
\[
\sqrt{\frac{\left\|\v_x(\alpha)\right\|_{e}
}{2\pi}}
\leq 
\left\|\v_x(\alpha)\right\|_{\mathrm{WP}}
\leq 
\sqrt{2c
\left\|\v_x(\alpha)\right\|_{e}},
\text{ as }\ell_\alpha(x)\to 0.
\]
\end{proposition}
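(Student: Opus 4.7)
The plan is to convert the stated sandwich into an asymptotic comparison between $\ell_\alpha(x)$ and $\|\nabla \ell_\alpha\|_{\mathrm{WP}}$, and then invoke Wolpert's pinching asymptotic for the Weil--Petersson gradient of a short geodesic's length function. First observe that, by definition of the earthquake norm, $\|\v_x(\alpha)\|_e=\ell_\alpha(x)$, while by \cref{lem:wp:nabla} we have $\|\v_x(\alpha)\|_{\mathrm{WP}}=\tfrac{1}{2}\|\nabla \ell_\alpha\|_{\mathrm{WP}}$. Hence, after substitution and rescaling, the proposition reduces to showing that, for some uniform $c$ and all sufficiently short $\alpha$,
\[
\sqrt{\tfrac{2\ell_\alpha(x)}{\pi}}\ \leq\ \|\nabla \ell_\alpha\|_{\mathrm{WP}}\ \leq\ 2\sqrt{2c\,\ell_\alpha(x)}.
\]

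The key input is Wolpert's pinching asymptotic for the Weil--Petersson gradient of the square root of a length function:
\[
\lim_{\ell_\alpha(x)\to 0}\bigl\|\nabla\sqrt{\ell_\alpha}\bigr\|_{\mathrm{WP}}^{2}=\tfrac{1}{2\pi}.
\]
Since the chain rule gives $\nabla\sqrt{\ell_\alpha}=\tfrac{1}{2\sqrt{\ell_\alpha}}\,\nabla \ell_\alpha$, this rewrites as
\[
\|\nabla \ell_\alpha\|_{\mathrm{WP}}^{2}=4\,\ell_\alpha(x)\,\bigl\|\nabla\sqrt{\ell_\alpha}\bigr\|_{\mathrm{WP}}^{2}\ \longrightarrow\ \tfrac{2\,\ell_\alpha(x)}{\pi}\ \text{ as }\ell_\alpha(x)\to 0.
\]
The lower bound follows directly by feeding this asymptotic equality back into $\|\v_x(\alpha)\|_{\mathrm{WP}}=\tfrac12\|\nabla\ell_\alpha\|_{\mathrm{WP}}$, while the upper bound follows by taking $c$ to be any constant strictly larger than $\tfrac{1}{4\pi}$, since then $\|\nabla\sqrt{\ell_\alpha}\|_{\mathrm{WP}}^{2}<2c$ throughout a neighbourhood of the stratum where $\ell_\alpha$ vanishes.

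Finally, the non-bi-Lipschitz conclusion is automatic once the sandwich is established: as $\ell_\alpha(x)\to 0$, the ratio $\|\v_x(\alpha)\|_{\mathrm{WP}}\big/\|\v_x(\alpha)\|_e$ is pinched between multiples of $\ell_\alpha(x)^{-1/2}$, which diverges to $+\infty$. The main obstacle I anticipate is pinning down the precise form of Wolpert's pinching asymptotic needed (in particular to justify the $\tfrac{1}{2\pi}$ constant appearing in the lower bound of the proposition) and upgrading the limit statement into the two-sided quantitative bounds that hold uniformly below an explicit threshold for $\ell_\alpha(x)$; this is standard but requires careful bookkeeping of the error terms in Wolpert's asymptotic expansion of the Weil--Petersson metric in plumbing coordinates near a boundary stratum of the augmented Teichm\"uller space.
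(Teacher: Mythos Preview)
Your reduction via $\|\v_x(\alpha)\|_e=\ell_\alpha(x)$ and $\|\v_x(\alpha)\|_{\mathrm{WP}}=\tfrac12\|\nabla\ell_\alpha\|_{\mathrm{WP}}$ is exactly what the paper does, and the upper bound argument is fine. The difference is in the lower bound. You invoke the limit $\|\nabla\sqrt{\ell_\alpha}\|_{\mathrm{WP}}^2\to\tfrac{1}{2\pi}$ and say the lower bound ``follows directly'', but a bare limit does not yield the inequality $\|\nabla\sqrt{\ell_\alpha}\|_{\mathrm{WP}}^2\geq\tfrac{1}{2\pi}$: the quantity could approach $\tfrac{1}{2\pi}$ from below or oscillate, and no amount of error-term bookkeeping in an asymptotic \emph{expansion} fixes this without knowing the sign of the correction.

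The paper handles this by citing Riera's gradient formula, which gives the exact global inequality $\|\nabla\ell_\alpha\|_{\mathrm{WP}}^2\geq\tfrac{2}{\pi}\ell_\alpha(x)$ (the formula expresses $\|\nabla\ell_\alpha\|_{\mathrm{WP}}^2$ as $\tfrac{2}{\pi}\ell_\alpha$ plus a nonnegative sum), and separately cites Wolpert's bound $\|\nabla\ell_\alpha\|_{\mathrm{WP}}\leq\sqrt{c(\ell_\alpha+\ell_\alpha^2 e^{\ell_\alpha/2})}$ for the upper estimate. So your single-input strategy collapses back to the paper's two inputs: Wolpert's asymptotic that you quote is really a consequence of combining Riera's lower bound with Wolpert's upper bound, and it is Riera's formula---not plumbing-coordinate error analysis---that supplies the sharp constant on the left.
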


\begin{proof}
Given an arbitrary $\alpha$, Riera's formula \cite[Theorem 2, equation (b)]{Riera2005} implies that $\left\|\nabla\ell_\alpha\right\|_{\mathrm{WP}}
\geq \sqrt{ 2\ell_\alpha(x)/\pi}$. On the other hand, Wolpert \cite[Lemma 3.16]{Wolpert2008} showed that 
\begin{align*}
\left\|\nabla \ell_\alpha\right\|_{\mathrm{WP}}
\leq\sqrt{ c(\ell_\alpha(x)+\ell^2_\alpha(x)e^{\ell_\alpha(x)/2})}
\end{align*}  
for some universal constant $c$. Thus, for small $\ell_\alpha(x)$, we have
\[
\sqrt{\frac{\left\|\v_x(\alpha)\right\|_{e}
}{2\pi}}
=
\sqrt{\frac{\ell_\alpha(x)}{2\pi}}
\leq
\tfrac{1}{2}\left\|\nabla\ell_\alpha\right\|_{\mathrm{WP}}
\leq 
\sqrt{2c\;\ell_\alpha(x)}
=
\sqrt{2c
\left\|\v_x(\alpha)\right\|_{e}}.
\]
Using \cref{lem:wp:nabla} to replace $\tfrac{1}{2}\|\nabla\ell_\alpha\|_{\mathrm{WP}}$ with $\|\v_x(\alpha)\|_{\mathrm{WP}}$ yields the result.
\end{proof}

We now employ \cref{thm:e:wp} to show:
\begin{theorem}\label{thm:wp:th}
	There exists a constant $C$ depending only on the topology of $S$, such that for any $x\in \T(S)$ and any tangent vector $v\in T_x\T(S)$,  
	\begin{align}
	\label{eq:e:wp:th}
		\frac{\|v\|_{\mathrm{WP}}}{\|v\|_{\mathrm{Th}}}
		\leq 
		\sup_{v\in T_x \T(S)}
 		\frac{\|v\|_{e}}{2\|v\|_{\mathrm{WP}}}.
	\end{align}
	In particular, 
	\begin{align}
	\label{eq:wp:th}
		\|v\|_{\mathrm{WP}}\leq C \|v\|_{\mathrm{Th}}.
	\end{align}
	Moreover, 
	\begin{equation}\label{eq:e:th}
		\|v\|_{e}\leq C \|v\|_{\mathrm{Th}}.
	\end{equation}
\end{theorem}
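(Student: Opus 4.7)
My plan is to deduce the three inequalities \cref{eq:e:wp:th}, \cref{eq:wp:th}, and \cref{eq:e:th} by upgrading the pointwise comparison \cref{thm:e:wp} to a cotangent-space comparison via the Weil--Petersson symplectic duality map $\Phi_{\mathrm{WP}}$ of \cref{lem:WP}, and then dualising.

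The key observation is that $\Phi_{\mathrm{WP}}$ plays two isometric roles at once. By \cref{duality} it identifies $(T_x\T(S), \|\cdot\|_e)$ with $(T^*_x\T(S), \|\cdot\|^*_{\mathrm{Th}})$ isometrically. Independently, the K\"ahler identity $\omega_{\mathrm{WP}}(u,v) = 2\langle Ju, v\rangle_{\mathrm{WP}}$ (equivalently, the factor $2$ already appearing in \cref{lem:wp:nabla} via $\nabla\ell_\alpha = 2J\,\v_\alpha(x)$) yields $\|\Phi_{\mathrm{WP}}(w)\|^*_{\mathrm{WP}} = 2\|w\|_{\mathrm{WP}}$, so $\Phi_{\mathrm{WP}}$ is also a scaled isometry between $(T_x\T(S), \|\cdot\|_{\mathrm{WP}})$ and $(T^*_x\T(S), \|\cdot\|^*_{\mathrm{WP}})$.

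For \cref{eq:e:wp:th}, I would combine the self-duality of the Weil--Petersson norm with the Thurston duality pairing $\phi(v) \leq \|\phi\|^*_{\mathrm{Th}}\|v\|_{\mathrm{Th}}$ to write
\begin{align*}
\|v\|_{\mathrm{WP}} \;=\; \sup_{\phi\neq 0} \frac{\phi(v)}{\|\phi\|^*_{\mathrm{WP}}} \;\leq\; \|v\|_{\mathrm{Th}} \cdot \sup_{\phi\neq 0} \frac{\|\phi\|^*_{\mathrm{Th}}}{\|\phi\|^*_{\mathrm{WP}}}.
\end{align*}
Substituting $\phi = \Phi_{\mathrm{WP}}(w)$ and applying both isometric identifications transforms the final supremum into $\sup_{w\neq 0} \|w\|_e / (2\|w\|_{\mathrm{WP}})$, which is exactly the right-hand side of \cref{eq:e:wp:th}. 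The remaining two inequalities then fall out immediately: \cref{thm:e:wp} bounds this supremum by $C/2$, yielding \cref{eq:wp:th}; and chaining $\|v\|_e \leq C\|v\|_{\mathrm{WP}}$ from \cref{thm:e:wp} with \cref{eq:wp:th} produces \cref{eq:e:th}.

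I do not foresee any serious obstacle; the argument is essentially a duality bookkeeping exercise, once one recognises the dual role of $\Phi_{\mathrm{WP}}$. The only point requiring care is to track the factor of $2$ arising from the convention $\omega_{\mathrm{WP}} = -2\,\mathrm{Im}(h_{\mathrm{WP}})$, but this is exactly the factor of $2$ in the denominator on the right-hand side of \cref{eq:e:wp:th}.
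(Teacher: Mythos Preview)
Your argument is correct and amounts to the same computation as the paper's, reorganised through abstract duality. The paper works concretely with earthquake vectors: for a simple closed curve $\alpha$ it writes $\|\v_\alpha\|_{\mathrm{WP}}=\tfrac12\|\nabla\ell_\alpha\|_{\mathrm{WP}}$, expresses the gradient norm via Riesz as $\sup_\beta |d\ell_\alpha(\v_\beta)|/\|\v_\beta\|_{\mathrm{WP}}$, swaps $\alpha$ and $\beta$ using the reciprocal formula $d\ell_\alpha(\v_\beta)=-d\ell_\beta(\v_\alpha)$ (\cref{reciprocal}), and finally bounds $d\ell_\beta(\v_\alpha)\le\|\v_\alpha\|_{\mathrm{Th}}\,\ell_\beta$ from the definition of the Thurston norm. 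Your proof absorbs each of these steps into a norm-level statement: the Riesz step becomes the scaled Weil--Petersson isometry $\|\Phi_{\mathrm{WP}}(w)\|^*_{\mathrm{WP}}=2\|w\|_{\mathrm{WP}}$, the reciprocal step is already encoded in \cref{duality} (whose proof in the paper rests precisely on that formula), and the last bound is the pairing inequality $\phi(v)\le\|\phi\|^*_{\mathrm{Th}}\|v\|_{\mathrm{Th}}$. So the two are the same proof seen through different lenses; yours is cleaner and makes the role of duality transparent, while the paper's keeps the measured-lamination content visible at every line.
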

\begin{proof}
By the density of simple closed curves in $\pml(S)$, it suffices to verify \cref{eq:e:wp:th} for $v=\v_x(\alpha)$, where $\alpha$ is an arbitrary simple closed curve.   For simplicity, we omit the $x$ in  $\v_\alpha(x)=v_x(\alpha)$ and write $\v_\alpha$. Recall from \cref{lem:wp:nabla} that $\|\v_\alpha\|_{\mathrm{WP}}=\frac{1}{2}\|\nabla\ell_\alpha\|_{\mathrm{WP}}$, hence
\begin{align*}
\frac{\|\v_\alpha\|_{\mathrm{WP}}}{\|\v_\alpha\|_{\mathrm{Th}}}
= \frac{\|\nabla \ell_\alpha\|_{\mathrm{WP}}}{2\|\v_\alpha\|_{\mathrm{Th}}}
&=\frac{1}{2\|\v_\alpha\|_{\mathrm{Th}}} \sup _{\beta}  \frac{|\mathrm{d}\ell_\alpha(\v_\beta)|}{\|\v_\beta\|_{\mathrm{WP}}}\\
&=\frac{1}{2\|\v_\alpha\|_{\mathrm{Th}}} \sup _{\beta}  \frac{-\mathrm{d}\ell_\alpha(\v_\beta)}{\|-\v_\beta\|_{\mathrm{WP}}}
= \frac{1}{2\|\v_\alpha\|_{\mathrm{Th}}} \sup _{\beta}  \frac{\mathrm{d}\ell_\beta(\v_\alpha)}{\|\v_\beta\|_{\mathrm{WP}}},
\end{align*}
where the last equality makes use of \cref{reciprocal}. Hence,
\begin{align*}
\frac{\|\v_\alpha\|_{\mathrm{WP}}}{\|\v_\alpha\|_{\mathrm{Th}}}
= \frac{1}{2\|\v_\alpha\|_{\mathrm{Th}}} \sup _{\beta}  \frac{[\v_\alpha(\frac{\mathrm{d}\ell_\beta}{\ell_\beta})]\ell_\beta}{\|\v_\beta\|_{\mathrm{WP}}}
&\leq \frac{1}{2\|\v_\alpha\|_{\mathrm{Th}}} \sup _{\beta}  \frac{\|\v_\alpha\|_{\mathrm{Th}}\ell_\beta}{\|\v_\beta\|_{\mathrm{WP}}}\\
&=\frac{1}{2}\sup _{\beta}  \frac{\ell_\beta}{\|\v_\beta\|_{\mathrm{WP}}}
=\sup _{\beta}  \frac{\|\v_\beta\|_{e}}{2\|\v_\beta\|_{\mathrm{WP}}}.
\end{align*}
This proves \cref{eq:e:wp:th}, which in turn,  combined with \cref{thm:e:wp}, gives  \cref{eq:wp:th} and \cref{eq:e:th}.
\end{proof}


\newpage

\section{Long earthquake paths are non-geodesic}

\label{s:geodesy}

The main goal of this section is to show the following.
\smallskip

\noindent\textbf{\cref{thm:longquake:nongeo}.} \textit{Sufficiently long left earthquake paths in $\T(S)$ cannot be geodesics with respect to the left earthquake metric.}

\subsection{Dehn twists case}
\label{DT}

We first give an elementary proof for the special case where the earthquake in question is a Fenchel--Nielsen twist, as it tells us something general about the geometry of moduli space.
(This kind of argument was originally suggested by Thurston in  \cite{ThM}.)

\begin{proof}[Proof of \cref{thm:longquake:nongeo} for Fenchel-Nielsen twists]
Let $\gamma$ be an arbitrary simple closed curve on $S$, consider an arbitrary point $x\in\T(S)$ and set $x_m:=E_{m\gamma}(x)\in\T(S)$ as its $m$-th right Dehn twist around $\gamma$  (see \cref{fig:earthquake:path}). Choose an arbitrary $y\in\T(S)$ such that $\ell_{\gamma}(y)<\ell_{\gamma}(x)$, and similarly define $y_m:=E_{m\gamma}(y)$. Since $d_e$ is mapping class group invariant, for all $m\in\mathbb{Z}$,
\[
d_e(y_m,x_m)=d_e(E_{m\gamma}(y),E_{m\gamma}(x))=d_e(y,x).
\]
Hence, the triangle inequality tells us that
\begin{align*}
d_e(x,x_m)\leq d_e(x,y)+d_e(y,y_m)+d_e(y,x).
\end{align*}
Since  $d_e(y,y_m)$ is bounded above by the length of the earthquake path from $y$ to $y_m$, we obtain the following.
\begin{align*}
d_e(x,x_m)\leq d_e(x,y)+m\ell_\gamma(y)^2+d_e(y,x).
\end{align*}
This inequality ensures that for $m\in\mathbb{N}$ sufficiently large, $d_e(x,x_m)$ cannot be realised by the earthquake path from $x$ to $x_m$, which has length
\[
m\ell_{\gamma}(x)^2
\gg
d_e(x,y)+m\ell_\gamma(y)^2+d_e(y,x).
\]
Since subsegments of geodesics are geodesics, no earthquake path containing $E_{t\gamma}(x)$, $t\in[0,m]$, can be a geodesic.
\end{proof}

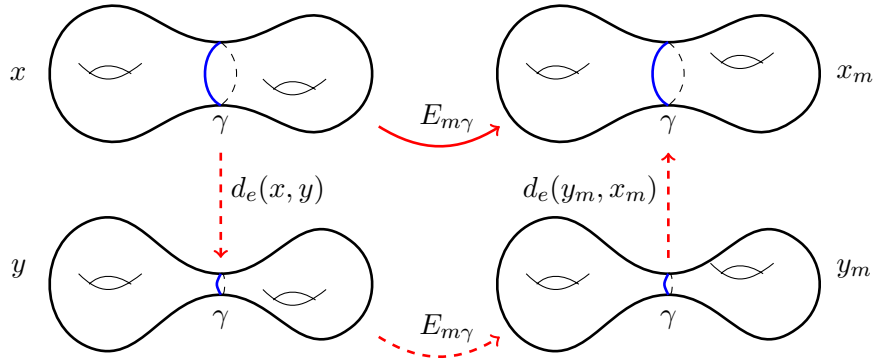
\begin{figure}[h!]
	\begin{tikzpicture}[scale=0.7]
		\draw[line width=1pt] (0-1,1.2)..controls (1-1,1.6) and (1.4-1,0.6)..(2.5-1,0.6)
		 (0-1,1.2)..controls (-1-1,0.8) and (-1-1,-.8)..(0-1,-1.2)
		 (0-1,-1.2)..controls (1-1,-1.6) and (1.4-1,-0.6)..(2.5-1,-0.6) 
		 (0+5-1,0.8)..controls (4-1,1.6) and (3.6-1,0.6)..(2.5-1,0.6)
		 (5-1,0.8)..controls (5.5-1,0.4) and (5.5-1,-0.4)..(5-1,-0.8)
		 (5-1,-0.8)..controls (4-1,-1.6) and (3.6-1,-0.6)..(2.5-1,-0.6);
		 \draw (-0.2-1,0.2).. controls (0.2-1,-0.2) and (0.5-1,-0.2)..(1-1,0.2) 
		 (0-1,0).. controls (0.2-1,0.2) and (0.5-1,0.2)..(0.8-1,0); 
		 \draw (-0.2+3.5-1,0.2-0.3).. controls (0.2+3.5-1,-0.2-0.3) and (0.5+3.5-1,-0.2-0.3)..(1+3.5-1,0.2-0.3) 
		 (0+3.5-1,0-0.3).. controls (0.2+3.5-1,0.2-0.3) and (0.5+3.5-1,0.2-0.3)..(0.8+3.5-1,0-0.3);
		 \draw[line width=1pt, color=blue] (2.5-1,0.6)..controls (2.1-1,0.4) and (2.1-1,-0.4)..(2.5-1,-0.6);
		 \draw[dashed](2.5-1,0.6)..controls (2.9-1,0.3) and (2.9-1,-0.3)..(2.5-1,-0.6);
		 \draw(2.5-1,-0.6)node[below]{{$\gamma$}}
		 (2.5-1-3.5,0)node[left]{{}{$x$}};
		 
		 	\draw[line width=1pt] (0+6.5+1,1.2)..controls (1+6.5+1,1.6) and (1.4+6.5+1,0.6)..(2.5+6.5+1,0.6)
		 (0+6.5+1,1.2)..controls (-1+6.5+1,0.8) and (-1+6.5+1,-.8)..(0+6.5+1,-1.2)
		 (0+6.5+1,-1.2)..controls (1+6.5+1,-1.6) and (1.4+6.5+1,-0.6)..(2.5+6.5+1,-0.6) 
		 (0+5+6.5+1,0.8)..controls (4+6.5+1,1.6) and (3.6+6.5+1,0.6)..(2.5+6.5+1,0.6)
		 (5+6.5+1,0.8)..controls (5.5+6.5+1,0.4) and (5.5+6.5+1,-0.4)..(5+6.5+1,-0.8)
		 (5+6.5+1,-0.8)..controls (4+6.5+1,-1.6) and (3.6+6.5+1,-0.6)..(2.5+6.5+1,-0.6);
		 \draw (-0.2+6.5+1,0.2).. controls (0.2+6.5+1,-0.2) and (0.5+6.5+1,-0.2)..(1+6.5+1,0.2) 
		 (0+6.5+1,0).. controls (0.2+6.5+1,0.2) and (0.5+6.5+1,0.2)..(0.8+6.5+1,0); 
		 \draw (-0.2+6.5+3.5+1,0.2+0.2).. controls (0.2+3.5+6.5+1,-0.2+0.2) and (0.5+3.5+6.5+1,-0.2+0.2)..(1+3.5+6.5+1,0.2+0.2) 
		 (0+3.5+6.5+1,0+0.2).. controls (0.2+3.5+6.5+1,0.2+0.2) and (0.5+3.5+6.5+1,0.2+0.2)..(0.8+3.5+6.5+1,0+0.2);
		 \draw[line width=1pt, color=blue] (2.5+6.5+1,0.6)..controls (2.1+6.5+1,0.4) and (2.1+6.5+1,-0.4)..(2.5+6.5+1,-0.6);
		 \draw[dashed](2.5+6.5+1,0.6)..controls (2.9+6.5+1,0.3) and (2.9+6.5+1,-0.3)..(2.5+6.5+1,-0.6);
			\draw(2.5+6.5+1,-0.6)node[below]{{$\gamma$}} (2.5-1+11.5,0)node[right]{{}{$x_m$}};
			
			\draw[line width=1pt] (0-1,1.2-4)..controls (1-1,1.6-4) and (1.4-1,0.6-4-0.4)..(2.5-1,0.6-4-0.4)
		 (0-1,1.2-4)..controls (-1-1,0.8-4) and (-1-1,-.8-4)..(0-1,-1.2-4)
		 (0-1,-1.2-4)..controls (1-1,-1.6-4) and (1.4-1,-0.6-4+0.4)..(2.5-1,-0.6-4+0.4) 
		 (0+5-1,0.8-4)..controls (4-1,1.6-4) and (3.6-1,0.6-4-0.4)..(2.5-1,0.6-4-0.4)
		 (5-1,0.8-4)..controls (5.5-1,0.4-4) and (5.5-1,-0.4-4)..(5-1,-0.8-4)
		 (5-1,-0.8-4)..controls (4-1,-1.6-4) and (3.6-1,-0.6-4+0.4)..(2.5-1,-0.6-4+0.4);
		 \draw (-0.2-1,0.2-4).. controls (0.2-1,-0.2-4) and (0.5-1,-0.2-4)..(1-1,0.2-4) 
		 (0-1,0-4).. controls (0.2-1,0.2-4) and (0.5-1,0.2-4)..(0.8-1,0-4); 
		 \draw (-0.2+3.5-1,0.2-0.3-4).. controls (0.2+3.5-1,-0.2-0.3-4) and (0.5+3.5-1,-0.2-0.3-4)..(1+3.5-1,0.2-0.3-4) 
		 (0+3.5-1,0-0.3-4).. controls (0.2+3.5-1,0.2-0.3-4) and (0.5+3.5-1,0.2-0.3-4)..(0.8+3.5-1,0-0.3-4);
		 \draw[line width=1pt, color=blue] (2.5-1,0.6-4-0.4)..controls (2.1-1+0.3,0.4-4-0.4) and (2.1-1+0.3,-0.4-4+0.4)..(2.5-1,-0.6-4+0.4);
		 \draw[dashed](2.5-1,0.6-4-0.4)..controls (2.9-1-0.3,0.3-4) and (2.9-1-0.3,-0.3-4)..(2.5-1,-0.6-4+0.4);
		 \draw(2.5-1,-0.6-3.7)node[below]{{$\gamma$}} (2.5-1-3.5,-3.7)node[left]{{}{$y$}};
		 
		 	\draw[line width=1pt] (0-1+8.5,1.2-4)..controls (1-1+8.5,1.6-4) and (1.4-1+8.5,0.6-4-0.4)..(2.5-1+8.5,0.6-4-0.4)
		 (0-1+8.5,1.2-4)..controls (-1-1+8.5,0.8-4) and (-1-1+8.5,-.8-4)..(0-1+8.5,-1.2-4)
		 (0-1+8.5,-1.2-4)..controls (1-1+8.5,-1.6-4) and (1.4-1+8.5,-0.6-4+0.4)..(2.5-1+8.5,-0.6-4+0.4) 
		 (0+5-1+8.5,0.8-4)..controls (4-1+8.5,1.6-4) and (3.6-1+8.5,0.6-4-0.4)..(2.5-1+8.5,0.6-4-0.4)
		 (5-1+8.5,0.8-4)..controls (5.5-1+8.5,0.4-4) and (5.5-1+8.5,-0.4-4)..(5-1+8.5,-0.8-4)
		 (5-1+8.5,-0.8-4)..controls (4-1+8.5,-1.6-4) and (3.6-1+8.5,-0.6-4+0.4)..(2.5-1+8.5,-0.6-4+0.4);
		 \draw (-0.2-1+8.5,0.2-4).. controls (0.2-1+8.5,-0.2-4) and (0.5-1+8.5,-0.2-4)..(1-1+8.5,0.2-4) 
		 (0-1+8.5,0-4).. controls (0.2-1+8.5,0.2-4) and (0.5-1+8.5,0.2-4)..(0.8-1+8.5,0-4); 
		 \draw (-0.2+3.5-1+8.5,0.2-0.3-4+0.5).. controls (0.2+3.5-1+8.5,-0.2-0.3-4+0.5) and (0.5+3.5-1+8.5,-0.2-0.3-4+0.5)..(1+3.5-1+8.5,0.2-0.3-4+0.5) 
		 (0+3.5-1+8.5,0-0.3-4+0.5).. controls (0.2+3.5-1+8.5,0.2-0.3-4+0.5) and (0.5+3.5-1+8.5,0.2-0.3-4+0.5)..(0.8+3.5-1+8.5,0-0.3-4+0.5);
		 \draw[line width=1pt, color=blue] (2.5-1+8.5,0.6-4-0.4)..controls (2.1-1+0.3+8.5,0.4-4-0.4) and (2.1-1+0.3+8.5,-0.4-4+0.4)..(2.5-1+8.5,-0.6-4+0.4);
		 \draw[dashed](2.5-1+8.5,0.6-4-0.4)..controls (2.9-1-0.3+8.5,0.3-4) and (2.9-1-0.3+8.5,-0.3-4)..(2.5-1+8.5,-0.6-4+0.4);
	\draw(2.5-1+8.5,-0.6-3.7)node[below]{{$\gamma$}}
	(2.5-1+11.5,-3.7)node[right]{{}{$y_m$}};
 \draw[->, color=red, line width=1pt] (4.5,-1)..controls (5.3,-1.5) and (6,-1.5)..(6.8,-1);
 \draw (5.8,-1.3)node[above]{{$E_{m\gamma}$}};
	\draw[->, color=red, line width=1pt, dashed] (4.5,-1-4)..controls (5.3,-1.5-4) and (6,-1.5-4)..(6.8,-1-4);
 \draw[->, color=red, line width=1pt, dashed] (1.5,-1.5)--(1.5,-3.5);
 \draw[->, color=red, line width=1pt, dashed] (1.5+8.5,-3.5)--(1.5+8.5,-1.5);
 \draw (5.8,-1.3-4)node[above]{{$E_{m\gamma}$}}
 (1.5,-2.2)node[right]{{$d_e(x,y)$}} (1.5+8.5,-2.2)node[left]{{$d_e(y_m,x_m)$}};	
			\end{tikzpicture}
	\caption{Long earthquakes cannot be geodesic.}
	\label{fig:earthquake:path}
\end{figure}

This proof did not make use of the earthquake metric in a \emph{fundamental} way.
The same argument can be used to show that for any mapping class group invariant path metric on Teichm\"uller space where arbitrarily long earthquakes continue to be geodesics, the distance of Fenchel-Nielsen twists around a fixed simple closed curve with respect to the metric have to be uniform:

\begin{lemma}
\label{lem:dtstrange}
Let $d$ be any mapping class group invariant asymmetric metric $d$ on $\T(S)$ where all left earthquake paths on $\T(S)$ are geodesics.
 Then for any essential simple closed curve $\gamma$ on $S$ the following holds.
\[
\text{For any } x,y\in\T(S) \text{ and for any } m\in\mathbb{N},\quad d(x,E_{m\gamma}(x))=d(y,E_{m\gamma}(y)).
\]
\end{lemma}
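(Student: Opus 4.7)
I will interpret $E_{m\gamma}$ for $m \in \mathbb{N}$ as the $m$-th iterate $T_\gamma^m$ of the right Dehn twist along $\gamma$, as in the proof of \cref{thm:longquake:nongeo}; since $T_\gamma^m$ lies in the mapping class group, $d$ is invariant under it. Set $f(x,m) := d(x, E_{m\gamma}(x))$; the goal is to prove $f(x,m) = f(y,m)$ for all $x, y \in \T(S)$ and all $m \in \mathbb{N}$. The argument will rest on two estimates: an additivity statement $f(x,m) = m f(x,1)$, and a uniform-in-$m$ bound $|f(x,m) - f(y,m)| \leq d(x,y) + d(y,x)$. Combining them, after dividing by $m$ and letting $m \to \infty$, forces $f(x,1) = f(y,1)$, and therefore $f(x,m) = m f(x,1) = m f(y,1) = f(y,m)$.

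For the additivity, I will use that the earthquake path $t \mapsto E_{t\gamma}(x)$ from $x$ to $T_\gamma^m(x)$ passes through $T_\gamma^k(x)$ at time $t = k \ell_\gamma(x)$, since $\gamma$-earthquakes preserve $\ell_\gamma$. By hypothesis this path is a geodesic for $d$, and each subsegment---itself an earthquake path---is also a geodesic. Additivity of $d$-length along concatenated geodesic segments then yields
\[
f(x,m) = \sum_{k=0}^{m-1} d(T_\gamma^k(x), T_\gamma^{k+1}(x)) = m \cdot d(x, T_\gamma(x)) = m f(x,1),
\]
where the middle equality applies mapping class group invariance to $T_\gamma^{-k}$. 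Separately, the triangle inequality together with mapping class group invariance gives
\[
f(x,m) \leq d(x,y) + f(y,m) + d(T_\gamma^m(y), T_\gamma^m(x)) = d(x,y) + f(y,m) + d(y,x),
\]
and similarly with $x, y$ exchanged, yielding the desired uniform bound.

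The delicate step will be the additivity. It depends on subsegments of earthquake paths being themselves earthquake paths (immediate from the flow structure), on the standard fact that three successive points along a geodesic satisfy $d(a,c) = d(a,b) + d(b,c)$ (which transfers without issue to the asymmetric setting), and on finiteness of $d(x,y)$ and $d(y,x)$---this last being guaranteed by the earthquake theorem, which gives an explicit earthquake (hence, under the lemma's hypothesis, geodesic) path of finite $d$-length between any ordered pair of points. Everything else is bookkeeping with the triangle inequality.
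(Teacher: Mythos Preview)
Your proof is correct and follows essentially the same approach as the paper's: both establish the additivity $d(x,E_{m\gamma}(x))=m\,d(x,E_\gamma(x))$ from the geodesic hypothesis and mapping class group invariance, combine it with the triangle-inequality bound $d(x,E_{m\gamma}(x))\leq d(x,y)+d(y,E_{m\gamma}(y))+d(y,x)$, divide by $m$, let $m\to\infty$, and then exchange $x$ and $y$. One small caveat: your justification of finiteness via an ``earthquake path of finite $d$-length'' is not quite meaningful, since $d$ is only assumed to be an asymmetric metric (not Finsler) and so there is no intrinsic notion of $d$-length of a path; the paper simply leaves this implicit (and indeed in the subsequent application to the torus, \cref{prop:torus}, explicitly assumes the metric is finite).
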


\begin{proof}
Since earthquakes for simple closed curves are  Fenchel-Nielsen twists, the unique (bi-infinite) left earthquake joining $x$ and $E_{\gamma}(x)$ contains $E_{m\gamma}(x)$ for all $m\in\mathbb{N}$. In particular,
\begin{align}
d(x,E_{m\gamma}(x))=md(x,E_{\gamma}(x)).\label{eq:s8_1}
\end{align}
By the triangle inequality and the mapping class group invariance of $d$,
\begin{align}
d(x,E_{m\gamma}(x))
&\leq d(x,y)+d(y,E_{m\gamma}(y))+d(E_{m\gamma}(y),E_{m\gamma}(x))\notag\\
&= d(x,y)+md(y,E_{\gamma}(y)) + d(y,x).\label{eq:s8_2}
\end{align}
Combining \cref{eq:s8_1} and \cref{eq:s8_2}, we obtain
\[
d(x,E_{\gamma}(x))
\leq\tfrac{1}{m}(d(x,y)+d(y,x))+d(y,E_{\gamma}(y)),
\]
and letting $m \to \infty$, we have \[d(x,E_{\gamma}(x))\leq d(y,E_{\gamma}(y)).
\]
By exchanging the roles of $x$ and $y$, we also have $d(y,E_{\gamma}(y))\leq d(x,E_{\gamma}(x))$, and hence $d(y,E_{\gamma}(y))= d(x,E_{\gamma}(x))$. Again appealing to \cref{eq:s8_1}, we obtain $d(y,E_{m\gamma}(y))= d(x,E_{m\gamma}(x))$ as required.
\end{proof}

We suspect that the conclusions for \cref{lem:dtstrange} may be too peculiar to hold for general asymmetric metrics on $\T(S)$. Verifying this for the Teichm\"uller space of flat tori yields \cref{prop:torus}. To this end, we  recall that 
$\T(S_{1,0})=\mathbb{H}^2$, that in this case the mapping class group action is that of $\mathrm{PSL}_2(\mathbb Z)$, and that the earthquake paths in this case are the horocycles of the hyperbolic plane, see \cite{Ker-ICM}.

\begin{proposition}\label{prop:torus}
There is no $\mathrm{PSL}_2(\mathbb Z)$-invariant finite asymmetric metric on $\mathbb{H}^2$ where all anti-clockwise horocycles are geodesics.
\end{proposition}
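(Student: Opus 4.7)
The plan is to apply \cref{lem:dtstrange} together with the specific $\mathrm{PSL}_2(\mathbb{Z})$-geometry of $\mathbb{H}^2 = \T(\Sigma_{1,1})$, then dispose of a degenerate case using an elliptic element of order three. Throughout I identify Dehn twists around simple closed curves on the once-punctured torus with the parabolic elements $T_v$ of $\mathrm{PSL}_2(\mathbb{Z})$ fixing the Farey vertex $v \in \mathbb{Q} \cup \{\infty\}$; in particular $T_1(z) = z+1$ and $T_2(z) = z/(z+1)$ are the Dehn twists at $\infty$ and $0$ respectively.

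First, by \cref{lem:dtstrange} and the $\mathrm{PSL}_2(\mathbb{Z})$-conjugacy of all positive Dehn twists, I would obtain a single constant $c \geq 0$ such that $d(x, T_v x) = c$ for every $x \in \mathbb{H}^2$ and every Farey vertex $v$. The key geometric observation is that on $\mathbb{H}^2$ a single pair of points can lie on two different horocycles based at two different boundary points, so one can compare the $d$-distance along two distinct horocycle geodesics. Taking the explicit pair $p_\pm := \pm 1 + i$, one checks that both lie on $H_\infty := \{\mathrm{Im}(z) = 1\}$ and on $H_0 := \{|z - i| = 1\}$, and that $p_+ = T_1^2(p_-)$ and $p_+ = T_2(p_-)$. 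Since both horocycles are $d$-geodesics in the anti-clockwise direction, the arc from $p_-$ to $p_+$ realises $d(p_-, p_+)$ on each: on $H_\infty$ it consists of two positive $T_1$-steps and so has $d$-length $2c$, while on $H_0$ it consists of a single $T_2$-step and has $d$-length $c$. Equating forces $2c = c$, so $c = 0$.

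The main obstacle is then to rule out the degenerate case $c = 0$, since an asymmetric metric is permitted to have $d(x,y) = 0$ for $x \neq y$ (provided $d(y,x) > 0$). My strategy is to exhibit an element $g \in \mathrm{PSL}_2(\mathbb{Z})$ such that both $g$ and $g^{-1}$ can be written as products of positive Dehn twists and such that $g$ acts non-trivially on a generic point of $\mathbb{H}^2$. A convenient candidate is $g := T_{[1]} T_2$, where $T_{[1]} := T_1 T_2 T_1^{-1}$ is the positive Dehn twist at the Farey vertex $1 \in \partial \mathbb{H}^2$ (hence a conjugate of $T_1$); as a matrix, $g = \bigl(\begin{smallmatrix} 1 & -1 \\ 1 & 0 \end{smallmatrix}\bigr)$, which has trace $1$, so $g$ is elliptic of order three and fixes only $e^{i\pi/3}$. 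Because $g$ is a product of two positive Dehn twists and $g^{-1} = g^2$ is a product of four, iterating the triangle inequality with $c = 0$ gives $d(x, gx) = 0 = d(x, g^{-1}x)$; by $\mathrm{PSL}_2(\mathbb{Z})$-invariance the latter equals $d(gx, x)$, so Axiom~(2) of \cref{defn:asymmetric} forces $gx = x$ for every $x \in \mathbb{H}^2$, contradicting the fact that $gx \neq x$ whenever $x \neq e^{i\pi/3}$.
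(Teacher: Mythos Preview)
Your argument has a genuine orientation error that breaks both halves. In $\mathrm{PSL}_2(\mathbb{Z})$ the parabolic $T_2 = \begin{bsmallmatrix}1&0\\1&1\end{bsmallmatrix}$ is conjugate to $T_1^{-1}$, \emph{not} to $T_1$: indeed $S T_1 S^{-1} = \begin{bsmallmatrix}1&0\\-1&1\end{bsmallmatrix} = T_2^{-1}$ for $S = \begin{bsmallmatrix}0&1\\-1&0\end{bsmallmatrix}$. So $T_2$ is a \emph{negative} Dehn twist, and the flow $z \mapsto z/(tz+1)$ traverses $H_0$ in the \emph{clockwise} sense (at $t=\tfrac12$ one is at $2i$, so the arc from $p_-$ to $p_+$ goes over the top of the circle). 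The hypothesis only makes anti-clockwise horocycles geodesic, so this arc is not a $d$-geodesic and you cannot conclude $d(p_-,p_+)=c$ along $H_0$. Worse, the anti-clockwise arc on $H_0$ from $p_-$ to $p_+$ would have to pass through the ideal point $0\in\partial\mathbb{H}^2$, so no such arc exists in $\mathbb{H}^2$ at all. The same misidentification contaminates your $c=0$ step: $T_{[1]}$ and $T_2$ are both conjugates of $T_1^{-1}$, so your $g$ is not a product of positive Dehn twists, and you cannot deduce $d(x,gx)=0$ from $c=0$.

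The paper's proof sidesteps this asymmetry by never equating two geodesic lengths directly. Instead it takes four anti-clockwise horocycle arcs (at $\infty$, $0$, $1$, $-1$), each obtained as the $\mathrm{PSL}_2(\mathbb{Z})$-image of a standard segment $\begin{bsmallmatrix}1&t\\0&1\end{bsmallmatrix}\cdot z$, and applies the triangle inequality twice to get $a+b \leq (c_1+c_2)+(d_1+d_2)$ with $a=1$, $b=2$, $c_1+c_2<1$, $d_1+d_2<1$ --- an immediate contradiction that never needs to match forward and backward distances. Your idea of forcing $c=0$ via two horocycles through a common pair of points is appealing, but because the two anti-clockwise orientations point in opposite directions between $p_-$ and $p_+$, one only ever compares $d(p_-,p_+)$ with $d(p_+,p_-)$, which yields nothing for an asymmetric metric.
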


\begin{proof}
In $\T(S_{1,0})=\mathbb{H}^2$, assume that there is an asymmetric metric $d$ with respect to which all earthquake paths, that is, anti-clockwise horocycles, are geodesics. Then, by \cref{lem:dtstrange}, for any $z\in\mathbb{H}^2$ the horocyclic segment
\[
\begin{bsmallmatrix}
1&t\\
0&1\end{bsmallmatrix}
\cdot z \quad (t\in[0,1]) \]
is a geodesic (for $d$) of the same length --- we normalise this length to be $1$. The oriented segments depicted in \cref{fig:45}, with lengths $a,b, c_1, c_2, d_1$ and $d_2$ are all anti-clockwise horocycles,  hence they are geodesics by assumption. By the triangle inequality, the lengths of the depicted oriented segments (see \cref{fig:45}) satisfy $a\leq c_1+d_2$ and $b\leq c_2+d_1$, hence $a+b\leq (c_1+c_2)+(d_1+d_2)$. We note the following.
\begin{itemize}
\item
The segment of length $a$ is the  image of a  path of length $1$ under $\begin{bsmallmatrix}0&1\\-1&0\end{bsmallmatrix}$:
\[
\begin{bsmallmatrix}0&1\\-1&0\end{bsmallmatrix}
\begin{bsmallmatrix}
1&t\\
0&1\end{bsmallmatrix}
\begin{bsmallmatrix}0&1\\-1&0\end{bsmallmatrix}^{-1}
\cdot (1+i) \quad \text{for }t\in[0,1],
\]
and hence $a=1$.

\item
The segment of length $b$ is the concatenation of two paths of length $1$:
\[
\begin{bsmallmatrix}
1&t\\
0&1\end{bsmallmatrix}
\cdot (-1+i)\quad \text{for }t\in[0,1]\quad
\text{ and }\quad
\begin{bsmallmatrix}
1&t\\
0&1\end{bsmallmatrix}
\cdot i
\quad \text{for }t\in[0,1],
\]
and hence $b=2$.

\item
the segment of length $c_1+c_2$ is the image of a subsegment of a length $1$ path under $\begin{bsmallmatrix}1&0\\-1&1\end{bsmallmatrix}$:
\[
\begin{bsmallmatrix}1&0\\-1&1\end{bsmallmatrix}
\begin{bsmallmatrix}
1&t\\
0&1\end{bsmallmatrix}
\begin{bsmallmatrix}1&0\\-1&1\end{bsmallmatrix}^{-1}
\cdot (2-\sqrt{3})i \quad  \text{for }t\in[0,1],
\]
and hence $c_1+c_2<1$;

\item
the segment of length $d_1+d_2$ is the image of a subsegment of a length $1$ path under $\begin{bsmallmatrix}1&0\\1&1\end{bsmallmatrix}$:
\[
\begin{bsmallmatrix}1&0\\1&1\end{bsmallmatrix}
\begin{bsmallmatrix}
1&t\\
0&1\end{bsmallmatrix}
\begin{bsmallmatrix}1&0\\1&1\end{bsmallmatrix}^{-1}
\cdot (2+\sqrt{3})i \quad \text{for }t\in[0,1],
\]
and hence $d_1+d_2<1$.
\end{itemize}
This is impossible, for it would cause $3=a+b\leq c_1+c_2+d_1+d_2<2$.
\begin{center}
\begin{figure}[ht]
\includegraphics[scale=0.3]{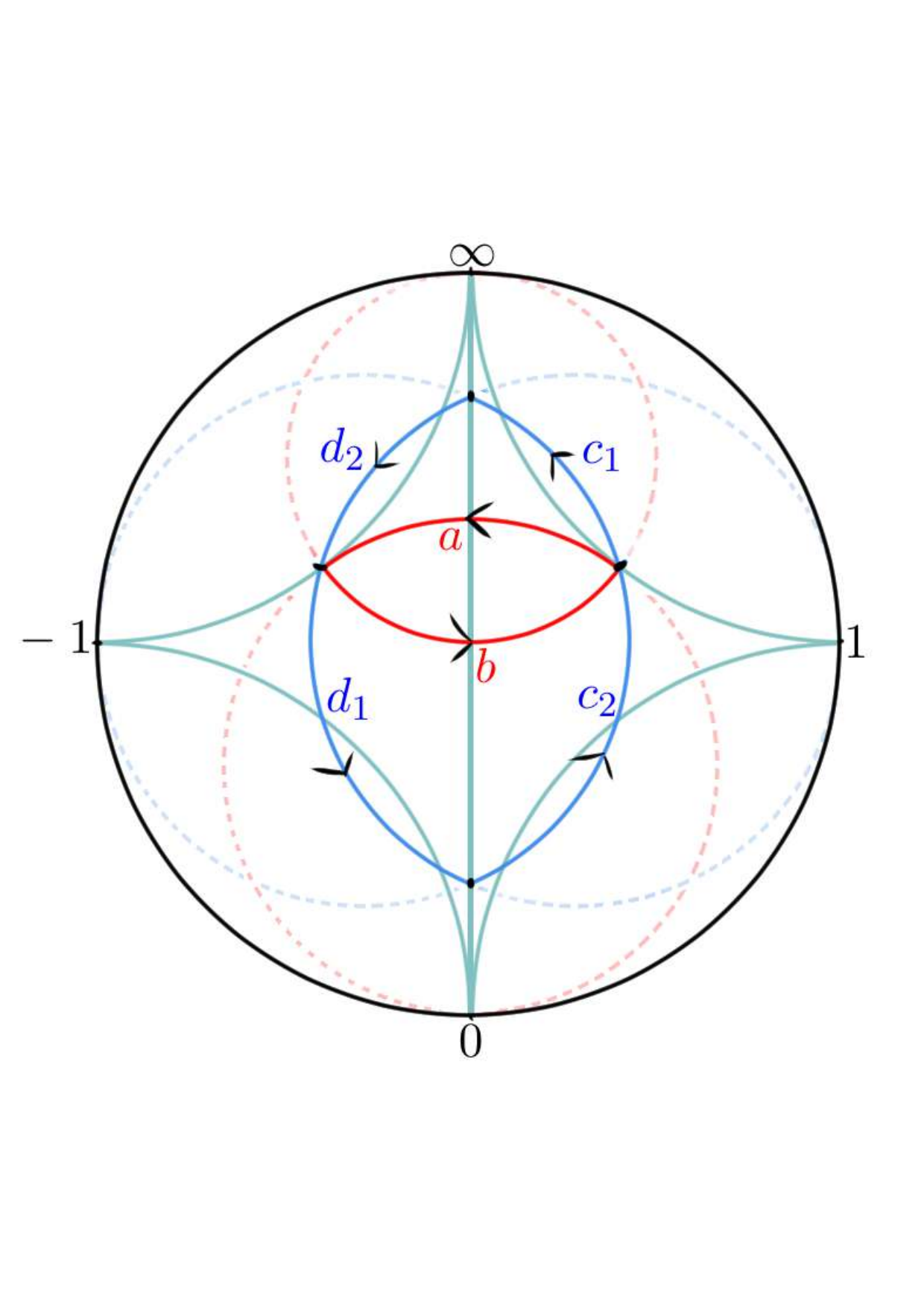}
\caption{Violation of triangle inequality.}
\label{fig:45}
\end{figure}
\end{center}
\end{proof}

\begin{remark} 
We finish off this subsection by mentioning the 1932 article  \cite{Busemann1932} by Herbert Busemann (with an English translation in \cite{Busemann1932-translation}). The setting is that of a metric space to which one adds some very general axioms such as existence of midpoints for any pair of points and extension of geodesics. Busemann calls these spaces ``line spaces". This is the setting in which he placed most of his later works (with different terminology).  In the article mentioned, Busemann studies spaces satisfying an additional axiom, namely  he considers \emph{geometries where circles of infinite radius are the shortest lines} (this is the title of the paper). The expression ``circle of infinite radius" designates a horosphere, that is, the limit of a sequence of geometric circles in a plane passing through a
given point with centres going to infinity along a ray starting at this point.  Busemann shows that the metrics satisfying the property in the title are generalisations of the Euclidean metric, namely, they are associated with a norm on a vector space. Now the relation between earthquake paths and horocycles which we mentioned in \cref{s:horo} makes a connection between Busemann's intuition and the study of a metric on Teichm\"uller space in which earthquakes are geodesics. Such a metric, if it exists would have Euclidean behaviour over the whole Teichmüller space.
This, together with  \cref{prop:torus} leads us to the following 
\end{remark}
\begin{conjecture}
There is no asymmetric metric on Teichmüller space which is mapping class group invariant and for which left earthquakes are geodesics.  
\end{conjecture}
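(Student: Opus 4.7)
The plan is to generalise \cref{prop:torus} to all surfaces of negative Euler characteristic by combining mapping class group invariance with the assumption that all left earthquakes are $d$-geodesic to extract severe rigidity, and then producing a concrete configuration of earthquake segments whose $d$-lengths violate the triangle inequality.

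My first step would be to upgrade \cref{lem:dtstrange} from Dehn twists around a fixed simple closed curve to arbitrary measured laminations. Combining the density of weighted simple closed curves in $\ml(S)$ with an iteration of the argument of \cref{lem:dtstrange}, and using that the earthquake paths $t \mapsto E_{t\lambda}(x)$ are $d$-geodesic (hence the $d$-length is additive in $t$), I expect to produce a basepoint-independent function $D \colon \ml(S) \to \mathbb{R}_{\geq 0}$ defined by $D(\lambda) := d(x, E_\lambda(x))$ which satisfies the homogeneity $D(t\lambda) = tD(\lambda)$ for $t \geq 0$ and the full mapping class group invariance $D(g \cdot \lambda) = D(\lambda)$ for $g \in \mathrm{Mod}(S)$. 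This reduces the problem to showing that no such function $D$ is consistent with the ambient triangle inequality $d(x,z) \leq d(x,y) + d(y,z)$ evaluated on suitable concatenations of earthquake segments.

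The decisive step is to exhibit, inside $\T(S)$, a small family of earthquake segments whose $d$-lengths, pinned down by $D$, cannot simultaneously satisfy all triangle inequalities. \cref{prop:torus} achieved this in $\T(S_{1,0}) = \htwo$ by arranging four $\mathrm{PSL}_2(\integers)$-related horocyclic arcs. I would attempt to port the argument to arbitrary $S$ by passing to a one-holed torus subsurface $\Sigma \subset S$: Dehn twists along a pair of curves on $\Sigma$ meeting once generate a copy of $\mathrm{SL}_2(\integers) \subset \mathrm{Mod}(S)$, and the associated earthquake paths inside $\T(S)$ furnish direct analogues of the horocycles used in \cref{prop:torus}. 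Combining the $D$-rigidity from the first step with an $\mathrm{SL}_2(\integers)$-analysis of this planar-looking configuration of Dehn twists should reproduce the counting argument $3 = a + b \leq c_1 + c_2 + d_1 + d_2 < 2$.

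The hard part will be ensuring that this planar configuration faithfully measures distances in the ambient $\T(S)$, rather than in some genuine two-dimensional sub-slice. The obstruction is that earthquakes along intersecting curves in $\Sigma$ do not commute, so the two endpoints reached by different orders of earthquaking generically differ, and the $d$-distances between the vertices of the candidate triangle may pick up contributions from higher-dimensional earthquake directions outside $\ml(\Sigma)$. Overcoming this is the principal obstacle: I would attempt either a restriction result showing that $d$ descends functorially to a metric on $\T(\Sigma)$ which again satisfies the hypotheses of the conjecture (immediately reducing to \cref{prop:torus} or its $S_{1,1}$ analogue), or, failing that, a direct infinitesimal argument in trace coordinates using \cref{lem:taylorseries} to control the size of the cross terms and push through a strict triangle inequality violation.
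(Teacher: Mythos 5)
First, a point of order: the statement you are proving is stated in the paper as an open conjecture — the paper only proves the genus-one case (\cref{prop:torus}) and the Dehn-twist rigidity statement (\cref{lem:dtstrange}) — so there is no proof in the paper to compare against, and your proposal must stand on its own. It does not, because its first step fails. \cref{lem:dtstrange} is a statement about iterated Dehn twists, i.e.\ about \emph{mapping class} images of a point: the invariance $d(E_{m\gamma}(y),E_{m\gamma}(x))=d(y,x)$ used in its proof is exactly the mapping class group invariance of $d$, and is only available because the $m$-fold twist is a mapping class. As an earthquake, however, the $m$-fold Dehn twist of $x$ is $E_{m\ell_\gamma(x)\gamma}(x)$, whose transverse measure depends on the basepoint; so the lemma does \emph{not} say that $d(x,E_\lambda(x))=d(y,E_\lambda(y))$ for a fixed measured lamination $\lambda$, even when $\lambda$ is a multiple of a simple closed curve. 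For a general (in particular irrational) $\lambda$ there is no mapping class translating along the earthquake flow line at all, so the argument of \cref{lem:dtstrange} cannot be iterated, and the passage ``by density of weighted simple closed curves'' would in addition require continuity of $(x,\lambda)\mapsto d(x,E_\lambda(x))$ — but the conjecture assumes nothing about continuity of $d$ (an arbitrary mapping class group invariant asymmetric metric need not even induce the usual topology). Geodesy of the earthquake lines only gives additivity $d(x,E_{(s+t)\lambda}(x))=d(x,E_{s\lambda}(x))+d\bigl(E_{s\lambda}(x),E_{t\lambda}(E_{s\lambda}(x))\bigr)$; without basepoint-independence this does not yield the homogeneity $D(t\lambda)=tD(\lambda)$, so the rigidity function $D$ on $\ml(S)$ that your whole strategy rests on is not produced.

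The second gap is the transplant of \cref{prop:torus}. That proof is not merely ``planar'': it exploits exact endpoint coincidences in $\mathbb{H}^2=\T(S_{1,0})$ — each of the six arcs of lengths $a,b,c_1,c_2,d_1,d_2$ is precisely an image, a subsegment of an image, or a concatenation of images of the standard unit horocyclic segment under explicit elements of $\mathrm{PSL}_2(\mathbb{Z})$ — a feature of the homogeneity of the modular torus picture. For a one-holed torus subsurface $\Sigma\subset S$, earthquake arcs along $\alpha,\beta$ with $i(\alpha,\beta)=1$ live in the higher-dimensional $\T(S)$, the non-commutativity you flag means the corresponding endpoints simply do not match up, and your two proposed remedies are both unsupported: there is no natural projection $\T(S)\to\T(\Sigma)$ intertwining earthquakes under which an arbitrary invariant metric ``descends'' to a metric on $\T(\Sigma)$ still satisfying the hypotheses (and even the $S_{1,1}$ analogue of \cref{prop:torus} is not established in the paper), while the ``infinitesimal argument in trace coordinates'' via \cref{lem:taylorseries} presupposes a Finsler/differentiable structure for $d$ that the hypotheses do not provide. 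So what you have is a reasonable research plan whose key reduction is asserted rather than proved, and whose first step, as stated, is false.
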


\subsection{General case}

\begin{proof}[Proof of \cref{thm:longquake:nongeo} for general earthquakes]
	Let $\lambda$ be an arbitrary measured lamination and $x\in\T(S)$ an arbitrary point in  Teichm\"uller space.  Consider the earthquake ray $E_{t\lambda}(x)\quad(t\geq0)$.  By \cref{Kerckhoff cosine}, for any measured lamination $\alpha$, we have
\begin{align*}
\ell_\alpha(E_{t\lambda}(x))\leq \ell_\alpha(x)+t\cdot i(\alpha,\lambda). 
\end{align*}
By \cref{eq:thurston:ratio}, the Thurston distance $\dth(x,E_{t\lambda}(x))$ satisfies 
\begin{align*}
\dth(x,E_{t\lambda}(x))
&= \log \sup_{\alpha\in \ml(S)} \frac{\ell_\alpha(E_{t\lambda}(x))}{\ell_\alpha(x)}\\
&\leq\log \sup_{\alpha\in \ml(S)} \frac{\ell_\alpha(x)+t\cdot i(\alpha,\lambda)}{\ell_\alpha(x)}\\
&=\log \sup_{\alpha\in \ml} \left(1+t\frac{ i(\alpha,\lambda)}{\ell_\alpha(x)}  \right)\\
&=\log \left(1+ t\cdot \max_{\mu \in \ml(S) (\ell_\alpha(x)=1) } i(\mu,\lambda)\right).
\end{align*}
Combined with \cref{cor:e<wp}, this implies that
\begin{align}\label{eq:dist:earthquakepath}
		d_e(x,E_{t\lambda}(x))
		\leq C_2 \dth(x,E_{t\lambda}(x))
		\leq C_2 \log  \left(1+ t\cdot \max_{\mu\in\ml(S) (\ell_\alpha(x)=1)} i(\mu,\lambda)\right).
\end{align}
We remark that $\max_{\mu \in \ml(S) (\ell_\alpha(x)=1)}i(\mu,\lambda)$ is a constant depending only on $x$.
On the other hand, the earthquake metric length of the earthquake path $\{E_{s\lambda}(x)\}_{s\in[0,t]}$ is $t\ell_\alpha(x)$, which, for large $t$, is bigger than $d_e(x,E_{t\lambda}(x))$, is bounded above by \cref{eq:dist:earthquakepath}. Thus, the earthquake path $\{E_{s\lambda}(x)\}_{s\in[0,t]}$ fails to be an earthquake metric geodesic when $t$ is big enough.
\end{proof}

\begin{remark}
It is possible to push the proof strategy for the Fenchel--Nielsen case to earthquakes with respect to general laminations.
Instead of pinching closed geodesics, we consider harmonic stretch lines defined by Pan--Wolf  \cite{PanWolf2022}.
As is shown in \cite[Proposition 13.10]{PanWolf2022}, harmonic stretch lines and earthquake paths along the same measured lamination \lq\lq commute'' with each other, and we can also control earthquake distances in terms of Thurston distances by \cref{eq:e:th}. 
Therefore we can repeat the same argument as \cref{DT}.
\end{remark}

\begin{remark}
\cref{thm:longquake:nongeo} (at least) for the case of closed surfaces can also be obtained by combining results in  \cite{BST2017} and our inequality in \cref{cor:e<wp}.
For $x, y \in \T(S)$, let $\lambda$ be the measured lamination such that $E_\lambda(x)=y$.  By Theorem C and Theorem F in \cite{BST2017}, it follows that
\begin{equation*}
\exp\left(\tfrac{a}{|\chi(S)|}\dwp(x,y)-b|\chi(S)|\right)
\leq 
\tfrac{1}{4}\ell_{\lambda}(x)
+
\tfrac{\pi^2}{2}|\chi(S)|.
\end{equation*}
Combined with \cref{cor:e<wp}, this implies that if $d_e(x,y)$ is sufficiently large, then
\begin{eqnarray*}
	\ell_\lambda(x)&\geq& \tfrac{1}{4}\ell_{\lambda}(x)+\tfrac{\pi^2}{2}|\chi(S)|\\
	&\geq & 	\exp\left(\tfrac{a}{|\chi(S)|}\dwp(x,y)-b|\chi(S)|\right)\\
	&\geq & \exp\left(\tfrac{a}{|C_1\chi(S)|}d_e(x,y)-b|\chi(S)|\right)
	\\
	&\geq& \exp (C d_e(x,y))
\end{eqnarray*}
for some topological constant $C$. 
Hence, $d_e(x,y)\leq C^{-1}\log \ell_\lambda(x)$.
\end{remark}

\subsection{Regularity}

We end this section with a \emph{positive} result about the geodesics with respect to the earthquake metric for some special surfaces.

\begin{proposition}
Geodesics of the earthquake metric on $\T(S_{1,1})$ and $\T(S_{0,4})$ are $C^1$.
\end{proposition}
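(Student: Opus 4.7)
The central ingredient is Corollary~\ref{cor:strictconvexity}: the earthquake Finsler norm has strictly convex unit balls on every tangent space of $\T(S_{1,1})$ and $\T(S_{0,4})$ (neither surface admits two distinct projective measured laminations with zero intersection number, so Theorem~\ref{segment} rules out flat faces in the indicatrix). This strict convexity is equivalent to the strict triangle inequality $\|v + w\|_e < \|v\|_e + \|w\|_e$ unless $v$ and $w$ are non-negative multiples of one another, and when combined with the local Lipschitz regularity of $\|\cdot\|_e$ established in Theorem~\ref{thm:lipregular}, provides precisely the data needed to force $C^1$ regularity of geodesics.

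The plan is to show that at every interior time $t_0$ of an arc-length parametrized $d_e$-geodesic $\gamma:[a,b]\to\T(S)$, the two-sided tangent vector $\gamma'(t_0)$ exists and has unit earthquake norm. I would work in a trace coordinate chart $(U, \trace^\Gamma)$ around $\gamma(t_0)$ and set $\pi := \trace^\Gamma \circ \gamma$; by Theorem~\ref{thm:lipregular}, $d_e$ is locally bi-Lipschitz equivalent to the Euclidean metric in these coordinates, so $\pi$ is Lipschitz. For any sequence $\epsilon_n \to 0^+$, after extracting subsequences, the difference quotients
\[
u_n^+ := \frac{\pi(t_0+\epsilon_n) - \pi(t_0)}{\epsilon_n},\qquad
u_n^- := \frac{\pi(t_0) - \pi(t_0-\epsilon_n)}{\epsilon_n}
\]
converge to vectors $v_+, v_-$ in $T_{\gamma(t_0)}\T(S)$. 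The local Finsler asymptotic
\[
d_e\bigl(\gamma(t_0),\, \gamma(t_0\pm\epsilon_n)\bigr) = \|u_n^\pm\|_e \, \epsilon_n + o(\epsilon_n),
\]
whose upper bound comes from Theorem~\ref{thm:magnitude} applied to a Euclidean straight-line path and whose lower bound uses the subadditivity of $\|\cdot\|_e$ together with Theorem~\ref{thm:lipregular}, combined with the geodesic identity $d_e(\gamma(t_0), \gamma(t_0\pm\epsilon_n)) = \epsilon_n$, forces $\|v_\pm\|_e = 1$.

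The decisive step uses strict convexity to identify $v_+$ with $v_-$. Since $\gamma$ is a geodesic, $d_e(\gamma(t_0-\epsilon_n), \gamma(t_0+\epsilon_n)) = 2\epsilon_n$; on the other hand, applying the same chord-asymptotic to the Euclidean chord from $\pi(t_0-\epsilon_n)$ to $\pi(t_0+\epsilon_n)$, whose displacement is $\epsilon_n(v_+ + v_-) + o(\epsilon_n)$, yields
\[
d_e\bigl(\gamma(t_0-\epsilon_n),\, \gamma(t_0+\epsilon_n)\bigr) \leq \|v_+ + v_-\|_e \, \epsilon_n + o(\epsilon_n).
\]
Hence $\|v_+ + v_-\|_e \geq 2$, and strict convexity with $\|v_\pm\|_e = 1$ forces $v_+ = v_-$. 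Since this holds for every pair of subsequential limits, the two-sided derivative $\gamma'(t_0)$ exists uniquely; the continuity of $t \mapsto \gamma'(t)$ then follows from a further application of the same strict-convexity-plus-shortcut principle, now applied to triples of nearby points $\gamma(t_0-\epsilon)$, $\gamma(t_n)$, $\gamma(t_n+\epsilon)$ with $t_n \to t_0^+$.

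The main technical obstacle is justifying the local Finsler asymptotic $d_e(\pi(s),\, \pi(s)+\epsilon u) = \|u\|_e\,\epsilon + o(\epsilon)$ under the assumption that $\|\cdot\|_e$ is only locally Lipschitz. The upper bound is immediate from integrating the norm along a straight line in trace coordinates. The lower bound requires subadditivity of $\|\cdot\|_{e,\pi(s)}$ to bound the Finsler length of any competitor path below by $\|u\|_e\,\epsilon$, modulo $O(\epsilon^2)$ corrections coming from base-point variation of the norm, which is controlled by Theorem~\ref{thm:lipregular}.
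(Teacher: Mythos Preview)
The paper's proof is a one-line citation: it invokes Theorem~d of Buro's thesis \cite{burothesis}, which asserts that on a two-dimensional manifold any Finsler metric with strictly convex indicatrix and locally Lipschitz norm has $C^1$ geodesics; strict convexity is supplied by \cref{cor:strictconvexity} and local Lipschitzness by \cref{thm:lipregular}. You have instead attempted to prove Buro's theorem directly in this setting. Your outline captures the correct mechanism (strict convexity forces equality in the triangle inequality to pin down the tangent direction), so the approach is sound in spirit, but there are genuine gaps in the execution.

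First, the asymmetry is mishandled: you write $d_e(\gamma(t_0),\gamma(t_0\pm\epsilon_n))=\epsilon_n$, but for an asymmetric geodesic only $d_e(\gamma(s),\gamma(t))=t-s$ for $s\le t$ is available. The backward identity you need is $d_e(\gamma(t_0-\epsilon_n),\gamma(t_0))=\epsilon_n$, and the Finsler asymptotic must be applied in that direction. This is easily repaired, but as written it is incorrect. Second, and more substantively, the uniqueness of the subsequential limit is not established. Your argument pairs a forward limit $v_+$ with a backward limit $v_-$ along the \emph{same} sequence $\epsilon_n$ and shows $v_+=v_-$; this does not yet rule out two distinct forward limits $v_+^{(1)}\neq v_+^{(2)}$ along different sequences, since each could pair with a different backward limit. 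The fix is to take an arbitrary forward sequence and an arbitrary backward sequence, reindex so their ratios tend to~$1$, and run the same shortcut argument to conclude that \emph{any} forward limit equals \emph{any} backward limit, whence all limits coincide. Third, the continuity of $t\mapsto\gamma'(t)$ is only gestured at; completing it requires a separate (though similar) argument.

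In short, what you have written is a reasonable sketch of the proof of Buro's theorem, but the paper's route of citing that theorem directly is both shorter and avoids the technical pitfalls above.
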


\begin{proof}
Theorem~d of Guillaume Buro's doctoral dissertation \cite{burothesis} asserts that on a two-dimensional differentiable manifold with a Finsler metric which is strictly convex and (locally) Lipschitz, its metric geodesics are $C^1$. We previously showed the strict convexity in \cref{cor:strictconvexity} and the (local) Lipschitzness in \cref{thm:lipregular}.
\end{proof}

\newpage
\section{Distance to and from the boundary}
\label{sec:asyp:dist}

The aim of this section is to show that the earthquake metric is incomplete (see \cref{defn:FDcomplete} for a definition of completeness in the setting of asymmetric metrics), and to quantitatively study this incompleteness. Mumford's compactness theorem \cite{mumford} and \cref{prop:topology:finsler} tells us that incompleteness can only happen as the systoles of a family of surfaces geometrically converge  to a cusp, and we use the Weil--Petersson completion as a convenient language for discussing sequences which penetrate into arbitrarily thin parts of moduli space.

\begin{theorem}\label{thm:dist:boundary:wp}
Let $\overline{\T(S)}^\mathrm{WP}$ denote the completion of Teichm\"uller space with respect to  the Weil--Petersson metric. Then there exists $C>1$ such that  for any $x\in\T(S)$,
\begin{equation*}
	2\ell_{\mathrm{sys}}(x)\mathrm{Log} \frac{1}{\ell_{\mathrm{sys}}(x)}\leq d_e(x,\partial \overline{\T(S)}^{\mathrm{WP}})\leq 2C \ell_{\mathrm{sys}}(x)\mathrm{Log} \frac{1}{\ell_{\mathrm{sys}}(x)}. 
\end{equation*}
Moreover,  as $\ell_{\mathrm{sys}}(x)\to 0$ we have
\begin{equation*}
\frac{d_e(x,\partial \overline{\T(S)}^{\mathrm{WP}})}{2\ell_{\mathrm{sys}}(x)~\mathrm{Log} \frac{1}{\ell_{\mathrm{sys}}(x)}}\to 1.
\end{equation*}
The same hold for $d_e(\partial \overline{\T(S)}^{\mathrm{WP}},x)$.
\end{theorem}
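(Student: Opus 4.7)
The plan is to bracket $d_e(x, \partial \overline{\T(S)}^{\mathrm{WP}})$ between matching upper and lower bounds, both of which boil down to Kerckhoff's cosine formula (\cref{Kerckhoff cosine}) combined with Torkaman's asymptotic estimate \eqref{eq:intersection:length:asym}:
\[
\sup_{\alpha,\beta}\frac{i(\alpha,\beta)}{\ell_\alpha(x)\ell_\beta(x)}\sim \frac{1}{2\ell_{\mathrm{sys}}(x)\log(1/\ell_{\mathrm{sys}}(x))}.
\]
The magnitude minimisation formulation (\cref{thm:magnitude}) lets me work with magnitudes of piecewise earthquake paths instead of Finsler lengths, and the identification of the earthquake completion with the Weil--Petersson completion (\cref{thm:completion}, proved later) means that approaching the boundary amounts to pinching some simple closed curve.

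For the upper bound, let $\gamma$ be a systole of $x$ with $\ell := \ell_{\mathrm{sys}}(x)$. I will construct a piecewise earthquake path whose magnitude is close to $2\ell\,\mathrm{Log}(1/\ell)$ and which degenerates $\gamma$. At the initial point I choose a simple closed curve $\beta$ intersecting $\gamma$ which nearly realises Torkaman's supremum, and orient the shearing so that the sign of $\mathrm{Cos}_x(\gamma,\beta)$ is negative and $|\mathrm{Cos}_x(\gamma,\beta)|$ is essentially $i(\gamma,\beta)$. By \cref{Kerckhoff cosine}, the unit-speed earthquake along $\beta/\ell_\beta(x)$ decreases $\ell_\gamma$ at rate approximately $i(\gamma,\beta)/\ell_\beta(x) \sim \ell_\gamma(x)/(2\log(1/\ell))$. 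I subdivide the interval $[0,\ell]$ into short pieces, and at each piece replace the driving lamination by a new near-optimal $\beta$ chosen relative to the updated (now shorter) systole. Summing the magnitudes gives (after passing to the limit of finer subdivisions)
\[
\int_0^{\ell} 2\mathrm{Log}(1/s)\,ds = 2\ell\,\mathrm{Log}(1/\ell) + O(\ell),
\]
i.e.\ the right asymptotic constant.

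For the lower bound I use magnitude minimisation. Let $p:[0,L]\to\T(S)$ be any piecewise earthquake path from $x$ to a sequence tending to $\partial \overline{\T(S)}^{\mathrm{WP}}$. Since the completion locus corresponds to nodal surfaces, some simple closed curve $\alpha$ must satisfy $\ell_\alpha(p(t))\to 0$. On each unit-speed earthquake sub-segment with infinitesimal lamination $\lambda(t)$ (so $\ell_{\lambda(t)}(p(t))=1$), Kerckhoff's cosine formula gives
\[
\left|\tfrac{d}{dt}\ell_\alpha(p(t))\right|\leq i(\alpha,\lambda(t)).
\]
Torkaman's estimate applied to $(\alpha,\lambda(t))$ at the current point $p(t)$ yields
\[
i(\alpha,\lambda(t))\leq \frac{\ell_\alpha(p(t))\cdot 1}{2\ell_{\mathrm{sys}}(p(t))\mathrm{Log}(1/\ell_{\mathrm{sys}}(p(t)))}\leq \frac{1}{2\mathrm{Log}(1/\ell_\alpha(p(t)))},
\]
where the second inequality uses $\ell_\alpha(p(t))\geq \ell_{\mathrm{sys}}(p(t))$ and the monotonicity of $s\mapsto s/\mathrm{Log}(1/s)$ near zero. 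Separating variables on the resulting differential inequality for $\ell_\alpha$ and integrating from $\ell_{\mathrm{sys}}(x)\leq \ell_\alpha(x)$ down to $0$ gives total magnitude $\geq 2\ell_{\mathrm{sys}}(x)\mathrm{Log}(1/\ell_{\mathrm{sys}}(x)) + O(\ell_{\mathrm{sys}}(x))$, and by \cref{thm:magnitude} this is a lower bound for $d_e(x,\partial\overline{\T(S)}^{\mathrm{WP}})$. The statement for $d_e(\partial \overline{\T(S)}^{\mathrm{WP}},x)$ follows by running the same argument with the right earthquake metric $d_e^\#$, since Torkaman's estimate and the cosine formula are insensitive to the left/right choice.

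The main obstacle will be tracking the \emph{constants} so that the ratio tends to $1$ rather than merely a bounded interval. Two technical points require care: on the upper bound side, the near-optimal curve $\beta$ realising Torkaman's supremum depends on $x$, so I must argue that the supremum can be approximately maintained as the systole shrinks along the constructed path, and that switching lamination at the subdivision points introduces negligible extra magnitude (here I can appeal to the local Lipschitz regularity in \cref{thm:lipregular}). On the lower bound side, one must ensure the choice of pinching curve $\alpha$ ultimately coincides with the systole often enough along the path for the inequality $\ell_\alpha(p(t))\geq \ell_{\mathrm{sys}}(p(t))$ to be saturated in the asymptotic regime; this is handled by noting that once $\ell_\alpha$ is near $0$ it \emph{is} the systole, so the asymptotic contribution to the integral comes from the regime where the two coincide.
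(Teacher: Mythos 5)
Your overall architecture (a differential inequality for $\ell_\alpha$ from the cosine formula for the lower bound, and an explicit pinching construction for the upper bound) matches the paper's, but both halves have real problems. In the lower bound, your displayed chain contains a reversed inequality: from Torkaman you have $i(\alpha,\lambda(t))\lesssim \ell_\alpha(p(t))\big/\bigl(2\ell_{\mathrm{sys}}(p(t))\,\mathrm{Log}(1/\ell_{\mathrm{sys}}(p(t)))\bigr)$, and you claim this is $\leq 1\big/\bigl(2\,\mathrm{Log}(1/\ell_\alpha(p(t)))\bigr)$ using $\ell_\alpha\geq\ell_{\mathrm{sys}}$; but that inequality is equivalent to $\ell_\alpha\,\mathrm{Log}(1/\ell_\alpha)\leq \ell_{\mathrm{sys}}\,\mathrm{Log}(1/\ell_{\mathrm{sys}})$, and since $s\mapsto s\,\mathrm{Log}(1/s)$ is \emph{increasing} for small $s$ while $\ell_\alpha\geq\ell_{\mathrm{sys}}$, it goes the other way. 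Your patch (``once $\ell_\alpha$ is near $0$ it is the systole'') does not hold in general: another pinching curve can remain strictly shorter than $\alpha$ all the way down. Moreover Torkaman's estimate is only asymptotic as $\ell_{\mathrm{sys}}\to0$, so it cannot yield the first display of the theorem, which is claimed for every $x$ with the exact constant $2$. The paper's fix is to drop Torkaman here entirely and use the collar lemma: a unit-length lamination crossing $\alpha$ satisfies $i(\alpha,\lambda)\leq 1/(2w(\ell_\alpha))$, a bound depending only on $\ell_\alpha$ and not on the systole; integrating this is exactly \cref{lem:lowerbound:emetric} and the argument of \cref{sec:proof:asym:dist:wp}. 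With that substitution your lower bound becomes correct (and works for all piecewise $C^1$ paths, not only earthquake paths).

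In the upper bound, the construction rests on an unproved assertion: at every stage you need a curve $\beta$ through the systole $\gamma$ with \emph{both} $\ell_\beta\approx 2\,i(\gamma,\beta)\,\mathrm{Log}(1/\ell_\gamma)$ and $\mathrm{Cos}(\gamma,\beta)\approx -\,i(\gamma,\beta)$ (all intersection angles near $\pi$), maintained as the metric evolves. Torkaman's supremum over pairs gives neither the angle control nor the fact that near-extremal curves can be taken through $\gamma$, and the angle condition is actively destroyed by the earthquake itself (Kerckhoff's monotonicity of angles under twisting), which is why one must repeatedly switch to Dehn-twisted copies of $\beta$. Supplying these curves and quantifying them is the technical core of the paper's proof of \cref{prop:asymp:dist:upper}: the hyperbolic-trigonometric estimates \cref{lem:angles} and \cref{lem:length:angle}, the switching scheme governed by \cite[Proposition 3.5]{Ker}, and a separate accumulation-point argument showing that $\ell_\gamma$ really tends to $0$ rather than stabilising while the twist parameter diverges. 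As written, your proposal assumes the conclusions of those lemmas; also, the appeal to \cref{thm:lipregular} is beside the point, since concatenating earthquake segments costs no extra magnitude at the switching times.
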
 

\begin{remark}
	As we shall see in the proof, the conclusion also holds if we replace Teichm\"uller space by the moduli space.
\end{remark}

\begin{remark}[width function]
Many of the bounds  in this section are obtained by using the fact that geodesic segments must spend some definite time every time they pass through the collar neighbourhood of some simple closed geodesic. We use the following notation:
\begin{itemize}
\item
let $w(t)=\mathrm{arcsinh}(\mathrm{cosech}\frac{t}{2})$ be the width of a ``standard'' collar neighbourhood of a simple closed curve with length $t>0$, and

\item
we  use $\mathrm{Log}(\frac{1}{t}):=\max\{1,\log(\frac{1}{t})\}$ as an approximation for $w(t)$.
\end{itemize}
We note that both functions are non-increasing in $t$.
\end{remark}

\subsection{Lower bound for the earthquake metric}

\begin{lemma}
\label{lem:lowerbound:emetric}
For any $x,y\in\T(S)$ and  any simple closed curve $\alpha$,  we have
\begin{align*}
d_e(x,y)
\geq 
2\int^{\max\{\ell_\alpha(x),\ell_\alpha(y)\}}_{\min \{\ell_\alpha(x),\ell_\alpha(y)\}} w(\ell)\,\mathrm{d}\ell.
\end{align*}
\end{lemma}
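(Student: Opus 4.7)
The plan is to bound pointwise the rate at which $\ell_\alpha$ can vary along a $C^1$ path in Teichm\"uller space in terms of the earthquake norm of the velocity, and then integrate. The two inputs are Kerckhoff's cosine formula (\cref{Kerckhoff cosine}) and the collar lemma. Given a tangent vector $v = \v_x(\lambda)$, Kerckhoff's formula tells us that $v(\ell_\alpha) = \Cos_x(\lambda,\alpha)$, whose absolute value is at most $i(\lambda,\alpha)$. On the other hand, the standard collar lemma guarantees that $\alpha$ (as a simple closed geodesic of length $\ell_\alpha(x)$) carries an embedded collar of width $w(\ell_\alpha(x))$, so any leaf of $\lambda$ crossing $\alpha$ picks up at least $2w(\ell_\alpha(x))$ in length, yielding
\[
i(\lambda,\alpha) \leq \frac{\ell_\lambda(x)}{2w(\ell_\alpha(x))} = \frac{\|v\|_e}{2w(\ell_\alpha(x))}.
\]
Thus, for every $v\in T_x\T(S)$,
\[
|v(\ell_\alpha)| \leq \frac{\|v\|_e}{2w(\ell_\alpha(x))}.
\]

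Now let $p:[0,L]\to \T(S)$ be an arbitrary piecewise $C^1$ path with $p(0)=x$ and $p(L)=y$, and set $f(t):=\ell_\alpha(p(t))$. Applying the pointwise inequality above at each $t$ to $v = p'(t)$ (where defined) gives $|f'(t)|\leq \|p'(t)\|_e/(2w(f(t)))$, or equivalently,
\[
\|p'(t)\|_e \geq 2w(f(t))\,|f'(t)|.
\]
Integrating over $[0,L]$ and changing variables yields
\[
\int_0^L \|p'(t)\|_e\,\mathrm{d}t \;\geq\; \int_0^L 2w(f(t))\,|f'(t)|\,\mathrm{d}t \;\geq\; 2\left|\int_{f(0)}^{f(L)} w(\ell)\,\mathrm{d}\ell\right|
= 2\int^{\max\{\ell_\alpha(x),\ell_\alpha(y)\}}_{\min\{\ell_\alpha(x),\ell_\alpha(y)\}} w(\ell)\,\mathrm{d}\ell,
\]
where the last equality uses that $w>0$.

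Taking the infimum over all piecewise $C^1$ paths $p$ from $x$ to $y$ and recalling the definition of $d_e$ (\cref{defn:earthquakemetric}) then gives the desired inequality. The only nonroutine ingredient is the pointwise derivative estimate combining Kerckhoff's cosine formula with the collar lemma; once that is in hand, everything else is a direct length-of-path computation.
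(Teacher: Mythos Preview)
Your proof is correct and follows essentially the same approach as the paper: both combine Kerckhoff's cosine formula with the collar estimate $\ell_\lambda(x)\geq 2\,i(\alpha,\lambda)\,w(\ell_\alpha(x))$ to get a pointwise bound on $|\mathrm{d}\ell_\alpha(p'(t))|$ in terms of $\|p'(t)\|_e$, and then integrate along an arbitrary path. Your presentation is in fact cleaner than the paper's, which discretises into Riemann sums and carries an explicit $\epsilon$-error term before passing to the limit; by writing the estimate directly as $\|p'(t)\|_e\geq 2w(f(t))|f'(t)|$ and invoking the change of variables, you avoid that bookkeeping entirely.
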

 
\begin{proof}
Let $\sigma:[0,1]\to \T(S)$ be a $C^1$-path connecting $x$ to $y$. We denote its length with respect to $d_e$ by $L(\sigma)$. By \cref{prop:spatial},  there is a continuous family $\beta_t \ (t\in[0,1])$ of measured laminations such that the tangent vector $\dot \sigma(t)$ is equal to the earthquake vector $\v_{\sigma(t)}(\beta_t)$ for every $t \in [0,1]$. 
It follows from \cref{Kerckhoff cosine} that
\begin{align*}
\text{for any } t\in[0,1],\quad
\ell_\alpha(\sigma(t'))-\ell_\alpha(\sigma(t))=\Cos_{\sigma(t)}(\beta_t,\alpha) (t'-t) +o(t'-t)
\end{align*}
as $t'\to t$. In particular, for every $\epsilon>0$, there exists a neighbourhood of $t$ such that 
\begin{align*}
|\ell_\alpha(\sigma(t'))-\ell_\alpha(\sigma(t''))|\leq i(\alpha,\beta_t)|t'-t''|+\epsilon|t'-t''|
\end{align*}
for any $t'$ and $t''$ in this neighbourhood. 
We subdivide the interval $[0,1]$ into finer intervals of the form $[t_i,t_{i+1}]$, where $0=t_0<t_1<\cdots <t_n=1$, so that for every $i=0,\ldots,n-1$,
\begin{equation}
\label{l-int-ineq}
|\ell_{\alpha}(\sigma (t_{i+1}))-\ell_{\alpha}(\sigma({t_i}))|
\leq 
i(\alpha,\beta_{t_i})(t_{i+1}-t_i)+\epsilon(t_{i+1}-t_i).
\end{equation}
We next make use of the fact that geodesic segments gain some extra length at least equal to $2w(\ell_\alpha(\sigma))$ each time they pass through the collar neighbourhood of $\alpha$, to obtain a lower bound of $(t_{i+1}-t_i) \ell_{\beta_{t_i}}(\sigma(t_i))$, which is a constituent of a Riemann sum approximating the earthquake metric length integral for $\sigma$. 
Set $\hat\ell_\alpha:=\min\{\ell_\alpha(\sigma(t)) \mid t\in[0,1]\}$. The fact that the function $w$ is monotone non-increasing, combined with \cref{l-int-ineq}, implies  that
\begin{align*}
(t_{i+1}-t_i) \ell_{\beta_{t_i}}(\sigma(t_i))
&\geq2(t_{i+1}-t_i)i(\alpha,\beta_{t_i}) w(\ell_\alpha(\sigma(t_i)))\\
&\geq2|\ell_{\alpha}(\sigma(t_{i+1}))-\ell_{\alpha}(\sigma(t_i))|w(\ell_\alpha(\sigma(t_i)))
-2\epsilon(t_{i+1}-t_i)w (\hat\ell_\alpha).
\end{align*}
Hence, a Riemann sum approximating the length $\mathrm{L}(\sigma)$  satisfies
\begin{align*}
\sum_{i=0}^{n-1}(t_{i+1}-t_i)\ell_{\beta_{t_i}}(\sigma(t_i))
&\geq2\sum_i \left|	\ell_{\alpha}(\sigma(t_{i+1}))-\ell_{\alpha}(\sigma(t_i))\right| w(\ell_\alpha(\sigma(t_i)))
-2\epsilon \cdot w (\hat\ell_\alpha).
\end{align*}
Letting $n\to\infty$ and $\max|t_{i+1}-t_i|\to0$, we see that :\begin{align*}
\mathrm{L}(\sigma)
&=\lim_{n\to\infty} \sum_{i=0}^{n-1}(t_{i+1}-t_i) \ell_{\beta_{t_i}}\\
&\geq\lim_{i\to\infty}2\sum_i \left|\ell_{\alpha}(\sigma(t_{i+1}))-\ell_{\alpha}(\sigma(t_i))\right| w(\ell_\alpha(\sigma(t_i)))
-2\epsilon \cdot w (\hat\ell_\alpha) \\
&=2\int_\sigma  w(\ell_\alpha(\sigma(t)))\,|\mathrm{d}\ell_\alpha(\sigma(t))|-2\epsilon \cdot w(\hat\ell_\alpha)\\
&\geq 2\int^{\max\{\ell_\alpha(x),\ell_\alpha(y)\}}_{\min \{\ell_\alpha(x),\ell_\alpha(y)\}} w(\ell)\,\mathrm{d}\ell
-2\epsilon \cdot w (\hat\ell_\alpha).
\end{align*}
Taking  $\epsilon\to0$ yields 
\begin{align*}
\mathrm{L}(\sigma)
\geq
2\int^{\max\{\ell_\alpha(x),\ell_\alpha(y)\}}_{\min \{\ell_\alpha(x),\ell_\alpha(y)\}} w(\ell)\,\mathrm{d}\ell.
\end{align*}
Since $d_e(x,y)$ is defined as the infimal length $\mathrm{L}(\sigma)$ over all $C^1$ paths $\sigma$ joining $x$ and $y$, this gives the requisite lower bound for $d_e(x,y)$.
\end{proof}

\subsection{Upper bound of distance to and from the boundary}

We next establish the following upper bound for distances to and from the boundary.

\begin{theorem}[upper bound]
\label{prop:asymp:dist:upper}
For all $\epsilon>0$, there exists $\delta>0$ such that for any $x\in\T(S)$ with $\ell_{\mathrm{sys}}(x)<\delta$, we have
\begin{align}
d_e(x,\partial \overline{\T(S)}^{\mathrm{WP}})&\leq 2(1+\epsilon) \ell_{\mathrm{sys}}(x)\log \tfrac{1}{\ell_{\mathrm{sys}}(x)},\text{ and }\label{eq:distanceto}\\
d_e(\partial \overline{\T(S)}^{\mathrm{WP}},x)&\leq 2(1+\epsilon) \ell_{\mathrm{sys}}(x)\log \tfrac{1}{\ell_{\mathrm{sys}}(x)}\label{eq:distancefrom}.
\end{align}
\end{theorem}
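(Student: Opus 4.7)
The plan is to construct an explicit $C^1$ path in $\T(S)$ from $x$ toward the Weil--Petersson boundary that pinches the systole, and to show that its earthquake length asymptotically matches the claimed upper bound. Let $\alpha$ be a systole of $x$, so $\ell_\alpha(x) = \ell = \ell_{\mathrm{sys}}(x)$. Extend $\alpha$ to a pants decomposition $P = \{\alpha_1 = \alpha, \alpha_2, \ldots\}$ of $S$ and equip $\T(S)$ with the associated Fenchel--Nielsen coordinates. Define $\gamma : [0, \ell) \to \T(S)$ by $\ell_\alpha(\gamma(t)) = \ell - t$, with all other FN coordinates of $\gamma(t)$ held equal to those of $x$. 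Standard facts on the Weil--Petersson completion guarantee that, as $t \to \ell^-$, $\gamma(t)$ converges in $\overline{\T(S)}^{\mathrm{WP}}$ to a nodal Riemann surface on the stratum where $\alpha$ is pinched. Consequently,
\[
d_e(x, \partial \overline{\T(S)}^{\mathrm{WP}}) \leq \int_0^\ell \|-\partial/\partial \ell_\alpha\|_{e, \gamma(t)}\, dt,
\]
and via the identity $d_e(\partial, x) = d_e^{\#}(x, \partial)$ together with $\|v\|_e^{\#} = \|-v\|_e$, the reverse distance satisfies $d_e(\partial, x) \leq \int_0^\ell \|+\partial/\partial \ell_\alpha\|_{e, \gamma(t)}\, dt$.

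The main estimate to establish is the pointwise tangent-vector bound
\[
\|\pm\partial/\partial \ell_\alpha\|_{e, y} \leq 2(1 + o(1))\, w(\ell_\alpha(y)) \quad \text{as } \ell_\alpha(y) \to 0,
\]
where $w(s) = \mathrm{arcsinh}(\mathrm{cosech}(s/2)) = \log(1/s) + O(1)$ is the Margulis collar half-width. Granted this estimate, integration yields
\[
\int_0^\ell 2(1 + o(1))\, w(\ell - t)\, dt = 2(1+o(1)) \int_0^\ell w(s)\, ds = 2(1+o(1))\, \ell \log(1/\ell),
\]
establishing both \eqref{eq:distanceto} and \eqref{eq:distancefrom} for $\ell < \delta(\epsilon)$ sufficiently small.

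The hardest part of the argument will be proving the tangent-vector estimate. By \cref{thm:earthquakehomeo}, there is a unique measured lamination $\mu_0 = \mu_0(y) \in \ml(S)$ with $\v_y(\mu_0) = \pm\partial/\partial \ell_\alpha$, and $\|\pm\partial/\partial \ell_\alpha\|_e = \ell_{\mu_0}(y)$; Kerckhoff's cosine formula (\cref{Kerckhoff cosine}) characterises $\mu_0$ via $\Cos_y(\mu_0, \nu) = \pm\partial\ell_\nu/\partial\ell_\alpha$ for all $\nu \in \ml(S)$. To bound $\ell_{\mu_0}$ from above, I would use the magnitude-minimisation principle (\cref{thm:magnitude}) to approximate the infinitesimal motion $\mp\partial/\partial \ell_\alpha \cdot dt$ by a piecewise earthquake segment: a principal earthquake along a simple closed curve $\beta$ crossing $\alpha$ once in the Margulis tube at a near-tangential angle $\theta$, followed by short corrective earthquakes along curves disjoint from $\alpha$ to cancel drifts in the remaining FN directions. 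A Fermi-coordinate computation in the collar yields $\ell_\beta = 2\sinh^{-1}(\sinh w(\ell_\alpha)/\sin\theta) + O(1)$ and $|\Cos_y(\alpha, \beta)| = |\cos\theta|$; optimising the ratio $\ell_\beta/|\cos\theta|$ over $\theta$ gives the asymptotic $2w(\ell_\alpha)(1 + o(1))$ at the critical choice $\sin^2\theta \asymp 1/w(\ell_\alpha)$. A careful bookkeeping argument (using step sizes $\Delta \approx w(\ell_\alpha)^{-1/2}$ to balance principal and corrective contributions) then shows that the cumulative corrective magnitude over the path is of order $o(\ell\log(1/\ell))$, completing the proof of the estimate and hence of the theorem.
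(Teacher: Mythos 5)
The central gap is your pointwise estimate $\|\pm\partial/\partial\ell_\alpha\|_{e,y}\leq 2(1+o(1))\,w(\ell_\alpha(y))$, and more precisely the route you propose to it is not valid. The earthquake norm of a tangent vector $v$ is, by definition, $\ell_{\mu_0}(y)$ for the \emph{unique} $\mu_0\in\ml(S)$ with $\v_y(\mu_0)=v$ (\cref{thm:earthquakehomeo}); it is not an infimum over competing deformations, so \cref{thm:magnitude} --- which characterises the \emph{distance} $d_e$, not the norm --- cannot be invoked to bound it by exhibiting a ``principal plus corrective'' piecewise earthquake approximating the infinitesimal motion. To bound $\|-\partial/\partial\ell_\alpha\|_e$ you would have to identify, or at least estimate the length of, the lamination that is Weil--Petersson-dual to $d\tau_\alpha$ (via \cref{lem:WP}), and nothing in your outline does this; whether that norm really is asymptotic to $2\log(1/\ell_\alpha)$ is an interesting question, but it is neither proved by you nor established in the paper.

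If instead you drop the norm estimate and read your construction as a direct bound on $d_e(x,\partial\overline{\T(S)}^{\mathrm{WP}})$ by the magnitude of a piecewise earthquake path, then the unproved step is precisely the claim that the corrective magnitude is $o(\ell\log(1/\ell))$, and that is where the real difficulty sits: a curve crossing the collar of $\alpha$ at a near-tangential angle winds on the order of $\log(1/\ell)/\ell$ times around $\alpha$, so shearing along it drags the twist coordinate $\tau_\alpha$ by a large amount, and cancelling that drift (by twisting back along $\alpha$, at cost $\ell_\alpha$ per unit of twist) appears on a back-of-envelope count to be comparable to the main term $2\ell\log(1/\ell)$ rather than negligible; in any case you give no argument. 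The paper's proof avoids this issue entirely by \emph{not} following the Fenchel--Nielsen ray: it shears along a sequence of curves $\beta_i\subset S_\alpha$ (Dehn-twist iterates of a transverse curve), keeping the intersection angle with $\alpha$ in $(\vartheta,\pi)$ by Kerckhoff's angle monotonicity \cite{Ker}, lets $\tau_\alpha$ drift freely, bounds $\ell_{\beta_i}$ by the length-versus-angle estimate of \cref{lem:length:angle}, bounds the decay rate of $\ell_\alpha$ by Wolpert's cosine formula (\cref{Kerckhoff cosine}), integrates in the $\ell_\alpha$ variable --- the factor $2/|\cos\vartheta|\to2$ playing exactly the role of your optimisation in $\theta$ --- and then argues separately that $\ell_\alpha\to0$ along the path; convergence in the Weil--Petersson completion does not require the twist about the pinching curve to converge, and the path stays in the two-dimensional $(\ell_\alpha,\tau_\alpha)$ slice because the $\beta_i$ are disjoint from the other pants curves. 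So either supply a genuine proof of your tangent-vector estimate, or restructure the path so that the only quantity you need to control is $\ell_\alpha$, as the paper does.
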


We establish the upper bound constructively. We shall consider  Fenchel-Nielsen twists around simple closed curves $\beta$ which wind around some short curve $\alpha$ sufficiently many times to make the angles of intersection between $\alpha$ and $\beta$ close to $0$, and we show that this process causes the length of $\alpha$ to converge to $0$ in finite time.  For the remainder of this subsection, we adopt the following notation.
Given an arbitrary $x\in\T(S)$,
\begin{itemize}
\item
let $\Gamma$ be a short pants decomposition for $x$, i.e.\  $\Gamma$ consists of curves shorter than the Bers constant  (see, for instance, \cite[Chapter 5]{Buser2010});
\item
let $\alpha\in\Gamma$ denote a systole of $x$, whose length is shorter than some $\delta$ (which we choose so that $\alpha$ is shorter than the width of its collar neighbourhood);
\item
let $S_\alpha$ be the connected component of $S\setminus(\Gamma\setminus\{\alpha\})$ which is not a pair of pants.

\end{itemize}

\begin{center}
\begin{figure}[ht]
\includegraphics[scale=0.5]{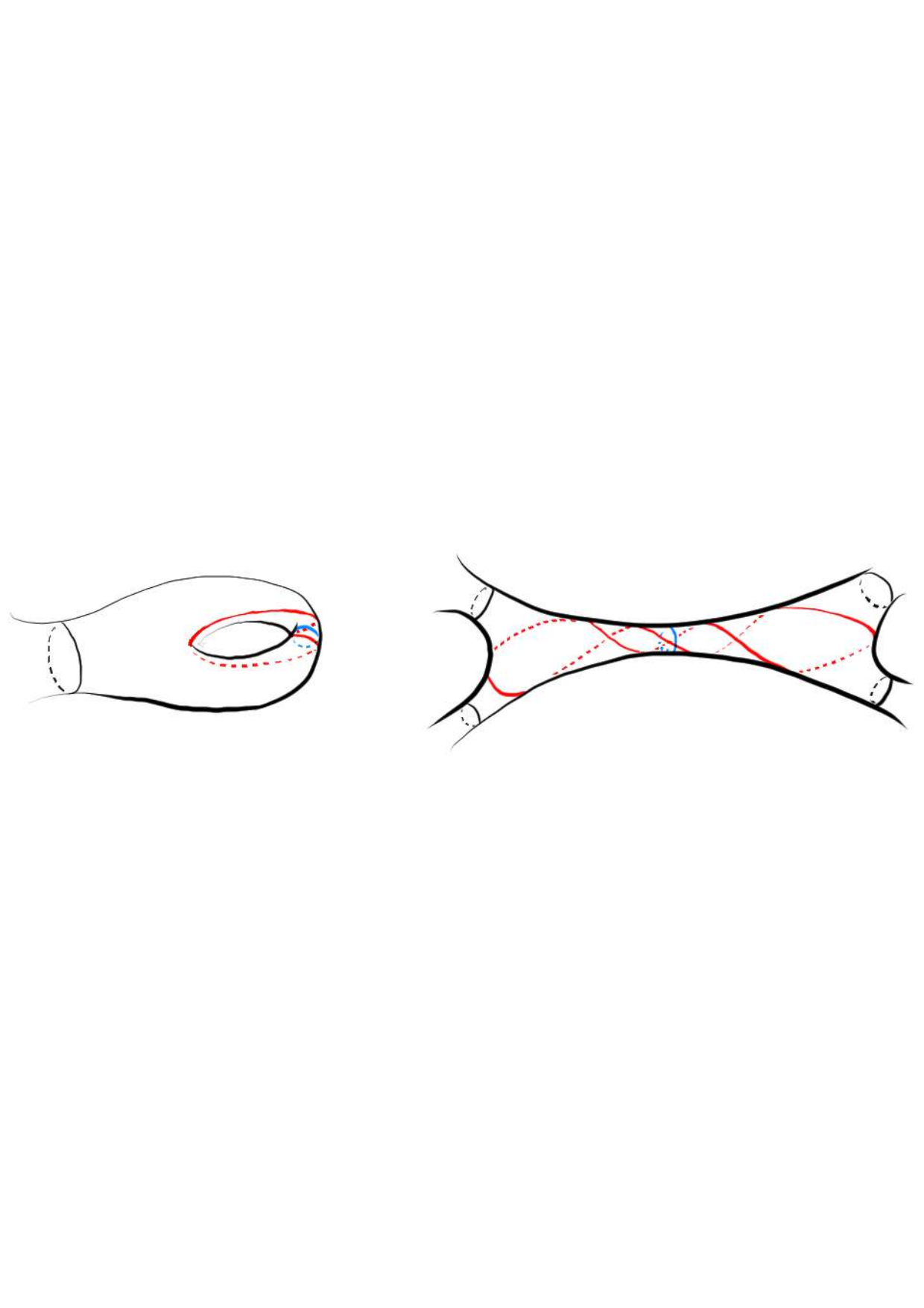}
\caption{The only two possible configurations for {\color{blue}$\alpha$} (short curve) and {\color{red}$\beta$} (long curve) on $S_\alpha$, up to homeomorphism.}
\label{fig:pg48}
\end{figure}
\end{center}
\begin{lemma}\label{lem:angles}
Let $\alpha$ and $\beta$  be two simple closed geodesics on $x\in \T(S)$ intersecting at more than one point. For any two intersection points $p_1,p_2\in \alpha\cap\beta$, their respective intersection angles $\theta_1,\theta_2\in(0,\pi)$ measured counterclockwise from $\alpha$ to $\beta$ satisfy
\begin{align*}
e^{-\ell_\alpha(x)/2}<\frac{\sin \theta_1}{\sin \theta_2}<e^{\ell_\alpha(x)/2}.
\end{align*}
\end{lemma}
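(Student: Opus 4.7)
The plan is to pass to the universal cover $\mathbb{H}^2$ and convert the comparison of angles into a comparison of endpoints of lifted semicircles. First I would work in the upper half-plane model, identifying a lift $\tilde\alpha$ of $\alpha$ with the positive imaginary axis $i\mathbb{R}_{>0}$, so that the covering translation $T_\alpha$ associated with $\alpha$ acts by $z\mapsto e^{\ell_\alpha(x)}z$; equivalently, the arc-length parameter along $\tilde\alpha$ at the point $ih$ is $\log h$.

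Next I would select lifts $\tilde p_1,\dots,\tilde p_n$ (with $n=i(\alpha,\beta)\geq 2$) of the intersection points to a common fundamental interval of length $\ell_\alpha(x)$ on $\tilde\alpha$, and let $\tilde\beta_i$ denote the lift of $\beta$ through $\tilde p_i$. As a Euclidean semicircle meeting the real line at two points, $\tilde\beta_i$ has endpoints $-u_i$ and $v_i$ with $u_i,v_i>0$, and a Euclidean tangent-line computation (valid hyperbolically since the model is conformal) yields the explicit formula
\[
\sin\theta_i=\frac{2\sqrt{u_iv_i}}{u_i+v_i}.
\]

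The simplicity of $\beta$ then enters via the fact that the lifts $\tilde\beta_i$ and all of their $T_\alpha$-translates are pairwise disjoint. Two disjoint semicircles both crossing the imaginary axis are forced to be nested, so after reindexing with $\tilde\beta_1$ innermost I may assume $u_1<\cdots<u_n$ and $v_1<\cdots<v_n$. Moreover $T_\alpha\tilde\beta_1$, whose endpoints are $-e^{\ell_\alpha(x)}u_1$ and $e^{\ell_\alpha(x)}v_1$, is the lift of $\beta$ through the first intersection in the adjacent fundamental interval, and hence sits immediately outside $\tilde\beta_n$ in the nesting order. This forces $u_n<e^{\ell_\alpha(x)}u_1$ and $v_n<e^{\ell_\alpha(x)}v_1$, and so for every pair $i<j$ the ratios $R:=u_j/u_i$ and $S:=v_j/v_i$ lie in $(1,e^{\ell_\alpha(x)})$.

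A final algebraic rearrangement then closes the argument:
\[
\frac{\sin\theta_i}{\sin\theta_j}=\frac{\sqrt{u_iv_i}\,(u_j+v_j)}{\sqrt{u_jv_j}\,(u_i+v_i)}=\frac{u_i}{u_i+v_i}\sqrt{\tfrac{R}{S}}+\frac{v_i}{u_i+v_i}\sqrt{\tfrac{S}{R}}
\]
exhibits the ratio as a convex combination of the reciprocal values $\sqrt{R/S}$ and $\sqrt{S/R}$, so it lies in the interval they span. Since $R/S\in(e^{-\ell_\alpha(x)},e^{\ell_\alpha(x)})$, both values lie in $(e^{-\ell_\alpha(x)/2},e^{\ell_\alpha(x)/2})$, and hence so does the ratio. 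The delicate step will be justifying that the $T_\alpha$-nesting forces each of $R$ and $S$ (and not merely their product $RS$) to be bounded by $e^{\ell_\alpha(x)}$; this is precisely where the consecutive placement of $T_\alpha\tilde\beta_1$ just outside $\tilde\beta_n$ in the nesting order is essential, and is the only point at which the simplicity of $\beta$ enters in a non-trivial way.
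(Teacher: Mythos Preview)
Your proof is correct and follows essentially the same approach as the paper: lift $\alpha$ to the imaginary axis in the upper half-plane, use the disjointness of the lifts of $\beta$ (and their $T_\alpha$-translates) to trap the endpoints of one lift between those of another and its $e^{\ell_\alpha(x)}$-dilate, and then read off the bound on $\sin\theta_1/\sin\theta_2$ from the resulting bounds on the endpoint ratios. The only cosmetic difference is in the final algebraic step: the paper extremises $\sin\theta_2$ by a short case split on $r\ge 1$ versus $r\le 1$, whereas your convex-combination identity $\frac{\sin\theta_i}{\sin\theta_j}=\frac{u_i}{u_i+v_i}\sqrt{R/S}+\frac{v_i}{u_i+v_i}\sqrt{S/R}$ avoids the case split entirely and is slightly cleaner.
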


\begin{proof}
Position a lift of $\alpha$ in the upper half plane $\mathbb{H}^2$ to the geodesic $\overline{0\infty}$ joining $0$ and $\infty$. Further set a lift of $\beta$ to be the geodesic $\overline{-1r}$ joining $-1$ and $r>0$, so that $\overline{0\infty}\cap\overline{-1r}$ is a lift of $p_1$ and hence the angle from $\overline{0\infty}$ to $\overline{-1r}$ is $\theta_1$. By a standard formula relating cross-ratios and intersection angles (see, e.g., \cite[Equation~(1)]{Ker}), we have
\begin{align}
\cos\theta_1=2(0,r;-1,\infty)-1=\frac{1-r}{1+r}.\label{eq:theta1}
\end{align}
One nearest translate, with respect to $\alpha$, of $\overline{-1r}$ is the geodesic $\overline{(-e^{\ell_\alpha(x)})(re^{\ell_\alpha(x)})}$ joining $-e^{\ell_\alpha(x)}$ and $re^{\ell_\alpha(x)}$. Since the segment on $\overline{0\infty}$ between its intersections with $\overline{-1r}$ and $\overline{(-e^{\ell_\alpha(x)})(re^{\ell_\alpha(x)})}$ is a fundamental domain, there must be a lift $\overline{uv}$ of $\beta$, with end points $u\in(-e^{\ell_\alpha(x)},-1)$ and $v\in(r,re^{\ell_\alpha(x)})$, such that $\overline{0\infty}\cap\overline{uv}$ is a lift of $p_2$. Note that the angle from $\overline{0\infty}$ to $\overline{uv}$ is $\theta_2$, and hence
\begin{align}
\cos\theta_2=2(0,v;u,\infty)-1=\frac{1+\frac{v}{u}}{1-\frac{v}{u}}=\frac{1-\frac{v}{|u|}}{1+\frac{v}{|u|}}.\label{eq:theta2}
\end{align}
Since $\theta_i\in(0,\pi)$, the quantities $\sin\theta_i$ are positive and \cref{eq:theta1} and \cref{eq:theta2} mean that
\begin{align}
\sin\theta_1=\frac{2}{\sqrt{r}+\sqrt{r^{-1}}},
\quad\text{and}\quad
\sin\theta_2=\frac{2}{\sqrt{v/|u|}+\sqrt{|u|/v}}.
\end{align}
The latter quantity $\sin\theta_2$ is infimised by supremising the denominator, which is given by the following choices: 
\begin{itemize}
\item
$u=-1$ and $v=re^{\ell_\alpha}$ when $r\geq1$, or
\item
$u=-e^{\ell_\alpha}$ and $v=r$ when $r\leq1$.
\end{itemize}
In either case, 
\[
\sin\theta_2
>
\frac{2}{\sqrt{re^{\ell_\alpha}}+\sqrt{r^{-1}e^{\ell_\alpha}}}
=
\frac{\sin\theta_1}{e^{\ell_\alpha/2}},
\]
and hence $\frac{\sin\theta_1}{\sin\theta_2}<e^{\ell_\alpha/2}$. By symmetry, $\frac{\sin\theta_2}{\sin\theta_1}<e^{\ell_\alpha/2}$, and the result follows.
\end{proof}

\begin{definition}[minimal angle of intersection]\label{def:nimimal:angle}
For any two intersecting simple closed geodesics $\gamma_1,\gamma_2$ on $(S,x)$, we define $\theta_x(\gamma_1,\gamma_2)\in[0,\pi)$ as the angle, measured anti-clockwise from $\gamma_1$ to $\gamma_2$, which is minimal among all the intersection points in $\gamma_1\cap\gamma_2$.
\end{definition}

\begin{lemma}\label{lem:length:angle}
Let $x\in\T(S)$ be a hyperbolic surface with a shortest pants decomposition $\Gamma$. Suppose that $\beta$ is a simple closed curve which intersects $\alpha\in\Gamma$ minimally and which is disjoint from other curves in $\Gamma$.  Then there exists a  constant $C$ which only depends on the topology of $S$ such that 
\begin{align}
\frac{\ell_{\beta}(x)}{2i(\alpha,\beta)}\leq\log \frac{1}{\ell_{\alpha}(x)}+\log\frac{1}{\sin\theta}+C
\end{align}
where $\theta$ is the intersection angle from $\alpha$ to $\beta$ at any point of $\alpha\cap \beta$.
\end{lemma}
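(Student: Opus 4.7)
The plan is to lift $\alpha$ and $\beta$ to the universal cover $\mathbb{H}^2$ (upper half-plane) and estimate $\ell_\beta(x)$ by decomposing $\beta$ into its portions inside and outside the standard collar $C_\alpha$ of $\alpha$. First label the intersection points of $\beta$ with $\alpha$ cyclically as $p_1,\dots, p_{i(\alpha,\beta)}$. For each $p_j$, let $I_j$ be the maximal connected subarc of $\beta \cap C_\alpha$ containing $p_j$, and decompose the complement $\beta \setminus \bigsqcup_j I_j$ into maximal ``outer arcs'' $E_j \subset \overline{S \setminus C_\alpha}$. The assumption that $\beta$ meets $\alpha$ minimally ensures that consecutive $I_j$'s are separated by some $E_j$ (otherwise, a subarc of $\beta$ between two such crossings together with an arc of $\alpha$ would form a bigon in $C_\alpha$), yielding exactly $i(\alpha,\beta)$ arcs of each type.

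For each inner arc $I_j$, I would lift the local picture around $p_j$ so that $\alpha$ lifts to $\overline{0\infty}$, $p_j$ lifts to $i$, and the corresponding lift $\widetilde{\beta}$ of $\beta$ is the Euclidean semicircle through $i$ of radius $1/\sin\theta_j$, where $\theta_j$ is the intersection angle at $p_j$. The collar $C_\alpha$ corresponds in this picture to $\{z : |\arg z - \pi/2| < w''\}$ with $\cos w'' = \tanh(\ell_\alpha(x)/2)$. Parametrising $\widetilde{\beta}$ by its Euclidean angle $\phi$ so that hyperbolic arclength is $\mathrm{d}\phi/\sin\phi$, and solving for the values of $\phi$ at which $\widetilde{\beta}$ meets the rays $\arg z = \pi/2 \pm w''$, a direct calculation yields the closed-form expression
\begin{equation*}
\ell(I_j) = 2\ln \frac{\sin\psi_j + \sin w''}{\cos w''\, \sin\theta_j}\quad\text{with}\quad \cos\psi_j = \cos\theta_j \cos w''.
\end{equation*}
Bounding $\sin\psi_j + \sin w'' \leq 2$ and using $\cos w'' = \tanh(\ell_\alpha(x)/2) \geq c_0\, \ell_\alpha(x)$ for $\ell_\alpha(x)$ bounded above by the Bers constant $L_B$, this gives $\ell(I_j) \leq 2\log(1/\ell_\alpha(x)) + 2\log(1/\sin\theta_j) + C_0$. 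Invoking \cref{lem:angles}, $\sin\theta_j \geq e^{-\ell_\alpha(x)/2}\sin\theta$, and since $\ell_\alpha(x) \leq L_B$, the dependence on $\theta_j$ is absorbed into the constant, giving $\ell(I_j) \leq 2\log(1/\ell_\alpha(x)) + 2\log(1/\sin\theta) + C_0'$.

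The main obstacle is controlling the outer arcs $E_j$. Each is a simple geodesic arc in $S_\alpha \setminus C_\alpha$ with endpoints on $\partial C_\alpha$, disjoint from $\partial S_\alpha \subset \Gamma \setminus \{\alpha\}$. Since $S_\alpha$ is topologically either a four-holed sphere or a one-holed torus (the two cases depicted in the figure referenced), and $\Gamma$ is a Bers pants decomposition in which each $\alpha_i \in \Gamma$ has length between $\ell_\alpha(x)$ and $L_B$, the arcs $E_j$ realise only finitely many isotopy classes. Using the simplicity of $\beta$ to show that no such arc can wind too deeply around a boundary curve of $S_\alpha$ without forcing either a self-intersection or an additional crossing with $\alpha$, each $E_j$ has geodesic length bounded by a constant $C_1$ depending only on the topology of $S$. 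Summing $\ell_\beta(x) = \sum_j \ell(I_j) + \sum_j \ell(E_j) \leq i(\alpha,\beta)\bigl(2\log(1/\ell_\alpha(x)) + 2\log(1/\sin\theta) + C_0' + C_1\bigr)$ and dividing by $2\, i(\alpha,\beta)$ completes the proof with $C = (C_0' + C_1)/2$.
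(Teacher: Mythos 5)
Your collar decomposition is a genuinely different route from the paper's (which bounds each full arc of $\beta$ in a pair of pants adjacent to $\alpha$ at once, via the right-angled hexagon/pentagon identities of Buser, comparing it with the orthogeodesic from $\alpha$ to $\alpha$ and bounding that orthogeodesic by $\sinh(d/2)\leq \cosh B/\sinh(\ell_\alpha(x)/4)$), and the inner part of your argument is sound: the closed-form crossing length is the standard Fermi-coordinate computation (it correctly reduces to twice the collar half-width when $\theta_j=\pi/2$), and \cref{lem:angles} legitimately converts $\theta_j$ into $\theta$ at the cost of a constant. The genuine gap is precisely the step you flag as the main obstacle: the uniform bound $\ell(E_j)\leq C_1$ on the outer arcs, for which you offer only ``finitely many isotopy classes'' plus ``simplicity prevents deep winding''. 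That is not a proof. The geodesic arc $E_j$ is a specific subarc of the closed geodesic $\beta$, not the length-minimiser in its isotopy class, so membership in a finite list of arc classes gives no length control whatsoever (the arcs of $T_\alpha^n\beta_0$ lie in the same finite list of classes for every $n$); what must be ruled out is geometric, not topological, behaviour -- e.g.\ a long run of $E_j$ nearly parallel to $\partial C_\alpha$ when $\ell_\alpha(x)$ is of moderate size and $\theta$ is tiny, or a long excursion alongside the collar of a boundary curve of $S_\alpha$, and note that nothing in the hypotheses prevents the curves of $\Gamma\setminus\{\alpha\}$ from being far shorter than $\alpha$, so you may not assume the complementary pants are thick (your parenthetical ``each curve of $\Gamma$ has length between $\ell_\alpha(x)$ and $L_B$'' is itself unjustified). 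This outer estimate is exactly where the analytic content of the lemma lives; in the paper it is supplied by $\cosh h_i<1/\sin\phi_i$ together with the orthogeodesic bound above.

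The claim you need is true (it is equivalent, after subtracting the sharp inner estimate, to the lemma itself), so the approach is repairable, but it requires a quantitative argument. One workable route inside your framework: use the Clairaut relation in Fermi coordinates about $\alpha$ to show that $\beta$ meets $\partial C_\alpha$ at an angle $\chi$ with $\cos\chi=\cos\theta_j\tanh(\ell_\alpha(x)/2)\leq\tanh(B/2)<1$, hence bounded away from zero in terms of the Bers constant alone, and then bound $E_j$ by pentagon/hexagon trigonometry in the adjacent pants -- at which point you have essentially reproduced the paper's computation; a bare compactness argument over pants with boundary lengths in $(0,B]$ is delicate because the pants degenerate as boundary lengths tend to $0$. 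Two smaller points: your bookkeeping assumes every component of $\beta\cap C_\alpha$ contains a crossing point, so that each $E_j$ really lies outside the collar; the correct reason consecutive crossings are separated is that a lift of $\beta$ to the annular cover of $\alpha$ meets the lifted core at most once (geodesics never bound bigons, so ``minimal position'' is automatic and is not the issue), and possible non-crossing dips of $\beta$ into $C_\alpha$ would also have to be absorbed into your outer estimate.
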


%

\begin{proof}
The connected component $S_\alpha$ is either a one-holed torus or a four-holed sphere (\cref{fig:pg48}), possibly with cusps in the latter case. We prove the claim for the four-holed sphere; the proof for the one-holed torus case can be similarly derived, or one can also quotient a one-holed torus by its hyperelliptic involution and then lift  the resulting orbifold to a four-holed sphere to derive the result for the one-holed torus via the result for the four-holed sphere.

Consider a pair of pants $P$ constituting a component of $S_\alpha\setminus \alpha$. Let 
\begin{itemize}
\item $b$ denote the length of a component $\rho$ of  $\beta\cap P$, and

\item $d$ denote the length of the unique orthogeodesic $\varrho$ on $P$ with both endpoints on $\alpha$.
 In particular, $\rho$ and $\varrho$ are freely homotopic, with endpoints allowed to move on $\alpha$.
\end{itemize}

Take  lifts $\tilde{\varrho},\tilde\rho$ of $\varrho, \rho$ which intersect on the universal cover $\hyperbolic^2$ (see \cref{fig:pg49}).
They stretch between two lifts $\tilde{\alpha}_1$, $\tilde{\alpha}_2$ of $\alpha$. 
For $i=1,2$, let $h_i$ denote the length, along $\tilde{\alpha}_i$, of the segments joining the endpoints of the lifts of $\tilde \varrho$ and $\tilde{\beta}_0$, and let $\phi_i$ denote the anti-clockwise angle from $\tilde{\alpha}_i$ to $\tilde\rho$. 
Depending on whether  $\tilde\rho$ and $\tilde\varrho$ intersect or not,  there are two cases which we should consider.

\begin{center}
\begin{figure}[ht]
\includegraphics[scale=0.3]{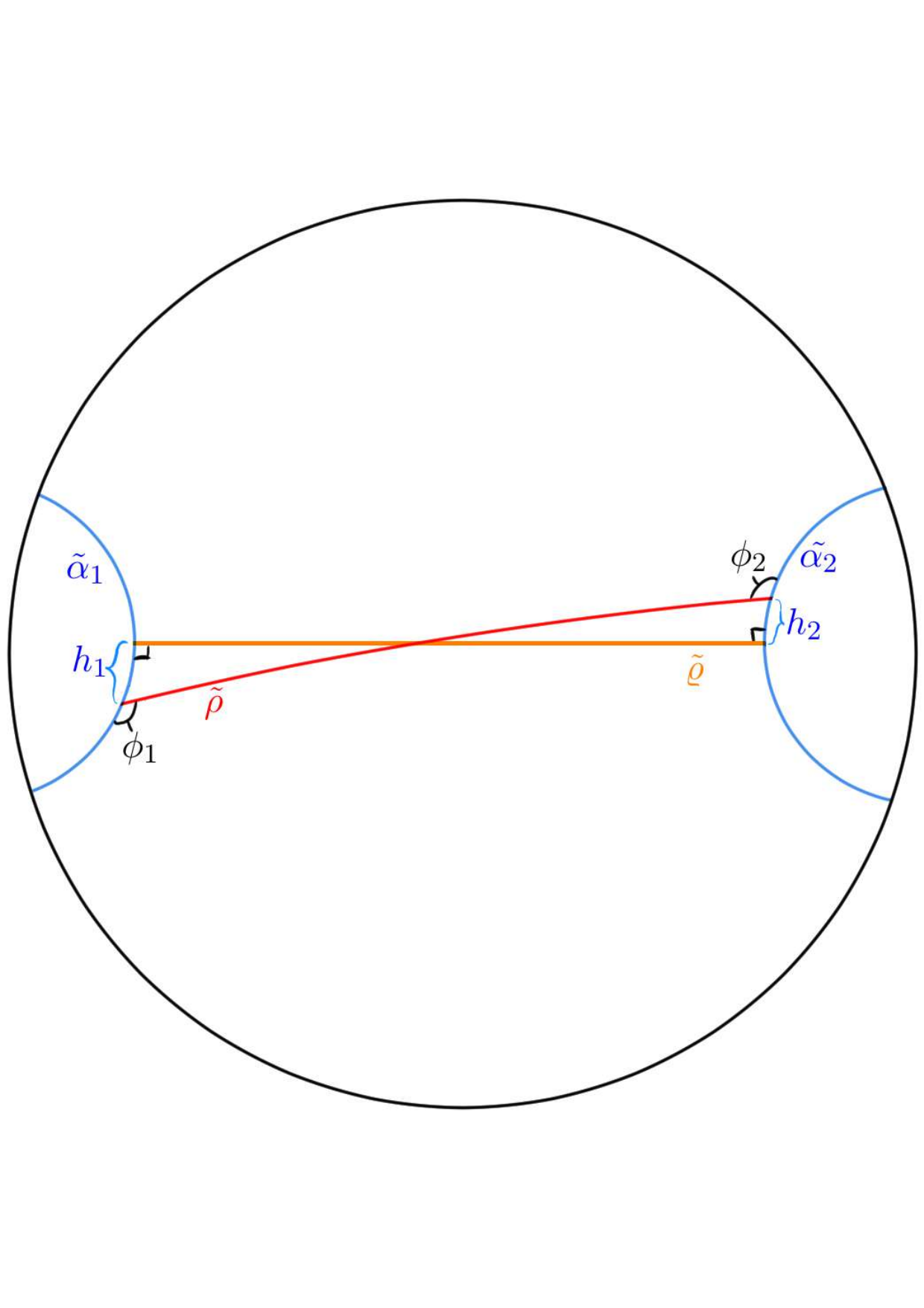}
\caption{Depiction of how $\tilde{\varrho}$, $\tilde\rho$, $\tilde{\alpha}_1$, and $\tilde{\alpha}_2$ are configured.}
\label{fig:pg49}
\end{figure}
\end{center}

\noindent
Case~(1): when $\tilde\rho$ and $\tilde\varrho$ intersect each other.

\noindent
By \cite[Eq.~(2.3.2) in Ch.~2]{Buser2010},
\begin{equation}
\begin{split}
\label{cosh}
\cosh b=\cosh h_1\cosh h_2\cosh d+\sinh h_1\sinh h_2 \\
\leq \cosh h_1\cosh h_2 (\cosh d+1).
\end{split}
\end{equation}
By \cite[Formula glossary, 2.2.2 (v)]{Buser2010}, we have
\[
\cosh h_i<\frac{1}{\sin\phi_i}, \quad i=1,2.
\]
Let $\theta$ be the  angle from $\alpha$ to $\beta$ at any point of $\alpha \cap \beta$. 
By Lemma \ref{lem:angles}, we see that 
\begin{equation*}
e^{-B/2}\leq 	\frac{\sin\phi_i}{\sin\theta}\leq e^{B/2}
\end{equation*}
 where $B$ is the Bers constant, which depends only on the topological type of $S$. Hence,
 \[
\cosh h_i<\frac{1}{\sin\phi_i}\leq \frac{e^{B/2}}{\sin\theta}.
\]
 Combined with \eqref{cosh}, this implies 
\begin{align}\label{eq:anglebounde} 
\cosh b<\frac{e^B(\cosh d +1)}{\sin^2 \theta}.
\end{align}

\noindent
Case~(2): when $\tilde\rho$ and $\tilde\varrho$ are disjoint. 

\noindent
By \cite[Eq.~(2.3.2) in Ch.~2]{Buser2010}, we have
\begin{eqnarray}
	\cosh b&=&\cosh h_1\cosh h_2\cosh d -\sinh h_1\sinh h_2\nonumber \\
	&\leq& \cosh h_1\cosh h_2\cosh d. \label{eq:coshb}
\end{eqnarray} 
Let $\tilde\eta$ be the geodesic segment perpendicular to both $\tilde\rho$ and $\tilde\varrho$. This segment cuts $\tilde\rho$ (resp. $\tilde\varrho$) into two subsegments of length $b_1$ and $b_2$ (resp. $d_1$ and $d_2$) respectively. In particular, $b=b_1+b_2$ and $d=d_1+d_2$. By \cite[Formula Glossary, 2.3.1 (iii)]{Buser2010}, 
\begin{eqnarray*}
	\cosh d_1&=&\cosh b_1\sin\phi_1\\
	\cosh d_2&=&\cosh b_2\sin\phi_2. \label{eq:coshb}
\end{eqnarray*}
Hence, 
\begin{eqnarray*}
	\cosh b\leq 2 \cosh b_1 \cosh b_2=\frac{2\cosh d_1\cosh d_2}{\sin\phi_1\sin\phi_2}\leq \frac{2\cosh d}{\sin\phi_1\sin\phi_2}.
\end{eqnarray*}
Combined with Lemma \ref{lem:angles}, this implies that
\begin{equation}\label{eq:anglebounde2}
	\cosh b\leq \frac{2e^B\cosh d}{\sin^2\theta},
\end{equation}
where $B$ is the Bers constant.\medskip 

Combining the conclusions \cref{eq:anglebounde} and \cref{eq:anglebounde2} from the two cases:
\begin{equation}\label{eq:anglebounde3}
	\cosh b\leq \frac{2e^B\cosh d}{\sin^2\theta}.
\end{equation}

Next, we estimate $d$ from above, which in turn gives the desired upper bound for $b$.
Denote the lengths of the non-$\alpha$ boundaries of $P$ by $L_1,L_2$, and let $a_i$ be the length of the subsegment of $\alpha$ between the orthogeodesic $\varrho$ and the orthogeodesic joining $\alpha$ to the length $L_i$ side of $P$ (see \cref{fig:pg50}). 

\begin{center}
\begin{figure}[ht]
\includegraphics[scale=0.3]{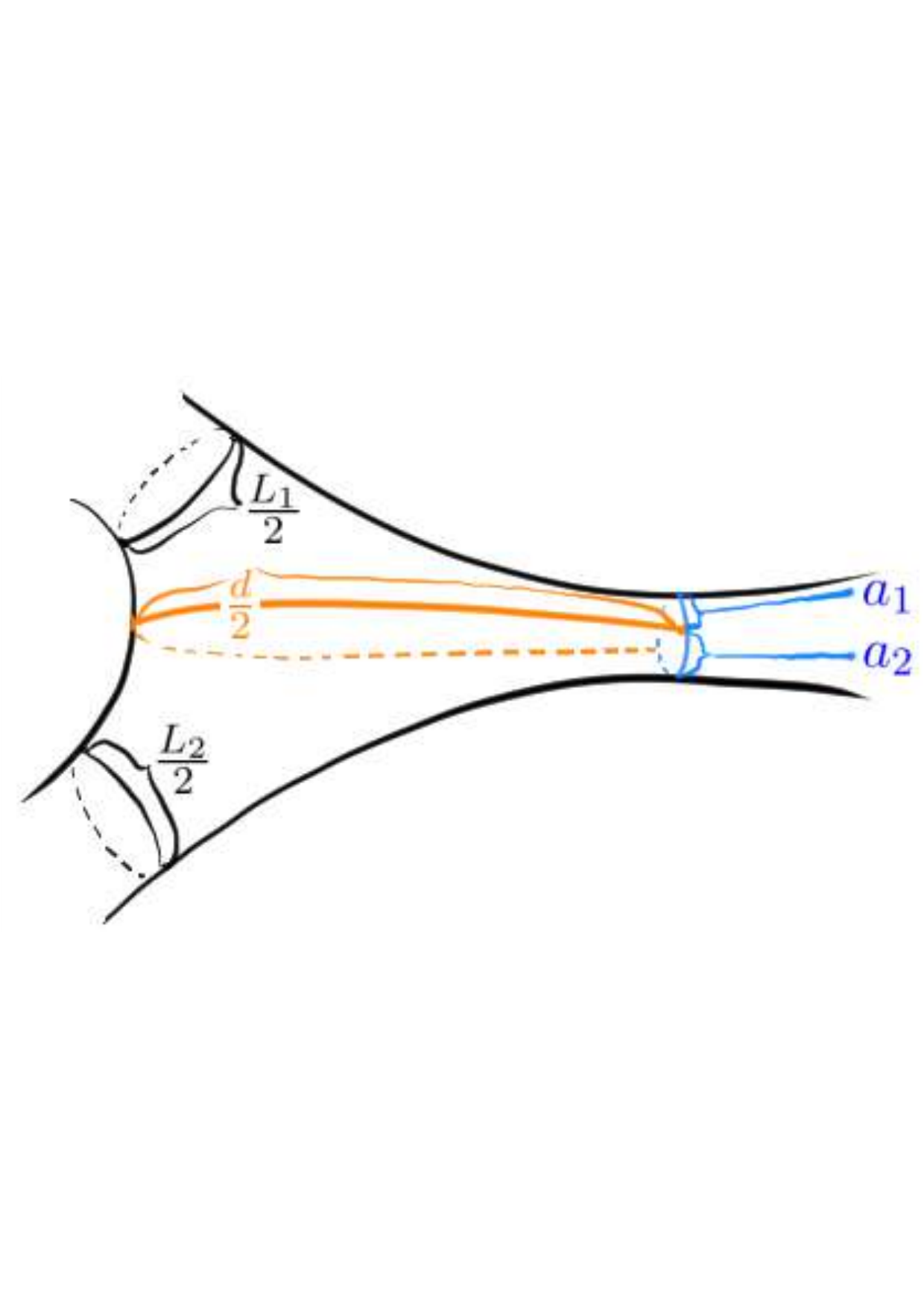}
\caption{Geodesic segments on $P$.}
\label{fig:pg50}
\end{figure}
\end{center}

Since $a_1+a_2=\frac{\ell_\alpha(x)}{2}$, one of these two segments has length at least $\tfrac{\ell_\alpha(x)}{4}$. Combine this with \cite[Formula glossary, 2.3.4 (i)]{Buser2010}, and we get
\begin{align}
\sinh(\tfrac{d}{2})
=\frac{\cosh \frac{L_i}{2}}{\sinh a_i}\leq\frac{\cosh B}{\sinh \tfrac{\ell_\alpha(x)}{4} },\label{eq:orthogeobound}
\end{align}
where $B$ is the Bers constant for $S$. Algebraically combining \cref{eq:anglebounde} and \cref{eq:orthogeobound},  we have 
\begin{align}\label{eq:length:b}
b\leq2\left(\log  \frac{1}{\ell_{\alpha}(x)}+\log\frac{1}{\sin\theta}+C\right)
\end{align}
for a constant $C$ which depends only on the topological type of $S$. Note that $\Gamma$ cuts $\beta$ into $i(\alpha,\beta)$ segments, each of which satisfies \eqref{eq:length:b}. The lemma follows.
\end{proof}

\begin{proof}[Proof of \cref{prop:asymp:dist:upper}]
We only need to establish \cref{eq:distanceto}, as \cref{eq:distancefrom} follows by doing the same proof by for the right earthquake metric. We also note that if \cref{eq:distanceto} is true for all $\epsilon>0$ smaller than some constant, then \cref{prop:asymp:dist:upper} holds true.

We prove \cref{eq:distanceto} by explicitly constructing a path $x_t$ of controlled earthquake metric length starting at an arbitrary $x_0=x$ with small systole $\alpha$, so that $\ell_\alpha(x_t) \to 0$ along the path. 
Fix an angle $\vartheta\in(\frac{\pi}{2}, \pi)$. 
Let $\beta_0$ be a simple closed geodesic on $S_\alpha$ that minimally intersects $\alpha$ and has $\theta_x(\alpha,\beta)>\vartheta$ (see \cref{def:nimimal:angle} for the definition of $\theta_x(\alpha,\beta)$). We produce a piecewise earthquake path $x_t$ in $\T(S)$ as follows.
\begin{itemize}
\item
We proceed along $x_s:=E_{s\beta_0}(x_0)$. By \cite[Proposition~{3.5}]{Ker}, the minimal intersecting angle $\theta_0(s):=\theta_{x_s}(\alpha,\beta_0)$ monotonically decreases, and we stop when the angle reaches $\vartheta$. Denote the time this occurs by $t_1$, and let $\beta_1$ be the left Dehn twist\footnote{We remind the reader that the action of left Dehn twists on the Teichm\"uller space corresponds to right Fenchel-Nielsen twists.} of $\beta_0$ with respect to $\alpha$, \cite[Proposition~{3.5}]{Ker} then ensures that $\theta_1(t_1):=\theta_{x_{t_1}}(\alpha,\beta_1)$ is greater than $\vartheta$. 

\item
We continue along $x_{s+t_1}:=E_{s\beta_1}(x_{t_1})$ until $\theta_{x_{s+t_1}}(\alpha,\beta_1)=\vartheta$. Denote the stopping time by $t_2$, and let $\beta_2$ be the left Dehn twist of $\beta_1$ with respect to $\alpha$. Again, $\theta_2(t_2):=\theta_{x_{t_2}}(\alpha,\beta_2)$ is greater than $\vartheta$.

\item
We iterate the above process indefinitely.
\end{itemize}

This procedure yields a path where we are always earthquaking with respect to some $\beta_i$ that intersects $\alpha$ at angle
\begin{align}
\theta_i(s)\in (\vartheta, \pi)
\text{ for $s\in[t_i,t_{i+1})$, and }
\theta_i(t_{i+1})=\vartheta.
\label{eq:exactvartheta}
\end{align}
In particular, $\theta_{x_{t_i}}(\alpha, \beta_{i-1})=\theta_{i-1}(t_i)=\vartheta$. 
Hence, by \cref{lem:length:angle} we obtain, 
\begin{align*}
\ell_{\beta_{i-1}}(x_{t_{i}})
\leq
2i(\alpha,\beta_{i-1})\left(\log\frac{1}{\ell_{\alpha}(x_{t_{i}})}+\log\frac{1}{\sin\vartheta}+C\right).
\end{align*}
Recall that $\beta_i$ is a left Dehn twist of $\beta_{i-1}$ and that $\ell_\alpha(x_{t_i})$ is decreasing. Therefore, for any $s\in[t_i,t_{i+1})$, 
\begin{align*}
\ \ell_{\beta_{i}}(x_{s})=\ell_{\beta_{i}}(x_{t_{i}}) 
&\leq \ell_{\beta_{i-1}}(x_{t_{i}}) +i(\alpha, \beta_{i-1} )\ell_{\alpha}(x_{t_i})\\
\leq&\
2i(\alpha,\beta_{i-1})\left(\log\frac{1}{\ell_{\alpha}(x_{t_{i}})}+\log\frac{1}{\sin\vartheta}+C+\ell_\alpha(x_{t_i})\right)\\
\leq&\ 2i(\alpha,\beta_{i-1})\left(\log\frac{1}{\ell_{\alpha}(x_{s})}+\log\frac{1}{\sin\vartheta}+C+\ell_\alpha(x_{0})\right)\\
\leq &\ 2i(\alpha,\beta_{i})\left(\log\frac{1}{\ell_{\alpha}(x_{s})}+\log\frac{1}{\sin\vartheta}+C+\ell_\alpha(x_{0})\right).
\end{align*}

By Wolpert's cosine formula \cite[Corollary 2.12]{Wolpert1982}, 
at each interval $[t_i,t_{i+1})$, the length of $\alpha$ changes at the rate of $\frac{\mathrm{d}\ell_\alpha}{\mathrm{d}s}<i(\alpha,\beta_i)\cos(\vartheta)<0$ for $s \in [t_i, t_{i+1})$, hence is monotone decreasing, and we have moreover
\begin{equation*}
	\left|\frac{\mathrm d \ell_\alpha}{\mathrm ds}\right|\geq i(\alpha,\beta_i)|\cos(\vartheta)|>0.
\end{equation*}

We bound the length of the piecewise earthquake path $(x_s)$ as follows:
\begin{align*}
&\sum_{i=0}^\infty
\int_{t_i}^{t_{i+1}}\|\dot{x}_s\|_e\ \mathrm{d}s
=
\sum_{i=0}^\infty
\int_{t_i}^{t_{i+1}}\|\v_{\beta_i}(x_s)\|_e\ \mathrm{d}s
=
\sum_{i=0}^\infty
\int_{t_i}^{t_{i+1}}\ell_{\beta_i}(x_s)\ \mathrm{d}s
\\ =& 
\sum_{i=0}^\infty
\int_{\ell_\alpha(t_i)}^{\ell_\alpha(t_{i+1})}\ell_{\beta_i}(x_s)\ \left(\frac{\mathrm{d}\ell_{\alpha}}{\mathrm{d}s}\right)^{-1} \mathrm{d}\ell_\alpha
= \sum_{i=0}^\infty
\int_{\ell_\alpha(t_{i+1})}^{\ell_\alpha(t_{i})}\ell_{\beta_i}(x_s)\ \left|\left(\frac{\mathrm{d}\ell_{\alpha}}{\mathrm{d}s}\right)^{-1}\right| \mathrm{d}\ell_\alpha
\\
\leq&
\sum_{i=0}^\infty
\int_{\ell_\alpha(x_{t_{i+1}})}^{\ell_{\alpha}(x_{t_{i}})} \frac{2i(\alpha,\beta_i)}{i(\alpha,\beta_i)|\cos\vartheta|}\left(\log\frac{1}{\ell_{\alpha}(x_{t_{s})}}+\log\frac{1}{\sin\vartheta}+C+\ell_\alpha(x_0)\right)\ \mathrm{d}\ell_\alpha
\\=&
\sum_{i=0}^\infty
\int_{\ell_\alpha(x_{t_{i+1}})}^{\ell_{\alpha}(x_{t_{i}})}\frac{2}{|\cos\vartheta|}\left(\log\frac{1}{\ell_{\alpha}}+\log\frac{1}{\sin\vartheta}+C+\ell_\alpha(x_0)\right)\ \mathrm{d}\ell_\alpha
\\ \leq&
\int_{0}^{\ell_{\alpha}(x_0)}\frac{2}{|\cos\vartheta|}\left(\log\frac{1}{\ell_{\alpha}}+\log\frac{1}{\sin\vartheta}+C+\ell_\alpha(x_0)\right)\ \mathrm{d}\ell_\alpha
\\
\leq&
\frac{2}{|\cos\vartheta|}
\ell_\alpha(x_0)\log\frac{1}{\ell_\alpha(x_0)}
+
\frac{2}{|\cos\vartheta|}\left(\log\frac{1}{\sin\vartheta}+1+C+\ell_\alpha(x_0)\right)\ell_\alpha(x_0).
\end{align*}

By choosing $\vartheta$ close to $\pi$ and $\delta>\ell_\alpha(x_0)$ sufficiently small ($\log\frac{1}{\delta}\gg\log\frac{1}{\sin\vartheta}$), we can bring the length of the path $x_t$ to the form of the desired expression, as per \cref{eq:distanceto}. 

All that remains is to show that the path $x_t$ tends to the boundary locus of Teichm\"uller space. To this end, showing $\ell_\alpha(x_t)\to0$ suffices. Assume otherwise, that $\ell_\alpha(x_t)$ monotonically decreases to some constant $A>0$. Since only the geometry of $S_\alpha$ changes along the path $x_t$, the path itself lies within a $2$-dimensional subspace of $\T(S)$ parameterised by  the length and twist parameters $\ell_\alpha$ and $\tau_\alpha$. In particular, since $\ell_\alpha(x_t)$ is decreasing and is bounded below by $A$,  the path $x_t$ must converge in $\T(S)$ unless $\tau_\alpha(x_t)$ is  unbounded. The latter is impossible, as the length of $x_t$ would be then comparable to the infinite length path formed by joining arbitrarily many $\alpha^j$ left Dehn twists of some point in this region. Thus, the sequence $(x_{t_i})$ has an accumulation point $\bar{x}\in\T(S)$. We derive a contradiction as follows.
Recall that $\beta_j$ is the $j$-times iterated Dehn twist of $\beta$ with respect to $\alpha$ and that $\theta_j(x):=\theta_x(\alpha,\beta_j)$. By \cite[Proposition~{3.5}]{Ker},  we see that $\theta_i(x)<\theta_{i+1}(x)$ for every $x\in\T(S)$. By the continuity of the angle function,
\[
\text{for all } j\in\mathbb{N},\quad
\theta_j(\bar{x})
=
\lim_{i\to\infty} \theta_j(x_{t_{i+1}})
\leq
\lim_{i\to\infty}\theta_{i}(x_{t_{i+1}})
=
\vartheta,
\]
where the last equality is due to \cref{eq:exactvartheta}. However, $(\beta_j)_{j\in\mathbb{N}}$ is a sequence of simple closed curves which is obtained by performing the left Dehn twist (i.e.\  right earthquake) arbitrarily many times with respect to $\alpha$, and hence $\lim_{j\to \infty}\theta_j(\bar x) = \pi$, and hence $\vartheta=\pi$. This contradicts the initial choice of $\vartheta\in(\frac{\pi}{2},\pi)$, and thus $\ell_\alpha$ tends to $0$ along $x_t$.
\end{proof}

\begin{remark}
It is possible to obtain the above result more directly by concretely specifying how long one should twist along a given curve $\beta_i$ before switching to $\beta_{i+1}$. 
\end{remark}

\subsection{Proof of \cref{thm:dist:boundary:wp}}\label{sec:proof:asym:dist:wp}

The upper bound in the statement of  \cref{thm:dist:boundary:wp} follows from the first inequality in \cref{prop:asymp:dist:upper}. For the lower bound, 
  let $\sigma:[0,1)\to \T(S)$ be an arbitrary piecewise $C^1$-path connecting $x$ to the boundary $\partial \overline{\T(S)}^{\mathrm{WP}}$. We subdivide $[0,1)$ into segments $[t_i,t_{i+1}]$ with $0=t_0<t_1<\cdots <t_n=0$ such that within each segment, the surfaces $\sigma(t)$, $t\in[t_i,t_{i+1}]$, share a common systole, which we refer to as $\alpha_i$. By \cref{lem:lowerbound:emetric}, the length of $\sigma$ with respect to the earthquake path is at least
\begin{align*}
&2\sum_{i} \int^{\max\{\ell_{\alpha_i}(\sigma(t_{i})),\ell_{\alpha_i}(\sigma(t_{i}))\}}_{\min \{\ell_{\alpha_i}(\sigma(t_{i})),\ell_{\alpha_i}(\sigma(t_{i}))\}}w(\ell)\ d\ell\\
=&2\sum_{i} \int^{\max\{\ell_{\mathrm{sys}}(\sigma(t_{i})),\ell_{\mathrm{sys}}(\sigma(t_{i}))\}}_{\min \{\ell_{\mathrm{sys}}(\sigma(t_{i})),\ell_{\mathrm{sys}}(\sigma(t_{i}))\}}w(\ell)\ d\ell \\
\geq& 2\int_0^{\ell_{\mathrm{sys}}(x)}\log \tfrac{1}{\ell}\ d\ell\\
=&2\ell_{\mathrm{sys}}(x)\log\frac{1}{\ell_{\mathrm{sys}}(x)}+2\ell_{\mathrm{sys}}(x)\\
\geq&2 \ell_{\mathrm{sys}}(x)\log\frac{1}{\ell_{\mathrm{sys}}(x)}.
\end{align*}
Finally, by \cref{prop:asymp:dist:upper}, we see that
\begin{eqnarray*}
\lim_{\ell_{\mathrm{sys}}(x)\to0}
\frac{d_e(x,\partial\overline{\T(S)})}{2\ell_{\mathrm{sys}}(x)\log\frac{1}{\ell_{\mathrm{sys}}(x)}}\leq  1.
\end{eqnarray*}
When combined with the lower bound established above, this yields the desired asymptotic distance, and this completes the proof.\qed


\newpage
\section{Earthquake metric completion}
\label{s:completion}

The incompleteness of the earthquake metric naturally leads one to draw conceptual comparisons with the Weil--Petersson metric. 
We recall that the completion of the Weil--Petersson metric of Teichm\"uller space has a rich associated theory:
 Teichm\"uller space completes to the augmented Teichm\"uller space \cite{abikoff1977degenerating}, and correspondingly,
the moduli space completes to its Deligne--Mumford compactification \cite{harvey1974chabauty,masur1976extension}.
What happens when one ``completes'' the earthquake metric? To rigorously answer this, we first need to explain what it means to complete a nonproper asymmetric metric. We give two approaches:
\begin{enumerate}
\item
we define a wholly novel notion of the completion of asymmetric metric spaces based on sequences which have finite ``\emph{distance-series}'';

\item
we take the completion with respect to a symmetrisation of the earthquake metric.
\end{enumerate}

The first approach is more intrinsic to the metric, but requires the development of a sufficiently geometrically motivated theory of completions for asymmetric metric spaces. The second approach is readily accessible using tools and language at our disposal, but is at the cost of the completion so-produced being more inherently tied to the symmetrised metric rather than the earthquake metric itself, and a priori it may suffer from the non-uniqueness of the choice of symmetrisation. We show that both methods yield the \emph{same} earthquake metric completion of Teichm\"uller space, and indeed that our two approaches produce completions which (topologically) coincide with the Weil-Petersson completion (\cref{thm:universalcompletion}).

\subsection{FD-completions of asymmetric metric spaces}
\label{sec:fdcompletion}

The standard theory of metric completion is founded on Cauchy sequences.
We observe that any Cauchy sequence $(x_n)$ in a (symmetric) metric space $(X,d)$ necessarily contains a subsequence $(x_{n_i})$ such that the series $\sum d(x_{n_i},x_{n_{i+1}})$, which we call a distance-series, is finite. We develop a novel theory of metric completion based on equivalence classes of sequences with finite distance-series, instead of  Cauchy sequences.

\begin{definition}[distance-series]
Given a sequence $(x_n)$ in an asymmetric metric space $(X,d)$, we call
\[
\sum_{i=1}^\infty
d(x_i,x_{i+1})
\]
the \emph{forward distance-series} for $(x_n)$. We likewise refer to $\sum_{i=1}^\infty d(x_{i+1},x_i)$ as the backward distance-series for $(x_n)$.
\end{definition}

\begin{definition}[FD-sequence] 
A sequence in $(X,d)$ is a \emph{forward FD-sequence} if its forward distance-series is finite. Define backward FD-sequences analogously.
\end{definition}

\begin{remark}
Unless otherwise specified, any FD-sequence mentioned henceforth without a specifying adjective should be taken as a forward FD-sequence.
\end{remark}

\begin{definition}[FD-completeness for asymmetric metrics]
\label{defn:FDcomplete}
We say that an asymmetric metric space $(X,d)$ is \emph{forward FD-complete} if for every forward FD-sequence $(x_n)$, there exists some $x\in X$ such that
\[
\lim_{n\to\infty} d(x_n,x)=0.
\]

\end{definition}

We next specify an equivalence relation on FD-sequences.

 \begin{definition}[interlacing]
Given two sequences $(x_i)$ and $(x'_j)$, we call a sequence $(\hat x_k)$ an \emph{interlacing}  of $(x_i)$ and $(x'_j)$ if
\begin{itemize}
\item
the subsequence of $(\hat x_k)$ comprised of elements from $(x_i)$ is infinite and has strictly increasing indices (as elements of $(x_i)$); and 
\item
the subsequence of $(\hat x_k)$ comprised of elements from $(x'_j)$ is also infinite and  has strictly increasing indices (as elements of $(x'_j)$).
\end{itemize}
\end{definition}

\begin{remark}
We can equivalently define interlacings to only admit subsequences which alternate between elements in $(x_i)$ and in $(x_j')$. This might be a simpler definition, but at the cost of occasional increase in the technicality of proofs (see, e.g., the proof of \cref{lem:infimalrepsequence}).
\end{remark}

\begin{definition}[FD-equivalence]
Two forward FD-sequences $(x_i)$ and $(x'_j)$ are \emph{forward FD-equivalent} if $(x_i)$ and $(x'_j)$ have an interlacing $(\hat x_k)$ which is a forward FD-sequence.  We similarly define backward FD-equivalence.
\end{definition}

\begin{remark}\label{rmk:FD:subsequence}
As a small but oft-used observation, any FD-sequence $(x_n)$ is FD-equivalent to all of its subsequences via the interlacing $(\hat{x}_n)=(x_n)$.
\end{remark}

\begin{lemma}\label{lem:equivrel}
The forward FD-equivalence is an equivalence relation on the set of forward FD-sequences. Likewise, the backward FD-equivalence is an equivalence relation on the set of backward FD-sequences.
\end{lemma}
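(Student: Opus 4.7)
The plan is to verify the three axioms of an equivalence relation — reflexivity, symmetry, transitivity — for the forward case; the backward case will follow by reversing distances. Reflexivity and symmetry are essentially formal. For reflexivity, given a forward FD-sequence $(x_n)$, I would exhibit the interlacing $(x_1, x_1, x_2, x_2, \ldots)$, labelling odd positions as the first copy and even positions as the second; both subsequences then coincide with $(x_n)$ and have strictly increasing indices, and the distance-series equals $\sum_n d(x_n, x_{n+1})$, which is finite by assumption. Symmetry is immediate: any interlacing of $(x_i)$ with $(x'_j)$ is also an interlacing of $(x'_j)$ with $(x_i)$, the labelling of the two subsequences simply being swapped.

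Transitivity is where the real work lies. Suppose $(x_i) \sim (y_j)$ via an FD-interlacing $(\hat a_k)$ and $(y_j) \sim (z_l)$ via an FD-interlacing $(\hat b_m)$. I would construct an FD-interlacing $(\hat c_n)$ of $(x_i)$ and $(z_l)$ by alternation, setting $\hat c_{2n-1} = x_{I_n}$ and $\hat c_{2n} = z_{L_n}$ with the indices $I_n, L_n$ chosen inductively. The key preparatory observation is that for any forward FD-sequence $(s_n)$ and each $p \in \mathbb{N}$ there exists a threshold $T_p$ such that $d(s_r, s_{r'}) < 2^{-p}$ for all $T_p \le r < r'$. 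Applied to the three forward FD-sequences $(\hat a_k)$, $(\hat b_m)$, and $(y_j)$ (the last being FD by hypothesis on our equivalence relation), this furnishes thresholds $K_p$, $M_p$, $N_p$, respectively.

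At step $n$ I would successively select: (i) $I_n > I_{n-1}$ with $x_{I_n}$ appearing in $(\hat a_k)$ at position $\ge K_n$; (ii) $J_n \ge N_n$ with $y_{J_n}$ appearing in $(\hat a_k)$ strictly after $x_{I_n}$; (iii) $J'_n \ge J_n$ with $y_{J'_n}$ appearing in $(\hat b_m)$ at position $\ge M_n$; (iv) $L_n > L_{n-1}$ with $z_{L_n}$ appearing in $(\hat b_m)$ strictly after $y_{J'_n}$. Each selection is possible because the relevant subsequence is infinite with both its host-positions and its originating-indices unbounded. The triangle inequality then delivers
\[
d(x_{I_n}, z_{L_n}) \le d(x_{I_n}, y_{J_n}) + d(y_{J_n}, y_{J'_n}) + d(y_{J'_n}, z_{L_n}) < 3 \cdot 2^{-n},
\]
with the three summands controlled by the tail-sums of $(\hat a_k)$, $(y_j)$, and $(\hat b_m)$ respectively (the middle one using $N_n \le J_n \le J'_n$ and the forward triangle inequality along $(y_j)$). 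A symmetric four-step selection between $z_{L_n}$ and $x_{I_{n+1}}$, with the roles of $(\hat a_k)$ and $(\hat b_m)$ reversed, will yield the matching bound $d(z_{L_n}, x_{I_{n+1}}) < 3 \cdot 2^{-n}$.

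The main obstacle — and the reason the argument is not a one-liner — is the asymmetry of $d$: every triangle-inequality chain must proceed strictly forward, which dictates the precise order of the four selections above and, in particular, forces $J_n \le J'_n$ for the $x \to z$ bridge (with the opposite inequality on the bridging $y$-indices required for the $z \to x$ bridge). Once the chains are correctly oriented, summing the geometric bound shows $\sum_n d(\hat c_n, \hat c_{n+1}) < \infty$, so $(\hat c_n)$ is a forward FD-interlacing of $(x_i)$ and $(z_l)$, establishing transitivity.
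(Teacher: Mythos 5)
Your proof is correct and follows essentially the same route as the paper: transitivity is established by constructing an alternating interlacing of the two outer sequences whose consecutive distances are bounded by forward-oriented triangle-inequality chains threaded through the intermediate sequence, using that sequence's own finite distance-series for the middle jump and the two given FD-interlacings for the outer jumps. The only difference is bookkeeping --- your explicit $2^{-n}$ thresholds versus the paper's bound by the sum of the three convergent series --- which does not change the argument.
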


\begin{proof}
Reflexivity and symmetry are clear. For transitivity, let $(x_i)$, $(x'_i)$, and $(x''_i)$ be arbitrary forward FD-sequences such that 
  $(x_i)$ and $(x'_i)$ are forward FD-equivalent, and
$(x'_i)$ and $(x''_i)$ are forward FD-equivalent.
Therefore, there exist subindices ${i_k}$ and ${j_k}$ satisfying
 \begin{align*}
 		i_1<i_3<i_5<\cdots;~ i_2<i_4<i_6<\cdots;~ j_1<j_3<j_5<\cdots;~ j_2<j_4<j_6<\cdots.
 \end{align*}
 	 such that both 
\begin{align*}
x_{i_1}, x_{i_2}',x_{i_3},x_{i_4}',\cdots
\quad\text{ and }\quad
x'_{j_1}, x''_{j_2},x'_{j_3},x_{j_4}'',\cdots
\end{align*}
 	are forward FD-sequences. We construct a forward  FD-interlacing of $(x_i)$ and $(x''_i)$ as follows.
	Start with $\hat x_1=x_{i_1}$. Take an odd index $k_1$ such that $j_{k_1}>i_2$ and set $\hat x_2:=x''_{j_{k_1+1}}$. Then 
 	\begin{equation*}
 		d(\hat x_1,\hat x_2)
 		\leq d(x_{i_1},x'_{i_2})+d(x'_{i_2},x'_{j_{k_1}})+d(x'_{j_{k_1}},x''_{j_{k_1+1}}).
 	\end{equation*}
 	Next, let $k_2$ be an even index such that $i_{k_2}>j_{k_1+2}$, and set $\hat x_3:=x_{i_{k_2+1}}$.
 	Then  
 	\begin{equation*}
 		d(\hat x_2,\hat x_3)
 		\leq d_e(x''_{j_{k_1+1}},x'_{j_{k_1+2}})+d(x'_{j_{k_1+2}},x'_{i_{k_2}})+d(x'_{i_{k_2}},x_{i_{k_2+1}}).
 	\end{equation*}
 	Inductively, we construct an interlacing  $\hat x_i$ of $(x_i)$ and $(x''_i)$ such that
 	\begin{align*}
 		\sum_{i\geq1} d(\hat x_i,\hat x_{i+1})
 		\leq 
		&\sum_{i\geq1} d( x'_i, x'_{i+1})
 		+\sum_{k\geq 1} [d( x_{i_{2k-1}}, x'_{i_{2k}})+d(x'_{i_{2k}},x_{i_{2k+1}})]\\
 		& +\sum_{k\geq 1} [d( x'_{j_{2k-1}}, x''_{j_{2k}})+d(x''_{j_{2k}},x'_{j_{2k+1}})] <\infty,
 	\end{align*} 
	which means that it is forward FD.
 	Hence $(x_i)$ and $(x''_i)$ are forward FD-equivalent. This shows that forward FD-equivalence is an equivalence relation. The proof for backward FD-equivalence being an equivalence relation is essentially the same.
 \end{proof}

\begin{definition}[FD-completion of asymmetric metric spaces]
The \emph{forward FD-completion} of an asymmetric metric space $(X,d)$ is defined as the set of forward FD-equivalence classes of forward FD-sequences. We denote this set by $\overline{X}$, and write each FD-equivalence class of an FD-sequence $(x_n)$ in the form $[x_n]$. We define the backward FD-completion analogously, and denote it by $\overline{X}^\#$.
\end{definition}

\begin{remark}
At present, $\overline{X}$ and $\overline{X}^{\#}$ are merely sets. We (asymmetrically) metrise them in Appendix~B, \cref{thm:FDcomplete}.
\end{remark}

\begin{proposition}[natural inclusion]
\label{lem:natinclude}
The map $\iota: X\to\overline{X}$ sending  $x\in X$ to the FD-equivalence class represented by the constant FD-sequence $(x)_{n\in\mathbb{N}}$ is an injection. In other words, $\overline{X}$ contains a copy of $X$ as the set of FD-equivalence classes represented by constant FD-sequences. This is also true for $\overline{X}^\#$, and we denote the inclusion by $\iota^\#$.
\end{proposition}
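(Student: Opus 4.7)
The plan is to first verify that constant sequences really are forward FD-sequences so that $\iota$ is well-defined, and then to extract a contradiction from assuming that two constant sequences with distinct values are forward FD-equivalent. The forward distance-series of $(x,x,x,\ldots)$ is $\sum_{i\geq 1} d(x,x)=0<\infty$ by axiom~(1) of \cref{defn:asymmetric}, so constant sequences land in the domain of the equivalence relation. A fortiori $\iota$ is a well-defined map.

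For injectivity, suppose $\iota(x)=\iota(y)$, i.e. the constant sequences $(x_n=x)_{n\in\mathbb{N}}$ and $(y_n'=y)_{n\in\mathbb{N}}$ are forward FD-equivalent. Unpacking the definition, there is an interlacing $(\hat x_k)$ of these two sequences which is itself forward FD. By the definition of interlacing, $(\hat x_k)$ contains infinitely many terms equal to $x$ and infinitely many terms equal to $y$.

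The key observation is that this forces infinitely many ``transitions,'' i.e. infinitely many indices $k$ with $\hat x_k\neq \hat x_{k+1}$. Indeed, if only finitely many such $k$ existed, the sequence would be eventually constant, contradicting the fact that both $x$ and $y$ occur infinitely often. At each transition, $d(\hat x_k,\hat x_{k+1})$ equals either $d(x,y)$ or $d(y,x)$; and again by infinite occurrence of each value, one can refine this to conclude that infinitely many transitions are of type $(x\to y)$ and infinitely many of type $(y\to x)$. Thus the forward distance-series of $(\hat x_k)$ dominates $\infty\cdot d(x,y)$ as well as $\infty\cdot d(y,x)$. Finiteness of the series then forces $d(x,y)=d(y,x)=0$, and axiom~(2) of \cref{defn:asymmetric} yields $x=y$.

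The same argument applies verbatim to $\iota^\#:X\to \overline{X}^\#$, since a constant sequence is simultaneously a forward and backward FD-sequence, and the infinite-transition argument only uses that the metric satisfies axioms (1) and (2) of \cref{defn:asymmetric}. No step here looks like a genuine obstacle; the only subtle point is carefully unwinding the interlacing definition to guarantee infinitely many transitions of both orientations, which is a finite combinatorial argument.
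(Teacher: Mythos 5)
Your proof is correct and follows essentially the same route as the paper: both verify that constant sequences are FD-sequences and then deduce from a finite forward distance-series of an interlacing that $d(x,y)=d(y,x)=0$, whence $x=y$ by the axioms of an asymmetric metric. Your explicit combinatorial count of transitions simply fills in the detail the paper leaves implicit, since interlacings need not alternate.
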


\begin{proof}
The map $\iota$ is well defined because constant sequences are FD-sequences. For the injectivity, consider two constant FD-sequences $(x)$ and $(y)$ which are FD-equivalent. The existence of a interlacing means that
\[
0=\lim_{n\to\infty}d(x,y)=d(x,y)
\quad\text{and}\quad
0=\lim_{n\to\infty}d(y,x)=d(y,x).
\]
By the axioms of asymmetric metric spaces, we see that $x=y$, which is the required injectivity. 
\end{proof}

We use \cref{lem:natinclude} to regard $X$ as a subset of $\overline{X}$, and this allows us to speak of extensions of functions and metrics.

\begin{definition}[extensions of functions and metrics]
Consider a function $f:(X,d_X)\to (Y,d_Y)$, and let $\iota_X:X\to\overline{X}$ and $\iota_Y:Y\to\overline{Y}$ denote the respective natural inclusion maps for $X$ and $Y$. We say that $\phi$ is an \emph{extension} of $f$ if $\iota_Y\circ f = \phi\circ\iota_X$, that is, if it satisfies the following commutative diagram:
\begin{center}$\begin{CD}
X 			@>f>> 	Y\\
@VV\iota_X V			@VV\iota_Y V\\
\overline{X}	@>\phi>>	\overline{Y}
\end{CD}$
\end{center}
Correspondingly, an asymmetric metric $D$ on $\overline{X}$ or $\overline{X}^{\#}$ is a \emph{metric extension} of $d_X$ if $D\circ (\iota_X\times\iota_X)=d_X$. 

\end{definition}

Having only looked at the FD-completion from a purely set-theoretic point of view, there are indubitably many questions as to why this construction is ``natural'' and if it merits being dubbed a ``completion''. In response, we show in \cref{appendix:FD} that the FD-completions $\overline{X}$ and $\overline{X}^\#$ of an asymmetric metric space $(X,d)$ satisfy the following:

\begin{enumerate}
\item
$\overline{X}$ is naturally metrised via an asymmetric metric $\bar{d}$ which extends the asymmetric metric $d$ (\cref{thm:asymmetricmetric} and \cref{prop:natisoinclude}). Similarly, $\overline{X}^\#$ is naturally metrised via an asymmetric metric $\bar{d}^\#$ which extends the reverse metric $\bar{d}^\#$ on $X$ (\cref{rmk:backwardmetric});

\item
the natural inclusion map $\iota:(X,d)\to(\overline{X},\bar{d})$ is an isometric embedding with forward-dense image (\cref{prop:natisoinclude});

\item
forward FD-completions are forward FD-complete (\cref{thm:FDcomplete});

\item
FD-completions generalise the notion of Cauchy completion when $d$ is symmetric (\cref{thm:Fd=cauchy});

\item
the process of taking the forward FD-completion of an asymmetric metric space can be naturally promoted to an endofunctor on the category of asymmetric metric spaces with Lipschitz morphisms (see \cref{defn:aml}).
\end{enumerate}

\subsection{FD-sequences in the earthquake metric}

Having defined FD-completions, we move on to establish the properties of the forward and backward FD-completions of $(\T(S),d_e)$. We employ the following notation throughout:
\begin{itemize}
\item
$\overline{\T(S)}^\mathrm{WP}$ denotes the Weil--Petersson completion of $\T(S)$;

\item 
$\overline{\T(S)}$ denotes the forward FD-completion of $\T(S)$ with respect to the left earthquake metric;

\item
$\overline{\T(S)}^{\#}$ denotes the backward FD-completion of $\T(S)$ with respect to the left earthquake metric.

\end{itemize}

\begin{remark}
Thanks to \cref{rmk:rightearthquake}, $\overline{\T(S)}^{\#}$ (regarded purely as a set) is identical to the forward FD-completion of $\T(S)$ with respect to the right earthquake metric. In fact, (see \cref{rmk:backwardmetric}) this identity applies also at the level of the metric extension: the backward metric extension of the left earthquake metric is definitionally equal to the forward metric extension of the right earthquake metric.
\end{remark}

Many of the arguments in the remainder of this section depend on understanding the limiting behaviour of FD-sequences in $(\T(S),d_e)$. Crucially, the length spectrum for an FD-sequence with respect to $d_e$ stabilises, and this can be characterised in terms of convergence in the Weil--Petersson metric.

\begin{lemma}
\label{lem:fdiswp}
Any (forward or backward) FD-sequence $(x_n)$ with respect to the earthquake metric satisfies the following:
\begin{enumerate}[(i)]
\item
for every simple closed curve $\alpha$, $(\ell_\alpha(x_n))_{n\in\mathbb{N}}$ converges in $[0,\infty]$; 
\item
$(x_n)$ converges in $\overline{\T(S)}^{\mathrm{WP}}$, and hence is a Cauchy sequence with respect to the Weil--Petersson metric.
\end{enumerate}
\end{lemma}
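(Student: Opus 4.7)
My plan is to first establish (i) using the lower bound on $d_e$ from Lemma \ref{lem:lowerbound:emetric}, and then to leverage (i) together with the characterisation of the Weil--Petersson completion as augmented Teichm\"uller space to deduce (ii). Since the lower bound in Lemma \ref{lem:lowerbound:emetric} is symmetric in its two arguments, the same argument will handle both forward and backward FD-sequences.

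For (i), I would fix a simple closed curve $\alpha$ and introduce the primitive $F(t) := \int_0^t w(\ell)\, d\ell$. Since $w(\ell)\sim \log(2/\ell)$ as $\ell\to 0^+$ and $w(\ell)\sim 2e^{-\ell/2}$ as $\ell\to\infty$, the function $F$ extends to a continuous, strictly increasing homeomorphism $F\colon [0,\infty]\to[0,F(\infty)]$ with $F(\infty)<\infty$. Lemma \ref{lem:lowerbound:emetric} then gives
\[
d_e(x_n, x_{n+1}) \geq 2\bigl|F(\ell_\alpha(x_{n+1}))-F(\ell_\alpha(x_n))\bigr|,
\]
so the summability of $\sum d_e(x_n,x_{n+1})$ forces $\sum|F(\ell_\alpha(x_{n+1}))-F(\ell_\alpha(x_n))|<\infty$. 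The sequence $(F(\ell_\alpha(x_n)))_n$ is therefore Cauchy in $\mathbb{R}$ and converges, and applying the continuous $F^{-1}$ yields convergence of $\ell_\alpha(x_n)$ in $[0,\infty]$.

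For (ii), I set $\sigma:=\{\alpha\,:\,\lim_n\ell_\alpha(x_n)=0\}$. I expect $\sigma$ to be a multicurve by the collar lemma: two transversely intersecting curves cannot simultaneously pinch since $\sinh(\ell_\alpha(x)/2)\sinh(\ell_\beta(x)/2)\geq 1$ whenever $i(\alpha,\beta)>0$. The length data from (i) then identifies a candidate limit point $y$ in the stratum $\T_\sigma$ of augmented Teichm\"uller space, with $\ell_\gamma(y):=\lim_n\ell_\gamma(x_n)$ for curves $\gamma$ disjoint from $\sigma$. To upgrade this to convergence $x_n\to y$ in $\overline{\T(S)}^{\mathrm{WP}}$, I would invoke the standard characterisation of convergence in augmented Teichm\"uller space via Fenchel--Nielsen coordinates on each component of $S\setminus\sigma$.

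The hardest step will be ensuring that the twist parameters of $(x_n)$ along non-pinched pants curves converge, as (i) alone does not suffice: an orbit $(T^n_\gamma(x_0))$ of iterated Dehn twists has all length functions converge in $[0,\infty]$ yet fails to converge in $\overline{\T(S)}^{\mathrm{WP}}$. Such pathological sequences are excluded by the FD-condition---$d_e(T^n_\gamma(x_0),T^{n+1}_\gamma(x_0))$ is constant, so the distance series diverges---and a careful argument using auxiliary curves transverse to the pants curves of $S\setminus\sigma$, combined with the summable bounds on $d_e(x_n,x_{n+1})$, should pin down both length and twist coordinates of the limit in each component.
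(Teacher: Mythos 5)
Your part~(i) is correct and is in fact a cleaner route than the paper's: the paper argues by contradiction, extracting $\liminf$/$\limsup$ subsequences and applying \cref{lem:lowerbound:emetric} to them, whereas you integrate the width function into the primitive $F(t)=\int_0^t w(\ell)\,\mathrm{d}\ell$, note $F(\infty)<\infty$, and read off from $d_e(x_n,x_{n+1})\geq 2|F(\ell_\alpha(x_{n+1}))-F(\ell_\alpha(x_n))|$ that $(F(\ell_\alpha(x_n)))$ has absolutely summable increments, hence converges, and then pull back by the homeomorphism $F^{-1}\colon[0,F(\infty)]\to[0,\infty]$. Both arguments apply verbatim to forward and backward FD-sequences since the lower bound is symmetric.

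For part~(ii), your overall strategy (pinching multicurve $\sigma$, candidate limit in a stratum, convergence criterion in Fenchel--Nielsen coordinates) is the same as the paper's, but there is a genuine gap at exactly the step you flag as hardest: you never actually control the twist parameters, writing only that auxiliary transverse curves ``should pin down'' the coordinates. The Dehn-twist-orbit example shows that (i) alone is insufficient, but excluding that one example is not an argument. Concretely, what is missing is a \emph{uniform upper bound} on the lengths of the transverse auxiliary curves: statement~(i) only gives convergence in $[0,\infty]$, and if the shortest curve crossing a non-pinched pants curve $\gamma$ had length tending to $\infty$ along $(x_n)$, the twist about $\gamma$ would be uncontrolled and the sequence could still escape every compact set of the stratum. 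The paper closes this quantitatively with the FD-condition and \cref{lem:lowerbound:emetric}, in two rounds: first choose $N_1$ with $\sum_{k\geq N_1}d_e(x_k,x_{k+1})<\int_B^{2B}w(\ell)\,\mathrm{d}\ell$ ($B$ the Bers constant) so that a short pants decomposition $\Gamma$ of $x_{N_1}$ satisfies $\ell_\gamma(x_n)<2B$ for all later $n$ (otherwise \cref{lem:lowerbound:emetric} would force $d_e(x_{N_1},x_n)\geq 2\int_B^{2B}w$); then, for each non-pinched $\gamma\in\Gamma$, take \emph{two} shortest curves $\alpha,\beta$ crossing $\gamma$ minimally, disjoint from $\Gamma\setminus\{\gamma\}$ and related by a (half-)Dehn twist about $\gamma$, of length at most some $L$ at a later time $N_2$ chosen so that the tail sum is less than $\int_L^{2L}w(\ell)\,\mathrm{d}\ell$; the same argument keeps $\ell_\alpha(x_n),\ell_\beta(x_n)\leq 2L$ for all $n>N_2$, so by (i) they converge to finite limits (positive, since they cross $\gamma$ whose length is bounded away from $0$), and the limiting lengths of $\gamma,\alpha,\beta$ determine the limiting length-and-twist data via the Fricke--Klein embedding. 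Without this quantitative use of the tail of the distance-series, your proof of (ii) does not go through; your $\sigma$-is-a-multicurve observation via the collar lemma is fine but is not the issue.
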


\begin{proof}
We first note that it suffices to only deal with forward FD-sequences.

(i) We assume that $(x_n)$ is a forward FD-sequence, and verify the convergence of $\ell_\alpha(x_n)$. 
Consider two subsequences
\begin{itemize}
\item
$(x_{n_i})$ where $\ell_{\alpha}(x_{n_i})\to \delta=\liminf_{n}\ell_{\alpha}(x_n)\in[0,\infty]$, and
\item
$(x_{n_j'})$ where $\ell_{\alpha}(x_{n_j'})\to \delta'=\limsup_{n}\ell_{\alpha}(x_n)\in[0,\infty]$.
\end{itemize}
If the sequence $(\ell_{\alpha}(x_n))$ diverges in $[0,\infty]$, then $\delta$ is strictly smaller than $\delta'$,  hence is finite. Let $[a,b]$ be any nontrivial closed interval contained in $(\delta,\delta')$. By \cref{lem:lowerbound:emetric}, there exists $M$ such that for any $i,j>M$,
\begin{align}\label{eq:fd:sequence}
d_e({x_{n_i}},x_{n_j'})\geq 2\int^{b}_{a} w(\ell)\ \mathrm{d}\ell>0,
\end{align}
	 	
On the other hand, the assumption that $(x_n)$ is an FD-sequence implies that for any $m>n$,
\begin{align*}
d_e(x_n,x_m)
\leq
\sum_{k=n}^\infty d_e(x_k,x_{k+1})\to 0,\quad\text{as }n\to\infty.
\end{align*}
This contradicts \eqref{eq:fd:sequence}, and hence $\ell_\alpha(x_n)$ converges for every simple closed curve $\alpha$.

(ii) We now show that $(x_n)$ is a Cauchy sequence with respect to the Weil--Petersson metric. Let $B$ be the Bers constant, which depends only on the topological type of $S$, and let $N_1$ be a natural number such that  
\begin{align}\label{eq:N:s'}
\sum_{k\geq  N_1} d_e(x_k,x_{k+1})<\int_{ B}^{2 B}w(\ell)\;\mathrm{d}\ell.
\end{align}
Consider the surface $x_{N_1}$, and let $\Gamma$ be a pants decomposition of $x_{N_1}$ comprised of geodesics of length at most $B$.  For any $\gamma\in \Gamma$, we can see that 
\begin{align}\label{eq:N:2B'}
\ell_\gamma(x_n)< 2 B, \quad \text{for all }n>N_1.
\end{align}
Otherwise, by \cref{lem:lowerbound:emetric}, 
\begin{align*}
d_e(x_{N_1}, x_n)\geq 2 \int_{\ell_\gamma(x_{N_1})}^{\ell_\gamma(x_{n})}w(\ell)\;\mathrm{d}\ell \geq 2 \int_{B}^{2 B}w(\ell)\;\mathrm{d}\ell,
\end{align*}  
which contradicts \cref{eq:N:s'}. Combining \eqref{eq:N:2B'} with part (i) of this lemma, we see that $\lim\limits_{n\to\infty}\ell_\gamma(x_n)\in [0, 2 B)$ exists for every $\gamma\in \Gamma$. In particular, $\Gamma$ must contain all the \emph{pinching curves} of $(x_n)$, whose collection we denote by $P$.
If $\Gamma=P$, the sequence $(x_n)$ converges to a  single point in the augmented Teichm\"{u}ller space with respect to the Weil--Petersson metric and we are done.

Suppose otherwise that $\Gamma$ properly contains $P$.
Define $\mathbf{s}:=\inf\{\ell_\gamma(x_i)\mid \gamma\in\Gamma\setminus P,\  i\in\mathbb{N}\}$.  
We see that $\mathbf{s}>0$ because there are only finitely many curves in $\Gamma\setminus P$ and none of them are pinched to length $0$ as $i\to\infty$. 
Any two shortest curves on $x_i$ which minimally intersect $\gamma$ without intersecting any other curve in $\Gamma$ are necessarily related by a Dehn twist or a half-Dehn twist with respect to $\gamma$. An upper bound for the lengths of such pairs of curves for all $\gamma\in \Gamma\setminus P$ on all $x_i$, $i\in\mathbb{N}$, exists since $\mathbf{s}>0$ and \cref{eq:N:2B'}.
Let $N_2$ be a constant such that
\begin{align*}
\sum_{k\geq  N_2} d_e(x_k,x_{k+1}) < \int_{L}^{2L}w(\ell)\;\mathrm{d}\ell. 
\end{align*}
Given an arbitrary $\gamma\in \Gamma\setminus P$, let $\alpha$ and $\beta$ denote two shortest curves on $x_{N_2}$ which minimally intersect $\gamma$ and are disjoint from other curves in $\Gamma$. We know that $\ell_\alpha(x_{N_2}),\ell_\beta(x_{N_2})\leq L$ by our definition of $L$. Applying the same argument as for the proof of \cref{eq:N:2B'}, we see that
\begin{align*}
\ell_{\alpha}(x_i),\ell_{\beta}(x_i)\leq  2L, \quad\text{for all }i> N_2.
\end{align*}
Combined with part (i) of this lemma, the lengths of $\alpha$ and $\beta$ converge in $(0,2L)$.
This also shows that Fenchel--Nielsen twist parameter for $\gamma$ also converges as can be seen by considering the Fricke--Klein embedding of Teichm\"{u}ller space (see, e.g., \cite[Theorem~3.12]{IT}). 
Since this holds for all $\gamma\in\Gamma\setminus P$, the criterion of convergence in the Weil--Petersson completion (see, e.g., the characterisation immediately prior to Remark~5.1 in \cite{mondello_convergence}) tells us that $(x_n)$ is a convergent sequence in $\overline{\T(S)}^{\mathrm{WP}}$, and hence is Cauchy.
\end{proof}

\begin{corollary}
\label{cauchy eq}
Let $(x_n)$ and $(x_n')$ be two FD-equivalent FD-sequences in $(\T(S), d_e)$.
Then they are Cauchy equivalent with respect to the Weil--Petersson metric, and moreover we have $\underset{n\to\infty}{\lim}d_e(x_n, x'_n)\to 0$.
\end{corollary}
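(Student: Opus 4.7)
The plan is to exploit \cref{lem:fdiswp}, which guarantees WP-convergence of any FD-sequence, together with the inequality $d_e \leq C_1 \dwp$ from \cref{cor:e<wp}. By the definition of forward FD-equivalence, there exists an interlacing $(\hat{x}_k)$ of $(x_n)$ and $(x_n')$ which is itself a forward FD-sequence. Applying \cref{lem:fdiswp} to each of the three forward FD-sequences $(x_n)$, $(x_n')$, and $(\hat{x}_k)$, we obtain WP-limits $\bar{x}, \bar{x}', \hat{x} \in \overline{\T(S)}^{\mathrm{WP}}$, respectively.

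I would then identify these three limits. By the definition of interlacing, $(\hat x_k)$ contains an infinite subsequence of $(x_n)$ with strictly increasing indices; viewed as a subsequence of the WP-convergent sequence $(\hat{x}_k)$, it must tend to $\hat x$, whereas viewed as a subsequence of $(x_n)$, it must tend to $\bar x$. Hence $\bar x = \hat x$, and symmetrically $\bar{x}' = \hat x$, so $\bar x = \bar{x}'$. Consequently,
\[
\dwp(x_n, x_n') \leq \dwp(x_n, \bar{x}) + \dwp(\bar{x}, x_n') \to 0,
\]
which establishes the first claim, namely Cauchy-equivalence with respect to the Weil--Petersson metric. The second claim is then immediate from \cref{cor:e<wp}:
\[
d_e(x_n, x_n') \leq C_1 \dwp(x_n, x_n') \to 0.
\]

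There is no substantial obstacle: all the heavy lifting has already been carried out in \cref{lem:fdiswp} (promoting the forward FD-condition to WP-Cauchyness) and in the norm comparison \cref{cor:e<wp}. The only conceptual subtlety is the passage from an \emph{interlacing}, which contains \emph{infinite subsequences} of both $(x_n)$ and $(x_n')$ but not necessarily every term, to the identification of WP-limits; this is resolved by the elementary fact that every subsequence of a WP-convergent sequence shares its limit.
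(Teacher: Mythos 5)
Your argument is correct and is essentially the paper's own proof: both take the FD-interlacing guaranteed by FD-equivalence, apply \cref{lem:fdiswp} to all three sequences, identify the Weil--Petersson limits via the common infinite subsequences, and conclude with \cref{cor:e<wp}. Your only addition is spelling out explicitly why the limits coincide (subsequences of a convergent sequence share its limit), which the paper states more briefly.
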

\begin{proof}
By \cref{lem:fdiswp}, both $(x_n)$ and $(x_n')$ converge in $\overline{\T(S)}^{\mathrm{WP}}$, whose limits we denote by $x$ and $x'$ respectively.
Since $(x_n)$ and $(x_n')$ are FD-equivalent, there is an FD-interlacing $(y_n)$ between them.
Since $(y_n)$ itself is also an FD-sequence, it converges to a point in $y$ in $\overline{\T(S)}^{\mathrm{WP}}$.
Since $(y_n)$ contains infinitely many terms in both $(x_n)$ and $(x_n')$, we have $x=y=x'$.
Therefore, $(x_n)$ and $(x_n')$ are Cauchy equivalent, and $\underset{n\to \infty}{\lim}d_\mathrm{WP}(x_n, x_n')=0$.
By \cref{cor:e<wp}, this implies also $\underset{n\to \infty}{\lim}d_e(x_n, x_n')=0$
\end{proof}

\begin{lemma}
\label{lem:fd-equi:wp-equi}
Any two Weil--Petersson metric Cauchy sequences $(x_n)$ and $(y_n)$ satisfying $\underset{n\to\infty}{\lim}d_e(x_n,y_n)=0$ must be Cauchy equivalent with respect to the Weil--Petersson metric.
\end{lemma}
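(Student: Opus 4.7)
The plan is to show that both sequences converge to the same point in the Weil--Petersson completion $\overline{\T(S)}^{\mathrm{WP}}$, after which Cauchy equivalence is immediate: denoting the respective Weil--Petersson limits by $x_\infty,y_\infty$ in the augmented Teichm\"uller space, $x_\infty=y_\infty$ gives
\[
d_{\mathrm{WP}}(x_n,y_n)\leq d_{\mathrm{WP}}(x_n,x_\infty)+d_{\mathrm{WP}}(y_\infty,y_n)\to 0.
\]

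The heart of the argument is to show that for every simple closed curve $\alpha$ on $S$, the limits $\lim_n \ell_\alpha(x_n)$ and $\lim_n \ell_\alpha(y_n)$ coincide as elements of $[0,\infty]$. These limits exist because length functions of simple closed curves extend continuously to $[0,\infty]$-valued functions on the strata of the augmented Teichm\"uller space (this is essentially the ingredient invoked in the proof of \cref{lem:fdiswp}). Suppose for contradiction that the limits differ; without loss of generality, $A:=\lim_n \ell_\alpha(x_n)<\lim_n \ell_\alpha(y_n)=:B$. Choose $a,b$ with $A<a<b<B$, where $b$ is taken to be any fixed finite number exceeding $a$ when $B=\infty$. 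For $n$ sufficiently large, $\ell_\alpha(x_n)<a$ and $\ell_\alpha(y_n)>b$, so \cref{lem:lowerbound:emetric} yields
\[
d_e(x_n,y_n)\geq 2\int_a^b w(\ell)\,\mathrm{d}\ell>0,
\]
a positive constant independent of $n$, contradicting $d_e(x_n,y_n)\to 0$.

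Once matching length limits are established, both $x_\infty$ and $y_\infty$ pinch the same multicurve $\sigma:=\{\alpha\mid \lim_n\ell_\alpha(x_n)=0\}$ and hence lie in the same stratum of $\overline{\T(S)}^{\mathrm{WP}}$; this stratum is a product of Teichm\"uller spaces of the components of $S\setminus\sigma$. The matching length data on simple closed curves disjoint from $\sigma$ (viewed as curves on the components) combined with Thurston's embedding of each component's Teichm\"uller space into $\mathbb{R}_{>0}^{\mathcal{S}}$ via length functions then forces $x_\infty=y_\infty$. The principal technical issue is handling the case $B=\infty$ in the contradiction step, which is cleanly resolved by the positivity of $w$ on $(0,\infty)$: any finite subinterval $[a,b]\subset(A,\infty)$ already yields a positive lower bound on $d_e(x_n,y_n)$.
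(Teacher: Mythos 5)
Your proposal is correct and follows essentially the same route as the paper: both proofs pass to the Weil--Petersson limits and derive a contradiction from \cref{lem:lowerbound:emetric}, the only difference being that the paper applies that lemma directly to a simple closed curve whose length separates the two (assumed distinct) limit points, while you apply it curve-by-curve along the sequences and then recover equality of the limits via the strata of the augmented Teichm\"uller space and Thurston's length-function embedding. Your extra stratum/embedding step simply makes explicit the separation-by-lengths fact that the paper's proof uses implicitly, so no gap arises.
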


\begin{proof}
Since $(x_n)$ and $(y_n)$ are Cauchy sequences with respect to the Weil-Petersson metric, they respectively converge to points $x,y\in\overline{\T(S)}^{\mathrm{WP}}$. Assume $x\neq y$. Then, there must be some simple closed curve $\gamma$ on $S$ such that $|\ell_{\gamma}(x)-\ell_{\gamma}(y)|>\delta$ for some $\delta\in(0,\infty]$, where $\delta=\infty$ means that exactly one of $\ell_\gamma(x)$ and $\ell_\gamma(y)$ is infinity. 
Hence, by Lemma \ref{lem:lowerbound:emetric}, 
\[
\limsup_{n\to\infty}
d_e(x_n,y_n)
\geq
\int_{\min\{\ell_{\gamma}(x),\ell_{\gamma}(y)\}}^{\min\{\ell_{\gamma}(x),\ell_{\gamma}(y)\}+\delta} w(\ell)\ \mathrm{d}\ell>0,
\]
which contradicts the assumption that $\lim\limits_{n\to\infty} d_e(x_n,y_n)=0$. Thus, the limits $x$ and $y$ must be equal, therefore $(x_n)$ and $(y_n)$ must be Cauchy equivalent.
\end{proof}



\subsection{The metric geometry of $\overline{\T(S)}$.}

The goal of this subsection is to show that $\overline{\T(S)}$ inherits a unique continuous extension of $d_e$ which is also a Busemannian metric (see \cref{thm:!extension}). 

\begin{theorem}
\label{thm:metric:extension}
The function $\bar{d_e}:\overline{\T(S)}\times\overline{\T(S)}\to[0,\infty)$ given by
 \begin{align*}
\bar{d_e}([x_n],[y_n])
:=
\lim_{n\to\infty} d_e(x_n,y_n),
\end{align*}
where $(x_n)$ and $(y_n)$ are arbitrary FD-sequences respectively representing $[x_n]$ and $[y_n]$, is well defined and defines an asymmetric metric on $\overline{\T(S)}$.
\end{theorem}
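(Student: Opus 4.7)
The plan is to establish the four requirements in turn: (a) existence of the limit, (b) independence of the choice of representatives, (c) finiteness and the axioms of an asymmetric metric.

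For (a), fix FD-representatives $(x_n)$ and $(y_n)$. The standard triangle inequality manipulation for an asymmetric metric gives, for all $m > n$,
\[
|d_e(x_n,y_n)-d_e(x_m,y_m)|
\leq \max\{d_e(x_n,x_m)+d_e(y_m,y_n),\; d_e(x_m,x_n)+d_e(y_n,y_m)\}.
\]
The ``forward'' terms $d_e(x_n,x_m)$ and $d_e(y_n,y_m)$ are bounded by the tails of the forward distance-series and therefore go to $0$. The ``backward'' terms $d_e(x_m,x_n)$ and $d_e(y_m,y_n)$ do not come from the FD property directly; here I will invoke \cref{lem:fdiswp}(ii), which tells us that $(x_n)$ and $(y_n)$ are Cauchy for $d_{\mathrm{WP}}$, and then \cref{cor:e<wp}, which bounds $d_e \leq C_1 d_{\mathrm{WP}}$. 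Hence the backward terms also tend to $0$, so $(d_e(x_n,y_n))$ is Cauchy in $\mathbb{R}$ and converges to a finite limit.

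For (b), suppose $(x_n)\sim(x_n')$ and $(y_n)\sim(y_n')$. By \cref{cauchy eq}, $d_e(x_n,x_n')\to0$ and $d_e(y_n,y_n')\to0$, and $(x_n),(x_n')$ are WP-Cauchy equivalent (likewise for the $y$'s). The triangle inequality gives
\[
d_e(x_n',y_n') \leq d_e(x_n',x_n)+d_e(x_n,y_n)+d_e(y_n,y_n'),
\]
together with the reverse. The term $d_e(x_n',x_n)$ is controlled by $C_1 d_{\mathrm{WP}}(x_n,x_n')\to0$ via \cref{cor:e<wp} and the WP-Cauchy equivalence, and similarly for the $y$'s. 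Taking the limit on both sides therefore yields $\lim d_e(x_n',y_n')=\lim d_e(x_n,y_n)$, as required.

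For (c), finiteness was already shown in (a). Reflexivity $\bar d_e([x_n],[x_n])=0$ is immediate from the constant sequence $d_e(x_n,x_n)=0$. The triangle inequality
\[
\bar d_e([x_n],[z_n])\leq \bar d_e([x_n],[y_n])+\bar d_e([y_n],[z_n])
\]
descends trivially from the triangle inequality for $d_e$ by taking limits along common index sets. The subtler axiom is non-degeneracy: suppose $\bar d_e([x_n],[y_n])=\bar d_e([y_n],[x_n])=0$, so both $d_e(x_n,y_n)\to 0$ and $d_e(y_n,x_n)\to 0$. I will produce an FD-interlacing by extracting indices $n_1<n_2<\ldots$ such that
\[
d_e(x_{n_k},x_{n_{k+1}}),\; d_e(y_{n_k},y_{n_{k+1}}),\; d_e(x_{n_k},y_{n_k}),\; d_e(y_{n_k},x_{n_k}) \leq 2^{-k},
\]
which is possible by the FD property of both sequences combined with our two assumed limits. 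The sequence $\hat x=(x_{n_1},y_{n_1},x_{n_2},y_{n_2},\ldots)$ is then an interlacing, and one checks by the triangle inequality that $\sum d_e(\hat x_k,\hat x_{k+1})<\infty$. Via \cref{rmk:FD:subsequence} and the transitivity established in \cref{lem:equivrel}, this shows $(x_n)$ and $(y_n)$ are FD-equivalent, so $[x_n]=[y_n]$.

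The main obstacle throughout is the asymmetry of $d_e$: the FD hypothesis only controls \emph{forward} tails, so every estimate involving $d_e(x_m,x_n)$ with $m>n$ must be routed through the Weil--Petersson geometry via \cref{lem:fdiswp}(ii) and the domination \cref{cor:e<wp}. This bridge is the single essential ingredient that allows the symmetric-looking arguments from the classical Cauchy completion to go through.
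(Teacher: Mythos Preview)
Your proof is correct and uses the same essential ingredients as the paper (\cref{lem:fdiswp}, \cref{cauchy eq}, \cref{cor:e<wp}), but the overall organisation differs. The paper's own proof is much shorter: it checks well-definedness and finiteness exactly as you do in (a)--(b), and then, rather than verifying the asymmetric-metric axioms directly, it observes that the formula $\lim_n d_e(x_n,y_n)$ coincides with the general $\inf\liminf$ extension $\bar d$ constructed abstractly in \cref{thm:asymmetricmetric}, so that the axioms (in particular non-degeneracy) are inherited from that appendix result. Your route is more self-contained: you establish existence of the limit explicitly (which the paper leaves somewhat implicit, relying on boundedness plus the $\liminf$ formulation in the appendix), and you prove non-degeneracy by hand via a direct interlacing construction. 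Your approach avoids any dependence on \cref{appendix:FD}; the paper's buys brevity by outsourcing the metric axioms to the general theory. Either way, the crucial observation you identify --- that every ``backward'' estimate must be routed through the Weil--Petersson comparison --- is exactly the point.
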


\begin{proof}
We first show that $\bar{d_e}$ is well defined. Let $(x_n), (x'_n)$ be FD-sequences representing $[x_n]$ and let $(y_n), (y'_n)$ be FD-sequences representing $[y_n]$. The triangle inequality tells us that
\[
d_e(x_n,y_n)
\leq d_e(x_n,x'_n)+d_e(x'_n,y'_n)+d_e(y'_n,y_n).
\]
\cref{cauchy eq} tells us that
$
d_e(x_n,x'_n),\ d_e(y'_n,y_n)\to 0,
$
and  hence $\bar{d_e}([x_n],[y_n])$ is independent of the FD-sequences representatives chosen to define the limit. Moreover, the fact that $(x_n)$ and $(y_n)$ are Cauchy sequences in $d_{\mathrm{WP}}$ proved in \cref{lem:fd-equi:wp-equi} combined with \cref{cor:e<wp} means that $d_e(x_n,y_n)\leq C_1 d_{\mathrm{WP}}(x_n,y_n)$ remains bounded, hence $\bar{d_e}:\overline{\T(S)}\times\overline{\T(S)}\to[0,\infty)$ is well defined. Note that the expression given for $\bar{d_e}$ agrees with the expression for the standard metric extension of $d_e$ to its completion, as given in \cref{thm:asymmetricmetric}, hence $\bar{d_e}$ is an asymmetric metric.
\end{proof}

\begin{theorem}
\label{thm:busemannianext}
The  function $\bar{d_e}$ defined in \cref{thm:metric:extension} is a Busemannian metric on the forward FD-completion $\overline{\T(S)}$.
\end{theorem}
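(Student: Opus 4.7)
The plan is to verify the third axiom of a Busemannian metric space, namely that for every $\mathbf{x} \in \overline{\T(S)}$ and every sequence $(\mathbf{x}_n) \subset \overline{\T(S)}$, one has $\bar{d}_e(\mathbf{x}_n,\mathbf{x}) \to 0$ if and only if $\bar{d}_e(\mathbf{x},\mathbf{x}_n) \to 0$; the other two axioms are delivered by \cref{thm:metric:extension}. Since $\bar{d}_e$ is inherently asymmetric, the natural strategy is to sandwich $\bar{d}_e$-convergence between upper and lower estimates involving the \emph{symmetric} Weil--Petersson metric, and then transfer convergence across the two sides of $\bar{d}_e$ through this symmetric intermediary.

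First, by \cref{lem:fdiswp} and \cref{cauchy eq}, each $\mathbf{y} \in \overline{\T(S)}$ has a well-defined Weil--Petersson limit $\pi(\mathbf{y}) \in \overline{\T(S)}^{\mathrm{WP}}$, independent of the choice of FD-representative. Applying \cref{cor:e<wp} to FD-representatives $(u_k)$ and $(v_k)$ of $\mathbf{u},\mathbf{v}$ and passing to the limit, one obtains the key upper estimate
\[
\bar{d}_e(\mathbf{u},\mathbf{v})
= \lim_{k\to\infty} d_e(u_k,v_k)
\leq C_1 \lim_{k\to\infty} d_{\mathrm{WP}}(u_k,v_k)
= C_1\, d_{\mathrm{WP}}(\pi\mathbf{u},\pi\mathbf{v}).
\]
By the symmetry of $d_{\mathrm{WP}}$, this immediately yields that $d_{\mathrm{WP}}(\pi\mathbf{x}_n,\pi\mathbf{x}) \to 0$ implies simultaneously $\bar{d}_e(\mathbf{x}_n,\mathbf{x}) \to 0$ and $\bar{d}_e(\mathbf{x},\mathbf{x}_n) \to 0$. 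The Busemannian axiom therefore reduces to the implication
\[
\bar{d}_e(\mathbf{x}_n,\mathbf{x}) \to 0
\;\Longrightarrow\;
d_{\mathrm{WP}}(\pi\mathbf{x}_n,\pi\mathbf{x}) \to 0,
\]
together with its symmetric analogue, which will follow from an identical argument applied to the reverse metric.

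For this implication, I would apply \cref{lem:lowerbound:emetric} to the FD-representatives $(y_k^n)$ of $\mathbf{x}_n$ and $(z_k)$ of $\mathbf{x}$ and take the limit $k \to \infty$, using the continuous extension of length functions to augmented Teichm\"uller space. This produces the lower bound
\[
\bar{d}_e(\mathbf{x}_n,\mathbf{x})
\geq
2\int_{\min\{\ell_\alpha(\pi\mathbf{x}_n),\ell_\alpha(\pi\mathbf{x})\}}^{\max\{\ell_\alpha(\pi\mathbf{x}_n),\ell_\alpha(\pi\mathbf{x})\}} w(\ell)\,d\ell
\]
for every simple closed curve $\alpha$, forcing $\ell_\alpha(\pi\mathbf{x}_n) \to \ell_\alpha(\pi\mathbf{x})$ for every $\alpha$ as $n \to \infty$. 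I would then upgrade this length function convergence to Weil--Petersson topology convergence by working in Fenchel--Nielsen coordinates associated to a pants decomposition containing the pinching system of $\pi\mathbf{x}$: length coordinates converge tautologically, while twist coordinates on non-pinched curves can be recovered from the lengths of auxiliary curves crossing them via standard hyperbolic trigonometry on pairs of pants.

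The main obstacle is the last step, that is, promoting length function convergence on augmented Teichm\"uller space to convergence in the Weil--Petersson topology. The subtlety is that length functions a priori only access length coordinates; twist coordinates must be recovered indirectly, and the argument must handle sequences that traverse several strata and must correctly identify the pinched system of the limit $\pi\mathbf{x}$ (which is already determined by the length functions $\ell_\gamma(\pi\mathbf{x}) = 0$). Once this topological characterisation is in hand, the sandwich
\[
\bar{d}_e(\mathbf{x}_n,\mathbf{x}) \to 0
\;\Longrightarrow\;
\ell_\alpha(\pi\mathbf{x}_n) \to \ell_\alpha(\pi\mathbf{x})\ \forall \alpha
\;\Longrightarrow\;
d_{\mathrm{WP}}(\pi\mathbf{x}_n,\pi\mathbf{x}) \to 0
\;\Longrightarrow\;
\bar{d}_e(\mathbf{x},\mathbf{x}_n) \to 0
\]
closes, completing the verification of the Busemannian property; the argument for $\bar{d}_e^\#$ on $\overline{\T(S)}^\#$ is entirely parallel.
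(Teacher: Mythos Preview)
Your approach is correct and follows the same overall strategy as the paper: sandwiching $\bar d_e$ via the symmetric Weil--Petersson metric, with \cref{cor:e<wp} providing the upper estimate and \cref{lem:lowerbound:emetric} the lower. Two points of comparison are worth making.

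First, a small imprecision: \cref{thm:metric:extension} only asserts that $\bar d_e$ is an \emph{asymmetric} metric, which gives the two-sided separation $\bar d_e(\xi,\eta)=\bar d_e(\eta,\xi)=0\Rightarrow\xi=\eta$, not the one-sided Busemannian axiom $\bar d_e(\xi,\eta)=0\Rightarrow\xi=\eta$. The paper verifies this separately. It does, however, follow from your third-axiom argument applied to the constant sequence $\mathbf x_n\equiv\xi$, so the gap is trivially repaired.

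Second, on the main implication $\bar d_e(\mathbf x_n,\mathbf x)\to0\Rightarrow d_{\mathrm{WP}}(\pi\mathbf x_n,\pi\mathbf x)\to0$: the paper does not go through the full ``length-spectrum convergence implies Weil--Petersson convergence on augmented Teichm\"uller space'' argument you outline. Instead it invokes \cref{lem:fd-equi:wp-equi}, which only uses the weaker \emph{separation} property that distinct points of $\overline{\T(S)}^{\mathrm{WP}}$ differ in some $\ell_\gamma$. Concretely, the paper builds a diagonal sequence $(x_{m_k,k})$ out of representatives of $(\xi_k)$, shows $d_e(x_{m_k,k},y_{m_k})\to0$, applies \cref{lem:fd-equi:wp-equi} to conclude Weil--Petersson Cauchy equivalence with $(y_{m_k})$, and then transfers back via $d_e\le C_1 d_{\mathrm{WP}}$. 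Your route avoids this diagonal bookkeeping and is conceptually tidier, but the price is that your ``main obstacle'' (upgrading length convergence to Weil--Petersson convergence across strata) genuinely requires the Fenchel--Nielsen argument you sketch---which is essentially a reprise of the proof of \cref{lem:fdiswp}(ii)---whereas the paper sidesteps it entirely by citing the already-established \cref{lem:fd-equi:wp-equi}.
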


\begin{proof}
We first note that the triangle inequality holds, and $\bar{d_e}(\xi,\xi)=0$ is clear. Thus, we need only verify that
\begin{enumerate}
\item for any $\xi,\eta\in \overline{\T(S)}$, $\bar{d_e}(\xi,\eta)=0\Rightarrow \xi=\eta$; and
\item for any $\eta\in \overline{\T(S)}$ and any $(\xi_k)$ in $\overline{\T(S)}$, we have $\bar{d_e}(\xi_k,\eta)\to 0 \Leftrightarrow\bar{d_e}(\eta,\xi_k)\to 0$.
\end{enumerate}
For {(1)}, suppose that $\bar{d_e}(\xi,\eta)=\underset{n\to\infty}{\lim} d_e(x_n,y_n)=0$. Let $(x_n)$ and $(y_n)$ be FD-sequences respectively representing $\xi$ and $\eta$. Then \cref{lem:fd-equi:wp-equi} asserts that $(x_n)$ and $(y_n)$ are equivalent Cauchy sequences with respect to $d_{\mathrm{WP}}$. Since $d_{\mathrm{WP}}$ is symmetric, this means that
\begin{align*}
0=\lim_{n\to\infty} C_1d_{\mathrm{WP}}(y_n,x_n)\geq \lim_{n\to\infty} d_e(y_n,x_n)=\bar{d_e}(\eta,\xi).
\end{align*}
Since we showed above that $\bar{d_e}$ is an asymmetric metric, the fact that $\bar{d_e}(\xi,\eta)=\bar{d_e}(\eta,\xi)=0$ implies $\xi=\eta$.

For {(2)}, we shall only show that $\bar{d_e}(\xi_k,\eta)\to 0$ implies $\bar{d_e}(\eta,\xi_k)\to 0$; the proof of the converse is essentially identical. For each $k$, let $(x_{n,k})_{n\in\mathbb{N}}$ denote an arbitrary FD-sequence representative for the FD-equivalence class $\xi_k$. Likewise let $(y_n)$ be an FD-sequence with respect to $d_e$ which represents the FD-equivalence class $\eta\in\overline{\T(S)}$. By \cref{lem:fdiswp}, for every $k$, the sequence $(x_{n,k})_{n\in\mathbb{N}}$ is a Cauchy sequence in $(\T(S),d_{\mathrm{WP}})$, as is $(y_n)$. We choose a sequence of increasing indices $(m_k)_{k\in\mathbb{N}}$ so that  for each $k\in\mathbb{N}$, \begin{equation}\label{eq:wp:mk:infty}
	\dwp (x_{m_k,k},x_{n,k})<2^{-k}, \text{ for every }n>m_k,
\end{equation}
and $$|
\bar{d_e}(\xi_k,\eta)
-
d_e(x_{m_k,k},y_{m_k})
|<2^{-k}.$$
Hence
\begin{align}
\lim_{k\to0}
d_e(x_{m_k,k},y_{m_k})
=0.\label{eq:limitsto0}
\end{align}
Since $(y_{m_k})_{k\in\mathbb{N}}$ is a  Cauchy sequence in $(\T(S),d_{\mathrm{WP}})$ by  \cref{lem:fd-equi:wp-equi}  and  \cref{eq:limitsto0}), we conclude that  $(x_{m_k,k})_{k\in\mathbb{N}}$ is also a Cauchy sequence in $(\T(S),d_{\mathrm{WP}})$ and that $(x_{m_k,k})_{k\in\mathbb{N}}$ and $(y_{m_k})_{k\in\mathbb{N}}$ are Cauchy equivalent in $(\T(S),d_{\mathrm{WP}})$.
Therefore,
\begin{align}
0\leq
\lim_{k\to\infty} d_e(y_{m_k},x_{m_k,k})
\leq 
\lim_{k\to\infty} C_1 d_{\mathrm{WP}}(y_{m_k},x_{m_k,k})=0,\label{eq:reverseto0}
\end{align}
where the inequality and the constant $C_1$ both come from \cref{cor:e<wp}.

Finally, we make use of \cref{eq:wp:mk:infty}, \cref{eq:reverseto0} and the Cauchy convergence of $(y_k)_{k\in\mathbb{N}}$ in $(\T(S),d_{\mathrm{WP}})$ to show:
\begin{align*}
&\lim_{k\to\infty}
\bar{d_e}(\eta,\xi_k)\\
=
&\lim_{k\to\infty}\lim_{n\to\infty}
d_e(y_n,x_{n,k})\\
\leq
&\lim_{k\to\infty}\lim_{n\to\infty}
\left(d_e(y_n,y_{m_k})+d_e(y_{m_k},x_{m_k,k})+d_e(x_{m_k,k},x_{n,k})
\right)\\
\leq
&\lim_{k\to\infty}\lim_{n\to\infty}
C_1\left(d_{\mathrm{WP}}(y_n,y_{m_k})+d_{\mathrm{WP}}(y_{m_k},x_{m_k,k})+d_{\mathrm{WP}}(x_{m_k,k},x_{n,k})
\right)
=0.
\end{align*}
We have thus verified that $\bar{d_e}$ satisfies all of the axioms for being a Busemannian metric on $(\overline{\T(S)},\bar{d_e})$.
 \end{proof}

\begin{remark}
The analogous statements (and proofs) of \cref{thm:metric:extension} and \cref{thm:busemannianext} also hold for the backward FD-completion $(\overline{\T(S)}^{\#},\bar{d_e}^{\#})$. 
\end{remark}

\subsection{Homeomorphic extension maps}

One important consequence of $\bar{d_e}$ being Busemannian (\cref{thm:busemannianext}) is that the forward, backward and symmetric topologies on $(\overline{\T(S)},\bar{d_e})$ all agree, and we set it as the standard topology on $\overline{\T(S)}$. Likewise, $\bar{d}_e^\#$ is also Busemannian and defines a standard topology on $\overline{\T(S)^\#}$. This allows us to speak of continuous maps between these spaces.
 
\begin{theorem}
\label{thm:lr:wp:completion} 
The identity map of Teichm\"uller space $\T(S)$ admits a unique homeomorphic extension between any pair of the following three:
\begin{itemize}
\item
the forward FD-completion $(\overline{\T(S)},\bar{d_e})$, 
\item
the forward FD-completion $(\overline{\T(S)}^{\#},\bar{d}_e^\#)$,
\item
and the Weil--Petersson completion $(\overline{\T(S)}^\mathrm{WP},\bar{d}_{\mathrm{WP}})$.
\end{itemize}
\end{theorem}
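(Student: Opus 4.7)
The plan is to construct a set-theoretic bijection extending the identity on $\T(S)$ between each pair of completions and verify continuity in both directions. Uniqueness is automatic: all three spaces are Hausdorff (the FD-completions by \cref{thm:busemannianext}, and the WP-completion being a standard metric completion), and each contains $\T(S)$ as a forward-dense subset by \cref{prop:natisoinclude}, so two continuous extensions of the identity must coincide. Since both \cref{cor:e<wp} and \cref{lem:lowerbound:emetric} are symmetric under the swap $x\leftrightarrow y$, the argument for the pair $(\overline{\T(S)},\overline{\T(S)}^{\mathrm{WP}})$ transfers verbatim to $(\overline{\T(S)}^{\#},\overline{\T(S)}^{\mathrm{WP}})$, and the remaining identification follows by composition.

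First I would define $\Phi\colon \overline{\T(S)}\to \overline{\T(S)}^{\mathrm{WP}}$ by $\Phi([x_n]):=\lim_n x_n$ in the Weil--Petersson completion, well-defined by \cref{lem:fdiswp}(ii) (FD-sequences are WP-Cauchy) and \cref{cauchy eq} (FD-equivalent sequences have a common WP-limit). For injectivity, if $\Phi([x_n])=\Phi([y_n])=p$, pick subsequences $(x_{n_k}),(y_{m_k})$ with $d_{\mathrm{WP}}(x_{n_k},p),d_{\mathrm{WP}}(y_{m_k},p)<2^{-k}$; the interlacing $x_{n_1},y_{m_1},x_{n_2},y_{m_2},\ldots$ has summable successive $d_{\mathrm{WP}}$-distances, hence by $d_e\leq C_1 d_{\mathrm{WP}}$ (\cref{cor:e<wp}) summable $d_e$-distances, making it an FD-interlacing. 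Surjectivity is analogous: any WP-Cauchy sequence admits a subsequence with summable $d_{\mathrm{WP}}$-gaps, which is automatically $d_e$-FD.

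The continuity of $\Phi^{-1}$ is then immediate: passing $d_e\leq C_1 d_{\mathrm{WP}}$ to the completions via the limit-based definition of $\bar{d}_e$ (\cref{thm:metric:extension}) gives $\bar{d}_e(\Phi^{-1}(p),\Phi^{-1}(q))\leq C_1 \bar{d}_{\mathrm{WP}}(p,q)$, so $\Phi^{-1}$ is $C_1$-Lipschitz.

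The main obstacle is the continuity of $\Phi$ itself, since the inequality \cref{cor:e<wp} runs the wrong way and $d_e,d_{\mathrm{WP}}$ fail to be bi-Lipschitz (\cref{prop:notbilipschitz}). My plan here is to route through length functions. For any simple closed curve $\gamma$ and any $\xi\in\overline{\T(S)}$ represented by an FD-sequence $(x_n)$, set $\ell_\gamma(\xi):=\lim_n \ell_\gamma(x_n)\in[0,\infty]$; this is well-defined on FD-equivalence classes by \cref{lem:fdiswp}(i) and \cref{cauchy eq}. Passing \cref{lem:lowerbound:emetric} to the limit yields
\begin{align*}
\bar{d}_e(\xi,\eta) \;\geq\; 2\int_{\min\{\ell_\gamma(\xi),\ell_\gamma(\eta)\}}^{\max\{\ell_\gamma(\xi),\ell_\gamma(\eta)\}} w(\ell)\,\mathrm{d}\ell .
\end{align*}
Since $w>0$ on $(0,\infty)$, the right-hand side is strictly positive whenever the two length values differ, and for finite $\ell_\gamma(\eta)$ the tail integral $\int_{\ell_\gamma(\eta)}^{\infty}w\,\mathrm{d}\ell$ is a positive constant; hence $\bar{d}_e(\xi_k,\eta)\to 0$ forces $\ell_\gamma(\xi_k)\to \ell_\gamma(\eta)$ for every simple closed curve $\gamma$ with $\ell_\gamma(\eta)<\infty$. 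Applying this to the curves of a pants decomposition $\Gamma$ adapted to the nodal surface $\Phi(\eta)$ (so that every $\sigma\in\Gamma$ has $\ell_\sigma(\eta)<\infty$), together with the Fenchel--Nielsen twist control already used in the proof of \cref{lem:fdiswp}(ii) to govern the surviving twist coordinates, yields WP-convergence $\Phi(\xi_k)\to\Phi(\eta)$ in the augmented Teichm\"uller space, completing the proof.
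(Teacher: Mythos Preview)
Your argument is correct and tracks the paper closely in constructing the bijection, verifying injectivity and surjectivity, handling uniqueness (the paper routes through the WP side; you use density plus Hausdorffness, which is equally valid), and establishing the continuity of $\Phi^{-1}$ via the Lipschitz bound $d_e\leq C_1 d_{\mathrm{WP}}$.

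The substantive divergence is in the continuity of $\Phi$. The paper extracts a diagonal sequence $(x_{m_k,k})$ from representatives, argues it is a $d_e$-FD-sequence (hence WP-Cauchy by \cref{lem:fdiswp}), invokes \cref{lem:fd-equi:wp-equi}, and then bounds $\bar d_{\mathrm{WP}}(\Pi(\xi_k),\Pi(\xi))$ by a triangle inequality in $d_{\mathrm{WP}}$. You instead push \cref{lem:lowerbound:emetric} to the completion and read off pointwise convergence of the length spectrum. Your route is more geometric and avoids the diagonal bookkeeping, at the cost of a slightly oblique final step: the appeal to ``the Fenchel--Nielsen twist control in the proof of \cref{lem:fdiswp}(ii)'' really amounts to invoking the standard characterisation of convergence in augmented Teichm\"uller space via Fenchel--Nielsen coordinates (convergence of $\ell_\gamma$ for $\gamma\in\Gamma$, and of $\tau_\gamma$ for $\gamma\in\Gamma\setminus P$, with the twists recovered from dual-curve lengths as in that proof). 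This works because every curve disjoint from the pinching set $P$ of $\eta$ has finite length there, so your length-convergence conclusion covers both the pants curves and their duals; moreover $\ell_\gamma(\xi_k)\to\ell_\gamma(\eta)>0$ for $\gamma\in\Gamma\setminus P$ guarantees the relevant twist parameters are eventually defined even if some $\xi_k$ sit in boundary strata. With that clarification, both approaches are sound.
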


\begin{proof}
We only show that $\mathrm{id}:(\T(S),d_e)\to(\T(S),\mathrm{d}_{\mathrm{WP}})$ has a unique homeomorphic extension. The proof for the backward FD-completion is identical, and the uniqueness of these two homeomorphisms then asserts the uniqueness of the requisite extension homeomorphism between $(\overline{\T(S)},\bar{d_e})$ and $(\overline{\T(S)}^\#,\bar{d_e}^\#)$.  The proof will be divided into the following steps: we
\begin{itemize}
 	\item define $\Pi:\overline{\T(S)}\to \overline{\T(S)}^\mathrm{WP}$;
 	\item show that the map $\Pi$ is surjective;
 	\item show that the map $\Pi$ is injective;
 	\item show that the map $\Pi$ is a homeomorphism; and
 	\item show that $\Pi$ is the unique homeomorphic extension.
 \end{itemize} 

\noindent\textsc{Step~1: constructing the map $\Pi:\overline{\T(S)}\to \overline{\T(S)}^\mathrm{WP}$.} By \cref{lem:fdiswp} any FD-sequence $(x_n)$ in $(\T(S),d_e)$ is a Cauchy sequence in $(\T(S),d_{\mathrm{WP}})$, and hence we define $\Pi$ by sending an FD-equivalence class represented by an FD-sequence $(x_n)$ with respect to $d_e$ to the Cauchy equivalence class represented by $(x_n)$ with respect to $d_\mathrm{WP}$. This is well defined thanks to the last sentence of \cref{lem:fd-equi:wp-equi}.\vspace{1em}

\noindent\textsc{Step~2: establishing surjectivity.} Consider an arbitrary element of $\overline{\T(S)}^{\mathrm{WP}}$ and let it be represented by some Cauchy sequence $(x_n)$ with respect to $d_\mathrm{WP}$. Then $(x_n)$ contains a subsequence $(x_{n_k})_{k\in\mathbb{N}}$ which is an FD-sequence with respect to $d_\mathrm{WP}$. 
By \cref{thm:e:wp}, there exists a constant $C_1$ such that
\begin{align*}
\sum_{k=1}^\infty
d_e(x_{n_k},x_{n_{k+1}})
\leq
\sum_{k=1}^\infty
C_1d_{\mathrm{WP}}(x_{n_k},x_{n_{k+1}})
<\infty.
\end{align*}
Therefore, $(x_{n_k})_{k\in\mathbb{N}}$ is also an FD-sequence with respect to $d_e$, and its FD-equivalence class maps to the same Cauchy equivalence class as $(x_n)_{n\in\mathbb{N}}$ since $(x_{n_k})_{k\in\mathbb{N}}$ is a subsequence of $(x_n)$.\vspace{1em}

\noindent\textsc{Step~3: establishing injectivity.} Consider two (forward) FD-sequences $(x_n)$ and $(y_n)$ in $(\T(S),d_e)$ which respectively represent two arbitrary points $[x_n],[y_n]\in\overline{\T(S)}$.  By \cref{lem:fdiswp}, these two sequences are Cauchy sequences with respect to $d_\mathrm{WP}$. Now suppose that they are Cauchy equivalent in $(\T(S),d_{\mathrm{WP}})$. Then, there are subsequences $(x_{n_k})_{k\in\mathbb{N}}$ and $(y_{n_k})_{k\in\mathbb{N}}$ which are FD-sequences with respect to $d_\mathrm{WP}$. 
The Cauchy equivalence ensures that $(x_{n_k})_{k\in\mathbb{N}}$ and $(y_{n_k})_{k\in\mathbb{N}}$ are FD-equivalent (\cref{lem:congruence}) with respect to $d_\mathrm{WP}$, and hence there is an interlacing with respect to $d_\mathrm{WP}$ between them. Thanks to \cref{cor:e<wp}, such an interlacing  is also an interlacing for $d_e$, and hence $(x_{n_k})_{k\in\mathbb{N}}$ and $(y_{n_k})_{k\in\mathbb{N}}$ are FD-equivalent with respect to $d_e$. Therefore, $(x_n)$, $(x_{n_k})_{k\in\mathbb{N}}$, $(y_{n_k})_{k\in\mathbb{N}}$ and $(y_n)$ are all FD-equivalent, and  $[x_n]=[y_n]$. This gives the requisite injectivity.\vspace{1em}

\noindent\textsc{Step~4: establishing homeomorphicity.} 
Steps 2 and 3 show that $\Pi$ is a bijection. By \cref{cor:e<wp}, its inverse map is Lipschitz   hence is also continuous (since both metrics are Busemannian). We only need to show that $\Pi$ is continuous. Fortunately, as both metrics are Busemannian, continuity is the same as the convergence of limits (\cref{rmk:continuityissues}). Specifically, we need to verify the following: given an arbitrary sequence $(\xi_k)$ in $(\overline{\T(S)},\bar{d_e})$ that converges to some point $\xi\in\overline{\T(S)}$, i.e.\  $\bar{d_e}(\xi_k,\xi)\to0$, then 
\[
\lim_{k\to\infty}\bar{d}_{\mathrm{WP}}(\Pi(\xi_k),\Pi(\xi))=0.
\]

For each $k$, let $(x_{n,k})_{n\in\mathbb{N}}$ be an arbitrary FD-sequence in $(\T(S),d_e)$ representing $\xi_k$. By \cref{lem:fdiswp}, it is also a Cauchy sequence with respect to $d_\mathrm{WP}$, and we note that $\Pi(\xi_k)$ is represented by $(\mathrm{id}(x_{n,k}))_{n\in\mathbb{N}}=(x_{n,k})_{n\in\mathbb{N}}$.
Let $(x_n)$ be an arbitrary FD-sequence in $(\T(S),d_e)$ representing $\xi$; it is likewise a Cauchy sequence with respect to $d_\mathrm{WP}$, and we note that $\Pi(\xi)$ is represented by $(x_n)$.

Since $(x_{n,k})_{n\in\mathbb{N}}$ is a Cauchy sequence for every $k$, we may choose an increasing sequence $(m_k)_{k\in\mathbb{N}}$ of indices so that
$
\text{for all }n\geq m_k,\quad
d_{\mathrm{WP}}(x_{m_k,k},x_{n,k})<2^{-k}.
$
We claim that $(x_{m_k,k})_{k\in\mathbb{N}}$ is an FD-sequence in $(\T(S),d_e)$. This follows from
\begin{align*}
&d_e(x_{m_k,k},x_{m_{k+1},k+1})
\leq
C_1 d_{\mathrm{WP}}(x_{m_k,k},x_{m_{k+1},k+1})\\
\leq&
\lim_{n\to\infty}
C_1\left(
d_{\mathrm{WP}}(x_{m_k,k},x_{n,k})+d_{\mathrm{WP}}(x_{n,k},x_{n,k+1})+d_{\mathrm{WP}}(x_{n,k+1},x_{m_{k+1},k+1})
\right)\\
\leq&
C_1\left(
2^{-k}+\bar{d}_{\mathrm{WP}}(\xi_k,\xi_{k+1})+2^{-k-1}
\right),
\end{align*}
in conjunction with $(\xi_k)$ being a Cauchy sequence with respect to $\bar d_\mathrm{WP}$. 

Next observe that
\begin{align*}
&\lim_{k\to\infty}
d_e(x_{m_k,k},x_k)\\
\leq
&\lim_{k\to\infty}\lim_{n\to\infty}
\left(d_e(x_{m_k,k},x_{n,k})+d_e(x_{n,k},x_n)+d_e(x_n,x_k)\right)\\
\leq
&\lim_{k\to\infty}\lim_{n\to\infty}
\left(
C_1d_{\mathrm{WP}}(x_{m_k,k},x_{n,k})+d_e(x_{n,k},x_n)+C_1 d_{\mathrm{WP}}(x_n,x_k)
\right)\\
=
&\lim_{k\to\infty}
(C_1 2^{-k}+\bar{d_e}(\xi_k,\xi))+
\lim_{k\to\infty}\lim_{n\to\infty}
C_1 d_{\mathrm{WP}}(x_n,x_k)=0.
\end{align*}
Since the FD-sequences $(x_{m_k,k})_{k\in\mathbb{N}}$ and $(x_k)_{k\in\mathbb{N}}$ satisfy $d_e(x_{m_k,k},x_k)\to0$, \cref{lem:fdiswp,lem:fd-equi:wp-equi} tell us that these two sequences are also  equivalent Cauchy sequences in the Weil--Petersson metric. Hence,
\begin{align*}
&\lim_{k\to\infty}\bar{d}_{\mathrm{WP}}(\Pi(\xi_k),\Pi(\xi))
=
\lim_{k\to\infty}\lim_{n\to\infty}
d_{\mathrm{WP}}(x_{n,k},x_n)\\
\leq
&\lim_{k\to\infty}\lim_{n\to\infty}
\left(d_{\mathrm{WP}}(x_{n,k},x_{m_k,k})
+d_{\mathrm{WP}}(x_{m_k,k},x_k)
+d_{\mathrm{WP}}(x_k,x_n)
\right)
=0.
\end{align*}
Thus, we have shown that $\Pi$ is continuous, hence is a homeomorphism.\vspace{1em}

\noindent\textsc{Step~5: establishing uniqueness.} Consider a homeomorphic extension $\Pi'$ of $\mathrm{id}:(\T(S),d_e)\to(\T(S),d_{\mathrm{WP}})$. Then $\Pi'\circ\Pi^{-1}$ gives a homeomorphic extension of $\mathrm{id}:(\T(S),d_{\mathrm{WP}})\to(\T(S),d_{\mathrm{WP}})$, which we know, from the uniqueness of continuous extensions of Lipschitz maps to maps between Cauchy completions, must be the identity map on $(\overline{\T(S)}^{\mathrm{WP}},\bar{d}_{\mathrm{WP}})$. Therefore, $\Pi'$ is $\Pi$.
\end{proof}

\subsection{Completion via symmetrisation}

We end this section by explaining what happens when we complete a symmetrisation of the earthquake metric. We first note that there is no canonical choice of symmetrisation for an asymmetric metric $d$.
The most obvious examples are the \emph{sum} and \emph{max} symmetrisations, defined by
\begin{gather*}
d^{(1)}(x,y):=d^{\mathrm{sum}}(x,y)=\tfrac{1}{2}(d(x,y)+d(y,x))\quad\text{and}\\
d^{(\infty)}(x,y)=d^{\max}(x,y):=\max\{d(x,y),d(y,x)\},\end{gather*}
but there are also other symmetrisations.

\begin{definition}[metric symmetrisation]\label{defn:metricsymm}
Indeed, we can interpolate between them via the following family of metrics
\begin{align}
d^{(p)}(x,y):=
\left\{\begin{array}{rl}
\sqrt[p]{\tfrac{1}{2}(d(x,y)^p+d(y,x)^p)},&\quad\text{ for }p\in[1,\infty),\\
\max\{d(x,y),d(y,x)\},&\quad\text{ for }p=\infty.
\end{array}
\right.
\end{align}
\end{definition}

We can also symmetrise within the Finsler category by symmetrising the Finsler norm of the metric. 

\begin{proposition}\label{prop:finslersymm}
Given a Finsler metric space $(X,d)$ where $d$ is the path metric induced by the Finsler norm ${\|\cdot\|}$. For every $p\in[1,\infty]$, the function ${\|\cdot\|}^{(p)}:T_xX\to[0,\infty)$,
\begin{align*}
\|v\|^{(p)}:=
\left\{
\begin{array}{rl}
\sqrt[p]{\tfrac{1}{2}(\| v\|^p+\|-v\|^p)},&\quad\text{ for }p\in[1,\infty),\\
\max\{\|v\|,\|-v\|\},&\quad\text{ for }p=\infty.
\end{array}
\right.
\end{align*}
defines a (symmetric) norm on  $T_xX$ for every $x\in X$.
\end{proposition}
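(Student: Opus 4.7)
The plan is to verify each of the four norm axioms (positive-definiteness, absolute homogeneity, subadditivity, and symmetry) for $\|\cdot\|^{(p)}$ by leveraging the corresponding weak-norm properties of $\|\cdot\|$ (\cref{defn:weaknorm}). Three of these axioms will follow almost immediately from the definition, so the substantive content is the triangle inequality, for which we will invoke the classical Minkowski inequality.

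First I would dispense with the routine axioms. \emph{Symmetry} $\|-v\|^{(p)} = \|v\|^{(p)}$ is immediate from the evident invariance of both defining expressions under the swap $v \leftrightarrow -v$. \emph{Positive-definiteness} follows because $\|v\|^{(p)}$ is built out of non-negative quantities $\|v\|$ and $\|-v\|$, each of which vanishes precisely when $v=0$. For \emph{absolute homogeneity}, given $\lambda\in\mathbb R$, I would split into cases $\lambda\geq 0$ and $\lambda<0$: in the first case $\|\lambda v\| = \lambda\|v\|$ and $\|-\lambda v\| = \lambda\|-v\|$ by weak-norm homogeneity, and in the second case writing $\lambda = -|\lambda|$ and using the symmetry already established yields $\|\lambda v\|^{(p)} = |\lambda|\|v\|^{(p)}$.

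The main obstacle is the triangle inequality for $p\in(1,\infty)$. The cases $p=1$ and $p=\infty$ are essentially one-line consequences of the weak-norm triangle inequality (averaging/taking the maximum of $\|u+v\|\leq\|u\|+\|v\|$ and $\|-(u+v)\|\leq\|-u\|+\|-v\|$). For the intermediate case, the first step is monotonicity: applying the weak-norm triangle inequality coordinatewise,
\begin{equation*}
\|u+v\|^{(p)} \leq \Bigl(\tfrac{1}{2}\bigl((\|u\|+\|v\|)^p + (\|-u\|+\|-v\|)^p\bigr)\Bigr)^{1/p}.
\end{equation*}
The key step is then to recognise the right-hand side as the $\ell^p$ norm, with respect to the normalised counting measure on a two-point space, of the sum of the vectors $(\|u\|,\|-u\|)$ and $(\|v\|,\|-v\|)$. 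Applying the Minkowski inequality in $\ell^p\bigl(\{1,2\},\tfrac{1}{2}(\delta_1+\delta_2)\bigr)$ then yields
\begin{equation*}
\|u+v\|^{(p)} \leq \Bigl(\tfrac{1}{2}(\|u\|^p + \|-u\|^p)\Bigr)^{1/p} + \Bigl(\tfrac{1}{2}(\|v\|^p + \|-v\|^p)\Bigr)^{1/p} = \|u\|^{(p)} + \|v\|^{(p)},
\end{equation*}
which completes the triangle inequality and hence the proof.
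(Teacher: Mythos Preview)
Your proof is correct. Both you and the paper handle positive-definiteness, homogeneity, and symmetry quickly, and both reduce the triangle inequality to the weak-norm triangle inequality plus a classical convexity/$\ell^p$ fact. The difference is in that last step: you apply the Minkowski inequality directly in $\ell^p$ of a two-point space to split the sum, whereas the paper reduces the triangle inequality to convexity of the unit ball (taking $\|v\|^{(p)}=\|w\|^{(p)}=1$ and showing $\|tv+(1-t)w\|^{(p)}\leq 1$), and then uses Jensen's inequality for the convex function $t\mapsto t^p$. Your route is slightly more direct and avoids the normalisation to the unit sphere; the paper's route has the minor advantage of making explicit that subadditivity for a positively homogeneous function is equivalent to convexity of the unit ball, which fits the Finsler-geometric viewpoint used elsewhere in the paper.
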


\begin{proof}
We first note that the properties
\begin{itemize}
\item $\| v\|^{(p)}=0$ if and only if $v=0$, and
\item $\| \lambda v\|^{(p)}= |\lambda| \| v\|^{(p)}$ for all $v\in T_xX$ and for all $\lambda\in\mathbb{R}$
\end{itemize}
trivially follow from ${\|\cdot\|}$ being a weak norm. Since ${\|\cdot\|}^{(p)}$ is positively homogeneous, verifying that it satisfies the triangle inequality is equivalent to showing that ${\|\cdot\|}^{(p)}$ is a convex function. For $p\neq\infty$, consider arbitrary $v,w\in T_x X$ with $\| v\|^{(p)}=\| w\|^{(p)}=1$, for any $t\in[0,1]$,
\begin{align*}
&(\|tv+(1-t)w\|^{(p)})^p\\
=
&\tfrac{1}{2}\left(\|tv+(1-t)w\|^p+\|t(-v)+(1-t)(-w)\|^p\right)\\
\leq
&\tfrac{1}{2}\left((t\|v\|+(1-t)\|w\|)^p+(t\|-v\|+(1-t)\|-w\|)^p\right).
\end{align*}
By Jensen's inequality,
\begin{align*}
(t\|v\|+(1-t)\|w\|)^p
\leq
t\|v\|^{p}+(1-t)\|w\|^p.
\end{align*}
The requisite convexity follows:
\begin{align*}
(\|tv+(1-t)w\|^{(p)})^p
\leq
&\tfrac{1}{2}\left(t\|v\|^p+(1-t)\|w\|^p+t\|-v\|^p+(1-t)\|-w\|^p\right)\\
=
&t(\|v\|^{(p)})^p+(1-t)(\|w\|^{(p)})^p=1.
\end{align*}
The case for $p=\infty$ is straightforward.
\end{proof}

\begin{definition}[Finsler symmetrisation]\label{defn:finslersymm}
 Let $(X,d)$ be a Finsler manifold with path metric $d$ induced by a Finsler norm $\|\cdot\|$. We refer to the path metric for the Finsler norm ${\|\cdot\|}^{(p)}$, where $p\in[1,\infty]$, as the $p$-th \emph{Finsler symmetrisation} of $d$. We denote such a path metric by $D^{(p)}$.
\end{definition}

\begin{proposition}\label{prop:DpvsD1}
For every $p\in[1,\infty]$, we have $D^{(p)}\geq D^{(1)}$.
\end{proposition}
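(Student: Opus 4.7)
The plan is to reduce the metric inequality $D^{(p)} \geq D^{(1)}$ to a pointwise (Finsler norm) inequality $\|v\|^{(p)} \geq \|v\|^{(1)}$ on every tangent space, and then to invoke the fact that the path metric of a larger Finsler norm dominates the path metric of a smaller one.

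First I would verify the norm-level inequality. For fixed $v \in T_x X$, set $a := \|v\|$ and $b := \|-v\|$. For $p \in [1, \infty)$, the claim $\|v\|^{(p)} \geq \|v\|^{(1)}$ becomes
\[
\left(\tfrac{1}{2}(a^p + b^p)\right)^{1/p} \geq \tfrac{1}{2}(a+b),
\]
which is precisely the power mean inequality (equivalently, Jensen's inequality applied to the convex function $t \mapsto t^p$ on $[0,\infty)$, as $p \geq 1$). For $p = \infty$, the inequality $\max\{a,b\} \geq \tfrac{1}{2}(a+b)$ is immediate. Hence $\|v\|^{(p)} \geq \|v\|^{(1)}$ pointwise on $TX$ for every $p \in [1,\infty]$.

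Next I would pass from norms to path metrics. For any piecewise $C^1$ path $\gamma: I \to X$ from $x$ to $y$, the pointwise inequality integrates to
\[
\int_I \|\gamma'(s)\|^{(p)} \, \mathrm{d}s \geq \int_I \|\gamma'(s)\|^{(1)} \, \mathrm{d}s,
\]
so the $\|\cdot\|^{(p)}$-length of $\gamma$ dominates its $\|\cdot\|^{(1)}$-length. Taking the infimum over all such $\gamma$ joining $x$ to $y$ (as in \cref{defn:finslermetric}) yields
\[
D^{(p)}(x,y) \geq D^{(1)}(x,y)
\]
for all $x,y \in X$, as required.

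No step presents a genuine obstacle here; the only subtlety is to distinguish the metric symmetrisation $d^{(p)}$ (\cref{defn:metricsymm}), which compares $d(x,y)$ and $d(y,x)$ directly, from the Finsler symmetrisation $D^{(p)}$ (\cref{defn:finslersymm}), which is built first at the norm level and then integrated into a path metric. The argument above crucially uses the latter definition, since the pointwise power mean inequality on the indicatrix is what drives the comparison.
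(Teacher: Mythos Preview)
Your proof is correct and follows the same approach as the paper: both establish the pointwise norm inequality $\|v\|^{(p)}\geq\|v\|^{(1)}$ via Jensen's inequality (the power mean inequality) and then pass to the induced path metrics. The paper's write-up is terser and, somewhat confusingly, carries along the expression $tv+(1-t)w$ from the preceding proposition, but the underlying argument is identical to yours.
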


\begin{proof}
This is clear for $p\infty$. For $p\in[1,\infty)$, Jensen's inequality asserts that
\begin{align*}
&(\|tv+(1-t)w\|^{(p)})^p\\
=
&\tfrac{1}{2}\left(\|tv+(1-t)w\|^p+\|t(-v)+(1-t)(-w)\|^p\right)\\
\geq
&(\tfrac{1}{2}\left(\|tv+(1-t)w\|+\|t(-v)+(1-t)(-w)\|\right))^p.
\end{align*}
Therefore $\|\cdot\|^{(p)}\geq\|\cdot\|^{(1)}$, and hence $D^{(p)}\geq D^{(1)}$.
\end{proof}

\begin{remark}
The last author and Troyanov gave (in \cite{PT-Harmonic}) a geometric description of the transformation of the Finsler norm for the $D^{(1)}$ symmetrisation of the distance function; this is based on an operation called the ``harmonic symmetrisation" at the level of the indicatrix. \cite[Lemma~4.5]{PT-Harmonic} asserts that $D^{(1)}\geq d^{(1)}$. However, we are unaware of any such relationships between $D^{(p)}$ and $d^{(p)}$ for general $p\in(1,\infty]$.
\end{remark}

In light of \cref{prop:DpvsD1}, this means that $D^{(p)}\geq d^{(1)}$ for all $p\in[0,\infty]$ and so by \cref{prop:FDmorphisms} there is a natural $1$-Lipschitz extension of the identity map between the respective FD-completions of $(\T(S),D^{(p)})$ and $(\T(S),d^{(p)})$. We show that such an extension map is homeomorphic:

\begin{theorem}\label{thm:universalcompletion}
The identity map $\mathrm{id}:\T(S)\to\T(S)$ induces a unique homeomorphic extension between any two of the following:
\begin{itemize}
\item
the forward FD-completion of $(\T(S),d_e)$;
\item
the backward FD-completion of $(\T(S),d_e)$;
\item
the forward FD-completion of $(\T(S),d_e^\#)$;
\item
the backward FD-completion of $(\T(S),d_e^\#)$;
\item
the (Cauchy) completion of $(\T(S),d_{\mathrm{WP}})$;
\item
the (Cauchy) completion of $(\T(S),d_e^{(p)})$, for any $p\in[1,\infty]$; and
\item
the (Cauchy) completion of $(\T(S),D_e^{(p)})$, for any $p\in[1,\infty]$.
\end{itemize}
\end{theorem}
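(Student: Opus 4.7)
The plan is to show that each completion listed is homeomorphic to the Weil--Petersson completion $\overline{\T(S)}^{\mathrm{WP}}$ via the unique continuous extension of $\mathrm{id}_{\T(S)}$. The four FD-completion cases already collapse to two: the backward FD-completion of $(\T(S),d_e)$ is, as an asymmetric metric space, identical to the forward FD-completion of $(\T(S),d_e^\#)$, and symmetrically for the other pair (as noted earlier in this section). Both are homeomorphic to $\overline{\T(S)}^{\mathrm{WP}}$ by Theorem~\ref{thm:lr:wp:completion}, so the remaining work concerns the symmetrised Cauchy completions.

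First I would record the elementary inequality $2^{-1/p}\max(a,b) \leq (\tfrac{1}{2}(a^p + b^p))^{1/p} \leq \max(a,b)$ valid for $a,b \geq 0$ and $p \in [1,\infty)$. Applied with $(a,b) = (d_e(x,y), d_e(y,x))$, this gives pairwise bi-Lipschitz equivalence of the family $\{d_e^{(p)}\}_{p \in [1,\infty]}$; applied with $(a,b) = (\|v\|_e, \|-v\|_e)$, it gives pairwise bi-Lipschitz equivalence of the Finsler norms $\{\|\cdot\|^{(p)}\}$, which integrates to pairwise bi-Lipschitz equivalence of the path metrics $\{D_e^{(p)}\}$. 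All $d_e^{(p)}$ therefore share one Cauchy completion and all $D_e^{(p)}$ share another, and it suffices to identify each with $\overline{\T(S)}^{\mathrm{WP}}$.

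For $d_e^{(1)}$, Corollary~\ref{cor:e<wp} gives $d_e^{(1)} \leq C_1\, d_{\mathrm{WP}}$, so the identity extends to a Lipschitz map $\Phi$ from $\overline{\T(S)}^{\mathrm{WP}}$ to the Cauchy completion of $(\T(S), d_e^{(1)})$. For injectivity, if two $d_{\mathrm{WP}}$-Cauchy sequences satisfy $d_e^{(1)}(x_n,y_n) \to 0$ then $d_e(x_n,y_n) \to 0$, forcing $d_{\mathrm{WP}}$-equivalence via Lemma~\ref{lem:fd-equi:wp-equi}. For surjectivity, given any $d_e^{(1)}$-Cauchy sequence, extract a subsequence $(x_{n_k})$ with $d_e^{(1)}(x_{n_k},x_{n_{k+1}}) < 2^{-k}$; since $d_e, d_e^\# \leq 2\, d_e^{(1)}$, this subsequence is simultaneously a forward FD-sequence for $d_e$ and for $d_e^\#$, hence $d_{\mathrm{WP}}$-Cauchy by Lemma~\ref{lem:fdiswp}, and its WP-class maps to the given $d_e^{(1)}$-class under $\Phi$. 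For $D_e^{(1)}$, the infinitesimal bound $\|\cdot\|^{(1)} \leq C_1 \|\cdot\|_{\mathrm{WP}}$ integrates to $D_e^{(1)} \leq C_1\, d_{\mathrm{WP}}$, and the inequality $D_e^{(1)} \geq d_e^{(1)}$ from the remark preceding \cref{defn:finslersymm} supplies the Lipschitz cycle
\[
\overline{\T(S)}^{\mathrm{WP}} \to \overline{\T(S)}^{D_e^{(1)}} \to \overline{\T(S)}^{d_e^{(1)}} \to \overline{\T(S)}^{\mathrm{WP}}
\]
whose composition extends $\mathrm{id}_{\T(S)}$ from the dense subset $\T(S)$ and therefore equals the identity on $\overline{\T(S)}^{\mathrm{WP}}$. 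Re-running the subsequence extraction directly for $D_e^{(1)}$-Cauchy sequences (using $d_e \leq 2\,d_e^{(1)} \leq 2\,D_e^{(1)}$), together with Lemma~\ref{lem:fd-equi:wp-equi}, forces each individual map in the cycle to be a bijection.

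The main obstacle is upgrading these continuous bijections to homeomorphisms, i.e., showing continuity of the inverse maps. This is handled by the diagonal-representative argument from Step~4 of the proof of Theorem~\ref{thm:lr:wp:completion}, applied essentially verbatim: given a convergent sequence $\xi_k \to \xi$ in the source completion, one chooses indices $m_k$ so that the diagonal $(x_{m_k,k})_k$ is controlled by the Cauchy condition on each representing sequence, splits the target distance by the triangle inequality, and uses the Lipschitz bounds $d_e^{(1)}, D_e^{(1)} \leq C_1 d_{\mathrm{WP}}$ and Lemma~\ref{lem:fd-equi:wp-equi} to drive all three terms to zero. Uniqueness of each continuous extension follows as usual from the density of $\T(S)$ combined with the fact (\cref{thm:busemannianext}) that every completion in the statement is a Busemannian, hence Hausdorff, metric space.
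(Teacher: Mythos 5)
Your proposal is correct and follows essentially the same route as the paper: collapse the FD-completion cases via \cref{thm:lr:wp:completion}, use the uniform bi-Lipschitz equivalence within each symmetrisation family $\{d_e^{(p)}\}$ and $\{D_e^{(p)}\}$, and then exploit the sandwich $d_e \lesssim d_e^{(p)}, D_e^{(p)} \leq C_1 d_{\mathrm{WP}}$ so that \cref{lem:fdiswp}, \cref{lem:fd-equi:wp-equi} and the Step-4 diagonal argument of \cref{thm:lr:wp:completion} carry over to the symmetrised metrics. The only cosmetic differences are that you reduce to $p=1$ where the paper reduces to $p=\infty$, and that you spell out the bijectivity checks and the Lipschitz cycle for $D_e^{(1)}$ explicitly rather than declaring the earlier proofs to hold verbatim.
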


\begin{proof}
We begin with a few observations.
\begin{itemize}
\item
As \cref{rmk:backwardmetric} explains, the forward FD-completion of $(\T(S),d_e)$ is the same asymmetric metric space as the backward FD-completion of $(\T(S),d_e^\#)$.
\item
Similarly, the backward FD-completion of $(\T(S),d_e)$ is identical to the forward FD-completion of $(\T(S),d_e^\#)$.

\item
By \cref{thm:lr:wp:completion}, the forward FD-completion of $(\T(S),d_e)$, the backward FD-completion of $(\T(S),d_e)$, and the completion of $(\T(S),d_{\mathrm{WP}})$ all admit unique homeomorphic extensions of the identity map.

\item
The symmetrised metrics $d_e^{(p)}$, for $p\in[1,\infty]$, are all bi-Lipschitz equivalent to each other, because $d_e^{(\infty)}(x,y)\leq \sqrt[p]{2}d_e^{(p)}(x,y)\leq \sqrt[p]{2}d_e^{(\infty)}(x,y)$. Hence, they all have homeomorphic Cauchy completions.

\item
Likewise, the symmetrised metrics $D_e^{(p)}$, for $p\in[1,\infty]$, are also all bi-Lipschitz equivalent to each other, because $\|v\|^{(\infty)}\leq \sqrt[p]{2}\|v\|^{(p)}\leq \sqrt[p]{2}\|v\|^{(\infty)}$. Hence, they also all induce homeomorphic Cauchy completions.
\end{itemize}
All of these things combined mean that we only need to establish unique homeomorphic extension in two cases:
\begin{enumerate}
\item
between $(\T(S),d_e^{(\infty)})$ and $(\T(S),d_{\mathrm{WP}})$, and
\item
between $(\T(S),D_e^{(\infty)})$ and $(\T(S),d_{\mathrm{WP}})$.
\end{enumerate}
Since $d_e\leq d_e^{(\infty)}\leq C_1 d_{\mathrm{WP}}$ and $d_e\leq D_e^{(\infty)}\leq C_1 d_{\mathrm{WP}}$, and $d_e^{(\infty)}$ and $D_e^{(\infty)}$ are Busemannian (symmetric) metrics, the statements and proofs of \cref{lem:fdiswp}, \cref{lem:fd-equi:wp-equi} and of \cref{thm:lr:wp:completion} still hold with the earthquake metric supplanted by either $d_e^{(\infty)}$ or $D_e^{(\infty)}$.
\end{proof}







\newpage
\section{Coarse comparison with the Weil--Petersson metric}
\label{s:vsWP}

Although the earthquake metric is bounded above by some constant multiple of the Weil--Petersson (\cref{thm:e:wp}), the two metrics are \emph{not} bi-Lipschitz (\cref{prop:notbilipschitz}). Such fine-scale difference is dramatically felt as one approaches the completion locus. Nevertheless, the fact that the completions of these two metrics topologically agree suggests that the coarse geometry of Teichm\"uller space with respect to the earthquake metric and the Weil--Petersson metric should see no such difference. 
To be more precise, the following holds.\medskip

\noindent\textbf{\cref{thm:coarse:geometry}.} \textit{The identity map on Teichm\"uller space is a quasi-isometry between $(\T(S),d_e)$ and $(\T(S),d_{\mathrm{WP}})$. Specifically, there are constants $D_1,D_2$ which depend only on the topology of $S$, such that
\begin{align*}
\text{for any } x,y\in\T(S),\quad 
&D_1 \dwp (x,y)-D_2\leq d_e(x,y)\leq C \dwp (x,y)
\end{align*}
where $C$ is the constant from \cref{cor:e<wp}.}

\subsection{Pants graph} 

\begin{definition}[elementary moves for pants decompositions]
Two pants decompositions $P$ and $P'$ of $S$ are related by an \emph{elementary move} if $P'$ can be obtained from $P$ by replacing (one isotopy class of) a simple closed curve $\alpha\in P$ by a different one $\alpha'$ which minimally intersects $\alpha$, and keeping the other curves unchanged.
\end{definition}

\begin{definition}[pants graph]
The \emph{pants graph} $\mathcal{P}(S)$ is a graph whose vertices are pants decompositions and where two pants decompositions $P$ and $P'$ are connected by an edge if they are related by an elementary move.  We metrise $\P(S)$ with a path metric $d_{\mathcal{P}}$ by setting the length of each edge to be $1$.
\end{definition}

Brock showed in \cite{Brock2003} that the pants graph is a coarse combinatorial model for $(\T(S),\dwp)$. We bridge the earthquake metric and the Weil--Petersson metric via the pants graph. We appeal to Brock's work and his ideas in our arguments.

Let $L$ be a positive constant greater than the  Bers constant. Then, every hyperbolic surface $x\in\T(S)$ admits a pants decomposition $P$ such that $\ell_{\alpha}(m)< L$ for every $\alpha\in P$.

\begin{theorem}[\cite{Brock2003}, Theorem 1.1]\label{thm:Brock}
Let $\pi:\T(S)\to\mathcal{P}(S)$ be a map which sends $x\in\T(S)$ to any pants decomposition $P$ satisfying  $\ell_{\alpha}(m)\leq L$ for every $\alpha\in P$.  The map $\pi:(\T(S),\dwp)\to(\mathcal{P}(S),d_{\mathcal{P}})$ is a quasi-isometry.
\end{theorem}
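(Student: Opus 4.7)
The plan is to follow Brock's original approach in \cite{Brock2003}, which establishes both inequalities defining the quasi-isometry — namely, the upper bound $d_{\mathcal{P}}(\pi(x),\pi(y)) \leq A\, \dwp(x,y) + B$ and its counterpart $\dwp(x,y) \leq A'\, d_{\mathcal{P}}(\pi(x),\pi(y)) + B'$. These two directions require independent arguments and draw on different inputs from \wp geometry.

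For the inequality $\dwp(x,y) \leq A'\, d_{\mathcal{P}}(\pi(x),\pi(y)) + B'$, the strategy is to realise each edge of the pants graph by a \wp path of uniformly bounded length. Given pants decompositions $P, P'$ that differ by an elementary move swapping a curve $\alpha$ for $\alpha'$, one constructs a \wp path from an $x \in \pi^{-1}(P)$ to an $x' \in \pi^{-1}(P')$ that first pinches $\alpha$ (which contributes a bounded \wp increment by Wolpert's asymptotic $\dwp \asymp \sqrt{\ell_\alpha}$ near the boundary stratum), then traverses a uniformly bounded amount within the boundary stratum corresponding to the associated nodal surface (using the finite \wp diameter of the thick part of the lower-dimensional Teichm\"uller space, along with the fact that the replacement curve $\alpha'$ remains of bounded length along this path), and finally unpinches along $\alpha'$. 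Concatenating such bounded segments over a geodesic in $\mathcal{P}(S)$, and using that the \wp diameter of $\pi^{-1}(P)$ restricted to surfaces with $\ell_\alpha < L$ for all $\alpha\in P$ is uniformly bounded (a consequence of the Mumford-type compactness for thick parts), yields the required estimate.

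For the opposite inequality, the key is to bound the change in the Bers-short pants decomposition along a short \wp path. I would take a \wp-minimising path from $x$ to $y$ (existence follows from the fact that the \wp completion is a CAT(0) space) and subdivide it into subarcs of unit \wp length. Along each subarc, Wolpert's formula controlling the \wp-gradient of geodesic length functions, together with the collar lemma, shows that only boundedly many simple closed curves can have their length cross the Bers threshold $L$. Each such threshold crossing accounts for at most one elementary move in $\pi$, so the total number of elementary moves separating $\pi(x)$ and $\pi(y)$ is bounded by a constant multiple of $\dwp(x,y)$ plus an additive constant. Across choices of $\pi$, this bounds $d_{\mathcal{P}}(\pi(x),\pi(y))$ in terms of $\dwp(x,y)$.

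The principal obstacle is quantifying the turnover of Bers-short decompositions along a \wp geodesic in a uniform way. Brock handles this via the synthetic CAT(0) geometry of the \wp completion, in combination with Wolpert's asymptotic expansion of the \wp metric near strata of nodal surfaces, which provides the quantitative gradient control referenced above. Once this control is secured, the remaining bookkeeping of elementary moves along the geodesic is routine.
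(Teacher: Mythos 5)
This statement is not proved in the paper at all: it is imported verbatim as a citation of Brock's Theorem~1.1, and the authors use it as a black box (together with their \cref{lem:Brock} and \cref{lem:e:pants:lower}) to deduce \cref{thm:coarse:geometry}. So there is no in-paper proof to compare against; what you have written is a reconstruction of Brock's own argument. As such a reconstruction it is broadly faithful: the coarse-Lipschitz direction $d_{\mathcal{P}}(\pi(x),\pi(y))\leq A\,\dwp(x,y)+B$ via subdividing a \wp\ geodesic into unit subarcs and bounding the turnover of Bers-short curves (gradient control on $\ell_\alpha$ plus the collar lemma caps the number of curves that are ever $L$-short on a unit subarc, and then Brock's Lemma~3.3 bounds the pants distance between any two short decompositions drawn from that finite set) is essentially Brock's covering argument, rephrased.

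There is, however, one step in the other direction that does not typecheck as written. For an elementary move replacing $\alpha$ by $\alpha'$ you propose to pinch $\alpha$, move inside the boundary stratum, and then ``unpinch along $\alpha'$.'' But $i(\alpha,\alpha')\neq 0$ by definition of an elementary move, so $\alpha'$ does not survive as a simple closed curve on the $\alpha$-noded surface: the strata $\mathcal{S}_\alpha$ and $\mathcal{S}_{\alpha'}$ are disjoint faces of the \wp\ boundary, and one cannot pass from one to the other without re-entering the interior. Worse, the naive cost of reaching $\mathcal{S}_{\alpha'}$ from a point $x$ with $\ell_\alpha(x)$ small is $\asymp\sqrt{\ell_{\alpha'}(x)}$, which is \emph{not} uniformly bounded since the collar lemma forces $\ell_{\alpha'}(x)\to\infty$ as $\ell_\alpha(x)\to 0$. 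The standard repair (and what Brock actually does) is different: one shows that $V_P(L)$ has uniformly bounded \wp\ diameter by pinching the \emph{entire} pants decomposition $P$ (the maximally noded limit has trivial Teichm\"uller space, so this costs at most $2(3g-3+n)\sqrt{2\pi L}$ there and back), observes that $V_P(L)\cap V_{P'}(L)\neq\emptyset$ for adjacent $P,P'$, and then invokes the finiteness of mapping class group orbits of elementary-move pairs together with the $\mathrm{Mod}(S)$-invariance of $\dwp$ to get a uniform bound per edge. With that substitution your outline closes up; without it, the per-edge bound is not established.
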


Proving the main result of this section reduces to the following lemma.

\begin{lemma}\label{lem:e:pants:lower}
For any $\pi:\T(S)\to\mathcal{P}(S)$ defined as above, there exist constants $C'$ and $D'$ such that for any $x,y\in\T(S)$, 
\begin{align*}
d_e(x,y)\geq C' d_\mathcal{P}(\pi(x),\pi(y))-D'.
\end{align*}
\end{lemma}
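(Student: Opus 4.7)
The plan is to reduce the global comparison to a local one via a standard subdivision argument. Specifically, I would aim to establish the following \emph{Key Lemma}: there exist constants $\epsilon_0, K > 0$, depending only on $L$ and the topology of $S$, such that whenever $d_e(x,y) \leq \epsilon_0$, one has $d_\mathcal{P}(\pi(x), \pi(y)) \leq K$. Granted the Key Lemma, the proof concludes by taking any piecewise $C^1$ path from $x$ to $y$ of earthquake length at most $d_e(x,y)+\delta$, subdividing it into at most $(d_e(x,y) + \delta)/\epsilon_0 + 1$ segments of $d_e$-length $\leq \epsilon_0$, applying the Key Lemma across each segment, and summing via the triangle inequality in $\mathcal{P}(S)$. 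Sending $\delta\to 0$ yields $d_\mathcal{P}(\pi(x),\pi(y)) \leq (K/\epsilon_0)\, d_e(x,y) + K$, which is the desired bound with $C' = \epsilon_0/K$ and $D' = \epsilon_0$.

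To prove the Key Lemma, the plan is to use \cref{lem:lowerbound:emetric} to control how geodesic lengths vary under a small earthquake displacement. For each $\alpha \in \pi(x)$, we have $\ell_\alpha(x) \leq L$, so the lemma gives the inequality $2\int_{\min\{\ell_\alpha(x),\ell_\alpha(y)\}}^{\max\{\ell_\alpha(x),\ell_\alpha(y)\}} w(\ell)\,\mathrm{d}\ell \leq \epsilon_0$. Since $w(\ell) \sim 2e^{-\ell/2}$ at infinity, the integral $\int_L^\infty w(\ell)\,\mathrm{d}\ell$ is finite and positive; choosing $\epsilon_0$ strictly smaller than twice this integral forces $\ell_\alpha(y) \leq L'$ for some $L' = L'(L, \epsilon_0)$. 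Consequently $\pi(x)$ is itself a pants decomposition of $y$ all of whose curves have length at most $L'$ at $y$.

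The remaining ingredient is to argue that any two pants decompositions of a single point $y \in \T(S)$, both with all curve-lengths bounded above by a fixed constant $L'$, lie at bounded distance in $\mathcal{P}(S)$, with bound depending only on $L'$ and the topological type of $S$. This is essentially a pointwise version of Brock's quasi-isometry theorem (\cref{thm:Brock}): both $\pi(x)$ (viewed at $y$) and $\pi(y)$ are admissible choices of a ``short'' pants decomposition map at $y$ once the threshold is raised from $L$ to $\max\{L,L'\}$, and any two such choices must differ by a uniformly bounded number of elementary moves. This then gives $d_\mathcal{P}(\pi(x),\pi(y)) \leq K$.

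The main obstacle I expect is this last pointwise bounded-diameter statement. Although it is a fairly well-known consequence of the thick-thin decomposition combined with Mumford's compactness theorem, spelling it out requires some care: in the $\epsilon$-thin part of $y$, the very short curves are uniquely determined and must lie in every short pants decomposition; in the complementary thick subsurfaces, only finitely many topological types of bounded-length pants decompositions can occur, up to the stabiliser of the thin-part combinatorics. Making this precise, and extracting a bound depending only on $L'$ and the topology of $S$, is the one place where one must work beyond the tools developed earlier in the paper.
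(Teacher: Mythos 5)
Your argument is correct, and it reaches the lemma by a genuinely different mechanism than the paper. The paper covers a near-minimising path by the regions $V_P(L')=\{x\mid \ell_\alpha(x)<L'\ \forall\alpha\in P\}$ and its key technical input is \cref{lem:cover:J}, a bound on how many such regions a unit-length earthquake path can require; that bound is obtained rather indirectly, via mapping class group invariance, the extension of $d_e$ to the Weil--Petersson completion (\cref{thm:metric:extension}) and compactness of $\overline{V_P(L')}/\mathbf{DT}(P)$, after which Brock's overlap estimate (\cref{lem:Brock}) is chained along consecutive regions. You instead subdivide the path into segments of earthquake length $\leq\epsilon_0$ and use the quantitative estimate \cref{lem:lowerbound:emetric} to show that each curve of $\pi(x_i)$, having length $\leq L$ at the start of a segment, still has length $\leq L'$ at its end (your choice $\epsilon_0<2\int_L^\infty w$, then $L'$ with $2\int_L^{L'}w>\epsilon_0$, is exactly right); this replaces the completion/compactness argument of \cref{lem:cover:J} by a more explicit and self-contained length-control step, at the cost of no new difficulty. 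The one place you flag as requiring extra work --- that two pants decompositions which are simultaneously of length $\leq L'$ at the same point $y$ are a bounded number of elementary moves apart --- is not actually a gap: it is precisely Brock's Lemma 3.3 (\cref{lem:Brock}), applied with threshold, say, $L'+1$, since $y\in V_{\pi(x)}(L'+1)\cap V_{\pi(y)}(L'+1)$; so you may simply cite it (as the paper does) rather than rederive it from the thick--thin decomposition, although your sketch of that derivation (bounded lengths force bounded intersection numbers, hence finitely many topological configurations up to the mapping class group) is also sound. With that citation in place, your constants $C'=\epsilon_0/K$ and $D'=\epsilon_0$ come out exactly as you state.
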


\begin{proof}[Proof of \cref{thm:coarse:geometry} assuming \cref{lem:e:pants:lower}]
	By \cref{cor:e<wp},
\begin{align*}
d_e(x,y)\leq C_1 \dwp(x,y)\quad \text{ and }\quad d_e(y,x)\leq C_1 \dwp (y,x)=\dwp(x,y)
\end{align*}
for any $x,y\in\T(S)$. The converse follows from \cref{lem:e:pants:lower} and \cref{thm:Brock}.
\end{proof}

\subsection{Proof of \cref{lem:e:pants:lower}} 
 For each pants decomposition $P$, we set
 \begin{align*}
 	V_P(L):=\{x\in\T(S)\mid  \ell_\alpha(x)< L \text{ for each } \alpha\in P\}.
 \end{align*}
Since $L$ is greater than the Bers constant , $\{V_P(L)\}$ is an open cover  of the \tec space.   

\begin{lemma}
[\cite{Brock2003}, Lemma 3.3]\label{lem:Brock} Given $L'>L$, there exists a constant $D_2$ depending on $L'$ such that if $V_P(L')\cap V_{P'}(L')\neq \emptyset $ then
$
d_{\P}(P,P')\leq D_2.
$
\end{lemma}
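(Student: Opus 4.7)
The plan is to reduce the statement to two separate bounds: first, a bound on the total geometric intersection number $i(P,P') := \sum_{\alpha\in P,\, \beta\in P'} i(\alpha,\beta)$ depending only on $L'$ and the topology of $S$; second, a bound on $d_{\mathcal P}(P,P')$ in terms of $i(P,P')$.

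For the intersection bound, I would fix any $x \in V_P(L') \cap V_{P'}(L')$ and invoke the collar lemma. Each simple closed geodesic $\alpha$ of $x$ admits an embedded collar of width $w(\ell_\alpha(x))$, where $w$ is the non-increasing function $w(t)=\mathrm{arcsinh}(\mathrm{cosech}\tfrac{t}{2})$ already used in Section~\ref{sec:asyp:dist}. If another simple closed geodesic $\beta$ crosses $\alpha$ transversely in $k$ points, then $\beta$ must traverse this collar $k$ times, contributing at least $2k\, w(\ell_\alpha(x))$ to $\ell_\beta(x)$. Since $\ell_\alpha(x), \ell_\beta(x) < L'$ and $w$ is monotone decreasing, this yields
\[
i(\alpha,\beta) \leq \frac{\ell_\beta(x)}{2 w(\ell_\alpha(x))} \leq \frac{L'}{2 w(L')}.
\]
Summing over the at most $(3g-3+n)^2$ pairs $(\alpha,\beta) \in P\times P'$ bounds $i(P,P')$ by a constant depending only on $L'$ and $S$.

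For the combinatorial bound, the idea is that the mapping class group acts cocompactly on the set of pairs of pants decompositions of bounded total intersection number; hence there are only finitely many $\mathrm{Mod}(S)$-orbits of such pairs, and within each orbit, $d_{\mathcal P}(P,P')$ is a mapping class group invariant, so it attains a maximum. Concretely, one can produce an explicit sequence of elementary moves by induction on $i(P,P')$: whenever a curve $\alpha \in P$ intersects some curve of $P'$, one can replace $\alpha$ by a curve that intersects $P'$ fewer times, at a cost bounded in terms of $i(P,P')$.

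The main obstacle I foresee is the second step: the collar argument is short and essentially automatic, but the passage from bounded intersection number to bounded pants graph distance is a genuinely combinatorial statement. The cleanest route is probably to invoke the standard finiteness fact that, for each constant $N$, the set of $\mathrm{Mod}(S)$-orbits of pairs $(P,P')$ with $i(P,P') \leq N$ is finite; this is exactly the ingredient that Brock exploits in \cite{Brock2003}, so citing his argument for this step would make the exposition self-contained without requiring us to reproduce the combinatorial induction.
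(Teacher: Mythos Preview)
The paper does not give its own proof of this lemma; it simply cites \cite[Lemma~3.3]{Brock2003}. Your proposal is correct and is essentially Brock's argument: bound $i(P,P')$ via the collar lemma using a common point $x\in V_P(L')\cap V_{P'}(L')$, then use the finiteness of $\mathrm{Mod}(S)$-orbits of pairs of pants decompositions with bounded total intersection number, together with the mapping class group invariance of $d_{\mathcal P}$, to obtain the distance bound.
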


\begin{lemma}\label{lem:cover:J}
	Given $L'>L$, there exists an integer $J$ such that for every compact path $\beta\subset\T(S)$ of unit length with respect to the earthquake metric, there exist pants decompositions $P_1,\cdots, P_J$ such that $\beta\subset V_{P_1}(L')\cup V_{P_2}(L')\cup \cdots \cup V_{P_J}(L'). $
\end{lemma}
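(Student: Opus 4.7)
The plan is to cover $\beta$ inductively by regions of the form $V_P(L')$, using \cref{lem:lowerbound:emetric} to obtain a uniform positive lower bound on the earthquake arc length spent inside each such region before the induction must advance. The bound on $J$ will depend only on $L'$ and $L$ (hence only on the topology of $S$), not on the particular path $\beta$.

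First I would reparametrise $\beta \colon [0,1] \to \T(S)$ by earthquake arc length, so that the earthquake length of any subpath $\beta\big|_{[s,t]}$ equals $t-s$ and in particular majorises $d_e(\beta(s),\beta(t))$. Set $t_0 := 0$. Given $t_k < 1$, choose a pants decomposition $P_{k+1}$ of $\beta(t_k)$ with $\ell_\alpha(\beta(t_k)) \leq L$ for every $\alpha \in P_{k+1}$ (such a $P_{k+1}$ exists because $L$ exceeds the Bers constant). Since $L < L'$, we have $\beta(t_k) \in V_{P_{k+1}}(L')$, and by continuity of length functions the set $\{t \in [t_k,1] : \beta([t_k,t]) \subset V_{P_{k+1}}(L')\}$ is a relatively open neighbourhood of $t_k$ in $[t_k,1]$; let $t_{k+1}$ be its supremum. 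Either $t_{k+1}=1$ and we stop, or some curve $\alpha \in P_{k+1}$ satisfies $\ell_\alpha(\beta(t_{k+1})) = L'$.

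The main step is to give a uniform positive lower bound on $t_{k+1} - t_k$ in the latter case. Invoking \cref{lem:lowerbound:emetric} with the curve $\alpha$ witnessing $\ell_\alpha(\beta(t_{k+1})) = L'$, and using $\ell_\alpha(\beta(t_k)) \leq L$ together with the monotonicity of the collar width function $w$, we get
\[
t_{k+1} - t_k \;\geq\; d_e(\beta(t_k),\beta(t_{k+1})) \;\geq\; 2\int_{\ell_\alpha(\beta(t_k))}^{L'} w(\ell)\,d\ell \;\geq\; 2(L'-L)\,w(L') \;=:\; \varepsilon_0.
\]
Since $\varepsilon_0 > 0$ depends only on $L'$ and $L$ (hence only on the topology of $S$), the induction terminates in at most $J := \lceil 1/\varepsilon_0 \rceil$ steps. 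Padding with repetitions if fewer decompositions are produced, we obtain $P_1,\ldots,P_J$ with $\beta \subset V_{P_1}(L') \cup \cdots \cup V_{P_J}(L')$.

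The only real obstacle is extracting the uniform lower bound $\varepsilon_0$, and this is precisely what the monotonicity of $w$ combined with \cref{lem:lowerbound:emetric} gives us; once that estimate is in hand, the covering follows from a routine continuity and compactness argument along $\beta$.
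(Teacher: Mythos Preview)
Your argument is correct and in fact more elementary than the paper's. Both proofs hinge on a uniform positive lower bound for the earthquake arc length of any subpath that starts in $V_P(L)$ and exits $V_P(L')$, but they obtain this bound by very different means. The paper passes to the Weil--Petersson completion, observes that the quotients $\overline{V_P(L)}/\mathbf{DT}(P)$ and $\overline{V_P(L')}/\mathbf{DT}(P)$ are compact, and appeals to the continuous extension $\bar{d}_e$ of the earthquake metric (\cref{thm:metric:extension}) to conclude that $d_e(\partial V_P(L),\partial V_P(L'))\geq c>0$ uniformly in $P$. You instead apply \cref{lem:lowerbound:emetric} directly to the single curve $\alpha\in P_{k+1}$ whose length rises from $\leq L$ to $L'$, obtaining the explicit bound $\varepsilon_0=2(L'-L)\,w(L')$. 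Your route avoids the completion machinery entirely and yields a concrete constant; the paper's route is softer but leans on results proved in \cref{s:completion}.

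One small point to tidy up: when the induction terminates with $t_{k+1}=1$ you only know $\beta([t_k,1))\subset V_{P_{k+1}}(L')$, and it can happen that $\beta(1)\notin V_{P_{k+1}}(L')$. In that case your length estimate still applies (some $\alpha$ has $\ell_\alpha(\beta(1))=L'$, so $1-t_k\geq\varepsilon_0$), and one additional pants decomposition covers the endpoint $\beta(1)$. So taking $J:=\lceil 1/\varepsilon_0\rceil+1$ is safe.
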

 \begin{proof}
By the choice of $L$ and the compactness of $\beta$, we see that  there exist pants decompositions $P_1,P_2,\cdots, P_k$ such that $\beta$ is covered  by the union of $V_{P_1}(L)$, $V_{P_2}(L)$, $\cdots$, $V_{P_k}(L)$.  Since $L'>L$, this implies that
$
\beta\subset \bigcup_{i=1}^k V_{P_i}(L').
$
For any pants decomposition $P$, let $\overline{V_{P}(L')}$  be  the closure of $V_{P_i}(L')$ in the Weil-Petersson completion. Let $\mathbf{DT}(P)$ be the subgroup of the mapping class group generated by Dehn twists around simple closed curves in $P$. Then the quotients $\overline{V_{P}(L')}/\mathbf{DT}(P)$ and $\overline{V_{P}(L)}/\mathbf{DT}(P)$ are compact. Since the earthquake metric is mapping class group invariant and extends to the Weil-Petersson completion (\cref{thm:metric:extension}), it follows that  there exists a constant $c>0$ such that for all $P\in\P(S)$, we have 
\[d_e(\partial V_P(L),\partial V_P(L'))\geq c\quad\text{ and }\quad d_e(\partial V_P(L'),\partial V_P(L))\geq c.
\]
Therefore, each component of $\beta\cap (V_{P_i}(L')\setminus V_{P_i}(L))$ which meets $\overline{V_{P_i}(L)}$ has length at least $c$. Thus, if $J$ is the smallest integer larger than $1/c$,  then we may cover $\beta$ by a subcovering of $\{V_{P_i}(L')\mid 1\leq i\leq k\}$ which has at most $J$ elements.
\end{proof}

\begin{proof}[Proof of \cref{lem:e:pants:lower}]
Let $x$ and $y$ be arbitrary points in $\T(S)$. Let $p$ be a path connecting $x$ to $y$ whose earthquake metric length $\mathrm{L}(p)$ satisfies 
$
|\mathrm{L}(p)-d_e(x,y)|\leq 1.
$
Let $K$ be the smallest integer larger than $\mathrm{L}(p)$.  We subdivide $p$ into segments $p_1,p_2,\cdots,p_K$, each of which has length at most one.  By \cref{lem:cover:J} there exists a collection $\{P_j\mid 1\leq j\leq KJ\}$ of pants decompositions such that
$
p\subset \bigcup_{1\leq j\leq KJ} V_{P_j}(L').$

Combined with \cref{lem:Brock}, this implies that $\pi(P)$ and $\pi(P')$ can be connected by a path in the pants graph with length at most $(KJ+1)D_2$.  In particular,
\begin{align*}
d_{\P}(\pi(x),\pi(y))
&\leq D_2(KJ+1)\\
&\leq D_2J(\ell(\sigma)+1)+D_2 \\
&\leq D_2Jd_e(m,m') +(2J+1)D_2.
\end{align*}
This completes the proof.
\end{proof}

\newpage
\appendix

\section{Topology of Finsler metrics of low regularity}

For an asymmetric metric space, we can  define a forward topology and a backward topology, with sub-bases respectively consisting of forward and backward open balls of the metric.
In general these two topologies may be different. Busemann studied these notions in Chapter~1 of \cite{Busemann-synthetic}, and gave conditions under which the forward and backward topologies coincide with the topology of the symmetrised metric 
$
d^{\max}(x,y)=\max\{d(x,y),d(y,x)\}.
$

In this appendix, after making precise what we mean by a Finsler (asymmetric) metric, we shall consider the topology of such metrics. We show that the induced topology is the same as the topology of the underlying differentiable manifold.  This will show in particular that  the topology induced on Teichm\"uller space  by the earthquake metric coincides with the usual topology.

For the remainder of this appendix, we take the following setup: 
\begin{itemize}
\item
$X$ is a $C^1$-manifold;

\item
there is a continuous function ${\|\cdot\|}:TX\to[0,\infty)$ called the Finsler metric (see \cref{defn:finslermetric}) on $X$, and we denote the restriction of ${\|\cdot\|}$ to a tangent space $T_x X$ by ${\|\cdot\|}_x$; and

\item
$d$ is the induced path metric of ${\|\cdot\|}$ on $X$.
\end{itemize}

Let $x$ be an \emph{arbitrary} point of $X$ and $(U,\phi)$ a  local chart around the point $x$ such that $\phi(x)=0$ and that the image of $\phi:U\to \R^m$ is the unit ball. Let ${\|\cdot\|}'$ be the pullback to $T U$ of the Euclidean norm on $\R^m$ via $\phi$.  Let $d'$ be the metric induced by ${\|\cdot\|}'$ on $U\subset M$. 

\begin{lemma}\label{lem:comparison}
There exists a positive constant $C=C(U,\phi)\geq 1$   such that for any $y$ with $d'(x,y)\leq\tfrac{3}{4}$ and  any $v\in T_y X$, we have
\begin{equation*}
\frac{1}{C}\leq \frac{\|v\|_y}{\|v\|_y'}\leq C.
\end{equation*}
In particular, $d(y,z)\leq C d'(y,z)$ for any $y,z\in U$ with $d'(x,y)\leq\tfrac{3}{4}$ and $d'(x,z)\leq\tfrac{3}{4}$.
\end{lemma}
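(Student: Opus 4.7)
The plan is to exploit compactness. Let $K := \{y \in U \mid d'(x,y) \leq 3/4\}$; via $\phi$ this corresponds to the closed Euclidean ball of radius $3/4$ in $\mathbb{R}^m$, hence $K$ is compact. Consider the $\|\cdot\|'$-unit sphere bundle $S := \{(y,v) \in TU \mid y \in K,\ \|v\|'_y = 1\}$. Using the trivialisation of $TU$ supplied by $\phi$, $S$ is homeomorphic to $\phi(K) \times S^{m-1}$, so $S$ is compact. The function $(y,v) \mapsto \|v\|_y$ is continuous on $S$ by the continuity assumption on the Finsler metric ${\|\cdot\|}:TX \to [0,\infty)$, and it is strictly positive on $S$ since $v \neq 0$ there and weak norms vanish only on the zero vector. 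By compactness, this function attains a positive minimum $m > 0$ and a finite maximum $M$ on $S$. Set $C := \max\{M, 1/m, 1\}$.

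For the first inequality, given any $y \in K$ and any nonzero $v \in T_y X$, positive homogeneity of both norms gives $\|v\|_y/\|v\|'_y = \|v/\|v\|'_y\|_y$, and the vector $(y, v/\|v\|'_y)$ lies in $S$, so $m \leq \|v\|_y/\|v\|'_y \leq M$, which yields the stated comparison $1/C \leq \|v\|_y/\|v\|'_y \leq C$. The case $v = 0$ is trivial.

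For the metric comparison, given $y,z \in U$ with $d'(x,y), d'(x,z) \leq 3/4$, consider the straight Euclidean segment in $\phi(U)$ from $\phi(y)$ to $\phi(z)$. By convexity of Euclidean balls this segment lies entirely in the closed ball of radius $3/4$, and its image $\gamma := \phi^{-1}(\text{segment})$ is a $C^1$ path in $U$ lying in $K$. Since $\phi$ is an isometry between $(U, d')$ and its Euclidean image, $\gamma$ has $\|\cdot\|'$-length exactly $d'(y,z)$. Applying the pointwise estimate along $\gamma$ gives
\[
d(y,z) \leq \int \|\gamma'(t)\|_{\gamma(t)}\,\mathrm{d}t \leq C \int \|\gamma'(t)\|'_{\gamma(t)}\,\mathrm{d}t = C\,d'(y,z).
\]

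The only point requiring care is that the Finsler norm is only \emph{continuous} (and asymmetric), not smooth; but since $\|v\|_y$ and $\|-v\|_y$ are both covered by letting $v$ range over the whole unit sphere (not just a hemisphere), the compactness argument handles both directions simultaneously, so asymmetry causes no additional difficulty. The main (minor) obstacle is simply verifying that the compact set $K$ can be used both as the domain for the sup/inf argument and as a region containing a suitable comparison path between arbitrary $y, z$ in that region, which is precisely why the hypothesis $d'(x,\cdot) \leq 3/4 < 1$ (rather than $\leq 1$) is imposed.
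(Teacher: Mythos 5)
Your proof is correct and follows essentially the same route as the paper: compactness of the $\|\cdot\|'$-unit sphere bundle over $\{y\mid d'(x,y)\le \tfrac34\}$ together with continuity and degree-$0$ homogeneity of the norm ratio for the constant $C$, and the straight (i.e.\ $d'$-geodesic) segment inside that convex region for the bound $d(y,z)\le C\,d'(y,z)$. No gaps; your write-up simply spells out details the paper leaves implicit.
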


\begin{proof}
The existence of $C$ follows from ${\|\cdot\|}$ being continuous, the ratio of norms being homogeneous of degree $0$, and the compactness of the unit tangent bundle over $\{y\in U\mid d'(x,y)\leq\tfrac{3}{4}\}$. The distance bound uses the geodesic convexity of $\{y\in U\mid d'(x,y)\leq\tfrac{3}{4}\}$ with respect to $d'$.
\end{proof}

\begin{lemma}\label{cor:Thurston:earthquake2}
For any $R\in(0,\tfrac{3}{4})$, if $d'(x,y)\geq R $, then both $d(x,y),d(y,x)\geq \tfrac{R}{C}$, where $C$ is the constant from \cref{lem:comparison}.
\end{lemma}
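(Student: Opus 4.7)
The plan is to reduce both inequalities to a direct application of \cref{lem:comparison} by analysing an arbitrary path from (respectively to) $y$ according to how it interacts with the closed ``Euclidean'' ball
\[
\bar B := \{z\in U : d'(x,z)\leq \tfrac{3}{4}\},
\]
and then infimising over paths. Note $\bar B\subset U$, so \cref{lem:comparison} applies pointwise on all of $\bar B$, yielding the Finsler length comparison $L_d(\eta) \geq L_{d'}(\eta)/C$ for any piecewise $C^1$ path $\eta$ whose image lies in $\bar B$.

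For the first inequality, $d(x,y)\geq R/C$, I would take an arbitrary piecewise $C^1$ path $\gamma:[0,1]\to X$ from $x$ to $y$, and consider the first-exit time $T:=\sup\{t\in[0,1]:\gamma([0,t])\subset\bar B\}$, which is positive since $\gamma(0)=x$ is an interior point of $\bar B$. If $T=1$, then $\gamma$ lies entirely in $\bar B$, in particular $\gamma$ is a path inside $U$ from $x$ to $y$, so
\[
L_d(\gamma)\;\geq\; L_{d'}(\gamma)/C\;\geq\; d'(x,y)/C\;\geq\; R/C.
\]
If $T<1$, continuity forces $d'(x,\gamma(T))=3/4$, the restriction $\gamma|_{[0,T]}$ lies in $\bar B$, and
\[
L_d(\gamma)\;\geq\; L_d(\gamma|_{[0,T]})\;\geq\; L_{d'}(\gamma|_{[0,T]})/C \;\geq\; d'(x,\gamma(T))/C \;=\; \tfrac{3}{4C}\;>\;\tfrac{R}{C},
\]
using $R<3/4$. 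Infimising over $\gamma$ gives $d(x,y)\geq R/C$.

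For the second inequality, $d(y,x)\geq R/C$, I would run the symmetric argument, using the \emph{last} time a path from $y$ to $x$ meets the sphere $\partial\bar B=\{z\in U:d'(x,z)=\tfrac{3}{4}\}$. Given $\gamma:[0,1]\to X$ with $\gamma(0)=y$, $\gamma(1)=x$, either (i) $\gamma$ never meets $\partial\bar B$, in which case connectedness together with $\gamma(1)=x\in\mathrm{int}(\bar B)$ forces $\gamma\subset\bar B$, and then $L_d(\gamma)\geq L_{d'}(\gamma)/C\geq d'(y,x)/C\geq R/C$ (recall that $d'$ is symmetric); or (ii) $t^*:=\sup\{t\in[0,1]:\gamma(t)\in\partial\bar B\}$ exists, in which case $\gamma|_{[t^*,1]}$ is forced to lie in $\bar B$ (again by connectedness, since it avoids $\partial\bar B$ after time $t^*$ and ends inside $\bar B$), so
\[
L_d(\gamma)\;\geq\; L_d(\gamma|_{[t^*,1]})\;\geq\; d'(\gamma(t^*),x)/C\;=\;\tfrac{3}{4C}\;>\;\tfrac{R}{C}.
\]
Infimising over $\gamma$ gives $d(y,x)\geq R/C$.

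There is no real obstacle here; the whole content lies in \cref{lem:comparison}, and the only care needed is in verifying that the subpaths to which one applies the comparison truly stay inside $\bar B$. This is precisely what the choice of first-exit (respectively last-entry) time ensures, so the argument is essentially complete once these two paragraphs are fleshed out.
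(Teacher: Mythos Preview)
Your proof is correct and follows essentially the same approach as the paper: restrict an arbitrary path to the portion lying inside a $d'$-ball centered at $x$, apply the pointwise norm comparison from \cref{lem:comparison} to bound its $d$-length below by $1/C$ times its $d'$-length, and observe that this $d'$-length is at least $R$. The only cosmetic difference is that the paper works with the smaller ball $V=\{z:d'(x,z)<R\}$ (so the path always exits and no case split is needed), whereas you work with the larger ball $\bar B$ of radius $3/4$ and split into the cases where the path exits or not; both choices give the same bound.
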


\begin{proof}
Consider the open subset
\[
V:=\{z\in U\mid d'(x,z)< R\}.
\]
By \cref{lem:comparison}, for any piecewise $C^1$ path $p$ connecting $x$ to any $y$ such that $d(x,y)\geq \tfrac{R}{C}$,  the length $\mathrm{L}(p)$ of $p$ with respect to ${\|\cdot\|}$  and the length $\mathrm{L}'(p)$ with respect to ${\|\cdot\|}'$ satisfy:
\begin{align*}
\mathrm{L}(p)
\geq\mathrm{L}( p\cap V) 
\geq\tfrac{1}{C}\mathrm{L}'( p\cap V)\geq \tfrac{R}{C}.
\end{align*}
Consequently,  $d(x,y)\geq \frac{R}{C}$.  Similarly, we have $d(y,x)\geq \frac{R}{C}$. 
\end{proof}

From \cref{cor:Thurston:earthquake2}, we deduce the following:
\begin{corollary}\label{cor:distinguishes}
A Finsler metric distinguishes points. In other words, for any $x$ and $y$ in $X$, we have  $d(x,y)=0$ if and only if $x=y$. 
\end{corollary}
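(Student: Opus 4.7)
The direction $x = y \Rightarrow d(x,y) = 0$ is immediate: the constant path at $x$ is a piecewise $C^1$ path of length zero from $x$ to itself. The substantive direction is the converse, which I would establish contrapositively: given $x \neq y$, produce a strictly positive lower bound for $d(x,y)$ by localising around $x$ and applying Corollary~\ref{cor:Thurston:earthquake2}.

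First, fix a $C^1$-chart $(U, \phi)$ around $x$, as in the setup preceding Lemma~\ref{lem:comparison}, with $\phi(x) = 0$ and $\phi(U)$ equal to the open unit ball in $\mathbb{R}^m$; let $C = C(U,\phi) \geq 1$ be the constant from Lemma~\ref{lem:comparison}. Next, choose $R \in (0, 3/4)$ so that $y$ lies outside the set $V := \{z \in U : d'(x,z) < R\}$. Such an $R$ exists because $\phi$ is a homeomorphism from $U$ onto the open unit ball: either $y \notin U$ (in which case any $R \in (0, 3/4)$ works), or $y \in U$ with $\|\phi(y)\| > 0$, and we may then take any $R < \min(3/4, \|\phi(y)\|)$. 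Finally, the argument in Corollary~\ref{cor:Thurston:earthquake2} yields $d(x,y) \geq R/C > 0$, proving that $x \neq y$ implies $d(x,y) > 0$.

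The only point that requires a brief comment (and the closest thing to an obstacle) is that Corollary~\ref{cor:Thurston:earthquake2} is phrased under the hypothesis $d'(x,y) \geq R$, which implicitly assumes $y \in U$, while in our situation $y$ need not lie in the chart. However, inspecting its proof shows that the estimate $L(p) \geq R/C$ depends only on the fact that any piecewise $C^1$ path $p$ from $x$ to $y$ begins inside $V$ and ends outside $V$, so it must contain a subpath in $V$ running from $x$ to $\partial V$ of Euclidean length at least $R$. This is equally valid when $y \notin U$, so the argument applies without change and the infimum over such paths delivers the desired bound.
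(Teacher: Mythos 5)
Your proof is correct and follows exactly the deduction the paper intends: the paper gives no explicit argument beyond citing \cref{cor:Thurston:earthquake2}, and your localisation around $x$ together with the observation that any piecewise $C^1$ path from $x$ to a point outside $V$ must accumulate $d'$-length at least $R$ inside $V$ is precisely that deduction. Your remark handling the case $y\notin U$ (where $d'(x,y)$ is not literally defined) is a welcome clarification of a point the paper glosses over, not a different approach.
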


\begin{lemma}\label{lem:earthquake:compact}
	There exists $\epsilon_0>0$ such that both $\{y\in X\mid d(x,y)\leq \epsilon_0\}$ and $\{y\in X \mid d(y,x)\leq \epsilon_0\}$ are compact.
\end{lemma}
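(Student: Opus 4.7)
The plan is to use the bi-Lipschitz comparison of \cref{lem:comparison} together with \cref{cor:Thurston:earthquake2} to trap the small forward/backward $d$-balls around $x$ inside a Euclidean ball in the chart $(U,\phi)$, and then use a squeeze via the triangle inequality to verify closedness in the manifold topology.

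First, I would fix the radius $R=\tfrac{1}{2}<\tfrac{3}{4}$ and let $C=C(U,\phi)\geq 1$ be the constant from \cref{lem:comparison}. Choose $\epsilon_0$ any positive number strictly less than $R/C=1/(2C)$. The contrapositive of \cref{cor:Thurston:earthquake2} then immediately gives the inclusions
\begin{align*}
\{y\in X\mid d(x,y)\leq\epsilon_0\}&\subseteq\{y\in U\mid d'(x,y)<R\}\subseteq K,\\
\{y\in X\mid d(y,x)\leq\epsilon_0\}&\subseteq\{y\in U\mid d'(x,y)<R\}\subseteq K,
\end{align*}
where $K:=\phi^{-1}(\overline{B(0,R)})\subset U$ is compact in the manifold topology (being the homeomorphic image of a closed Euclidean ball strictly inside $\phi(U)$).

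It remains to verify that both balls are closed in the manifold topology; combined with containment in the compact set $K$, this will give compactness. Let $(y_n)$ be a sequence in $\{y\mid d(x,y)\leq\epsilon_0\}$ converging to $y_\infty$ in the manifold topology. Since $y_n\in K$ and $K$ is closed, $y_\infty\in K$, so in particular $d'(x,y_\infty)\leq R<\tfrac{3}{4}$ and likewise $d'(x,y_n)\leq R<\tfrac{3}{4}$ eventually. By \cref{lem:comparison}, $d(y_n,y_\infty)\leq C\,d'(y_n,y_\infty)\to 0$, so the triangle inequality yields
\[
d(x,y_\infty)\leq d(x,y_n)+d(y_n,y_\infty)\leq\epsilon_0+C\,d'(y_n,y_\infty)\xrightarrow{n\to\infty}\epsilon_0.
\]
The same argument, but applied with $d(y_\infty,y_n)\leq C\,d'(y_\infty,y_n)\to 0$ and the triangle inequality $d(y_\infty,x)\leq d(y_\infty,y_n)+d(y_n,x)$, handles the backward ball. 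Thus both sets are closed subsets of the compact set $K$, hence compact.

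The only mildly subtle point is the asymmetry: one must be careful to use the version of the triangle inequality appropriate to the direction of the ball under consideration, and to invoke \cref{lem:comparison} to control the $d$-distance (in the correct direction) from $y_n$ to $y_\infty$, rather than the other way around. Once one observes that \cref{lem:comparison} bounds $\|\cdot\|$ by $\|\cdot\|'$ symmetrically on a neighbourhood of $x$, no obstacle remains.
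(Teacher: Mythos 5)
Your proof is correct and follows essentially the same route as the paper: both use \cref{cor:Thurston:earthquake2} (with $R=\tfrac12$, giving $\epsilon_0<\tfrac{1}{2C}$) to trap the forward and backward $\epsilon_0$-balls inside the compact set $\{y\in U\mid d'(x,y)\leq\tfrac12\}$, and then conclude compactness from closedness. The only difference is that you spell out, via \cref{lem:comparison} and the directed triangle inequalities, the closedness of the two balls in the manifold topology, a point the paper's proof asserts without detail.
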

\begin{proof}
	By \cref{cor:Thurston:earthquake2},  there exists $\delta>0$ such that for any $y\in  X$ with $d'(x,y)\geq \tfrac{1}{2}$, we have $d(x,y)\geq \delta$ and $d(y,x)\geq \delta$. This implies that  both closed sets $\{y\in X \mid d(x,y)\leq \tfrac{\delta}{2}\}$ and $\{y\in X\mid d(y,x)\leq \tfrac{\delta}{2}\}$ are contained in the compact subset $\{y\in U\mid d'(x,y)\leq \tfrac{1}{2}\}$, hence are  also compact.  The lemma follows by letting $\epsilon_0$ be any positive constant smaller than $\tfrac{\delta}{2}$.
\end{proof}

\begin{lemma}\label{lem:local:comparison}
There exists a neighbourhood $V\subset U$ of $x$  such that
\begin{align*}
\tfrac{1}{2C} d'(y,z)
\leq 
d(y,z)\leq C d'(y,z)
\end{align*}
holds for any $y,z\in V$, where $C$ is the constant from \cref{lem:comparison}.
\end{lemma}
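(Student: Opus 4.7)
The plan is to choose $V$ as a small $d'$-ball around $x$ so that any two points $y,z \in V$ are much closer together than the radius of the region on which Lemma~\ref{lem:comparison} gives pointwise norm comparison. Concretely, I would set $V := \{y \in U : d'(x,y) < 1/4\}$. The upper bound $d(y,z) \leq C d'(y,z)$ for $y,z \in V$ is already contained in Lemma~\ref{lem:comparison}: the straight-line path in $\mathbb{R}^m$ connecting $\phi(y)$ to $\phi(z)$, pulled back to $U$ via $\phi^{-1}$, lies entirely in $\{w : d'(x,w) \leq 1/2\} \subset \{w : d'(x,w) \leq 3/4\}$, where ${\|v\|} \leq C{\|v\|}'$ holds pointwise, so its $d$-length is at most $C \cdot d'(y,z)$.

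The essential content is the lower bound $d(y,z) \geq \tfrac{1}{2C} d'(y,z)$, and the subtlety is that the infimum defining $d(y,z)$ is taken over all piecewise $C^1$ paths in $X$, any one of which could in principle leave the chart $U$. I would fix an arbitrary such path $p$ from $y$ to $z$ and split into two cases. If $p$ remains inside $\{w : d'(x,w) \leq 3/4\}$, then Lemma~\ref{lem:comparison} gives ${\|p'(t)\|} \geq \tfrac{1}{C}{\|p'(t)\|}'$ at each time $t$, and integration yields $\mathrm{L}(p) \geq \tfrac{1}{C}\mathrm{L}'(p) \geq \tfrac{1}{C} d'(y,z)\geq \tfrac{1}{2C}d'(y,z)$. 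Otherwise, the initial sub-path of $p$ from $y$ up to its first contact with the $d'$-sphere of radius $3/4$ about $x$ lies entirely in the good region and has $d'$-length at least $3/4 - 1/4 = 1/2$; applying Lemma~\ref{lem:comparison} to this sub-path alone shows that $\mathrm{L}(p) \geq \tfrac{1}{2C}$. Since $y,z \in V$ gives $d'(y,z) < 1/2$ by the triangle inequality, we conclude $\mathrm{L}(p) \geq \tfrac{1}{2C} \geq \tfrac{1}{2C} d'(y,z)$. Taking the infimum over all admissible paths $p$ gives the lower bound.

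The only conceptual obstacle is precisely the one this case-split handles: one must not inadvertently restrict to paths lying in $U$ when defining $d$, because $d$ is an infimum over paths in the whole manifold $X$. Once this is acknowledged, the exit-sphere $\{w : d'(x,w) = 3/4\}$ acts as a uniform ``toll barrier'' with $d$-cost at least $\tfrac{1}{2C}$, making any long excursion suboptimal relative to the trivial bound $\tfrac{1}{2C} d'(y,z) < \tfrac{1}{2C}$. The radius $r = 1/4$ is chosen comfortably within the $3/8$ threshold needed to make the inequality $\tfrac{3/4 - r}{C} \geq \tfrac{r}{C}$ hold, and the symmetric role played by the forward and backward directions means the same argument yields the bound without modification.
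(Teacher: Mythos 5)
Your proof is correct, and it takes a genuinely different route from the paper's for the lower bound. The paper first invokes \cref{lem:earthquake:compact} to get a compact $d$-ball $K_0$ about $x$, proves the upper bound there, and then takes $V$ to be the interior of a set cut out by \emph{both} a $d$-ball and a $d'$-ball condition; the key step is that any competitor path of $d$-length less than $2d(y,z)$ stays inside $K_0$ by the triangle inequality for $d$, so the norm comparison of \cref{lem:comparison} applies along the whole path and gives $d'(y,z)\leq 2C\,d(y,z)$. You instead take $V$ to be a small coordinate ball $\{d'(x,\cdot)<1/4\}$ and handle an \emph{arbitrary} competitor path by a dichotomy: either it stays in the $3/4$-chart ball (apply the pointwise norm comparison and $\mathrm{L}'(p)\geq d'(y,z)$), or it must reach the $d'$-sphere of radius $3/4$ and so pays $d$-cost at least $\tfrac{1}{2C}$, which already exceeds $\tfrac{1}{2C}d'(y,z)$ because $d'(y,z)<\tfrac12$. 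This ``toll barrier'' mechanism is exactly the idea of the paper's \cref{cor:Thurston:earthquake2}, so your argument reuses that earlier lemma's exit principle and bypasses \cref{lem:earthquake:compact} altogether, making it more self-contained for this particular statement; the paper's route instead recycles the compact $d$-balls it has already built and avoids having to discuss exit points of paths. The one point you should make explicit in your case 2 is why a path that leaves $\{w\in U: d'(x,w)\leq 3/4\}$ actually attains the sphere $\{d'(x,\cdot)=3/4\}$ rather than escaping $U$ some other way: this follows because $\phi^{-1}(\overline{B}(0,3/4))$ is compact, hence closed in $X$, so the first exit time lands on that sphere. This is the same implicit step the paper glosses over in \cref{cor:Thurston:earthquake2}, so it is a matter of polish rather than a gap; the constants you obtain coincide with the paper's.
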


\begin{proof}
Consider the compact subset $K_0:=\{y\in X:d(x,y)\leq \epsilon_0\}$, where $\epsilon_0$ is the constant from \cref{lem:earthquake:compact}. By \cref{lem:comparison}, there exists a constant $C$ such that  
\begin{align}\label{eq:Th:e:upper}
d(y,z)\leq Cd'(y,z)
\end{align}
 holds for any $y,z$ in  $K_0$. Now we set
\begin{align*}
K:=\{y\in X \mid d(x,y)\leq \tfrac{\epsilon_0}{5}, d'(x,y)\leq \tfrac{4\epsilon_0}{5C}\}.
\end{align*}
Consider two arbitrary points $y,z$ in $K$. We have
\begin{align*}
d(y,z)\leq Cd'(y,z)\leq\tfrac{2\epsilon_0}{5}.
\end{align*}
Any piecewise $C^1$-path $p$ in $X$ connecting $y$ to $z$ with length (with respect to $d$) less than $2d(y,z)$, which is at most $\frac{4\epsilon_0}{5}$,  is contained in $K_0$, since the distance from $x$ of any point $u$ on $p$ satisfies
\[
\epsilon_0=
\tfrac{\epsilon_0}{5}+\tfrac{4\epsilon_0}{5}
\geq
d(x,y)+d(y,u)\geq d(x,u).
\]
By \cref{lem:comparison},  the length $\mathrm{L}'(p)$ of $p$ with respect to $d'$ and the length $\mathrm L(p)$ of the same path with respect to $d$ satisfy
$\mathrm{L}'(p)\leq C \mathrm{L}(p)\leq 2Cd(y,z).$
Hence, 
\begin{equation}\label{eq:Th:e:lower}
d'(y,z)\leq \mathrm{L}'(p)\leq 2Cd(y,z).
\end{equation} Let $V$ be the interior of $K$.  The lemma follows from \cref{eq:Th:e:upper} and \cref{eq:Th:e:lower}. 
\end{proof}

\begin{corollary}
There exists a neighbourhood $V\subset U$ of $x$  such that
\begin{align*}
\tfrac{1}{4C^2} d(z,y)
\leq 
d(y,z)\leq 4C^2 d(z,y)
\end{align*}
holds for every $y,z\in V$, where $C$ is the constant from \cref{lem:comparison}.
\end{corollary}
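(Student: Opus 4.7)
The proof plan is to directly chain the two sides of the bi-Lipschitz comparison in \cref{lem:local:comparison}, exploiting the crucial fact that the auxiliary metric $d'$, being the pullback of the Euclidean metric via the chart $\phi$, is genuinely symmetric. I would take $V$ to be exactly the neighbourhood of $x$ furnished by \cref{lem:local:comparison}, so that both
\[
\tfrac{1}{2C}\, d'(y,z) \leq d(y,z) \leq C\, d'(y,z)
\]
hold simultaneously for every $y,z \in V$.

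Given any pair $y,z \in V$, the main observation is that $d'(y,z) = d'(z,y)$ because $d'$ is symmetric. Applying the upper bound to $d(y,z)$ and the lower bound to $d(z,y)$ then yields
\[
d(y,z) \leq C\, d'(y,z) = C\, d'(z,y) \leq C \cdot 2C \cdot d(z,y) = 2C^2\, d(z,y),
\]
which is certainly bounded by $4C^2\, d(z,y)$. Swapping the roles of $y$ and $z$ in the same chain gives the reverse inequality $d(z,y) \leq 4C^2\, d(y,z)$, completing the proof.

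There is no substantive obstacle here: the work has all been done in \cref{lem:local:comparison}, and the present corollary merely records the consequence that locally the asymmetric metric $d$ is comparable to its own reverse up to a uniform multiplicative factor. The slack between the obtained constant $2C^2$ and the stated $4C^2$ is harmless and reflects only the qualitative nature of the conclusion. This local symmetry-up-to-constants is precisely what one needs to conclude, together with \cref{cor:Thurston:earthquake2}, that the forward and backward topologies of any Finsler asymmetric metric coincide with the manifold topology.
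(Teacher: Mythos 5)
Your proof is correct and is essentially the argument the paper intends: the corollary follows immediately from \cref{lem:local:comparison} together with the symmetry of the auxiliary metric $d'$ (the pullback of the Euclidean norm). In fact your chain even yields the sharper constant $2C^2$, which of course implies the stated bound with $4C^2$ since $C\geq 1$.
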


\begin{proposition}
\label{prop:finslertopology}
The forward topology induced by $d$, the backward topology induced by $d$, and the topology induced by various symmetrisation of $d$ (\cref{defn:metricsymm} and \cref{defn:finslersymm})  are all the same as the underlying topology of $X$ as a manifold.
\end{proposition}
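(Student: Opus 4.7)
The plan is to reduce the statement to the local bi-Lipschitz comparisons already established in Lemma~\ref{lem:local:comparison} and its corollary. Those two results show that around any point $x \in X$, there exists a neighbourhood $V$ and a constant $C>0$ such that for all $y,z \in V$,
\begin{equation*}
\tfrac{1}{2C}\, d'(y,z) \leq d(y,z) \leq C\, d'(y,z)
\quad\text{and}\quad
\tfrac{1}{4C^2}\, d(z,y) \leq d(y,z) \leq 4C^2\, d(z,y),
\end{equation*}
where $d'$ is the pullback of the Euclidean metric under a local chart. Since $d'$ visibly induces the manifold topology on $V$, so do both $d$ (forward) and $d^{\#}$ (backward).

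First I would fix an arbitrary $x \in X$ and apply these bounds to show that the forward and backward open balls at $x$ nest into Euclidean balls with controlled radii, and conversely. This directly gives the equivalence of the forward topology of $d$, the backward topology of $d$, and the manifold topology in a neighbourhood of $x$. As $x$ is arbitrary, this yields the global equivalence of all three topologies on $X$.

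Next, I would handle the symmetrisations. For the metric symmetrisations $d^{(p)}$ of \cref{defn:metricsymm}, the trivial pointwise bound $2^{-1/p}\max\{d(y,z),d(z,y)\} \leq d^{(p)}(y,z) \leq \max\{d(y,z),d(z,y)\}$ (with the natural interpretation at $p=\infty$) shows that every $d^{(p)}$ is bi-Lipschitz equivalent to $d^{\max}$, which is in turn equivalent to $d$ by the second comparison above. Hence each $d^{(p)}$ induces the manifold topology. For the Finsler symmetrisations $D^{(p)}$ of \cref{defn:finslersymm}, I would invoke \cref{prop:finslersymm}, which guarantees that each $\|\cdot\|^{(p)}$ is itself a (symmetric) Finsler norm, continuous on $TX$ since it is a continuous combination of $\|v\|$ and $\|-v\|$; consequently the whole chain \cref{lem:comparison}--\cref{lem:local:comparison} applies verbatim to each $D^{(p)}$, and the previous argument gives the desired topological agreement.

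The main obstacle is essentially bookkeeping rather than substance: one must verify that the symmetrised Finsler norm $\|\cdot\|^{(p)}$ really satisfies the standing hypotheses of this appendix (continuity on the whole tangent bundle, positivity away from the zero section), so that the local comparisons can be invoked without modification. Both properties follow directly from the corresponding properties of $\|\cdot\|$ and from the continuity of the map $v \mapsto -v$, so no new estimates are needed.
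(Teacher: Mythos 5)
Your proposal is correct and follows essentially the same route as the paper, which simply deduces the proposition from the local bi-Lipschitz comparison of \cref{lem:local:comparison} (and its corollary for the forward/backward comparison). Your additional bookkeeping for the symmetrisations $d^{(p)}$ and $D^{(p)}$ — the pointwise equivalence with $d^{\max}$ and rerunning the appendix chain for the continuous symmetrised norm $\|\cdot\|^{(p)}$ — is exactly the content the paper leaves implicit, and it checks out.
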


\begin{proof}
This follows directly from \cref{lem:local:comparison}.
\end{proof}

\begin{theorem}
\label{prop:finslerbusemann}
Finsler manifolds are Busemannian metric spaces.
\end{theorem}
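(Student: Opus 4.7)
The plan is to verify each of the three defining properties of a Busemannian metric space (\cref{defn:busemannian}) for the pair $(X, d)$, where $d$ is the path metric induced by the Finsler norm ${\|\cdot\|}$. First, I should check that $d$ is finite-valued: since $X$ is a path-connected $C^1$-manifold and ${\|\cdot\|}$ is continuous (hence bounded on the compact image of any piecewise $C^1$ path), any two points are joined by a piecewise $C^1$ path of finite length, so $d: X \times X \to [0,\infty)$.

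Next, the axiom $d(x,y) = 0 \Leftrightarrow x = y$: the implication $x = y \Rightarrow d(x,y) = 0$ is immediate from the constant path, and the reverse is exactly \cref{cor:distinguishes}. The triangle inequality is standard for path metrics: for any $\epsilon > 0$, concatenate piecewise $C^1$ paths from $x$ to $y$ and from $y$ to $z$ whose lengths approximate $d(x,y)$ and $d(y,z)$ to within $\epsilon/2$; the resulting path from $x$ to $z$ gives $d(x,z) \leq d(x,y) + d(y,z) + \epsilon$, and letting $\epsilon \to 0$ yields the claim.

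The substantive work is the third axiom: $d(x_n, x) \to 0 \Leftrightarrow d(x, x_n) \to 0$. The key tool is \cref{lem:local:comparison}: working in a chart $(U,\phi)$ centred at $x$ with pulled-back Euclidean metric $d'$, it provides a neighbourhood $V \subset U$ of $x$ and a constant $C \geq 1$ such that
\[
\tfrac{1}{2C}\,d'(y,z) \leq d(y,z) \leq C\,d'(y,z) \quad\text{for all } y, z \in V.
\]
Since $d'$ is symmetric, applying the inequality once in each direction yields, for $x_n \in V$,
\[
d(x, x_n) \leq C\,d'(x, x_n) = C\,d'(x_n, x) \leq 2C^2\,d(x_n, x),
\]
and the reverse estimate by symmetry of the argument. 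To ensure $x_n$ eventually lies in $V$, I shrink to a radius $R > 0$ with $\{y \in U : d'(x,y) < R\} \subset V$ and invoke the contrapositive of \cref{cor:Thurston:earthquake2}: $d(x_n, x) < R/C$ forces $d'(x_n, x) < R$, hence $x_n \in V$. Thus $d(x_n, x) \to 0$ implies $x_n \in V$ eventually, and then the bi-Lipschitz comparison delivers $d(x, x_n) \to 0$; the same reasoning (swapping roles) handles the converse.

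The main potential obstacle is that for a general asymmetric metric, forward and backward sequential convergence really can differ, so the Busemannian condition is a substantive requirement. What makes it automatic here is the local bi-Lipschitz comparability to a symmetric metric furnished by \cref{lem:local:comparison}, which ultimately rests on the continuity and degree-zero homogeneity of ${\|\cdot\|}$ together with compactness of the unit tangent bundle over a small neighbourhood. No additional ingredients beyond the appendix lemmas already established are required.
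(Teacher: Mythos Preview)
Your proof is correct and follows essentially the same approach as the paper: both verify the three Busemannian axioms by appealing to the local bi-Lipschitz comparison with the Euclidean metric (\cref{lem:local:comparison}), with the triangle inequality handled as a general path-metric fact. You have simply spelled out the details (finiteness of $d$, the contrapositive of \cref{cor:Thurston:earthquake2} to trap the sequence in $V$, and the explicit $2C^2$ constant) that the paper leaves implicit.
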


\begin{proof}
We need to verify that an arbitrary Finsler metric $d$ satisfies
\begin{itemize}
\item $\text{for any }x,y\in X$, $d(x,y)=0\Leftrightarrow x=y$;
\item $\text{for any }x,y,z\in X$, $d(x,z)\leq d(x,y)+d(y,z)$; and
\item $\text{for any }  x\in X$ and $\text{any sequence } (x_n)$ in $X$, $d(x_n,x)\to 0$ if and only if $d(x,x_n)\to 0$.
\end{itemize}
The second property is a general property of path metrics, and the first and the third properties follow from the local bi-Lipschitz comparison between any Finsler metric and the Euclidean metric, as given by \cref{lem:local:comparison}.
\end{proof}

\newpage
\section{FD-completions of Asymmetric Metric Spaces}
\label{appendix:FD}

In this appendix, we develop the theory of \emph{FD-completions} of asymmetric metric spaces, a notion we defined in \cref{sec:fdcompletion}.

\subsection{Metrising the FD-completion}

We first show that the forward FD-completion $\overline{X}$ may be naturally topologised/metrised as follows.

\begin{theorem}[metrising the forward FD-completion]
\label{thm:asymmetricmetric}
Let $\overline{X}$ denote the forward FD-completion of an asymmetric metric space $(X,d)$. Then, $d$ induces a function $\bar{d}:\overline{X}\times\overline{X}\to[0,\infty]$ defined by
\[
\bar{d}([x_n],[y_m])
:=
\inf_{(x'_n)\in[x_n],\ (y'_m)\in[y_m]}\liminf_{k\to\infty} d(x'_k,y'_k),
\]
where the infimum is taken over all forward FD-sequences $(x'_n)$ and $(y'_m)$ respectively representing the forward FD-equivalence classes $[x_n]$ and $[y_m]$. The function $\bar{d}$ defines an asymmetric metric on $\overline{X}$, and we refer to $\bar{d}$ as the \emph{forward metric extension} of $d$.
\end{theorem}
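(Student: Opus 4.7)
The plan is to verify the three axioms of an asymmetric metric (\cref{defn:asymmetric}) for $\bar{d}: \overline{X} \times \overline{X} \to [0,\infty]$: reflexivity ($\bar{d}(\xi, \xi) = 0$), identity ($\bar{d}(\xi, \eta) = \bar{d}(\eta, \xi) = 0 \Rightarrow \xi = \eta$), and the triangle inequality. Well-definedness as a map to $[0,\infty]$ is automatic, since we are taking an infimum of nonnegative extended reals. Reflexivity is immediate: choosing the same FD-representative $(x'_n)$ on both sides yields $\liminf_k d(x'_k, x'_k) = 0$.

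For the triangle inequality, let $\xi = [x_n], \eta = [y_m], \zeta = [z_k]$ and set $\alpha := \bar{d}(\xi, \eta), \beta := \bar{d}(\eta, \zeta)$. Given $\epsilon > 0$, I would choose FD-representatives $(a_n) \in \xi,\ (b_n), (b'_n) \in \eta,\ (c_n) \in \zeta$ realising $\liminf_n d(a_n, b_n) < \alpha + \epsilon$ and $\liminf_n d(b'_n, c_n) < \beta + \epsilon$. By \cref{rmk:FD:subsequence}, any subsequence of an FD-sequence is an FD-representative of the same class, so I may pass to subsequences along which $d(a_n, b_n) \to L_1 \leq \alpha + \epsilon$ and $d(b'_n, c_n) \to L_2 \leq \beta + \epsilon$. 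The crux is that $(b_n)$ and $(b'_n)$ are FD-equivalent, so a tail-sum argument on their FD-interlacing produces subsequences $(b_{s_k}), (b'_{t_k})$ with $d(b_{s_k}, b'_{t_k}) \to 0$ and $s_k, t_k \to \infty$. Setting $\tilde{a}_k := a_{s_k}$ and $\tilde{c}_k := c_{t_k}$ (still FD-representatives of $\xi$ and $\zeta$), the triangle inequality for $d$ gives
\[
d(\tilde{a}_k, \tilde{c}_k) \leq d(a_{s_k}, b_{s_k}) + d(b_{s_k}, b'_{t_k}) + d(b'_{t_k}, c_{t_k}),
\]
and sending $k \to \infty$ yields $\liminf_k d(\tilde{a}_k, \tilde{c}_k) \leq L_1 + L_2 \leq \alpha + \beta + 2\epsilon$. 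Hence $\bar{d}(\xi, \zeta) \leq \alpha + \beta + 2\epsilon$, and the triangle inequality follows by letting $\epsilon \downarrow 0$.

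For the identity axiom, suppose $\bar{d}(\xi, \eta) = \bar{d}(\eta, \xi) = 0$ and fix FD-representatives $(x_n) \in \xi, (y_n) \in \eta$; the goal is to construct an FD-interlacing of $(x_n)$ and $(y_n)$. The vanishing distances yield, for each $k \geq 1$, auxiliary FD-representatives $(\alpha^{(k)}_n) \in \xi, (\beta^{(k)}_n) \in \eta$ and an index $p_k$ with $d(\alpha^{(k)}_{p_k}, \beta^{(k)}_{p_k}) < 2^{-k}$, and symmetrically $(\beta'^{(k)}_n) \in \eta, (\alpha'^{(k)}_n) \in \xi$ and $q_k$ with $d(\beta'^{(k)}_{q_k}, \alpha'^{(k)}_{q_k}) < 2^{-k}$. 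Since all representatives of $\xi$ are FD-equivalent (and similarly for $\eta$), each relevant pair admits an FD-interlacing whose tail forward-distance series can be made $< 2^{-k}$. I would build the interlacing inductively by performing the excursion
\[
x_{n_k} \rightsquigarrow \alpha^{(k)}_{p_k} \to \beta^{(k)}_{p_k} \rightsquigarrow y_{m_k} \rightsquigarrow \beta'^{(k)}_{q_k} \to \alpha'^{(k)}_{q_k} \rightsquigarrow x_{n_{k+1}},
\]
where each wavy arrow denotes a finite walk through a tail of one of the FD-interlacings (selected deep enough to contribute total forward distance $< 2^{-k}$), each plain arrow is a direct hop of forward length $< 2^{-k}$, and indices $n_k, m_k$ are chosen strictly increasing. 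Summing a geometric series shows the concatenated sequence has finite total forward distance-series, so it is an FD-sequence; since it contains infinitely many terms from both $(x_n)$ and $(y_n)$ with strictly increasing indices, it is an FD-interlacing, establishing $\xi = \eta$.

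The main obstacle is the identity axiom: one must simultaneously bridge four different FD-representatives per stage while controlling both directions of distance, which is delicate because FD-equivalence only supplies control on forward distances along the interlacing. Both the identity and triangle inequality arguments rest on the same underlying synchronisation principle, namely that FD-equivalent sequences admit matched subsequences whose mutual forward distance tends to zero, extracted from tails of their FD-interlacings.
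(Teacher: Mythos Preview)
Your proposal is essentially correct, but it takes a different route from the paper. The paper first proves an auxiliary result (\cref{lem:infimalrepsequence}) asserting that the infimum defining $\bar{d}([x_n],[y_m])$ is actually \emph{attained} as a limit $\lim_k d(x'_k,y'_k)$ for some pair of representatives; this is done by an explicit ``hopping'' construction through a sequence of near-optimal representative pairs. With this lemma in hand, both the triangle inequality and the identity axiom become short: for the identity axiom, one obtains synchronised representatives $(x_k),(y_k),(y'_k),(x'_k)$ with $\lim d(x_k,y_k)=\lim d(y'_k,x'_k)=0$, reduces to a single pair $(x_k),(y'_k)$ with $\lim d(x_k,y'_k)=\lim d(y'_k,x_{k+1})=0$, and then reads off an FD-interlacing directly.

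You instead bypass \cref{lem:infimalrepsequence} entirely. For the triangle inequality this is a genuine simplification: your $\epsilon$-argument with a single synchronisation step is cleaner than invoking the lemma. For the identity axiom, however, the cost of avoiding the lemma shows: your stage-$k$ excursion through four auxiliary representatives and four FD-interlacings is correct in outline but requires careful ordering of choices (fix the interlacings and their tail thresholds before selecting $p_k,q_k$, ensure all $x$- and $y$-indices encountered are strictly increasing across stages, etc.). One point to tighten: the concatenated sequence you build contains auxiliary terms and is therefore not itself an interlacing of $(x_n)$ and $(y_n)$ in the paper's sense; you should note that the subsequence consisting only of the $x_{n_k}$ and $y_{m_k}$ terms is still FD (subsequences of FD-sequences are FD via the triangle inequality) and \emph{that} is the desired FD-interlacing. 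In effect, your identity-axiom argument reinvents the hopping construction of \cref{lem:infimalrepsequence} inline and in a more tangled setting; factoring it out as the paper does is the cleaner organisation.
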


Before proving the above result, we first establish a useful lemma:

\begin{lemma}
\label{lem:infimalrepsequence}
Given $[x_n],[y_m]$, there exist forward FD-sequences $(x'_n)$ and $(y'_m)$ respectively representing $[x_n]$ and $[y_m]$ such that
\[
\bar{d}([x_n],[y_m])
:=
\inf_{(\hat{x}_n)\in[x_n],\ (\hat{y}_m)\in[y_m]}\liminf_{k\to\infty} d(\hat{x}_k,\hat{y}_k)
=
\lim_{k\to\infty} d(x'_k,y'_k).
\]
\end{lemma}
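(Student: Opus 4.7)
Set $D := \bar d([x_n],[y_m])$. The plan is to produce the required representatives by a diagonal construction leveraging transitivity of forward FD-equivalence (\cref{lem:equivrel}) and the fact that subsequences of FD-sequences remain in the same FD-equivalence class (\cref{rmk:FD:subsequence}). If $D = \infty$ the conclusion follows by choosing any representatives and then subsequencing to force $d(x'_k, y'_k) \to \infty$, so I focus on $D < \infty$.

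First I would unpack the infimum. For each integer $j \geq 1$, I select forward FD-representatives $(\hat x^{(j)}_n) \in [x_n]$ and $(\hat y^{(j)}_n) \in [y_m]$ with
\[
\liminf_{k\to\infty} d\bigl(\hat x^{(j)}_k, \hat y^{(j)}_k\bigr) < D + 2^{-j-1},
\]
then pass to a common index subsequence in both coordinates so that, after re-indexing, $d(\hat x^{(j)}_i, \hat y^{(j)}_i) < D + 2^{-j}$ holds for every $i$. Each such subsequence is FD and lies in the same FD-equivalence class by \cref{rmk:FD:subsequence}, and by transitivity (\cref{lem:equivrel}) any two of the $(\hat x^{(j)})$'s (respectively $(\hat y^{(j)})$'s) are mutually FD-equivalent, so each consecutive pair $(\hat x^{(j)}),(\hat x^{(j+1)})$ admits a forward FD-interlacing, and analogously for $\hat y$.

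Next I would assemble $(x'_n),(y'_n)$ as concatenations of synchronised blocks: the $j$-th block consists of $\hat x^{(j)}_{A_j},\ldots,\hat x^{(j)}_{B_j}$ in $(x'_n)$ and the correspondingly-indexed $\hat y^{(j)}_{A_j},\ldots,\hat y^{(j)}_{B_j}$ in $(y'_n)$. The indices $A_j \le B_j$ are chosen so that the intra-block forward distance-series for each coordinate is bounded by $2^{-j}$ (possible since both sequences are FD and their tails vanish), and so that the inter-block transition distances $d(\hat x^{(j)}_{B_j}, \hat x^{(j+1)}_{A_{j+1}})$ and $d(\hat y^{(j)}_{B_j}, \hat y^{(j+1)}_{A_{j+1}})$ are each at most $2^{-j}$, achieved by placing these indices sufficiently late within the FD-interlacings produced above (so that the forward tail-estimate on the interlacing applies). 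The resulting sequences have distance-series bounded by $\sum_j 2\cdot 2^{-j} < \infty$, hence are forward FD; and because the $x$- and $y$-blocks share their indexing, every index $k$ in block $j$ satisfies $d(x'_k, y'_k) < D + 2^{-j}$. Combined with the a priori bound $\liminf_k d(x'_k, y'_k) \ge D$ coming from the definition of $D$ as an infimum, this yields $\lim_{k\to\infty} d(x'_k, y'_k) = D$.

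The hard part will be confirming that $(x'_n)\in[x_n]$ and $(y'_n)\in[y_m]$, since the concatenated sequences blend pieces of infinitely many different representatives. My plan is to mimic the proof of transitivity in \cref{lem:equivrel}: introduce auxiliary sequences $(\tilde x^{(j)}_n)$ consisting of the first $j$ blocks followed by the infinite tail $\hat x^{(j)}_{B_j+1}, \hat x^{(j)}_{B_j+2},\ldots$ Each $(\tilde x^{(j)})$ is forward FD-equivalent to $(\hat x^{(j)})\in[x_n]$ via a direct interlacing along their shared infinite tail, and the controlled inter-block transitions exhibit $(\tilde x^{(j)})$ and $(\tilde x^{(j+1)})$ as FD-equivalent with explicit bridging distance-series bounded by $2^{-j}$. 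Summing these bounds and invoking the same construction used in the transitivity proof yields a single FD-interlacing between $(x'_n)$ and $(\hat x^{(1)})$, thus placing $(x'_n)$ in $[x_n]$; the argument for $(y'_n)\in[y_m]$ is identical.
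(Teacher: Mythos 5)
Your first stage (synchronised blocks drawn from near-optimal representatives, with intra-block tails and inter-block transitions forced below $2^{-j}$) is sound in spirit and does produce a forward FD-sequence pair with $d(x'_k,y'_k)\to D$ from above, modulo a small imprecision: the indices $A_j,B_j$ must serve both coordinates simultaneously, while the interlacing between $(\hat x^{(j)})$ and $(\hat x^{(j+1)})$ and the one between $(\hat y^{(j)})$ and $(\hat y^{(j+1)})$ are independent, so ``placing the indices late in the interlacings'' does not literally make sense for a common index; one must instead route each transition through anchor points of the respective interlacing and use the forward tails of the individual distance-series. That is repairable. The genuine gap is the membership step, on which everything (including your lower bound $\liminf_k d(x'_k,y'_k)\ge D$) depends. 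Knowing that each $(\tilde x^{(j)})$ lies in $[x_n]$ and that consecutive $(\tilde x^{(j)})$, $(\tilde x^{(j+1)})$ are FD-equivalent does not yield an FD-interlacing between $(x'_n)$ and a fixed representative: $(x'_n)$ equals none of the $(\tilde x^{(j)})$, and FD-classes are not known to be closed under this kind of ``agreement on ever longer prefixes''. Concretely, any interlacing of $(x'_n)$ with $(\hat x^{(1)})$ must contain terms $\hat x^{(1)}_{m_j}$ with $m_j\to\infty$ followed by returns to terms of $(x'_n)$ lying in the \emph{already fixed} finite windows $\hat x^{(q)}_{A_q},\dots,\hat x^{(q)}_{B_q}$; because the metric is asymmetric, forward distance-series tails only control travel toward larger indices, so the cost of these return hops $d(\hat x^{(1)}_{m_j},\hat x^{(q)}_{i})$ with $i\le B_q$ is controlled by nothing you arranged in stage one (your blocks were positioned using only the consecutive-pair interlacings, never the $(1,q)$-interlacings). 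The technique of the transitivity proof (\cref{lem:equivrel}) does not transfer, precisely because there all hop points are chosen arbitrarily late on the fly, a freedom your pre-fixed blocks destroy.

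The paper's proof is organised to avoid exactly this: it runs a single induction (``hopping'') in which the constructed sequence returns to the fixed base representative $(x^1_n)$ infinitely often, with each excursion to a new representative $(x^{i}_n)$ and back costing less than a summable amount; the final $(x'_k)$ is then extracted as a subsequence of that one FD-sequence, so it automatically shares an infinite subsequence with $(x^1_n)$ and membership in $[x_n]$ is immediate by \cref{rmk:FD:subsequence} and transitivity, while the head-truncation estimate gives the convergence of $d(x'_k,y'_k)$ to the infimum. To repair your argument you would have to merge your two stages into one induction of this type — either routing every inter-block bridge through the fixed representative $(\hat x^{(1)})$, or, when choosing $A_{j+1}$, recording anchor positions in the $(1,j+1)$-interlacing so that the later return hops are pre-paid — at which point you have essentially reconstructed the paper's proof. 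As written, the final paragraph asserts rather than establishes $(x'_n)\in[x_n]$ and $(y'_n)\in[y_m]$, so the proposal is incomplete.
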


\begin{proof}
Let $(x_n^1)_{n\in\mathbb{N}},(x_n^2)_{n\in\mathbb{N}},\ldots$ and $(y_m^1)_{m\in\mathbb{N}},(y_m^2)_{m\in\mathbb{N}},\ldots$ be two sequences of forward FD-sequences respectively representing $[x_n]$ and $[y_m]$, which realise the infimisation process for $\bar{d}([x_n],[y_m])$, i.e.\ 
\[
\bar{d}([x_n],[y_m])
=
\lim_{i\to\infty}
\liminf_{k\to\infty} d(x_k^i,y_k^i).
\]
By a straightforward application of the triangle inequality, since subsequences of (forward) FD-sequences are also (forward) FD-sequences, we can replace each pair $(x_k^i),(y_k^i)$ by a pair of subsequences where $\underset{k\to\infty}{\liminf}\ d(x_k^i,y_k^i)=\underset{k\to\infty}{\lim} d(x_k^i,y_k^i)$. By possibly truncating the head of each sequence, we further ensure that
\begin{align}
\text{for all } j\geq i,\quad\left|\lim_{k\to\infty} d(x_k^i,y_k^i) - d(x_j^i,y_j^i)\right|<\frac{1}{i}.\label{eq:asymapproximation}
\end{align}
We build an ``intermediate'' FD-sequence $(\tilde{x}_n)$ and $(\tilde{y}_n)$ as follows. 
\begin{itemize}
\item
Step~1. Choose $\tilde{x}_1=x^1_{n_1}$, $\tilde{x}_2=x^2_{n_2}$, $\tilde{y}_1=y^1_{n_1}$, and $\tilde{y}_2=y^2_{n_2}$ with $n_2>n_1>1$, so that 
\begin{itemize}
\item $d(\tilde{x}_1,\tilde{x}_2)<\tfrac{1}{2}$, $\sum_{k\geq n_2} d(x^2_k,x^2_{k+1})<\tfrac{1}{2^2}$, and
\item $d(\tilde{y}_1,\tilde{y}_2)<\tfrac{1}{2}$, $\sum_{k\geq n_2} d(y^2_k,y^2_{k+1})<\tfrac{1}{2^2}$.
\end{itemize}
 This is always possible because $(x^1_n)$ and $(x^2_n)$ (resp. $(y^1_n)$ and $(y^2_n)$) are FD-equivalent. We informally refer to this as \emph{hopping} from $(x^1_n)$ and $(y^1_n)$ to $(x^2_n)$ and $(y^2_n)$. In what follows, we always ensure that the $n_k$ in $x^i_{n_k}$ is greater than $k$. 

\item
Step~2. We hop from $(x^2_n)$ and $(y^2_n)$ back to $(x^1_n)$ and $(y^1_n)$ by choosing $\tilde{x}_3=x^2_{n_3}$, $\tilde{x}_4=x^1_{n_4}$, $\tilde{y}_3=y^2_{n_3}$ and $\tilde{y}_4=y^1_{n_4}$, with respectively increased indices $n_4>n_3>n_2>n_1$, chosen so that
\begin{itemize}
\item $d(\tilde{x}_3,\tilde{x}_4)<\tfrac{1}{2^3}$, $\sum_{k\geq n_4} d(x^1_k,x^1_{k+1})<\tfrac{1}{2^4}$, and
\item $d(\tilde{y}_3,\tilde{y}_4)<\tfrac{1}{2^3}$ and $\sum_{k\geq n_4} d(y^1_k,y^1_{k+1})<\tfrac{1}{2^4}$.
\end{itemize}
Since $n_3>n_2$, we see that $d(\tilde x_2,\tilde x_3)\leq \sum_{k\geq n_2} d(x^2_k,x^2_{k+1})<\tfrac{1}{2^2} $. Similarly, we have $d(\tilde y_2,\tilde y_3)\leq \frac{1}{2^2}$.
\item
Step~3. We hop from $(x^1_n)$ and $(y^1_n)$ to $(x^3_n)$ and $(y^3_n)$ by choosing $\tilde{x}_5=x^1_{n_5}$,  $\tilde{x}_6=x^3_{n_6}$, $\tilde{y}_5=y^1_{n_5}$,  and $\tilde{y}_6=x^3_{n_6}$, with $n_6>n_5>n_4$, so that 
\begin{itemize}
\item $d(\tilde{x}_5,\tilde{x}_6)<\tfrac{1}{2^5}$,  $\sum_{k\geq n_6} d(x^3_k,x^3_{k+1})<\tfrac{1}{2^6}$, and
\item $d(\tilde{y}_5,\tilde{y}_6)<\tfrac{1}{2^5}$ and $\sum_{k\geq n_6} d(y^3_k,y^3_{k+1})<\tfrac{1}{2^6}$.
\end{itemize}
Since $n_5>n_4$, we see that $d(\tilde x_4,\tilde x_5)\leq \sum_{k\geq n_4} d(x^1_k,x^1_{k+1})<\tfrac{1}{2^4} $. Similarly, we have $d(\tilde y_4,\tilde y_5)\leq \frac{1}{2^4}$.

\item
Step~4. We hop from $(x^3_n)$ and $(y^3_n)$  back to $(x^1_n)$ and $(y^1_n)$ by choosing $\tilde{x}_7=x^3_{n_7}$, $\tilde{x}_8=x^1_{n_8}$, $\tilde{y}_7=y^3_{n_7}$ and $\tilde{y}_8=y^1_{n_8}$ with increased indices $n_8>n_7>n_6$, so that
\begin{itemize}
\item  $d(\tilde{x}_7,\tilde{x}_8)<\tfrac{1}{2^7}$, $\sum_{k\geq n_8} d(x^1_k,x^1_{k+1})<\tfrac{1}{2^8}$, and
\item  $d(\tilde{y}_7,\tilde{y}_8)<\tfrac{1}{2^7}$ and $\sum_{k\geq n_8} d(y^1_k,y^1_{k+1})<\tfrac{1}{2^8}$.
\end{itemize}
Since $n_7>n_6$, we see that $d(\tilde x_6,\tilde x_7)\leq \sum_{k\geq n_2} d(x^3_k,x^3_{k+1})<\tfrac{1}{2^6} $. Similarly, we have $d(\tilde y_6,\tilde y_7)\leq \frac{1}{2^6}$.
\item
We repeat the above steps ad infinitum, doing ``rounds'' of hops from $(x^1_n)$ and $(y^1_n)$ to  new sequences $(x^i_n)$ and $(y^i_n)$ and then hopping back to $(x^1_n)$ and $(y^1_n)$.
\end{itemize}
This process yields two forward FD-sequences $(\tilde{x}_n)$  and  $(\tilde{y}_n)$ because 
 $\sum_{n}^\infty d(\tilde{x}_n,\tilde{x}_{n+1})<\sum_{n}\tfrac{1}{2^n}<\infty$ and $\sum_{n}^\infty d(\tilde{y}_n,\tilde{y}_{n+1})<\sum_{n}\tfrac{1}{2^n}<\infty$. 
 
Define $(x'_i)$ by $x'_i=\tilde{x}_{4i+2}=x^{i+2}_{n_{4i+2}}$ and $(y'_j)$ by $y'_j=\tilde{y}_{4j+2}=y^{j+2}_{m_{4j+2}}$. Since $(x'_i)$ and $(y'_j)$ are subsequences of FD-sequences, they must also be FD-sequences. Moreover, $(x'_i)$ is FD-equivalent to $(\tilde{x}_n)$, which in turn shares a subsequence with $(x^1_n)$ and is hence FD-equivalent to $(x^1_n)$. Thus, $(x'_i)\in[x_n]$. Likewise, $(y'_j)\in[y_m]$. Finally, since $n_{i+1}>n_i>\cdots >n_1>1$, we have $n_i\geq i+2$. Hence,  \cref{eq:asymapproximation} ensures that
\begin{align*}
\lim_{i\to\infty} d(x'_i,y'_i)
=
\lim_{i\to\infty} d(x^{i+2}_{n_{4i+2}},y^{i+2}_{n_{4i+2}})
=
\lim_{i\to\infty} \lim_{k\to\infty} d(x^{i+2}_{k},y^{i+2}_{k})
=
\bar{d}([x_n],[y_m]).
\end{align*}

\end{proof}

\begin{proof}[Proof of \cref{thm:asymmetricmetric}]
We verify the three axioms for asymmetric metrics. To begin with, $\bar{d}([x_n],[x_n])=0$ because $d(x_n,x_n)=0$. 

(i) We next show that if $\bar{d}([x_i],[y_j])=\bar{d}([y_j],[x_i])=0$, then $[x_i]=[x_j]$. By \cref{lem:infimalrepsequence}, there are forward FD-sequences $(x_i),(x'_i)$ which both represent $[x_i]$ and forward FD-sequences $(y_j),(y'_j)$ which both represent $[y_j]$ so that
\begin{align}
\bar{d}([x_i],[y_j])=\underset{k\to\infty}{\lim} d(x_k,y_k)=0,
\text{ and }
\bar{d}([y_j],[x_i])=\underset{k\to\infty}{\lim} d(y'_k,x'_k)=0.\label{eq:tendsto0}
\end{align}
 Since $(x_i)$ and $(x_i')$ (resp. $(y_i)$ and $(y_i')$) are FD-equivalent, after taking subsequences if necessary, we may assume the interlacings $x_1,x_1',x_2,x_2',\cdots$ and $y_1,y_1',y_2,y_2',\cdots$ are forward FD-sequences. To see this is possible, we first take an interlacing $x_{i_1},x'_{i_2},x_{i_3},x_{i_4}',\cdots$ so that it is a forward FD-sequence. We next take an interlacing $y_{i_{j_1}}, y_{i_{j_2}}',y_{i_{j_3}}, y_{i_{j_4}}',y_{i_{j_5}}, y_{i_{j_6}}',\cdots$ of $(y_{i_k})$ and $(y_{i_k}')$ so that the interlacing is a forward FD-sequence. Finally, we replace $(x_k)$, $(x_k')$, $(y_k)$, and $(y_k')$ by $(x_{i_{j_{2k}}})$, $(x_{i_{j_{2k+1}}}')$, $(y_{i_{j_{2k}}})$, and $(y_{i_{j_{2k+1}}}')$. The resulting sequences satisfy the desired property. Hence,
\[
\lim_{k\to\infty} d(x_k,{y}'_k)
\leq
\lim_{k\to\infty} 
d(x_{k},y_{k})
+
d(y_{k},y'_{k})
=0.
\]
Similarly,
\[
\lim_{k\to\infty} d({y}'_k,{x}_{k+1})
\leq
\lim_{k\to\infty} 
d(y'_{k},x'_{k})
+
d(x'_{k},x_{k+1})
=0.
\]
We now choose a sequence of increasing indices $(n_k)$ such that 
\begin{equation*}
	d({x}_{n_k},{y}'_{n_k})<\frac{1}{2^k},\quad \sum_{n_k\leq j\leq n_{k+1} }d(x_j,x_{j+1})<\frac{1}{2^k},\quad d(y_{n_k}',x_{n_k+1})<\frac{1}{2^k}.
\end{equation*}
Hence 
 $\sum_{k=1}^\infty d({x}_{n_k},{y}'_{n_k})<\infty$ and 
 \begin{align*}
 	\sum_{k=1}^\infty d({y}'_{n_k},{x}_{n_{k+1}}))&\leq	\sum_{k=1}^\infty \left( d(y_{n_k}',x_{n_k+1})+\sum_{n_k\leq j\leq n_{k+1}} d(x_j,x_{j+1})) \right) \\
 	&<\sum_{k=1}^\infty \left(\frac{1}{2^k}+  \frac{1}{2^k}\right )<\infty.
 \end{align*}
Thus, the interlacing given by ${x}_{n_1},{y}'_{n_1},{x}_{n_2},{y}'_{n_2},\ldots$ is an FD-sequence, showing that $(x_i)$ and $(y'_j)$ are FD-equivalent.

(ii) Finally, we show that $\bar{d}$ satisfies the triangle inequality. Consider arbitrary elements $[x_i],[y_j],[z_k]\in \overline{X}$. We invoke \cref{lem:infimalrepsequence} to obtain the following FD-sequences:
\begin{itemize}
\item
$(x_i)$ and $(y_j)$ are respective representatives for $[x_i]$ and $[y_j]$, so that $\bar{d}([x_i],[y_j])=\underset{i\to\infty}{\lim} d(x_i,y_i)$;

\item
$(y'_j)$ and $(z'_k)$ are respective representatives for $[y_j]$ and $[z_k]$, so that $\bar{d}([y_j],[z_k])=\underset{i\to\infty}{\lim} d(y'_i,z'_i)$.
\end{itemize}

Since $(y_j)$ and $(y'_j)$ are FD-equivalent, after taking subsequences if necessary, we may assume that the interlacing $y_1,y_1',y_2,y_2'\cdots$ is a forward FD-sequence. In particular, 
\begin{align*}
\lim_{j\to\infty} d(y_j,y'_{j})
=0.
\end{align*}
 The triangle inequality tells us that
\begin{align*}
\liminf_{i\to\infty}
d(x_i,z_{i}')
&\leq
\lim_{i\to\infty}
(d(x_i,y_i)+d(y_i,y'_{i})+d(y'_{i},z'_{i}))\\
&=
\bar{d}([x_i],[y_j])+0+\bar{d}([y_j],[z_k]).
\end{align*}
By definition,
\begin{align*}
\bar{d}([x_i],[z_k])
\leq
\liminf_{i\to\infty}
d(x_i,z_{i}')
\leq
\bar{d}([x_i],[y_j])+\bar{d}([y_j],[z_k]),\text{ as desired.}
\end{align*}

\end{proof}

\begin{remark}[backward metric extension]
\label{rmk:backwardmetric}
\cref{thm:asymmetricmetric} holds \emph{specifically} for the forward FD-completion, and there is a similar construction of a standard metric for the backward FD-completion. However, in order to obtain an analogous triangle inequality based on arguments like those in the latter steps of the proof of \cref{thm:asymmetricmetric} (as well as the latter half of \cref{lem:infimalrepsequence}), we define the \emph{backward metric extension} of $d$ as:
\[
\inf_{(x'_n)\in[x_n],\ (y'_m)\in[y_m]}\liminf_{k\to\infty} d(y'_k,x'_k).
\]
Note in particular, that this makes the backward FD-completion of a metric space $(X,d)$ identical to the forward FD-completion of $(X^\#,d^\#)$.
\end{remark}

\subsection{Natural inclusion}

Recall from \cref{lem:natinclude} that the map $\iota: X\to\overline{X}$ sending $x\in X$ to the FD-equivalence class represented by the constant FD-sequence $(x)_{n\in\mathbb{N}}$ naturally identifies $X$ with a subset of $\overline{X}$. We show:

\begin{proposition}[dense inclusion]
\label{prop:natisoinclude}
The natural inclusion map $\iota: X\to\overline{X}$ is an isometric embedding from $(X,d)$ to $(\overline{X},\bar{d})$. Moreover, $\iota(X)$ is \emph{forward dense} in $\overline{X}$, meaning that every point in $\overline{X}$ is the forward limit of a sequence of points $(\iota(x_n))_{n\in\mathbb{N}}$ in $\iota(X)\subset\overline{X}$.
\end{proposition}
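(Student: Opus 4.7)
I will treat the two assertions separately, making repeated use of \cref{lem:infimalrepsequence} and the characterisation of forward FD-equivalence via interlacings.

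\textbf{Isometric embedding.} For $x,y \in X$, the constant sequences $(x)_{n\in\mathbb N}$ and $(y)_{n\in\mathbb N}$ represent $\iota(x)$ and $\iota(y)$ respectively, and the defining formula for $\bar d$ immediately yields
\[
\bar d(\iota(x),\iota(y)) \leq \liminf_{k\to\infty} d(x,y) = d(x,y).
\]
For the reverse inequality, I would apply \cref{lem:infimalrepsequence} to produce forward FD-representatives $(x'_k) \in [\iota(x)]$ and $(y'_k) \in [\iota(y)]$ realising $\bar d(\iota(x),\iota(y)) = \lim_{k\to\infty} d(x'_k, y'_k)$. Since $(x'_k)$ is forward FD-equivalent to $(x)_{n\in\mathbb N}$, there is a forward FD-interlacing of the two sequences; as both constituent subsequences are infinite, infinitely many adjacent pairs in the interlacing must cross from the ``$x$-side'' to the ``$x'_k$-side'', and summability of the forward distance-series forces the forward distance across such crossings to tend to zero. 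This delivers a subsequence $(k_j)$ along which $d(x, x'_{k_j}) \to 0$. Applying the same argument to $(y'_k)$ and $(y)_{n\in\mathbb N}$ produces a further subsequence $(k_{j_\ell})$ along which $d(y'_{k_{j_\ell}}, y) \to 0$. The triangle inequality
\[
d(x,y) \leq d(x, x'_{k_{j_\ell}}) + d(x'_{k_{j_\ell}}, y'_{k_{j_\ell}}) + d(y'_{k_{j_\ell}}, y)
\]
then yields $d(x,y) \leq \bar d(\iota(x),\iota(y))$ upon letting $\ell \to \infty$.

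\textbf{Forward density.} Given $\xi \in \overline X$ represented by a forward FD-sequence $(x_n)$, I will show that $\bar d(\iota(x_n), \xi) \to 0$ as $n \to \infty$. For each fixed $n$, the tail $(x_{n+k-1})_{k\in\mathbb N}$ is a subsequence of $(x_n)_{n\in\mathbb N}$; by \cref{rmk:FD:subsequence} it is a forward FD-sequence which is forward FD-equivalent to $(x_n)_{n\in\mathbb N}$, and so it represents $\xi$. Pairing this representative with the constant representative $(x_n)_{k\in\mathbb N}$ of $\iota(x_n)$, the definition of $\bar d$ yields
\[
\bar d(\iota(x_n), \xi) \leq \liminf_{k\to\infty} d(x_n, x_{n+k-1}) \leq \sum_{j\geq n} d(x_j, x_{j+1}),
\]
where the last inequality is obtained by iterating the triangle inequality along $x_n, x_{n+1}, \ldots, x_{n+k-1}$. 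Since $(x_n)$ is a forward FD-sequence, the tail sum on the right tends to $0$ as $n\to\infty$, and forward density follows.

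\textbf{Main obstacle.} The only step that is not essentially formal is the combinatorial extraction of a subsequence $(k_j)$ with $d(x, x'_{k_j}) \to 0$ from the FD-interlacing of $(x'_k)$ with $(x)_{n\in\mathbb N}$: one must check that the interlacing cannot eventually remain on a single side without contradicting the infiniteness of both constituent subsequences, so infinitely many transitions of the required form must occur. Once this observation is in place, the rest of the argument reduces to routine uses of the triangle inequality together with \cref{lem:infimalrepsequence}.
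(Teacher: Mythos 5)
Your proof is correct and follows essentially the same route as the paper's: the easy upper bound from constant representatives, then \cref{lem:infimalrepsequence} plus an FD-interlacing with the constant sequences to produce representatives converging to $x$ (forwards) and to $y$ (backwards), a triangle inequality to close the isometry, and tail sums of the distance-series for forward density. The only slip is wording: to get nested indices you should run the second crossing argument on $(y'_{k_j})_j$ rather than on $(y'_k)$ — legitimate since by \cref{rmk:FD:subsequence} a subsequence of an FD-sequence is FD-equivalent to it, hence to $(y)_{n\in\mathbb{N}}$ — exactly as your notation $(k_{j_\ell})$ suggests; alternatively, note that summability of $\sum_k d(x'_k,x'_{k+1})$ and $\sum_k d(y'_k,y'_{k+1})$ upgrades the subsequential limits to $d(x,x'_k)\to 0$ and $d(y'_k,y)\to 0$ along the full sequences, which removes the need for any common subsequence.
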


\begin{proof}
Given two arbitrary FD-equivalence classes $[x]=\iota(x)$ and $[y]=\iota(y)$, let $(x_n)$ and $(y_n)$ denote FD-sequences (obtained via \cref{lem:infimalrepsequence}), which respectively  represent $[x]$ and $[y]$ and satisfy
\[
\bar{d}([x],[y])=\lim_{n\to\infty} d(x_n,y_n).
\]
Since the constant sequence $(x)_{n\in\mathbb{N}}$ and the sequence $(x_n)_{n\in\mathbb{N}}$ are FD-equivalent, they have an interlacing which is an FD-sequence, and hence there is a subsequence $(x_{n_k})_{k\in\mathbb{N}}$ such that
\begin{align}
\sum_{k} d(x,x_{n_k}),\ 
\sum_{k} d(x_{n_k},x)<\infty.\label{eq:finsum}
\end{align}
Replace both $(x_n)$ and $(y_n)$ with their respective subsequences indexed by $(n_k)_{k\in\mathbb{N}}$, and observe that \cref{eq:finsum} implies that
\[
\lim_{n\to\infty} 
d(x,x_n)
=0
\quad\text{and}\quad
\lim_{n\to\infty} 
d(x_n,x)=0.
\]
Since the updated $(y_n)$ is a subsequence of the former, they are FD-equivalent, hence there is an interlacing between $(y_n)_{n\in\mathbb{N}}$ and $(y)_{n\in\mathbb{N}}$ which is an FD-sequence. We similarly find a subsequence $(y_{n_k})_{k\in\mathbb{N}}$ satisfying
\begin{align}
\sum_{k} d(y,y_{n_k}),\ 
\sum_{k} d(y_{n_k},y)<\infty.\label{eq:finsum2}
\end{align}
Update both $(x_n)$ and $(y_n)$ again, and observe that \cref{eq:finsum2} yields
\[
\lim_{n\to\infty} 
d(y,y_n)
=0
\quad\text{and}\quad
\lim_{n\to\infty} 
d(y_n,y)=0.
\]
Then,
\begin{align*}
d(x_n,y_n)
&\leq d(x_n,x)+d(x,y)+d(y,y_n)\\
&\leq d(x_n,x)+d(x,x_n)+d(x_n,y_n)+d(y_n,y)+d(y,y_n).
\end{align*}
Taking the limit as $n\to\infty$, the sandwich theorem tells us that
\[
\bar{d}([x],[y])
=
\lim_{n\to\infty} d(x_n,y_n)
=
\lim_{n\to\infty} d(x,y)
=
d(x,y).
\]
Thus, $\iota$ is an isometric embedding. Density is straightforward: given an arbitrary element $\xi\in\overline{X}$ represented by an FD sequence $(x_k)_{k\in\mathbb{N}}$, we have
\begin{align*}
\lim_{k\to\infty}
\bar{d}(\iota(x_k),\xi)
\leq
\lim_{k\to\infty}
\lim_{n\to\infty}
d(x_k,x_n)
\leq
\lim_{k\to\infty}
\sum_{j=k}^\infty d(x_k,x_{k+1}).
\end{align*}
The right hand side is equal to $0$ because $(x_k)_{k\in\mathbb{N}}$ is an FD-sequence. Thus, $\bar{d}(\iota(x_k),\xi)\to0$, as desired.
\end{proof}

\begin{remark}
Analogously applying the above arguments, we obtain that $\iota^\#: X\to\overline{X}^\#$ is an isometric embedding from $(X,d^\#)$ to $(\overline{X}^\#,\bar{d}^\#)$ with backward dense image.
\end{remark}

\subsection{FD-Completeness}

\begin{theorem}
\label{thm:FDcomplete}
The forward FD-completion of any asymmetric metric space $(X,d)$ is forward FD-complete. Likewise, the backward FD-completion is backward FD-complete.
\end{theorem}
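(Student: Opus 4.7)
We address forward FD-completeness; backward FD-completeness of $\overline{X}^\#$ then follows by applying the argument to the reverse metric space $(X, d^\#)$, whose forward FD-completion coincides with $\overline{X}^\#$ by \cref{rmk:backwardmetric}. Let $(\xi_n)$ be an arbitrary forward FD-sequence in $(\overline{X}, \bar{d})$; we construct $\xi \in \overline{X}$ realising it as a forward limit. The first step is to iterate \cref{lem:infimalrepsequence} in conjunction with a Cantor-style diagonal subsequence argument to pick, for each $n$, an FD-representative $(x_k^n)_k$ of $\xi_n$ satisfying
\[
\sum_k d(x_k^n, x_{k+1}^n) < 2^{-n}, \qquad d(x_k^n, x_k^{n+1}) < \bar{d}(\xi_n, \xi_{n+1}) + 2^{-k} \;\text{ for every } k.
\]
The second condition is achievable because \cref{lem:infimalrepsequence} applied to the pair $(\xi_n, \xi_{n+1})$ yields representatives with $\lim_k d(x_k^n, x_k^{n+1}) = \bar{d}(\xi_n, \xi_{n+1})$, and any sufficiently sparse common subsequence of $(x_k^n)_k$ and $(x_k^{n+1})_k$ realises the quantitative rate; the diagonal argument ensures that common-subsequence choices made at stage $n$ only strengthen the inequalities established for earlier pairs, enabling a monotone nested construction.

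Next, choose a strictly increasing sequence $K_n \geq 2^n$ with $\sum_{k \geq K_n} d(x_k^{n+1}, x_{k+1}^{n+1}) < 2^{-n}$ and set $y_n := x_{K_n}^n \in X$. A triangle inequality routed through $x_{K_n}^{n+1}$ gives
\[
d(y_n, y_{n+1}) \leq d(x_{K_n}^n, x_{K_n}^{n+1}) + \sum_{k=K_n}^{K_{n+1}-1} d(x_k^{n+1}, x_{k+1}^{n+1}) < \bar{d}(\xi_n, \xi_{n+1}) + 2\cdot 2^{-n},
\]
so $(y_n)_n$ is a forward FD-sequence in $X$. Define $\xi := [y_n] \in \overline{X}$.

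To verify $\bar{d}(\xi_n, \xi) \to 0$, fix $n$ and observe that for any strictly increasing sequence $(k_j)$ of integers, the subsequence $(x_{k_j}^n)_j$ represents $\xi_n$ and $(y_{M_j})_j$ with $M_j := n + \lfloor \log_2 k_j \rfloor$ represents $\xi$ (both being subsequences of the respective full representatives, hence FD-equivalent to them). Consequently
\[
\bar{d}(\xi_n, \xi) \leq \liminf_{j \to \infty} d(x_{k_j}^n, y_{M_j}).
\]
We chain the triangle inequality along $x_{k_j}^n \to x_{k_j}^{n+1} \to \cdots \to x_{k_j}^{M_j} \to x_{K_{M_j}}^{M_j} = y_{M_j}$. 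Each intermediate step contributes less than $\bar{d}(\xi_m, \xi_{m+1}) + 2^{-k_j}$ by Step 1, so the intermediate sum is bounded by $\sum_{m \geq n} \bar{d}(\xi_m, \xi_{m+1}) + \lfloor \log_2 k_j \rfloor \cdot 2^{-k_j}$; and the final link satisfies $d(x_{k_j}^{M_j}, x_{K_{M_j}}^{M_j}) \leq \sum_{k \geq k_j} d(x_k^{M_j}, x_{k+1}^{M_j}) < 2^{-M_j}$, using $k_j \leq K_{M_j}$ (ensured by $K_{M_j} \geq 2^{M_j} \geq 2^n k_j \geq k_j$). Both error terms vanish as $j \to \infty$, yielding $\bar{d}(\xi_n, \xi) \leq \sum_{m \geq n} \bar{d}(\xi_m, \xi_{m+1})$, which tends to $0$ as $n \to \infty$ by the forward FD property of $(\xi_n)$.

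\textbf{Main obstacle.} The primary technical difficulty is the diagonal construction described above: we must simultaneously arrange, for every $n$, the summability $\sum_k d(x_k^n, x_{k+1}^n) < 2^{-n}$ together with the quantitative compatibility $d(x_k^n, x_k^{n+1}) < \bar{d}(\xi_n, \xi_{n+1}) + 2^{-k}$, without unraveling prior choices or breaking the FD-summability of any $(x_k^n)_k$. The key observation that makes this work is that common-subsequence refinements of already-chosen representatives only strengthen both inequalities, permitting a monotone Cantor-style nested choice; nonetheless, tracking the pairwise interactions cleanly enough to produce a single coherent system $(x_k^n)_{n,k}$ of representatives demands careful inductive bookkeeping.
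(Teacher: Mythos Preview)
Your argument is correct and follows essentially the same architecture as the paper's proof: construct a coherent system of representatives $(x_k^n)$ via \cref{lem:infimalrepsequence} and a Cantor diagonal, extract a diagonal FD-sequence in $X$, and verify it is a forward limit by chaining through the rows; the paper organises the same ideas with two representatives per $\xi_i$ that are then merged, but the substance is identical. One small point: you need $(M_j)_j$ strictly increasing so that $(y_{M_j})_j$ is a genuine subsequence representing $\xi$, which is not guaranteed for arbitrary $(k_j)$; simply take $k_j = 2^j$ (so $M_j = n+j$) and the issue disappears.
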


\begin{proof}
We only need to verify this for the forward FD-completion, as the backward FD-completion is equal to the forward FD-completion of the reverse metric space $(X,d^\#)$. Consider an arbitrary (forward) FD-sequence $(\xi_k)$ in $(\overline{X},\bar{d})$. Our goal is to show that there exists $\xi\in\overline{X}$ such that $\bar{d}(\xi_k,\xi)\to0$. We do so by constructing a specific FD-sequence in $(X,d)$ and showing that its corresponding FD-equivalence class satisfies the desired property.

By \cref{lem:infimalrepsequence}, for each $i\geq1$, we find FD-sequences $(\hat x_{n,i})$ and $(x'_{n,i})$ representing  $\xi_i$ such that
\begin{equation*}
	\bar d(\xi_{i},\xi_{i+1})=\lim_{n\to\infty} d(\hat x_{n,i},x'_{n,i+1}).
\end{equation*}
Since both $(\hat x_{n,i})$ and $(x'_{n,i})$ represent $\xi_i$, they are FD-equivalent. After taking subsequences, we may assume that for each $i\geq 1$ the interlacing 
\begin{equation*}
	\hat x_{1,i},x'_{1,i},\hat x_{2,i},x_{2,i}',\cdots 
\end{equation*}
is a forward FD-sequence. In particular, $\lim\limits_{n\to\infty}d( x_{n,i}', \hat x_{n+1,i}')=0$. 
Hence, 

\begin{align*}
	 \lim_{n\to\infty} d(\hat x_{n,i},\hat x_{n,i+1})&\leq  \lim_{n\to\infty} \left(d(\hat x_{n,i}, x_{n,i+1}')+d( x_{n,i+1}', \hat x_{n+1,i+1})\right)\\
	 &= \lim_{n\to\infty} d(\hat x_{n,i}, x_{n,i+1}') =\bar d(\xi_i,\xi_{i+1}).
\end{align*}
On the other hand, the definition of $\bar d$ implies that 
\begin{equation*}
	\bar d(\xi_i,\xi_{i+1})\leq  \lim_{n\to\infty} d(\hat x_{n,i},\hat x_{n,i+1}).
\end{equation*}
Hence, for each $i\geq1$
\begin{equation*}
	\bar d(\xi_i,\xi_{i+1})=  \lim_{n\to\infty} d(\hat x_{n,i},\hat x_{n,i+1}).
\end{equation*}

To simplify notation, we set $x_{m,k}:=\hat x_{m,k}$.
Take a sequence of (strictly) increasing indices $(m_k)$ satisfying 
\begin{align}
\text{for all } n\geq m_k,\quad \left|d(x_{n,k},x_{n,k+1})-\bar{d}(\xi_k,\xi_{k+1})\right|\leq2^{-k},
\label{eq:firstcondition}
\end{align}
and
\begin{equation}\label{eq:secondcondition}
	\sum_{j=m_k}^\infty d(x_{j,k},x_{j+1,k})\leq2^{-k}.
\end{equation}

We verify that the sequence $(x_{m_k,k})_{k\in\mathbb{N}}$ is an FD-sequence. \cref{eq:firstcondition} and \cref{eq:secondcondition} tell us that
\begin{align*}
	&d(x_{m_k,k},x_{m_{k+1},k+1})\\
	\leq&
d(x_{m_k,k},x_{{m_k+1},k})+\cdots +d(x_{m_{k+1}-1,k},x_{m_{k+1},k})+ d(x_{m_{k+1},k},x_{m_{k+1},k+1})\\
\leq & 2^{-k}+ \bar d (\xi_k,\xi_{k+1})+2^{-k}=\bar d(\xi_{k},\xi_{k+1})+2^{-k+1}.
\end{align*}
Hence, 
\begin{equation*}
	\sum_{k=1}^\infty d(x_{m_k,k},x_{m_{k+1},k+1})\leq \sum_{k=1}^\infty (\bar d(\xi_{k},\xi_{k+1})+2^{-k+1})<\infty,
\end{equation*}
where we use the assumption that $(\xi_k)$ is a forward FD-sequence in $(\overline{X},\bar d)$.

Let $\xi\in\overline{X}$ be the FD-equivalence class for $(x_{m_k,k})$. We complete this proof by showing that $\bar{d}(\xi_k,\xi)\to0$.

To calculate the desired limit, we observe that
\begin{align*}
\lim_{k\to\infty}
\bar{d}(\xi_k,\xi)
\leq
\lim_{k\to\infty}\lim_{n\to\infty}
d(x_{m_n,k},x_{m_n,n})
\end{align*}
is true because $(x_{m_n,k})$ is a subsequence of $(x_{n,k})$ for each $k$,  and hence is FD-equivalent to $(x_{n,k})$. Finally, since $m_n>n$, 
\begin{align*}
\lim_{k\to\infty}\lim_{n\to\infty}
d(x_{m_n,k},x_{m_n,n})
&\leq
\lim_{k\to\infty}\lim_{n\to\infty}
\sum_{j=k}^{n-1}
d(x_{m_n,j},x_{m_n,j+1})\\
&\leq
\lim_{k\to\infty}
\sum_{j=k}^{n-1}
(\bar{d}(\xi_j,\xi_{j+1})+2^{-j}) \qquad(\text{by } \eqref{eq:firstcondition})\\
&
=0  \qquad(\text{since } \sum_{k=1}^\infty \bar d(\xi_k,\xi_{k+1})<\infty ),
\end{align*}
as required. Therefore, $(\overline{X},\bar{d})$ is forward FD-complete.
\end{proof}

\begin{remark}[non-idempotence]
\label{rmk:example}
Although the forward FD-completion is forward FD-complete, we emphasise that successive applications of FD-completions may indefinitely add more and more points to the metric space. Consider the following example.
Let $X=\mathbb{N}$ and 
\begin{align*}
d(m,n)=\left\{
\begin{array}{rl}
\frac{1}{m}-\tfrac{1}{n},&\quad\text{ if }m\leq n,\\
1,&\quad\text{ otherwise.}
\end{array}\right.
\end{align*}
Successive FD-completions will add a string of ``$\infty$''s, which we denote by $\infty_1,\infty_2,\infty_3,\ldots$ so that $d(\infty_m,\infty_n)=0$ if $m\geq n$, but $1$ otherwise. There may be ways of addressing this by, for example, applying infinitely many FD-completions. This is a delicate discussion, however, for one should take into account the purpose that such a completion should serve, and we leave this for future studies.
\end{remark}

\subsection{FD-completion generalises Cauchy completion}

For the purposes of this subsection, we assume that $(X,d)$ is symmetric. The goal is to show that FD-completion naturally generalises Cauchy completion within this restricted context. We first lay down some groundwork:

\begin{lemma}
\label{lem:fdiscauchy}
Any FD-sequence in a symmetric metric space $(X,d)$ is also Cauchy in $(X,d)$.
\end{lemma}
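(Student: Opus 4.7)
The plan is to observe that this lemma follows by an entirely standard tail-series argument that any absolutely summable distance-series forces the Cauchy condition, modulo the triangle inequality. First I would unpack the definitions: an FD-sequence $(x_n)$ in $(X,d)$ is, by definition, one for which $\sum_{i=1}^\infty d(x_i, x_{i+1}) < \infty$; since $d$ is assumed symmetric here, we need not distinguish forward from backward FD-sequences.

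Next, I would fix an arbitrary $\varepsilon > 0$. Convergence of the distance-series means the tails vanish, so there exists $N \in \mathbb{N}$ such that $\sum_{i=N}^\infty d(x_i, x_{i+1}) < \varepsilon$. Then for any $m > n \geq N$, iterating the triangle inequality yields
\[
d(x_n, x_m) \leq \sum_{i=n}^{m-1} d(x_i, x_{i+1}) \leq \sum_{i=N}^\infty d(x_i, x_{i+1}) < \varepsilon,
\]
and by symmetry the same bound holds for $d(x_m, x_n)$. This is exactly the Cauchy condition, so $(x_n)$ is Cauchy.

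There is no real obstacle here; the argument is purely a manipulation of the triangle inequality and the convergence of a nonnegative series. The only minor point to highlight is that in the symmetric setting one does not have to separately control $d(x_n, x_m)$ and $d(x_m, x_n)$, so the forward FD-condition alone suffices. The proof is essentially a two-line observation and is worth including primarily as a sanity check before establishing the converse direction needed to identify the FD-completion with the Cauchy completion in the symmetric case.
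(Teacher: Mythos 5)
Your proof is correct and follows essentially the same route as the paper: choose $N$ so that the tail of the distance-series is below $\varepsilon$, then bound $d(x_n,x_m)$ for $m>n\geq N$ by the tail via the triangle inequality. The remark that symmetry spares you from separately handling $d(x_m,x_n)$ is a fine addition but not needed for the argument.
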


\begin{proof}
Let $(x_n)$ be an FD-sequence. For any $\epsilon>0$, take $N$ to be some integer such that $\sum_{k=N}^\infty d(x_k,x_{k+1})<\epsilon$. Then, for any $m>n\geq N$, the distance $d(x_n,x_m)$ satisfies
\[
d(x_n,x_m)
\leq
d(x_n,x_{n+1})+\cdots+d(x_{m-1},x_m)
\leq
\sum_{k=N}^\infty d(x_k,x_{k+1})<\epsilon.
\]
\end{proof}

\begin{lemma}
\label{lem:surjective}
Any Cauchy sequence in a symmetric metric space $(X,d)$ has a subsequence which is an FD-sequence in $(X,d)$.
\end{lemma}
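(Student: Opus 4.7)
The plan is to construct the desired FD-subsequence by extracting indices along which the distances between consecutive terms decay geometrically. Given a Cauchy sequence $(x_n)$ in $(X,d)$, I would first apply the Cauchy property with $\epsilon = 2^{-k}$ for each $k \in \mathbb{N}$ in turn, to obtain an increasing sequence of indices $n_1 < n_2 < n_3 < \cdots$ such that $d(x_m, x_{m'}) < 2^{-k}$ for all $m, m' \geq n_k$. In particular, since $n_{k+1} > n_k$, this gives $d(x_{n_k}, x_{n_{k+1}}) < 2^{-k}$.

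Next, I would verify that the subsequence $(x_{n_k})_{k \in \mathbb{N}}$ has finite forward distance-series:
\[
\sum_{k=1}^\infty d(x_{n_k}, x_{n_{k+1}}) \leq \sum_{k=1}^\infty 2^{-k} = 1 < \infty,
\]
so $(x_{n_k})$ is a forward FD-sequence. Since $d$ is symmetric, this is simultaneously a backward FD-sequence.

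There is no real obstacle here; the argument is entirely standard and mirrors the classical fact that every Cauchy sequence in a (symmetric) metric space admits a rapidly convergent subsequence. Together with Lemma~B.6 (\cref{lem:fdiscauchy}), this lemma sets up the equivalence between Cauchy and FD-completions in the symmetric setting, which would be formalised in the subsequent theorem asserting that the forward FD-completion coincides with the Cauchy completion when $d$ is symmetric.
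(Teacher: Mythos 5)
Your proof is correct and follows essentially the same argument as the paper: extract indices $n_k$ using the Cauchy property with $\epsilon=2^{-k}$ so that consecutive terms of the subsequence are within $2^{-k}$, making the distance-series bounded by $\sum 2^{-k}=1$. No issues to flag.
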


\begin{proof}
Given a Cauchy sequence $(x_n)$, let $(m_k)_{k\in\mathbb{N}}$ be an increasing sequence of integers such that $\forall i,j\leq m_k$, $d(x_i,x_j)<2^{-k}$. Then, $(x_{m_k})_{k\in\mathbb{N}}$ is an FD-sequence, as $\sum_{k=1}^\infty d(x_{m_k},x_{m_{k+1}})<1$.
\end{proof}

\begin{lemma}
\label{lem:congruence}
Two FD-sequences $(x_n)$ and $(y_n)$ in a symmetric metric space $(X,d)$ are FD-equivalent if and only if they are Cauchy equivalent.
\end{lemma}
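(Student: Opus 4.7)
My plan is to prove the two implications separately, leveraging \cref{lem:fdiscauchy} throughout.

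For the forward direction, suppose $(x_n)$ and $(y_n)$ are FD-equivalent. Then by definition there exists an interlacing $(\hat z_k)$ of $(x_n)$ and $(y_n)$ which is itself an FD-sequence. By \cref{lem:fdiscauchy}, $(\hat z_k)$ is Cauchy. Since $(x_n)$ and $(y_n)$ both appear as subsequences of the Cauchy sequence $(\hat z_k)$ (with strictly increasing indices), they share a common limit in any Cauchy completion, and in particular they satisfy $\lim_{n \to \infty} d(x_n, y_n) = 0$, so they are Cauchy equivalent.

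For the backward direction, suppose $(x_n)$ and $(y_n)$ are Cauchy equivalent FD-sequences. I would first pass to subsequences in order to obtain tight geometric control. Since FD-sequences are Cauchy, and since Cauchy equivalence gives $d(x_n, y_n) \to 0$, I can choose strictly increasing indices $(n_k)$ and $(m_k)$ so that for every $k \geq 1$,
\[
d(x_{n_k}, x_{n_{k+1}}) < 2^{-k}, \quad d(y_{m_k}, y_{m_{k+1}}) < 2^{-k}, \quad d(x_{n_k}, y_{m_k}) < 2^{-k}.
\]
Form the interlacing $\hat z_{2k-1} := x_{n_k}$ and $\hat z_{2k} := y_{m_k}$. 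By the triangle inequality and the three bounds above,
\[
d(\hat z_{2k-1}, \hat z_{2k}) < 2^{-k}, \quad d(\hat z_{2k}, \hat z_{2k+1}) \leq d(y_{m_k}, x_{n_k}) + d(x_{n_k}, x_{n_{k+1}}) < 2 \cdot 2^{-k},
\]
so $\sum_k d(\hat z_k, \hat z_{k+1}) < \infty$ and $(\hat z_k)$ is an FD-sequence. Finally, since each of the original sequences $(x_n)$ and $(y_n)$ is FD-equivalent to any of its subsequences (see \cref{rmk:FD:subsequence}), the transitivity of FD-equivalence established in \cref{lem:equivrel} allows us to conclude that $(x_n)$ and $(y_n)$ are FD-equivalent.

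The main conceptual point is that in the symmetric setting, the FD-condition on a sequence is strictly stronger than the Cauchy condition, yet weak enough that every Cauchy sequence has an FD-subsequence (\cref{lem:surjective}); these two facts together essentially force FD-equivalence and Cauchy equivalence to coincide. No step here should present a serious obstacle --- the only mild subtlety is verifying that the interlacing constructed in the backward direction genuinely qualifies as an interlacing (both component subsequences are infinite with strictly increasing indices), which is immediate from the construction.
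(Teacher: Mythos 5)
Your proof is correct overall, but the two directions compare differently with the paper's argument. For the backward direction you do essentially what the paper does: pick indices far enough out that consecutive distances and cross distances are summable, and check that the alternating interlacing has finite distance-series; your final detour through \cref{rmk:FD:subsequence} and transitivity (\cref{lem:equivrel}) is harmless but unnecessary, since by the paper's definition of interlacing your sequence $(\hat z_k)$ is already an interlacing of the \emph{full} sequences $(x_n)$ and $(y_n)$ (an interlacing need not exhaust either sequence), so FD-equivalence of the originals follows directly. For the forward direction you take a genuinely different route: the paper argues purely metrically, bounding $d(x_m,y_m)$ by an $\epsilon/3$-estimate using the tails of the two distance-series plus one small consecutive $x$--$y$ gap inside the FD interlacing, whereas you pass to the Cauchy completion and argue that everything converges to a common limit. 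Your route is fine and arguably more conceptual, but it buys nothing the direct estimate does not, and it costs you a small imprecision: the interlacing does \emph{not} contain $(x_n)$ and $(y_n)$ as subsequences, only a subsequence of each; the fix is one line (each of $(x_n)$, $(y_n)$ is Cauchy by \cref{lem:fdiscauchy}, hence Cauchy-equivalent to its subsequence sitting inside the Cauchy sequence $(\hat z_k)$, hence all four sequences share the same limit in the completion). A second minor point in the backward direction: with distinct index sequences $(n_k)$ and $(m_k)$, the bound $d(x_{n_k},y_{m_k})<2^{-k}$ is not immediate from $d(x_n,y_n)\to 0$; either take $n_k=m_k$ (as the paper effectively does) or insert one triangle inequality through $y_{n_k}$ using the Cauchyness of $(y_n)$. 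Neither issue is a genuine gap.
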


\begin{proof}
First assume that $(x_n)$ and $(y_n)$ are FD-equivalent. For any $\epsilon>0$, there exists some $N$ such that the following two inequalities hold:
\[
\sum_{k=N}^\infty d(x_k,x_{k+1}) <\frac{\epsilon}{3}\quad\text{and}\quad\sum_{k=N}^\infty d(y_k,y_{k+1})
<\frac{\epsilon}{3}.
\]
For any $m\geq N$, we can find some successive pair of terms $x_{i_m},y_{j_m}$ in an interlacing for $(x_n)$ and $(y_n)$ such that $i_m,j_m> m$ and $d(x_{i_m},y_{j_m})<\frac{\epsilon}{3}$ (since $(x_n)$ and $(y_n)$ are FD-equivalent). Then,
\begin{align*}
d(x_m,y_m)\leq 
\sum_{k=m}^{i_m-1} d(x_k,x_{k+1})
+d(x_{i_m},y_{j_m})
+\sum_{k=m}^{j_m-1} d(y_{k+1},y_k)<\epsilon.
\end{align*}
Hence, $\underset{m\to\infty}{\lim} d(x_m,y_m)=0$ and we see that $(x_n)$ and $(y_n)$ (which are necessarily Cauchy, by \cref{lem:fdiscauchy}) are Cauchy equivalent.

For the converse, assume that $\underset{k\to\infty}{\lim} d(x_k,y_k)=0$. Choose an increasing sequence of indices $(m_k)_{k\in\mathbb{N}}$ such that
\[
\forall n\geq m_k,\quad
d(x_n,y_n)<2^{-k}.
\]
Then the sequence $x_{m_1},y_{m_1},y_{m_2},x_{m_2},x_{m_3},y_{m_3},y_{m_4},x_{m_4},\ldots$ is an FD-sequence because its distance-series is bounded above by
\[
\sum_{k=1}^\infty(d(x_k,x_{k+1})+d(y_k,y_{k+1})+2^{-k})<\infty.
\]
Hence, this interlacing induces an FD-equivalence between $(x_n)$ and $(y_n)$.
\end{proof}

\begin{lemma}
\label{lem:fwdbwdsame}
The forward and backward FD-completions of a symmetric metric space $(X,d)$ are canonically isometric symmetric metric spaces.
\end{lemma}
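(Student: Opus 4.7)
The plan is to show that when $d$ is symmetric the forward and backward completions coincide not just up to canonical isometry but literally as pairs (set, metric), with the identity on underlying sets providing the isometry. The key observation driving everything is that symmetry of $d$ forces the forward and backward distance-series of every sequence $(x_n)$ in $X$ to be equal:
\begin{equation*}
\sum_{i=1}^\infty d(x_i, x_{i+1})
\;=\;
\sum_{i=1}^\infty d(x_{i+1}, x_i).
\end{equation*}
First I would record the immediate corollary that a sequence in $X$ is a forward FD-sequence if and only if it is a backward FD-sequence, and that an interlacing $(\hat{x}_k)$ of two sequences is forward FD exactly when it is backward FD. Thus forward FD-equivalence and backward FD-equivalence of FD-sequences in $X$ are literally the same equivalence relation, and the underlying sets $\overline{X}$ and $\overline{X}^\#$ of FD-equivalence classes agree.

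Next I would compare the metric extensions. By \cref{thm:asymmetricmetric} and \cref{rmk:backwardmetric}, we have
\begin{equation*}
\bar{d}([x_n],[y_m])
=\inf_{(x'_n)\in[x_n],(y'_m)\in[y_m]}\liminf_{k\to\infty} d(x'_k,y'_k),
\end{equation*}
\begin{equation*}
\bar{d}^\#([x_n],[y_m])
=\inf_{(x'_n)\in[x_n],(y'_m)\in[y_m]}\liminf_{k\to\infty} d(y'_k,x'_k).
\end{equation*}
Since $d(x'_k,y'_k)=d(y'_k,x'_k)$ for all $k$, the two infima are taken of the same expression, so $\bar{d}=\bar{d}^\#$ as functions on $\overline{X}\times\overline{X}=\overline{X}^\#\times\overline{X}^\#$. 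Symmetry of $\bar{d}$ itself follows by the same reasoning: for any two FD-equivalence classes, \cref{lem:infimalrepsequence} produces representatives $(x'_n),(y'_m)$ with $\bar{d}([x_n],[y_m])=\lim_k d(x'_k,y'_k)$, and symmetry of $d$ turns this limit into $\lim_k d(y'_k,x'_k)\geq\bar{d}([y_m],[x_n])$; the reverse inequality is obtained by swapping the roles.

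The ``canonical isometry'' is then simply the identity map $\mathrm{id}\colon\overline{X}\to\overline{X}^\#$ at the level of FD-equivalence classes, which is manifestly compatible with the natural inclusions $\iota$ and $\iota^\#$ of $X$. I expect there to be no genuine obstacle: the entire content of the lemma is bookkeeping with the symmetry of $d$ inside the definitions of FD-sequence, FD-equivalence, and the metric extension. The only point requiring any care is the verification that one really is allowed to identify the two sets as one: this is essentially automatic once one observes that nothing in the constructions of \cref{sec:fdcompletion} and \cref{thm:asymmetricmetric} uses the forward direction beyond reading off values $d(x_i,x_{i+1})$, which in the symmetric case are unchanged upon reversing the direction.
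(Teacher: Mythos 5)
Your proposal is correct and follows essentially the same route as the paper: symmetry of $d$ makes forward and backward FD-sequences, interlacings, and hence FD-equivalence classes literally coincide, and then the explicit formulas in \cref{thm:asymmetricmetric} and \cref{rmk:backwardmetric} show the two metric extensions agree and are symmetric. You simply spell out the details (including the use of \cref{lem:infimalrepsequence} for symmetry of $\bar{d}$) that the paper's shorter proof leaves as observations.
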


\begin{proof}
We first note that the symmetry of $d$ as a metric means that the set of forward and backward FD-sequences agree, which in turn means that the two completions canonically identify with each other. Then, it is clear from the definition of $\bar{d}$ in \cref{thm:asymmetricmetric} and \cref{rmk:backwardmetric} that the two metric extensions agree and are symmetric.
\end{proof}

\cref{lem:fwdbwdsame} allows us to simply refer to ``the FD-completion of $(X,d)$'', and so allows us to pose the following:

\begin{theorem}[FD generalises Cauchy]
\label{thm:Fd=cauchy}
For any symmetric metric space $(X,d)$, the identity map $\mathrm{id}:X\to X$ uniquely extends to a continuous map between the FD-completion of $(X,d)$ and its Cauchy completion. Moreover, this extension map is an isometric homeomorphism between $(\overline{X},\bar{d})$ and the Cauchy completion.
\end{theorem}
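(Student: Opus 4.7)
The plan is to construct the extension map $\Phi$ from $(\overline{X},\bar{d})$ to the Cauchy completion $(X^{\mathrm{Cau}},d^{\mathrm{Cau}})$ on the level of sequences, and then verify well-definedness, bijectivity and the isometry property using the three bridge lemmas already established (\cref{lem:fdiscauchy}, \cref{lem:surjective} and \cref{lem:congruence}). Uniqueness will then fall out from the fact that both target metrics are Hausdorff and that $\iota(X)$ sits forward-densely in $\overline{X}$ by \cref{prop:natisoinclude}.

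First I would define $\Phi([x_n]):=[x_n]_{\mathrm{Cau}}$, where on the right we regard the FD-representative $(x_n)$ as a Cauchy sequence (which is legitimate by \cref{lem:fdiscauchy}). To see that this is well-defined, observe that if $(x_n)$ and $(x_n')$ are FD-equivalent, then \cref{lem:congruence} tells us they are Cauchy-equivalent, so they determine the same class in $X^{\mathrm{Cau}}$. Injectivity is the converse direction of \cref{lem:congruence}: two FD-sequences that are Cauchy-equivalent are FD-equivalent, hence represent the same FD-equivalence class. Surjectivity is supplied by \cref{lem:surjective}: any Cauchy sequence $(y_n)$ admits an FD-subsequence $(y_{n_k})$, whose FD-equivalence class satisfies $\Phi([y_{n_k}])=[y_n]_{\mathrm{Cau}}$ because subsequences are Cauchy-equivalent to their parent sequence. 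Moreover, $\Phi$ restricted to $\iota(X)$ is the identity inclusion into $X^{\mathrm{Cau}}$, so $\Phi$ is an extension of $\mathrm{id}_X$.

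For the isometry property, I would argue that the infimum appearing in the definition of $\bar{d}$ (\cref{thm:asymmetricmetric}) is in fact attained as a genuine limit and equals $d^{\mathrm{Cau}}$. Given two FD-equivalence classes $[x_n]$ and $[y_m]$ with FD-representatives $(x_n)$ and $(y_n)$, the two sequences are Cauchy, so $d(x_n,y_n)$ converges to $d^{\mathrm{Cau}}([x_n]_{\mathrm{Cau}},[y_n]_{\mathrm{Cau}})$. For any other pair of FD-representatives $(x_n')$ and $(y_n')$, symmetry and \cref{lem:congruence} give $d(x_n,x_n')\to 0$ and $d(y_n,y_n')\to 0$, so by the triangle inequality $\liminf_{k}d(x_k',y_k')=\lim_k d(x_k,y_k)$. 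Thus $\bar{d}([x_n],[y_m])=\lim_k d(x_k,y_k)=d^{\mathrm{Cau}}(\Phi[x_n],\Phi[y_m])$, proving that $\Phi$ is an isometry; in particular, $\bar{d}$ inherits symmetry (which is also a direct consequence of \cref{lem:fwdbwdsame}).

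Finally, uniqueness of the continuous extension follows because $\iota(X)$ is forward-dense in $\overline{X}$ by \cref{prop:natisoinclude} and $\bar{d}$ is here symmetric, so forward density coincides with density in the usual sense; any two continuous extensions of $\mathrm{id}_X$ to a Hausdorff target must agree on this dense subset and hence everywhere. The main (mild) obstacle I anticipate is the isometry verification: one needs to show that passing from the double-infimum definition of $\bar{d}$ to the single-limit expression really is legitimate, which requires carefully invoking the converse direction of \cref{lem:congruence} to control arbitrary FD-representatives rather than just subsequences of a fixed one. Once this is in place, the remaining steps are bookkeeping.
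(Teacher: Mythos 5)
Your proposal is correct and follows essentially the same route as the paper: the same map $[x_n]_{FD}\mapsto[x_n]_C$, with well-definedness, injectivity and surjectivity drawn from \cref{lem:fdiscauchy}, \cref{lem:congruence} and \cref{lem:surjective}, the isometry obtained by collapsing the infimum in the definition of $\bar d$ to the limit $\lim_k d(x_k,y_k)$, and uniqueness from density of the constant classes. The only cosmetic difference is that the paper spells out uniqueness via explicit limits of constant sequences rather than the dense-agreement/Hausdorff argument, which is the same idea.
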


\begin{proof}
We first introduce a little notation. We denote the FD-equivalence class of an FD-sequence $(x_n)\in\overline{X}$ by $[x_n]_{FD}$. We use $(\widehat{X},\hat{d})$ to denote the Cauchy completion, and write Cauchy-equivalence classes as ${[x_n]_C}$. Since FD-sequences are Cauchy (\cref{lem:fdiscauchy}), the obvious candidate for an extension map $\bar{\mathrm{id}}:\overline{X}\to\widehat{X}$ is given by $\bar{\mathrm{id}}([x_n]_{FD})={[x_n]_C}$. For this map to be well-defined, we require that FD-equivalent FD-sequences are also Cauchy equivalent, and this is precisely given by the ``only if'' direction in \cref{lem:congruence}. Furthermore, the ``if'' direction of \cref{lem:congruence} shows that $\bar{\mathrm{id}}$ is injective, and \cref{lem:surjective} shows that $\bar{\mathrm{id}}$ is surjective, as a Cauchy sequence is Cauchy equivalent to any of its subsequences. Thus, $\bar{\mathrm{id}}:\overline{X}\to\widehat{X}$ is a well-defined bijection between these two sets.

\cref{thm:asymmetricmetric} (in conjunction with \cref{lem:infimalrepsequence}, if one so wishes) shows that $\bar{d}([x_n]_{FD},[y_n]_{FD})=\hat{d}({[x_n]_C},{[y_n]_C})$, and thus $\bar{\mathrm{id}}$ is an isometric homeomorphism. This means in particular that $\bar{\mathrm{id}}$ is a \emph{continuous} extension of $\mathrm{id}$. All that remains is to show that $\bar{\mathrm{id}}$ is the unique continuous extension. Consider a continuous extension $\phi:\overline{X}\to\widehat{X}$. Take an arbitrary element $\xi=[x_n]_{FD}\in\overline{X}$, represented by an FD-sequence $(x_n)$. Let $\mathbf{x}_k$ be the constant FD-sequence represented by $x_k$. 
Then, the sequence of FD-equivalence classes of constant FD-sequences $\mathbf{x}_1,\mathbf{x}_2,\mathbf{x} _3,\ldots$ converges to $\xi$ in the sense that 
\[
\lim_{k\to\infty}\bar{d}(\mathbf{x}_k,\xi)=0.
\]
Since $\phi$ is continuous, limits are preserved and
\[
\lim_{k\to\infty}\hat{d}(\phi(\mathbf{x}_k),\phi(\xi)))=0.
\]
On the other hand, since $[x_n]_C$ is the Cauchy equivalence class represented by the sequence $(x_n)=(\phi(x_n))$ and $\phi(\mathbf{x}_k)$ is the constant Cauchy equivalence class represented by $x_k=\phi(x_k)$, we see that 
\begin{equation*}
	\lim_{k\to\infty} \hat{d}(\phi(\mathbf{x}_k), [x_n]_C)=0.
\end{equation*}
Therefore,
\[
\phi(\xi)=[x_n]_C.
\]
Consequently,  $\phi=\bar{\mathrm{id}}$. This completes the proof.
\end{proof}

\subsection{FD-extensibility}

We return now to the general setting where $(X,d)$ is an asymmetric metric space. Thanks to \cref{thm:asymmetricmetric} and \cref{prop:natisoinclude}, FD-completion can be regarded as a process that takes in $(X,d)$ and outputs a larger metric space $(\overline{X},\bar{d})$. It is natural to wonder if this process can be framed more category theoretically. However, even for symmetric metric spaces, one cannot always extend a continuous map between two (symmetric) metric spaces to a continuous map between their completions. Rather, only maps which take Cauchy sequences to Cauchy sequences produce a well-defined extension. We do something analogous for FD-completion.

\begin{definition}[FD-extensibility]
A map $f:(X,d_X)\to(Y,d_Y)$ is called \emph{forward FD-extensible} if the image of every forward FD-sequence
\begin{itemize}
\item
contains at least one subsequence which is a forward FD-sequence in $(Y,d_Y)$, and
\item
all such subsequences are forward FD-equivalent in $(Y,d_Y)$.
\end{itemize}
Backward FD-extensibility is defined analogously.
\end{definition}

\begin{proposition}[standard extension]
Any FD-extensible map $f:(X,d_X)\to(Y,d_Y)$ induces a natural extension map $\bar{f}:\overline{X}\to \overline{Y}$. We refer to $\bar{f}$ as the \emph{standard extension} of $f$.
\end{proposition}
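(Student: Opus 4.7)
The plan is to define $\bar f$ by the only natural recipe available: given an FD-equivalence class $[x_n]\in\overline{X}$, pick any forward FD representative $(x_n)$, and any subsequence $(f(x_{n_k}))$ of its image that is forward FD in $(Y,d_Y)$ --- such a subsequence exists by the first clause in the definition of FD-extensibility --- and set
\[\bar{f}([x_n]):=[f(x_{n_k})].\]
The content of the proposition then reduces to verifying that this assignment is independent of both choices; naturality of the extension through the natural inclusions $\iota_X,\iota_Y$ (in the sense required by the paragraph following \cref{lem:natinclude}) will then follow by applying $\bar f$ to the constant FD-sequence $(x)_{n\in\mathbb{N}}$, whose image $(f(x))_{n\in\mathbb{N}}$ is trivially its own FD-subsequence.

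Independence of the subsequence $(x_{n_k})$ is precisely the second clause in the definition of FD-extensibility and is therefore immediate. For independence of the representative, suppose $(x_n)$ and $(x'_n)$ are two forward FD-equivalent representatives of $[x_n]$; there is by hypothesis an interlacing $(\hat{x}_k)$ which is itself a forward FD-sequence. I plan to exploit the fact that the $(x_n)$-restriction of $(\hat{x}_k)$ --- the subsequence formed from those terms drawn from $(x_n)$ --- is simultaneously a subsequence of $(\hat{x}_k)$ (hence forward FD as a subsequence of an FD-sequence) and a subsequence of $(x_n)$. Applying FD-extensibility to this common subsequence and extracting a further FD-subsequence of its image yields a sequence that is simultaneously an FD-subsequence of $(f(x_n))$ and of $(f(\hat{x}_k))$; by subsequence-independence applied in turn to $(x_n)$ and to $(\hat{x}_k)$, this forces $\bar{f}([x_n]) = \bar{f}([\hat{x}_k])$. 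The symmetric argument using the $(x'_n)$-restriction of $(\hat{x}_k)$ gives $\bar{f}([x'_n])=\bar{f}([\hat{x}_k])$, and chaining these equalities closes the argument.

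The main anticipated obstacle is not conceptual but one of careful bookkeeping: confirming that the alleged ``restrictions'' are genuinely infinite subsequences indexable by $\mathbb{N}$. Fortunately, this is built into the very definition of interlacing, which demands infinitely many terms of each component sequence to appear inside $(\hat{x}_k)$ with strictly increasing indices. Once this is in hand, the entire argument is a compact application of the two defining clauses of FD-extensibility, combined with the elementary observation that subsequences of forward FD-sequences are themselves forward FD.
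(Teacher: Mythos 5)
Your proposal is correct, and it follows the same overall strategy as the paper's proof: define $\bar{f}[x_n]$ via an FD-subsequence of the image sequence, and reduce well-definedness to the interlacing witnessing FD-equivalence together with the uniqueness clause of FD-extensibility. The bookkeeping differs in one place, and the difference is worth noting. The paper first chooses subsequences $(x_{n_i})$ of $(x_n)$ and $(x'_{n_j})$ of $(x'_n)$ whose images are FD, uses \cref{rmk:FD:subsequence} and transitivity (\cref{lem:equivrel}) to see that these pulled-back subsequences are FD-equivalent, takes an FD-interlacing of \emph{them}, and applies the uniqueness clause to that interlacing, finishing again via \cref{rmk:FD:subsequence}. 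You instead work directly with the given FD-interlacing $(\hat{x}_k)$ of the original representatives: its restriction to terms of $(x_n)$ is a common subsequence of $(x_n)$ and $(\hat{x}_k)$, it is forward FD (subsequences of forward FD-sequences are forward FD by the triangle inequality, which holds in the asymmetric setting), and pushing it through $f$ via the existence clause yields one sequence that is simultaneously an FD-subsequence of $(f(x_n))$ and of $(f(\hat{x}_k))$; two applications of the uniqueness clause then chain the assigned class through $(\hat{x}_k)$, and the symmetric argument for $(x'_n)$ closes the loop. This buys a small simplification --- no appeal to transitivity of FD-equivalence or to the remark on subsequences is needed, only the two clauses of FD-extensibility --- at the cost of the auxiliary restriction sequences. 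The only presentational caveat is that your expression $\bar{f}([\hat{x}_k])$ is used before well-definedness is proved; it should be read as ``the common FD-class of all FD-subsequences of $(f(\hat{x}_k))$'', which is legitimate by the uniqueness clause, so this is cosmetic rather than a gap. Your treatment of the extension property via constant sequences matches the paper's.
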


\begin{proof}
Since $f$ is FD-extensible, we specify the map $\bar{f}$ as follows: given an FD-sequence $(x_n)$ representing an FD-equivalence class $[x_n]$, we define $\bar{f}([x_n])$ by sending $[x_n]$ to the unique FD-equivalence class represented by FD-subsequences of $(f(x_n))_{n\in\mathbb{N}}$. This is the obvious candidate for $\bar{f}$. We first show that this defines a well-defined map. Consider $(x_n)$ and $(x'_n)$ to be FD-sequences representing the same FD-equivalence class $[x_n]$. Let $(f(x_{n_i}))_{i\in\mathbb{N}}$ be an FD-sequence representing the unique FD-equivalence class among subsequences of $(f(x_n))$ and likewise let $(f(x'_{n_j}))_{j\in\mathbb{N}}$ be an FD-sequence representing the unique FD-equivalence class among subsequences of $(f(x'_n))$. Our present goal is to show that $(f(x_{n_i}))$ and $(f(x'_{n_j}))$ are FD-equivalent. Since $(x_{n_i})$ and $(x'_{n_j})$ are subsequences of $(x_n)$ and $(x'_n)$, they must be FD-equivalent, hence there is some interlacing $(\hat{x}_m)$ of $(x_{n_i})$ and $(x'_{n_j})$. Since $f$ is an FD-morphism, all FD-sequences of $(f(\hat{x}_m))$ are FD-equivalent. In particular, the sequences of $(f(\hat{x}_m))$ comprised of terms in $(f(x_{n_i}))$ and $(f(x'_{n_j}))$ both have finite distance series, hence they are in the same FD-equivalence class. This means that $[f(x_{n_i})]=[f(x'_{n_j})]$, hence $\bar{f}$ is well-defined. The fact that $\bar{f}$ is an extension is clear from the construction.
\end{proof}

Given the naturality properties inherent in the definition of the standard extension, one wonders if the standard extension is ``canonical'' in some formal sense --- in the symmetric setting, uniqueness is forced by requiring continuity. However, to attempt to phrase such a question in the full generality of asymmetric metric spaces, one needs to consider the subtleties of \emph{which} topology to use. We skirt this issue by considering Busemannian metric spaces:

\begin{proposition}
\label{prop:uniqueextension}
Consider asymmetric metric spaces $(X,d_X)$ and $(Y,d_Y)$ whose respective FD-completions $\phi:(\overline{X},\bar{d}_X)$, $(\overline{Y},\bar{d}_Y)$ are Busemannian metric spaces. (Note that this also makes $(X,d_X)$ and $(Y,d_Y)$ Busemannian, endowing them also with a standard topology.) Then, any FD-extensible map $f:X\to Y$ admits at most one continuous extension. Furthermore, whenever such a continuous extension exists, it is necessarily the standard extension $\bar{f}$.
\end{proposition}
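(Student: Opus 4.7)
The plan is to exploit the forward density of $\iota_X(X)$ in $\overline{X}$ (\cref{prop:natisoinclude}), the FD-extensibility hypothesis on $f$, and the Hausdorff nature of the standard topology on any Busemannian metric space to force any continuous extension $\phi$ of $f$ to agree with the standard extension $\bar{f}$ pointwise. First I would verify the auxiliary fact that limits are unique in a Busemannian metric space: if $\bar{d}_Y(\eta_n,\eta)\to 0$ and $\bar{d}_Y(\eta_n,\eta')\to 0$, then by the Busemannian property $\bar{d}_Y(\eta,\eta_n)\to 0$, and by the triangle inequality $\bar{d}_Y(\eta,\eta')\leq \bar{d}_Y(\eta,\eta_n)+\bar{d}_Y(\eta_n,\eta')\to 0$, whence $\eta=\eta'$.

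Next, fix an arbitrary $\xi\in\overline{X}$ and any forward FD-sequence $(x_n)$ in $(X,d_X)$ representing $\xi$. By \cref{prop:natisoinclude}, the sequence of constant-class embeddings $(\iota_X(x_n))$ forward converges to $\xi$ in $(\overline{X},\bar{d}_X)$; since $\bar{d}_X$ is Busemannian, this is convergence in the standard topology on $\overline{X}$. By continuity of $\phi$, we deduce that $\phi(\iota_X(x_n))\to\phi(\xi)$ in the standard topology of $(\overline{Y},\bar{d}_Y)$. Using the commutation $\phi\circ\iota_X=\iota_Y\circ f$ coming from the definition of an extension, this rewrites as $\iota_Y(f(x_n))\to\phi(\xi)$.

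On the other hand, the FD-extensibility of $f$ furnishes a subsequence $(f(x_{n_k}))$ which is a forward FD-sequence in $(Y,d_Y)$, and the standard extension is defined precisely by $\bar{f}(\xi)=[f(x_{n_k})]\in\overline{Y}$. Applying the forward density statement of \cref{prop:natisoinclude} to this FD-sequence yields $\iota_Y(f(x_{n_k}))\to\bar{f}(\xi)$ in the standard topology of $\overline{Y}$. But $(\iota_Y(f(x_{n_k})))$ is a subsequence of $(\iota_Y(f(x_n)))$, and the latter converges to $\phi(\xi)$, so the subsequence also converges to $\phi(\xi)$. Uniqueness of limits in the Busemannian metric space $(\overline{Y},\bar{d}_Y)$, established in the first paragraph, then forces $\phi(\xi)=\bar{f}(\xi)$. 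Since $\xi$ was arbitrary, $\phi=\bar{f}$, which simultaneously yields the uniqueness of a continuous extension and the identification of any such extension with the standard one.

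The main obstacle will be ensuring that all limit-style arguments performed above are unambiguous in the asymmetric setting: one must be careful to use the Busemannian hypothesis at exactly the right places to transfer between forward and backward convergence (for continuity is stated in terms of the standard topology, while forward density in \cref{prop:natisoinclude} is phrased purely in forward terms), and to establish Hausdorffness so that subsequential limits are unique. Once this bookkeeping is in order, the remaining structure -- the commutativity relation for extensions and the defining recipe for $\bar{f}$ via FD-extensibility -- slots together routinely.
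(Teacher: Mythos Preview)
Your proposal is correct and follows essentially the same approach as the paper: both arguments show that the sequence $\iota_Y(f(x_{n_k}))$ forward-converges to both $\phi(\xi)$ (via continuity and the extension relation $\phi\circ\iota_X=\iota_Y\circ f$) and to $\bar f(\xi)$ (via the FD-sequence representative), and then invoke the Busemannian hypothesis on $\overline{Y}$ to force equality. The only cosmetic differences are that you isolate uniqueness of limits as a preliminary lemma and pass to the FD-subsequence on the $Y$-side, whereas the paper pre-replaces $(x_n)$ by a subsequence so that $(f(x_n))$ is already FD and then argues the equality $\bar d_Y([f(x_n)],\phi(\xi))=0$ directly via the triangle inequality.
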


\begin{proof}
The argument is essentially the same as the proof of uniqueness in \cref{thm:Fd=cauchy}. Let $\phi:\overline{X}\to\overline{Y}$ denote a continuous extension of $f:X\to Y$. Given an arbitrary element $\xi\in\overline{X}$, let $(x_n)$ be an FD-sequence representing $\xi$. Since $f$ is FD-extensible, we can replace $(x_n)$ with a subsequence so that $(f(x_n))$ is an FD-sequence in $(Y,d_Y)$. Let $\xi_k\in\overline{X}$ denote the FD-equivalence class represented by the constant FD sequence $[x_k]$. Then $\xi_k\to\xi$ because
\begin{align*}
\lim_{k\to\infty} \bar{d}_X(\xi_k,\xi)
\leq
\lim_{k\to\infty}\lim_{n\to\infty} d_X(x_k,x_n)
\leq
\lim_{k\to\infty}
\sum_{j=k}^\infty d_X(x_k,x_{k+1})=0.
\end{align*}
Continuous maps between Busemannian spaces preserve limits, hence
\begin{align}
\lim_{k\to\infty}
\bar{d}_Y(\phi(\xi_k),\phi(\xi))=0.\label{eq:triineq1}
\end{align}
Now, since $(f(x_n))_{n\in\mathbb{N}}$ is an FD-sequence in $(Y,d_Y)$, its FD-equivalence class $[f(x_n)]_{n\in\mathbb{N}}$ defines an element in $\overline{Y}$. We observe that
\begin{align*}
\lim_{k\to\infty}
\bar{d}_Y(\phi(\xi_k),[f(x_n)]_{n\in\mathbb{N}})
&=
\lim_{k\to\infty}
\bar{d}_Y([f(\xi_k)]_{n\in\mathbb{N}},[f(x_n)]_{n\in\mathbb{N}})\notag\\
&\leq
\lim_{k\to\infty}\lim_{n\to\infty}
d_Y(f(x_k),f(x_n))\notag\\
&\leq
\lim_{k\to\infty}
\sum_{j=k}^\infty d_Y(f(x_j),f(x_{j+1}))
=0.
\end{align*}
Since $(\overline{Y},\bar{d}_Y)$ is Busemannian, this means
\begin{align}
\lim_{k\to\infty}
\bar{d}_Y([f(x_n)]_{n\in\naturals},\phi(\xi_k))=0.\label{eq:triineq2}
\end{align}
Combining \cref{eq:triineq1} and \cref{eq:triineq2}, we see that
\[
\bar{d}_Y([f(x_n)]_{n\in \naturals},\phi(\xi))
\leq
\lim_{k\to\infty}
(\bar{d}_Y([f(x_n)]_{n\in \naturals},\phi(\xi_k))
+
\bar{d}_Y(\phi(\xi_k),\phi(\xi)))=0.
\]
Again using the fact that $(\overline{Y},\bar{d}_Y)$ is Busemannian, we conclude that the continuity of $\phi$ forces $\phi(\xi_k)=[f(x_n)]_{n\in\naturals}$,  hence $\phi$ must be the standard extension.
\end{proof}

\begin{remark}
It is possible to weaken the conditions of the previous lemma so that $(\overline{X},\bar{d}_X)$ is not necessarily Busemannian, and to specify continuity with respect to the backward topology on $(\overline{X},\bar{d}_X)$.
\end{remark}

FD-extensibility is a difficult property to verify without some stronger condition in place, such as Lipschitz continuity. Indeed, Lipschitz continuity is a very natural condition for preserving FD-sequences, hence also FD-equivalence.

\begin{proposition}
\label{prop:FDmorphisms}
Every $K$-Lipschitz map $f:(X,d_X)\to(Y,d_Y)$ is FD-extensible, and induces a standard $K$-Lipschitz map $\bar{f}:(\overline{X},\bar{d}_X)\to(\overline{Y},\bar{d}_Y)$. 
\end{proposition}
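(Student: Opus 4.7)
The plan is to prove the two claims in sequence: first that a $K$-Lipschitz $f$ is forward FD-extensible, and then that the standard extension $\bar{f}$ produced by the preceding proposition is itself $K$-Lipschitz. Both assertions flow quite directly from the compatibility of the Lipschitz inequality with summation and liminf, so the main work is in organising the definitions rather than producing new geometric input.

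For FD-extensibility, I would first check that the image of any forward FD-sequence $(x_n)$ is already itself a forward FD-sequence in $Y$; the bound
\[
\sum_{n=1}^{\infty} d_Y(f(x_n), f(x_{n+1})) \leq K \sum_{n=1}^{\infty} d_X(x_n, x_{n+1}) < \infty
\]
follows from the Lipschitz hypothesis, so no subsequence is needed. Next, I would verify the uniqueness-up-to-FD-equivalence condition. Given two forward FD-sequences $(x_n)$, $(x'_n)$ representing the same FD-equivalence class in $\overline{X}$, there is by definition an interlacing $(\hat{x}_m)$ which is a forward FD-sequence in $X$. The same Lipschitz estimate applied to $(\hat{x}_m)$ shows that $(f(\hat{x}_m))$ is a forward FD-sequence in $Y$, and by construction it is an interlacing of $(f(x_n))$ and $(f(x'_n))$. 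Hence these latter two sequences are forward FD-equivalent in $Y$, as required. Given any FD-subsequences of $(f(x_n))$ and $(f(x'_n))$, the same argument (now applied to an interlacing of the chosen subsequences of the $x$'s and $x'$'s) shows pairwise FD-equivalence.

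For the Lipschitz property of $\bar{f}$, fix arbitrary $[x_n], [y_m] \in \overline{X}$ and invoke \cref{lem:infimalrepsequence} to select representatives $(x'_n)$, $(y'_m)$ realising the infimum in the definition of $\bar{d}_X$:
\[
\bar{d}_X([x_n], [y_m]) = \lim_{k\to\infty} d_X(x'_k, y'_k).
\]
Since $(f(x'_n))$ and $(f(y'_m))$ are forward FD-sequences representing $\bar{f}([x_n])$ and $\bar{f}([y_m])$ respectively, the definition of the forward metric extension gives
\[
\bar{d}_Y(\bar{f}([x_n]), \bar{f}([y_m])) \leq \liminf_{k\to\infty} d_Y(f(x'_k), f(y'_k)) \leq K \lim_{k\to\infty} d_X(x'_k, y'_k) = K\, \bar{d}_X([x_n], [y_m]).
\]

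There is no real obstacle here: the argument is essentially the observation that the Lipschitz inequality is preserved termwise, hence under summation (giving FD-extensibility) and under liminf (giving the Lipschitz constant for $\bar{f}$). The only point of mild care is to use the optimal representatives from \cref{lem:infimalrepsequence} so that the liminf on the right coincides with a genuine limit equal to $\bar{d}_X([x_n],[y_m])$, thereby avoiding any loss in the constant.
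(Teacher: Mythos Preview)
Your proof is correct and follows essentially the same approach as the paper's own proof, which is quite terse: it simply observes that Lipschitz maps send FD-sequences to FD-sequences (hence all subsequences are automatically FD-equivalent), and then invokes \cref{lem:infimalrepsequence} to obtain the $K$-Lipschitz bound on $\bar{f}$. Your version spells out both steps in more detail, but the strategy is identical.
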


\begin{proof}
Since the image of an FD-sequence under a $K$-Lipschitz map is an FD-sequence and all subsequences are in the same FD-equivalence class, $K$-Lipschitz maps are FD-extensible. \cref{lem:infimalrepsequence} then suffices to ensure that $\bar{f}$ is $K$-Lipschitz. 

\end{proof}

\subsection{Categorification of FD-completion}

\cref{prop:FDmorphisms} motivates the following categorical definition.

\begin{definition}[$\mathcal{AML}$ and the FD-completion functor]
\label{defn:aml}
Let $\mathcal{AML}$ denote the category whose objects are asymmetric metric spaces and whose morphisms are Lipschitz maps. For $K\in[1,\infty)$, we similarly define $\mathcal{AML}_K$ to be the subcategory consisting of the same objects, and with morphisms given by $K$-Lipschitz maps. Note that $K\geq1$ is needed to allow for identity morphisms.
\end{definition}

\begin{proposition}\label{prop:compatibility}
The FD-completion defines an endofunctor on $\mathcal{AML}$, as well as on $\mathcal{AML}_K$ for any $K\in[1,\infty)$.
\end{proposition}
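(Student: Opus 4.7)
The plan is to check the functoriality axioms, relying entirely on the work already assembled in this appendix. The object assignment $(X,d) \mapsto (\overline{X},\bar{d})$ is well defined by \cref{thm:asymmetricmetric}. For morphisms, \cref{prop:FDmorphisms} establishes that every $K$-Lipschitz map $f \colon (X,d_X) \to (Y,d_Y)$ is FD-extensible and that its standard extension $\bar{f}\colon (\overline{X},\bar{d}_X) \to (\overline{Y},\bar{d}_Y)$ is again $K$-Lipschitz. Hence the assignment $f \mapsto \bar{f}$ is well defined at the level of morphisms and respects the Lipschitz constant, which is the input needed to later restrict to $\mathcal{AML}_K$.

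The next step is to verify preservation of identities. Given $X \in \mathcal{AML}$, the identity $\mathrm{id}_X \colon X \to X$ is $1$-Lipschitz, and for any FD-equivalence class $[x_n] \in \overline{X}$ its standard extension satisfies
\[
\overline{\mathrm{id}_X}([x_n]) = [\mathrm{id}_X(x_n)] = [x_n] = \mathrm{id}_{\overline{X}}([x_n]),
\]
so $\overline{\mathrm{id}_X} = \mathrm{id}_{\overline{X}}$.

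Then I would check preservation of composition. Given $f\colon X \to Y$ which is $K$-Lipschitz and $g\colon Y \to Z$ which is $L$-Lipschitz, the composition $g \circ f$ is $KL$-Lipschitz, hence a morphism in $\mathcal{AML}$, and both $\overline{g\circ f}$ and $\bar{g}\circ\bar{f}$ are well-defined extensions into $\overline{Z}$. Evaluated on any $[x_n] \in \overline{X}$ represented by a forward FD-sequence $(x_n)$ whose image subsequences realise the standard extensions (obtained by iterated subsequence selection as in the proof of \cref{prop:FDmorphisms}), we get
\[
\overline{g\circ f}([x_n]) = [(g\circ f)(x_n)] = [g(f(x_n))] = \bar{g}([f(x_n)]) = \bar{g}(\bar{f}([x_n])),
\]
so $\overline{g\circ f} = \bar{g}\circ\bar{f}$. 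Thus FD-completion is an endofunctor on $\mathcal{AML}$.

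Finally, to handle $\mathcal{AML}_K$, I would note that the Lipschitz constant of $\bar{f}$ equals that of $f$ by \cref{prop:FDmorphisms}, so if $f$ is a morphism of $\mathcal{AML}_K$ then so is $\bar{f}$; together with the identity and composition checks above (which take place verbatim inside $\mathcal{AML}_K$), this shows that FD-completion restricts to an endofunctor on $\mathcal{AML}_K$. The main obstacle is essentially cosmetic: one needs to be careful that the standard extension of a composition agrees pointwise with the composition of the standard extensions on each FD-equivalence class, but this is immediate once one picks a common subsequence realising FD-extensibility for both $f$ and $g \circ f$, which is possible because $K$-Lipschitz maps take forward FD-sequences to forward FD-sequences rather than only to sequences admitting FD-subsequences.
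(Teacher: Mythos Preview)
Your proposal is correct and follows the same approach as the paper, which simply states that the result is a corollary of \cref{thm:asymmetricmetric} and \cref{prop:FDmorphisms}; you have just spelled out the functoriality axioms (identities and composition) explicitly, which the paper leaves implicit. One minor imprecision: \cref{prop:FDmorphisms} only asserts that $\bar{f}$ is $K$-Lipschitz when $f$ is, not that the Lipschitz constants coincide, but this weaker statement is all you need for the endofunctor to restrict to $\mathcal{AML}_K$.
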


\begin{proof}
This is a corollary of \cref{thm:asymmetricmetric} and \cref{prop:FDmorphisms}.
\end{proof}
 
\begin{remark}
It is tempting to wonder if one can enrich $\mathcal{AML}$ by taking a  class of FD-extensible maps as morphisms. This needs a fairly involved discussion which is orthogonal to our present purpose, and deserves independent future investigation.
\end{remark}

 \begin{question}
Constructions based on \cref{rmk:example} might suggest that a 
naive reading of the universal property of metric completions (either in terms of isometric or uniformly continuous maps) seems to fail because pathological asymmetric metric spaces might admit isometric self-maps which are equal to the identity on a dense subset, but not everywhere. Is there a remedy for this either by adjusting the universal property or by imposing additional conditions on the class of asymmetric metric spaces considered?
\end{question}

\begin{remark}
There are other approaches to completing asymmetric metric spaces, see \cite{Algom-Kfir2020,BvBR1998,KS2002,Schellekens2002,Vickers2005,Vickers2009}, and the closest that we have seen is Algom-Kfir's \cite{Algom-Kfir2020}. Both her construction and ours are motivated by simple geometric considerations, and are more accessible. It would be interesting to see to what extent our constructions differ, and what differences they detect in the geometry of asymmetric metric spaces.
\end{remark}

\vspace{8em}

\newpage
\bibliographystyle{acm}
\bibliography{earthquake}

\end{document}